\newtheorem{theorem}{Theorem}[section]
\newtheorem{axiom}[theorem]{Axiom}
\newtheorem{conjecture}[theorem]{Conjecture}
\newtheorem{corollary}[theorem]{Corollary}
\newtheorem{definition}[theorem]{Definition}
\newtheorem{example}[theorem]{Example}
\newtheorem{exercise}[theorem]{Exercise}
\newtheorem{lemma}[theorem]{Lemma}
\newtheorem{notation}[theorem]{Notation}
\newtheorem{proposition}[theorem]{Proposition}
\newtheorem{remark}[theorem]{Remark}
\let\pdfoutput=\undefined\fi
\chardef\@x10\chardef\@xv60
\def\tcitime{
\def\@time{%
  \@minute\time\@hour\@minute\divide\@hour\@xv
  \ifnum\@hour<\@x 0\fi\the\@hour:%
  \multiply\@hour\@xv\advance\@minute-\@hour
  \ifnum\@minute<\@x 0\fi\the\@minute
  }}%
\def\x@hyperref#1#2#3{%
   \catcode`\~ = 12
   \catcode`\$ = 12
   \catcode`\_ = 12
   \catcode`\# = 12
   \catcode`\& = 12
   \catcode`\% = 12
   \y@hyperref{#1}{#2}{#3}%
}
\def\y@hyperref#1#2#3#4{%
   #2\ref{#4}#3
   \catcode`\~ = 13
   \catcode`\$ = 3
   \catcode`\_ = 8
   \catcode`\# = 6
   \catcode`\& = 4
   \catcode`\% = 14
}
\def\QCTOpt[#1]#2{%
  \def\QCTOptB{#1}
  \def\QCTOptA{#2}
}
\def\QCTNOpt#1{%
  \def\QCTOptA{#1}
  \let\QCTOptB\empty
}
\def\Qct{%
  \@ifnextchar[{%
    \QCTOpt}{\QCTNOpt}
}
\def\QCBOpt[#1]#2{%
  \def\QCBOptB{#1}%
  \def\QCBOptA{#2}%
}
\def\QCBNOpt#1{%
  \def\QCBOptA{#1}%
  \let\QCBOptB\empty
}
\def\Qcb{%
  \@ifnextchar[{%
    \QCBOpt}{\QCBNOpt}%
}
\def\PrepCapArgs{%
  \ifx\QCBOptA\empty
    \ifx\QCTOptA\empty
      {}%
    \else
      \ifx\QCTOptB\empty
        {\QCTOptA}%
      \else
        [\QCTOptB]{\QCTOptA}%
      \fi
    \fi
  \else
    \ifx\QCBOptA\empty
      {}%
    \else
      \ifx\QCBOptB\empty
        {\QCBOptA}%
      \else
        [\QCBOptB]{\QCBOptA}%
      \fi
    \fi
  \fi
}
\def\GRAPHICSPS#1{%
 \ifcase\GRAPHICSTYPE
   \special{ps: #1}%
 \or
   \special{language "PS", include "#1"}%
 \fi
}%
\def\graffile#1#2#3#4{%
    \bgroup
	   \@inlabelfalse
       \leavevmode
       \@ifundefined{bbl@deactivate}{\def~{\string~}}{\activesoff}%
        \raise -#4 \BOXTHEFRAME{%
           \hbox to #2{\raise #3\hbox to #2{\null #1\hfil}}}%
    \egroup
}%
\def\draftbox#1#2#3#4{%
 \leavevmode\raise -#4 \hbox{%
  \frame{\rlap{\protect\tiny #1}\hbox to #2%
   {\vrule height#3 width\z@ depth\z@\hfil}%
  }%
 }%
}%
\let\nographics=\@msidraft
\newif\ifwasdraft
\def\GRAPHIC#1#2#3#4#5{%
   \ifnum\@msidraft=\@ne\draftbox{#2}{#3}{#4}{#5}%
   \else\graffile{#1}{#3}{#4}{#5}%
   \fi
}
\def\addtoLaTeXparams#1{%
    \edef\LaTeXparams{\LaTeXparams #1}}%
\newif\ifBoxFrame \BoxFramefalse
\newif\ifOverFrame \OverFramefalse
\newif\ifUnderFrame \UnderFramefalse
\def\BOXTHEFRAME#1{%
   \hbox{%
      \ifBoxFrame
         \frame{#1}%
      \else
         {#1}%
      \fi
   }%
}
\def\doFRAMEparams#1{\BoxFramefalse\OverFramefalse\UnderFramefalse\readFRAMEparams#1\end}%
\def\readFRAMEparams#1{%
 \ifx#1\end%
  \let\next=\relax
  \else
  \ifx#1i\dispkind=\z@\fi
  \ifx#1d\dispkind=\@ne\fi
  \ifx#1f\dispkind=\tw@\fi
  \ifx#1t\addtoLaTeXparams{t}\fi
  \ifx#1b\addtoLaTeXparams{b}\fi
  \ifx#1p\addtoLaTeXparams{p}\fi
  \ifx#1h\addtoLaTeXparams{h}\fi
  \ifx#1X\BoxFrametrue\fi
  \ifx#1O\OverFrametrue\fi
  \ifx#1U\UnderFrametrue\fi
  \ifx#1w
    \ifnum\@msidraft=1\wasdrafttrue\else\wasdraftfalse\fi
    \@msidraft=\@ne
  \fi
  \let\next=\readFRAMEparams
  \fi
 \next
 }%
\def\IFRAME#1#2#3#4#5#6{%
      \bgroup
      \let\QCTOptA\empty
      \let\QCTOptB\empty
      \let\QCBOptA\empty
      \let\QCBOptB\empty
      #6%
      \parindent=0pt
      \leftskip=0pt
      \rightskip=0pt
      \setbox0=\hbox{\QCBOptA}%
      \@tempdima=#1\relax
      \ifOverFrame
          \typeout{This is not implemented yet}%
          \show\HELP
      \else
         \ifdim\wd0>\@tempdima
            \advance\@tempdima by \@tempdima
            \ifdim\wd0 >\@tempdima
               \setbox1 =\vbox{%
                  \unskip\hbox to \@tempdima{\hfill\GRAPHIC{#5}{#4}{#1}{#2}{#3}\hfill}%
                  \unskip\hbox to \@tempdima{\parbox[b]{\@tempdima}{\QCBOptA}}%
               }%
               \wd1=\@tempdima
            \else
               \textwidth=\wd0
               \setbox1 =\vbox{%
                 \noindent\hbox to \wd0{\hfill\GRAPHIC{#5}{#4}{#1}{#2}{#3}\hfill}\\%
                 \noindent\hbox{\QCBOptA}%
               }%
               \wd1=\wd0
            \fi
         \else
            \ifdim\wd0>0pt
              \hsize=\@tempdima
              \setbox1=\vbox{%
                \unskip\GRAPHIC{#5}{#4}{#1}{#2}{0pt}%
                \break
                \unskip\hbox to \@tempdima{\hfill \QCBOptA\hfill}%
              }%
              \wd1=\@tempdima
           \else
              \hsize=\@tempdima
              \setbox1=\vbox{%
                \unskip\GRAPHIC{#5}{#4}{#1}{#2}{0pt}%
              }%
              \wd1=\@tempdima
           \fi
         \fi
         \@tempdimb=\ht1
         \advance\@tempdimb by -#2
         \advance\@tempdimb by #3
         \leavevmode
         \raise -\@tempdimb \hbox{\box1}%
      \fi
      \egroup%
}%
\def\DFRAME#1#2#3#4#5{%
  \vspace\topsep
  \hfil\break
  \bgroup
     \leftskip\@flushglue
	 \rightskip\@flushglue
	 \parindent\z@
	 \parfillskip\z@skip
     \let\QCTOptA\empty
     \let\QCTOptB\empty
     \let\QCBOptA\empty
     \let\QCBOptB\empty
	 \vbox\bgroup
        \ifOverFrame 
           #5\QCTOptA\par
        \fi
        \GRAPHIC{#4}{#3}{#1}{#2}{\z@}%
        \ifUnderFrame 
           \break#5\QCBOptA
        \fi
	 \egroup
  \egroup
  \vspace\topsep
  \break
}%
\def\FFRAME#1#2#3#4#5#6#7{%
  \@ifundefined{floatstyle}
    {
     \begin{figure}[#1]%
    }
    {
	 \ifx#1h
      \begin{figure}[H]%
	 \else
      \begin{figure}[#1]%
	 \fi
	}
  \let\QCTOptA\empty
  \let\QCTOptB\empty
  \let\QCBOptA\empty
  \let\QCBOptB\empty
  \ifOverFrame
    #4
    \ifx\QCTOptA\empty
    \else
      \ifx\QCTOptB\empty
        \caption{\QCTOptA}%
      \else
        \caption[\QCTOptB]{\QCTOptA}%
      \fi
    \fi
    \ifUnderFrame\else
      \label{#5}%
    \fi
  \else
    \UnderFrametrue%
  \fi
  \begin{center}\GRAPHIC{#7}{#6}{#2}{#3}{\z@}\end{center}%
  \ifUnderFrame
    #4
    \ifx\QCBOptA\empty
      \caption{}%
    \else
      \ifx\QCBOptB\empty
        \caption{\QCBOptA}%
      \else
        \caption[\QCBOptB]{\QCBOptA}%
      \fi
    \fi
    \label{#5}%
  \fi
  \end{figure}%
 }%
\def\makeactives{
  \catcode`\"=\active
  \catcode`\;=\active
  \catcode`\:=\active
  \catcode`\'=\active
  \catcode`\~=\active
}
   \gdef\activesoff{%
      \def"{\string"}%
      \def;{\string;}%
      \def:{\string:}%
      \def'{\string'}%
      \def~{\string~}%
    }
\def\FRAME#1#2#3#4#5#6#7#8{%
 \bgroup
 \ifnum\@msidraft=\@ne
   \wasdrafttrue
 \else
   \wasdraftfalse%
 \fi
 \def\LaTeXparams{}%
 \dispkind=\z@
 \def\LaTeXparams{}%
 \doFRAMEparams{#1}%
 \ifnum\dispkind=\z@\IFRAME{#2}{#3}{#4}{#7}{#8}{#5}\else
  \ifnum\dispkind=\@ne\DFRAME{#2}{#3}{#7}{#8}{#5}\else
   \ifnum\dispkind=\tw@
    \edef\@tempa{\noexpand\FFRAME{\LaTeXparams}}%
    \@tempa{#2}{#3}{#5}{#6}{#7}{#8}%
    \fi
   \fi
  \fi
  \ifwasdraft\@msidraft=1\else\@msidraft=0\fi{}%
  \egroup
 }%
\def\TEXUX#1{"texux"}
\long\def\QQQ#1#2{%
     \long\expandafter\def\csname#1\endcsname{#2}}%
\long\def\QQA#1#2{}%
\def\QTR#1#2{{\csname#1\endcsname {#2}}}%
\long\def\TeXButton#1#2{#2}%
\def\EXPAND#1[#2]#3{}%
\def\NOEXPAND#1[#2]#3{}%
\def\LaTeXparent#1{}%
\def\ChildStyles#1{}%
\def\ChildDefaults#1{}%
\def\QTagDef#1#2#3{}%
  \providecommand{\UNICODE}[2][]{\protect\rule{.1in}{.1in}}
  \providecommand{\U}[1]{\protect\rule{.1in}{.1in}}
\def\QQfnmark#1{\footnotemark}
 \def\abstract{%
  \if@twocolumn
   \section*{Abstract (Not appropriate in this style!)}%
   \else \small 
   \begin{center}{\bf Abstract\vspace{-.5em}\vspace{\z@}}\end{center}%
   \quotation 
   \fi
  }%
   \def\registered{\relax\ifmmode{}\r@gistered
                    \else$\m@th\r@gistered$\fi}%
 \def\r@gistered{^{\ooalign
  {\hfil\raise.07ex\hbox{$\scriptstyle\rm\text{R}$}\hfil\crcr
  \mathhexbox20D}}}}{}%
\newdimen\theight
\def\newfmtname{LaTeX2e}
  \DeclareOldFontCommand{\rm}{\normalfont\rmfamily}{\mathrm}
  \DeclareOldFontCommand{\sf}{\normalfont\sffamily}{\mathsf}
  \DeclareOldFontCommand{\tt}{\normalfont\ttfamily}{\mathtt}
  \DeclareOldFontCommand{\bf}{\normalfont\bfseries}{\mathbf}
  \DeclareOldFontCommand{\it}{\normalfont\itshape}{\mathit}
  \DeclareOldFontCommand{\sl}{\normalfont\slshape}{\@nomath\sl}
  \DeclareOldFontCommand{\sc}{\normalfont\scshape}{\@nomath\sc}
\def\alpha{{\Greekmath 010B}}%
\def\beta{{\Greekmath 010C}}%
\def\gamma{{\Greekmath 010D}}%
\def\delta{{\Greekmath 010E}}%
\def\epsilon{{\Greekmath 010F}}%
\def\zeta{{\Greekmath 0110}}%
\def\eta{{\Greekmath 0111}}%
\def\theta{{\Greekmath 0112}}%
\def\iota{{\Greekmath 0113}}%
\def\kappa{{\Greekmath 0114}}%
\def\lambda{{\Greekmath 0115}}%
\def\mu{{\Greekmath 0116}}%
\def\nu{{\Greekmath 0117}}%
\def\xi{{\Greekmath 0118}}%
\def\pi{{\Greekmath 0119}}%
\def\rho{{\Greekmath 011A}}%
\def\sigma{{\Greekmath 011B}}%
\def\tau{{\Greekmath 011C}}%
\def\upsilon{{\Greekmath 011D}}%
\def\phi{{\Greekmath 011E}}%
\def\chi{{\Greekmath 011F}}%
\def\psi{{\Greekmath 0120}}%
\def\omega{{\Greekmath 0121}}%
\def\varepsilon{{\Greekmath 0122}}%
\def\vartheta{{\Greekmath 0123}}%
\def\varpi{{\Greekmath 0124}}%
\def\varrho{{\Greekmath 0125}}%
\def\varsigma{{\Greekmath 0126}}%
\def\varphi{{\Greekmath 0127}}%
\def\nabla{{\Greekmath 0272}}
\def\FindBoldGroup{%
   {\setbox0=\hbox{$\mathbf{x\global\edef\theboldgroup{\the\mathgroup}}$}}%
}
\def\Greekmath#1#2#3#4{%
    \if@compatibility
        \ifnum\mathgroup=\symbold
           \mathchoice{\mbox{\boldmath$\displaystyle\mathchar"#1#2#3#4$}}%
                      {\mbox{\boldmath$\textstyle\mathchar"#1#2#3#4$}}%
                      {\mbox{\boldmath$\scriptstyle\mathchar"#1#2#3#4$}}%
                      {\mbox{\boldmath$\scriptscriptstyle\mathchar"#1#2#3#4$}}%
        \else
           \mathchar"#1#2#3#4%
        \fi 
    \else 
        \FindBoldGroup
        \ifnum\mathgroup=\theboldgroup 
           \mathchoice{\mbox{\boldmath$\displaystyle\mathchar"#1#2#3#4$}}%
                      {\mbox{\boldmath$\textstyle\mathchar"#1#2#3#4$}}%
                      {\mbox{\boldmath$\scriptstyle\mathchar"#1#2#3#4$}}%
                      {\mbox{\boldmath$\scriptscriptstyle\mathchar"#1#2#3#4$}}%
        \else
           \mathchar"#1#2#3#4%
        \fi     	    
	  \fi}
\newif\ifGreekBold  \GreekBoldfalse
\let\SAVEPBF=\pbf
\def\pbf{\GreekBoldtrue\SAVEPBF}%
  \newcounter{equationnumber}  
  \def\mathletters{%
     \addtocounter{equation}{1}
     \edef\@currentlabel{\theequation}%
     \setcounter{equationnumber}{\c@equation}
     \setcounter{equation}{0}%
     \edef\theequation{\@currentlabel\noexpand\alph{equation}}%
  }
    \def\BibTeX{{\rm B\kern-.05em{\sc i\kern-.025em b}\kern-.08em
                 T\kern-.1667em\lower.7ex\hbox{E}\kern-.125emX}}}{}%
\def\AmS{{\protect\usefont{OMS}{cmsy}{m}{n}%
                A\kern-.1667em\lower.5ex\hbox{M}\kern-.125emS}}}{}%
\def\@@eqncr{\let\@tempa\relax
    \ifcase\@eqcnt \def\@tempa{& & &}\or \def\@tempa{& &}%
      \else \def\@tempa{&}\fi
     \@tempa
     \if@eqnsw
        \iftag@
           \@taggnum
        \else
           \@eqnnum\stepcounter{equation}%
        \fi
     \fi
     \global\tag@false
     \global\@eqnswtrue
     \global\@eqcnt\z@\cr}
\def\TCItag{\@ifnextchar*{\@TCItagstar}{\@TCItag}}
\def\@TCItag#1{%
    \global\tag@true
    \global\def\@taggnum{(#1)}%
    \global\def\@currentlabel{#1}}
\def\@TCItagstar*#1{%
    \global\tag@true
    \global\def\@taggnum{#1}%
    \global\def\@currentlabel{#1}}
\def\tint{\msi@int\textstyle\int}%
\def\tiint{\msi@int\textstyle\iint}%
\def\tiiint{\msi@int\textstyle\iiint}%
\def\tiiiint{\msi@int\textstyle\iiiint}%
\def\tidotsint{\msi@int\textstyle\idotsint}%
\def\toint{\msi@int\textstyle\oint}%
\newtoks\temptoksa
\newtoks\temptoksb
\newtoks\temptoksc
\def\msi@int#1#2{%
 \def\@temp{{#1#2\the\temptoksc_{\the\temptoksa}^{\the\temptoksb}}}%
 \futurelet\@nextcs
 \@int
}
\def\@int{%
   \ifx\@nextcs\limits
      \typeout{Found limits}%
      \temptoksc={\limits}%
	  \let\@next\@intgobble%
   \else\ifx\@nextcs\nolimits
      \typeout{Found nolimits}%
      \temptoksc={\nolimits}%
	  \let\@next\@intgobble%
   \else
      \typeout{Did not find limits or no limits}%
      \temptoksc={}%
      \let\@next\msi@limits%
   \fi\fi
   \@next   
}%
\def\@intgobble#1{%
   \typeout{arg is #1}%
   \msi@limits
}
\def\msi@limits{%
   \temptoksa={}%
   \temptoksb={}%
   \@ifnextchar_{\@limitsa}{\@limitsb}%
}
\def\@limitsa_#1{%
   \temptoksa={#1}%
   \@ifnextchar^{\@limitsc}{\@temp}%
}
\def\@limitsb{%
   \@ifnextchar^{\@limitsc}{\@temp}%
}
\def\@limitsc^#1{%
   \temptoksb={#1}%
   \@ifnextchar_{\@limitsd}{\@temp}%
}
\def\@limitsd_#1{%
   \temptoksa={#1}%
   \@temp
}
\def\dint{\msi@int\displaystyle\int}%
\def\diint{\msi@int\displaystyle\iint}%
\def\diiint{\msi@int\displaystyle\iiint}%
\def\diiiint{\msi@int\displaystyle\iiiint}%
\def\didotsint{\msi@int\displaystyle\idotsint}%
\def\doint{\msi@int\displaystyle\oint}%
\def\dsum{\mathop{\displaystyle \sum }}%
\def\dprod{\mathop{\displaystyle \prod }}%
\def\dbigcap{\mathop{\displaystyle \bigcap }}%
\def\dbigoplus{\mathop{\displaystyle \bigoplus }}%
\def\dcoprod{\mathop{\displaystyle \coprod }}%
\def\dbigcup{\mathop{\displaystyle \bigcup }}%
\def\ExitTCILatex{\makeatother }
\if@compatibility\message{amsmath already loaded}\fi\aftergroup\ExitTCILatex}
\if@compatibility\message{amstex already loaded}\fi\aftergroup\ExitTCILatex}
\if@compatibility\message{amsgen already loaded}\fi\aftergroup\ExitTCILatex}
\let\DOTSI\relax
\def\RIfM@{\relax\ifmmode}%
\def\FN@{\futurelet\next}%
\def\iint{\DOTSI\intno@\tw@\FN@\ints@}%
\def\iiint{\DOTSI\intno@\thr@@\FN@\ints@}%
\def\iiiint{\DOTSI\intno@4 \FN@\ints@}%
\def\idotsint{\DOTSI\intno@\z@\FN@\ints@}%
\def\ints@{\findlimits@\ints@@}%
\newif\iflimtoken@
\newif\iflimits@
\def\findlimits@{\limtoken@true\ifx\next\limits\limits@true
 \else\ifx\next\nolimits\limits@false\else
 \limtoken@false\ifx\ilimits@\nolimits\limits@false\else
 \ifinner\limits@false\else\limits@true\fi\fi\fi\fi}%
\def\multint@{\int\ifnum\intno@=\z@\intdots@                          
 \else\intkern@\fi                                                    
 \ifnum\intno@>\tw@\int\intkern@\fi                                   
 \ifnum\intno@>\thr@@\int\intkern@\fi                                 
 \int}
\def\multintlimits@{\intop\ifnum\intno@=\z@\intdots@\else\intkern@\fi
 \ifnum\intno@>\tw@\intop\intkern@\fi
 \ifnum\intno@>\thr@@\intop\intkern@\fi\intop}%
\def\intic@{%
    \mathchoice{\hskip.5em}{\hskip.4em}{\hskip.4em}{\hskip.4em}}%
\def\negintic@{\mathchoice
 {\hskip-.5em}{\hskip-.4em}{\hskip-.4em}{\hskip-.4em}}%
\def\ints@@{\iflimtoken@                                              
 \def\ints@@@{\iflimits@\negintic@
   \mathop{\intic@\multintlimits@}\limits                             
  \else\multint@\nolimits\fi                                          
  \eat@}
 \else                                                                
 \def\ints@@@{\iflimits@\negintic@
  \mathop{\intic@\multintlimits@}\limits\else
  \multint@\nolimits\fi}\fi\ints@@@}%
\def\intkern@{\mathchoice{\!\!\!}{\!\!}{\!\!}{\!\!}}%
\def\plaincdots@{\mathinner{\cdotp\cdotp\cdotp}}%
\def\intdots@{\mathchoice{\plaincdots@}%
 {{\cdotp}\mkern1.5mu{\cdotp}\mkern1.5mu{\cdotp}}%
 {{\cdotp}\mkern1mu{\cdotp}\mkern1mu{\cdotp}}%
 {{\cdotp}\mkern1mu{\cdotp}\mkern1mu{\cdotp}}}%
\def\RIfM@{\relax\protect\ifmmode}
\def\text{\RIfM@\expandafter\text@\else\expandafter\mbox\fi}
\let\nfss@text\text
\def\text@#1{\mathchoice
   {\textdef@\displaystyle\f@size{#1}}%
   {\textdef@\textstyle\tf@size{\firstchoice@false #1}}%
   {\textdef@\textstyle\sf@size{\firstchoice@false #1}}%
   {\textdef@\textstyle \ssf@size{\firstchoice@false #1}}%
   \glb@settings}
\def\textdef@#1#2#3{\hbox{{%
                    \everymath{#1}%
                    \let\f@size#2\selectfont
                    #3}}}
\newif\iffirstchoice@
\def\Let@{\relax\iffalse{\fi\let\\=\cr\iffalse}\fi}%
\def\vspace@{\def\vspace##1{\crcr\noalign{\vskip##1\relax}}}%
\def\multilimits@{\bgroup\vspace@\Let@
 \baselineskip\fontdimen10 \scriptfont\tw@
 \advance\baselineskip\fontdimen12 \scriptfont\tw@
 \lineskip\thr@@\fontdimen8 \scriptfont\thr@@
 \lineskiplimit\lineskip
 \vbox\bgroup\ialign\bgroup\hfil$\m@th\scriptstyle{##}$\hfil\crcr}%
\def\Sb{_\multilimits@}%
\def\endSb{\crcr\egroup\egroup\egroup}%
\def\Sp{^\multilimits@}%
\newdimen\ex@
\def\rightarrowfill@#1{$#1\m@th\mathord-\mkern-6mu\cleaders
 \hbox{$#1\mkern-2mu\mathord-\mkern-2mu$}\hfill
 \mkern-6mu\mathord\rightarrow$}%
\def\leftarrowfill@#1{$#1\m@th\mathord\leftarrow\mkern-6mu\cleaders
 \hbox{$#1\mkern-2mu\mathord-\mkern-2mu$}\hfill\mkern-6mu\mathord-$}%
\def\leftrightarrowfill@#1{$#1\m@th\mathord\leftarrow
\mkern-6mu\cleaders
 \hbox{$#1\mkern-2mu\mathord-\mkern-2mu$}\hfill
 \mkern-6mu\mathord\rightarrow$}%
\def\overrightarrow{\mathpalette\overrightarrow@}%
\def\overrightarrow@#1#2{\vbox{\ialign{##\crcr\rightarrowfill@#1\crcr
 \noalign{\kern-\ex@\nointerlineskip}$\m@th\hfil#1#2\hfil$\crcr}}}%
\def\overleftarrow{\mathpalette\overleftarrow@}%
\def\overleftarrow@#1#2{\vbox{\ialign{##\crcr\leftarrowfill@#1\crcr
 \noalign{\kern-\ex@\nointerlineskip}$\m@th\hfil#1#2\hfil$\crcr}}}%
\def\overleftrightarrow{\mathpalette\overleftrightarrow@}%
\def\overleftrightarrow@#1#2{\vbox{\ialign{##\crcr
   \leftrightarrowfill@#1\crcr
 \noalign{\kern-\ex@\nointerlineskip}$\m@th\hfil#1#2\hfil$\crcr}}}%
\def\underrightarrow{\mathpalette\underrightarrow@}%
\def\underrightarrow@#1#2{\vtop{\ialign{##\crcr$\m@th\hfil#1#2\hfil
  $\crcr\noalign{\nointerlineskip}\rightarrowfill@#1\crcr}}}%
\def\underleftarrow{\mathpalette\underleftarrow@}%
\def\underleftarrow@#1#2{\vtop{\ialign{##\crcr$\m@th\hfil#1#2\hfil
  $\crcr\noalign{\nointerlineskip}\leftarrowfill@#1\crcr}}}%
\def\underleftrightarrow{\mathpalette\underleftrightarrow@}%
\def\underleftrightarrow@#1#2{\vtop{\ialign{##\crcr$\m@th
  \hfil#1#2\hfil$\crcr
 \noalign{\nointerlineskip}\leftrightarrowfill@#1\crcr}}}%
\def\qopnamewl@#1{\mathop{\operator@font#1}\nlimits@}
\let\nlimits@\displaylimits
\def\setboxz@h{\setbox\z@\hbox}
\def\varlim@#1#2{\mathop{\vtop{\ialign{##\crcr
 \hfil$#1\m@th\operator@font lim$\hfil\crcr
 \noalign{\nointerlineskip}#2#1\crcr
 \noalign{\nointerlineskip\kern-\ex@}\crcr}}}}
 \def\rightarrowfill@#1{\m@th\setboxz@h{$#1-$}\ht\z@\z@
  $#1\copy\z@\mkern-6mu\cleaders
  \hbox{$#1\mkern-2mu\box\z@\mkern-2mu$}\hfill
  \mkern-6mu\mathord\rightarrow$}
\def\leftarrowfill@#1{\m@th\setboxz@h{$#1-$}\ht\z@\z@
  $#1\mathord\leftarrow\mkern-6mu\cleaders
  \hbox{$#1\mkern-2mu\copy\z@\mkern-2mu$}\hfill
  \mkern-6mu\box\z@$}
\def\projlim{\qopnamewl@{proj\,lim}}
\def\injlim{\qopnamewl@{inj\,lim}}
\def\varinjlim{\mathpalette\varlim@\rightarrowfill@}
\def\varprojlim{\mathpalette\varlim@\leftarrowfill@}
\def\varliminf{\mathpalette\varliminf@{}}
\def\varliminf@#1{\mathop{\underline{\vrule\@depth.2\ex@\@width\z@
   \hbox{$#1\m@th\operator@font lim$}}}}
\def\varlimsup{\mathpalette\varlimsup@{}}
\def\varlimsup@#1{\mathop{\overline
  {\hbox{$#1\m@th\operator@font lim$}}}}
\def\align{\@verbatim \frenchspacing\@vobeyspaces \@alignverbatim
You are using the "align" environment in a style in which it is not defined.}
\let\csname endalign*\endcsname =\endtrivlist
\def\alignat{\@verbatim \frenchspacing\@vobeyspaces \@alignatverbatim
You are using the "alignat" environment in a style in which it is not defined.}
\let\csname endalignat*\endcsname =\endtrivlist
\def\xalignat{\@verbatim \frenchspacing\@vobeyspaces \@xalignatverbatim
You are using the "xalignat" environment in a style in which it is not defined.}
\let\csname endxalignat*\endcsname =\endtrivlist
\def\gather{\@verbatim \frenchspacing\@vobeyspaces \@gatherverbatim
You are using the "gather" environment in a style in which it is not defined.}
\let\csname endgather*\endcsname =\endtrivlist
\def\multiline{\@verbatim \frenchspacing\@vobeyspaces \@multilineverbatim
You are using the "multiline" environment in a style in which it is not defined.}
\let\csname endmultiline*\endcsname =\endtrivlist
\def\arrax{\@verbatim \frenchspacing\@vobeyspaces \@arraxverbatim
You are using a type of "array" construct that is only allowed in AmS-LaTeX.}
\def\tabulax{\@verbatim \frenchspacing\@vobeyspaces \@tabulaxverbatim
You are using a type of "tabular" construct that is only allowed in AmS-LaTeX.}
\let\csname endarrax*\endcsname =\endtrivlist
\let\csname endtabulax*\endcsname =\endtrivlist
 \def\endequation{%
     \ifmmode\ifinner 
      \iftag@
        \addtocounter{equation}{-1} 
        $\hfil
           \displaywidth\linewidth\@taggnum\egroup \endtrivlist
        \global\tag@false
        \global\@ignoretrue   
      \else
        $\hfil
           \displaywidth\linewidth\@eqnnum\egroup \endtrivlist
        \global\tag@false
        \global\@ignoretrue 
      \fi
     \else   
      \iftag@
        \addtocounter{equation}{-1} 
        \eqno \hbox{\@taggnum}
        \global\tag@false%
        $$\global\@ignoretrue
      \else
        \eqno \hbox{\@eqnnum}
        $$\global\@ignoretrue
      \fi
     \fi\fi
 } 
 \newif\iftag@ \tag@false
 \def\TCItag{\@ifnextchar*{\@TCItagstar}{\@TCItag}}
 \def\@TCItag#1{%
     \global\tag@true
     \global\def\@taggnum{(#1)}%
     \global\def\@currentlabel{#1}}
 \def\@TCItagstar*#1{%
     \global\tag@true
     \global\def\@taggnum{#1}%
     \global\def\@currentlabel{#1}}
     \def\tag{\@ifnextchar*{\@tagstar}{\@tag}}
     \def\@tag#1{%
         \global\tag@true
         \global\def\@taggnum{(#1)}}
     \def\@tagstar*#1{%
         \global\tag@true
         \global\def\@taggnum{#1}}
\DeclareMathOperator{\coker}{coker}
\DeclareMathOperator{\Hom}{Hom}
\DeclareMathOperator{\Imm}{Im}
\numberwithin{equation}{section}
\begin{document}
\title[Cosheaf homology]{Cosheaf homology}
\author{Andrei V. Prasolov}
\address{Institute of Mathematics and Statistics\\
The University of Troms\o\ - The Arctic University of Norway\\
N-9037 Troms\o , Norway}
\email{andrei.prasolov@uit.no}
\urladdr{http://serre.mat-stat.uit.no/ansatte/andrei/Welcome.html}
\date{}
\subjclass[2020]{Primary 18F10, 18G80, 18G10; Secondary 18G40, 18F20, 55N30}
\keywords{Cosheaves, precosheaves, cosheafification, pro-category, cosheaf
homology, precosheaf homology, 
\u{C}ech
homology, shape theory, pro-homology, Alexandroff spaces, locally finite
space, paracompact spaces}

\begin{abstract}
In this paper the cosheaf homology is investigated from different
viewpoints: the behavior under site morphisms, connections with 
\u{C}ech
homology via spectral sequences, and description of cosheaf homology using
hypercoverings.

It is proved that in the case of Hausdorff paracompact spaces, the cosheaf
homology in general is isomorphic to the 
\u{C}ech
homology, and for a constant cosheaf is isomorphic to the shape pro-homology.

In the case of Alexandroff spaces, including finite and locally finite
spaces, the cosheaf homology is isomorphic to the singular homology.
\end{abstract}

\maketitle
\tableofcontents

\setcounter{section}{-1}

\section{Introduction}

This paper continues a series of papers (\cite%
{Prasolov-smooth-cosheaves-2012-MR2879363}, \cite%
{Prasolov-Cosheafification-2016-MR3660525}, \cite%
{Prasolov-Cosheaves-2021-MR4347662}) of the author concerning (pre)cosheaves.

In Section \ref{Sec-Site-morphisms} we study the behavior of cosheaves and
cosheaf homology under site morphisms. For a site morphism $f:X\rightarrow Y$%
, we construct and study in Proposition \ref{Prop-Site-morphism-cosheaves}
and Corollary \ref{Cor-Satellites-site-morphisms} the functors and their
left satellites%
\begin{eqnarray*}
f_{\ast } &:&\text{Cosheaves on }X\text{ }\longrightarrow \text{Cosheaves on 
}Y\text{,} \\
L_{s}f_{\ast } &:&\text{Cosheaves on }X\text{ }\longrightarrow \text{%
Cosheaves on }Y\text{, }s\geq 0, \\
f^{\ast } &:&\text{Cosheaves on }Y\text{ }\longrightarrow \text{Cosheaves on 
}X\text{.}
\end{eqnarray*}
Various spectral sequences are constructed in Theorem \ref%
{Th-Grothendieck-spectral-sequence-(pre)cosheaves}:

\begin{enumerate}
\item In (\ref{Th-Grothendieck-spectral-sequence-compare-to-Cech}) the two
spectral sequences comparing cosheaf homology with 
\u{C}ech
homology.

\item In (\ref{Th-Grothendieck-spectral-sequence-Leray}) the Leray-type
spectral sequence comparing the homology on $X$ and $Y$.

\item In (\ref{Th-Grothendieck-spectral-sequence-Grothendieck}) the
Grothendieck-type spectral sequence comparing the left satellites $%
L_{\bullet }f_{\ast }$, $L_{\bullet }g_{\ast }$ and $L_{\bullet }\left(
g\circ f\right) _{\ast }$.
\end{enumerate}

In Section \ref{Sec-Hypercoverings} we give an explicit construction of
cosheaf homology using hypercoverings. It is proved in Theorem \ref%
{Th-Hypercoverings-vs-resolutions} that the cosheaf homology may be
calculated as a limit of 
\u{C}ech
homology for hypercoverings:%
\begin{equation*}
H_{\bullet }\left( U,\mathcal{A}\right) 
\simeq%
\underset{\mathcal{K}_{\bullet }\in \mathcal{HR}\left( U\right) }{%
\underleftarrow{\lim }}\check{H}_{\bullet }\left( \mathcal{K}_{\bullet },%
\mathcal{A}\right)
\end{equation*}%
where $\mathcal{K}_{\bullet }$ runs through the hypercoverings (modulo
simplicial homotopy) of $U$.

In Section \ref{Sec-topological-spaces} we study cosheaf homology on
topological spaces. For any cosheaf $\mathcal{A}$ on a general site $X$
there is an epimorphism $\mathcal{P}\rightarrow \mathcal{A}$ where $\mathcal{%
P}$ is quasi-projective. The proof in \cite[Theorem 3.4.1(1)]%
{Prasolov-Cosheaves-2021-MR4347662} is rather complicated. For topological
spaces we give a simpler proof in Proposition \ref%
{Prop-Enough-quasi-projective}.

Let now $X$ be a Hausdorff paracompact space. It is proved in Corollary \ref%
{Cor-Homology-Paracompact} that naturally%
\begin{equation*}
H_{\bullet }\left( X,\mathcal{A}_{\#}\right) 
\simeq%
\check{H}_{\bullet }\left( X,\mathcal{A}\right)
\end{equation*}%
where $\mathcal{A}$ is a precosheaf, $\mathcal{A}_{\#}$ is its
cosheafification, and $\check{H}_{\bullet }$ is the 
\u{C}ech
homology.

The main result of Section \ref{Sec-topological-spaces} is Theorem \ref%
{Th-Paracompact-vs-Shape} saying that for an abelian group $G$ (or a $k$%
-module $G$)%
\begin{equation*}
H_{\bullet }\left( X,G_{\#}\right) 
\simeq%
pro\text{-}H_{\bullet }\left( X,G\right)
\end{equation*}%
where $pro$-$H_{\bullet }$ is the shape pro-homology. That theorem proves 
\cite[Conjecture 1.0.3(1)]{Prasolov-Cosheaves-2021-MR4347662}.

In Section \ref{Sec-A-spaces} we study sheaf \textbf{co}homology and \textbf{%
co}sheaf homology on Alexandroff spaces (shortly: $A$-spaces). That class of
topological spaces includes finite and locally finite spaces.

Theorem \ref{Th-Poset-vs-sheaves} establishes deep connections between
sheaves on an $A$-space $X$, and functors from the corresponding pre-ordered
set $\left( X,\leq \right) ^{op}$. In \cite{Jensen-MR0407091} such
connections were established for \textbf{directed posets}. In (\ref%
{Th-Poset-vs-sheaves-categories-equivalent}) it is proved that the
corresponding categories (of functors and of sheaves) are equivalent. In (%
\ref{Th-Poset-vs-sheaves-higher-limits}) it is proved that the sheaf \textbf{%
co}homology groups on $X$ are isomorphic to the higher limits $%
\underleftarrow{\lim }^{\bullet }$ (the \textbf{right} satellites of the
limit functor $\underleftarrow{\lim }$) of functors from $\left( X,\leq
\right) ^{op}$. In (\ref{Th-Poset-vs-sheaves-Bar-construction}) the
bar-construction for calculating of $\underleftarrow{\lim }^{\bullet }$ is
given and proved.

Theorem \ref{Th-Poset-vs-cosheaves} establishes deep connections between 
\textbf{co}sheaves on an $A$-space $X$, and functors from the corresponding
pre-ordered set $\left( X,\leq \right) $. In (\ref%
{Th-Poset-vs-cosheaves-categories-equivalent}) it is proved that the
corresponding categories (of functors and of \textbf{co}sheaves) are
equivalent. In (\ref{Th-Poset-vs-cosheaves-higher-colimits}) it is proved
that the \textbf{co}sheaf homology groups on $X$ are isomorphic to the
higher colimits $\underrightarrow{\lim }_{\bullet }$ (the \textbf{left}
satellites of the colimit functor $\underrightarrow{\lim }$) of functors
from $\left( X,\leq \right) $. In (\ref{Th-Poset-vs-sheaves-Bar-construction}%
) the bar-construction for calculating of $\underrightarrow{\lim }_{\bullet
} $ is given and proved.

In Theorem \ref{Th-Cosheaf-homology-for-A-spaces} it is proved that for an $%
A $-space and a $k$-module $G$ there are natural isomorphisms%
\begin{equation*}
H_{n}\left( X,G_{\#}\right) 
\simeq%
H_{n}^{sing}\left( \left\vert \mathcal{K}\left( X\right) \right\vert
,G\right) 
\simeq%
H_{n}^{sing}\left( X,G\right)
\end{equation*}%
where $\left\vert \mathcal{K}\left( X\right) \right\vert $ is the polyheder
corresponding to $X$. That theorem proves \cite[Conjecture 1.0.5(1)]%
{Prasolov-Cosheaves-2021-MR4347662}.

In Section \ref{Sec-classical-cosheaves} we give some examples of
\textquotedblleft bad\textquotedblright\ behavior of classical cosheaves
(with values in sets and groups \textbf{instead} of \textbf{pro}-sets and 
\textbf{pro}-groups). The author is skeptical to the prospects of building
an appropriate theory for such (\textbf{rudimentary}) cosheaves. We believe
that the only way is building such a theory for pro-sets (pro-groups,
pro-modules), hoping that sometimes the theory produces \textbf{rudimentary}
pro-objects or (pre)cosheaves. In that case one may apply Proposition \ref%
{Prop-Rudimentary-precosheave}.

The category of cosheaves with values in $\mathbf{Pro}\left( \mathbf{Ab}%
\right) $ is abelian. The key ingredient in the proof of that fact in \cite[%
Theorem 3.3.1(1)]{Prasolov-Cosheaves-2021-MR4347662} was the \textbf{%
exactness} of the \textbf{co}sheafification functor $\left( \bullet \right)
_{\#}$. For classical cosheaves, that functor is \textbf{not} in general
exact, see Example \ref{Ex-Non-exactness-Set-valued} and \ref%
{Ex-Non-exactness-Ab-valued}. Therefore we conjecture that the classical
cosheaves do \textbf{not} form an abelian category, see Conjecture \ref%
{Conj-Classical-not-abelian} and Remark \ref{Rem-Classical-not-abelian}.

Several examples are given in Section \ref{Sec-Examples}.

The categories of modules $\mathbf{Mod}\left( k\right) $ and pro-modules $%
\mathbf{Pro}\left( k\right) $ are very important in this paper. For our
purposes, it has to be assumed that the ring $k$ is \textbf{quasi-noetherian}%
. It was proved earlier in \cite[Proposition 2.28]%
{Prasolov-universal-coefficients-formula-2013-MR3095217} that any noetherian
ring is quasi-noetherian. It happens, however, that the two classes
(noetherian and quasi-noetherian rings) coincide, see Theorem \ref%
{Th-(Quasi-)noetherian}.

In Section \ref{Sec-Derived-categories} we give several useful facts about
derived categories:

\begin{enumerate}
\item Left satellites, see Proposition \ref{Prop-Left-derived}.

\item Cartan-Eilenberg resolutions, see Proposition \ref%
{Prop-Cartan-Eilenberg}.

\item The Grothendieck spectral sequence, see Theorem \ref%
{Th-Grothendieck-spectral-sequence}.
\end{enumerate}

\section{Preliminaries}

\subsection{Categories}

Below are several notations, definitions and facts about categories that are
used in this paper.

\begin{notation}
\label{Not-Categories}~

\begin{enumerate}
\item Let $\mathbf{C}$ be a category.

\begin{enumerate}
\item $Ob\left( \mathbf{C}\right) $ is the \textbf{class} (the \textbf{set},
if the category is \textbf{small}) of objects of $\mathbf{C}$.

\item $Mor\left( \mathbf{C}\right) $ is the \textbf{class} (the \textbf{set}%
, if the category is \textbf{small}) of morphisms of $\mathbf{C}$.

\item We will simply write $X\in \mathbf{C}$ if $X$ is an object of $\mathbf{%
C}$ ($X\in Ob\left( \mathbf{C}\right) $).
\end{enumerate}

\item ~

\begin{enumerate}
\item Denote by $\mathbf{Set}$ ($\mathbf{Set}_{\ast }$), $\mathbf{Top}$ ($%
\mathbf{Top}_{\ast }$), $\mathbf{Pol}$ ($\mathbf{Pol}_{\ast }$), $\mathbf{%
Group}$ and $\mathbf{Ab}$ the categories of (pointed) sets, (pointed)
topological spaces, (pointed) polyhedra, groups and abelian groups,
respectively.

\item Denote by $H\left( \mathbf{Top}\right) $, $H\left( \mathbf{Top}_{\ast
}\right) $, $H\left( \mathbf{Pol}\right) $, and $H\left( \mathbf{Pol}_{\ast
}\right) $ the corresponding homotopy categories.
\end{enumerate}

\item For categories $\mathbf{C}$ and $\mathbf{D}$, let $\mathbf{D}^{\mathbf{%
C}}$ be the category of functors $\mathbf{C}\rightarrow \mathbf{D}$.

\begin{enumerate}
\item If both $\mathbf{C}$ and $\mathbf{D}$ are small, then $\mathbf{D}^{%
\mathbf{C}}$ is a small category.

\item If $\mathbf{C}$ is small, then $\mathbf{D}^{\mathbf{C}}$ is a (\textbf{%
large}) category. For $F,G\in \mathbf{D}^{\mathbf{C}}$, $%
\Hom%
_{\mathbf{D}^{\mathbf{C}}}\left( F,G\right) $ is a \textbf{set}.

\item Strictly speaking, $\mathbf{D}^{\mathbf{C}}$ is in general not a
category, because $%
\Hom%
_{\mathbf{D}^{\mathbf{C}}}\left( F,G\right) $ needs not be a set. Let us
call such \textquotedblleft categories\textquotedblright\ \textbf{huge}.
\end{enumerate}

\item A \textbf{diagram} in $\mathbf{C}$ is a functor $D:\mathbf{%
I\rightarrow C}$ where the \textbf{index category} $\mathbf{I}$ is always
assumed to be \textbf{small}.

\item Limits (inverse/projective limits) will be denoted by $\underleftarrow{%
\lim }$, while colimits (direct/inductive limits) will be denoted by $%
\underrightarrow{\lim }$.

\item \label{Def-(Co)-complete}\label{Not-Categories-(Co)-complete}A
category $\mathbf{C}$ is called \textbf{(co)complete} iff it admits all
small (co)limits $\underleftarrow{\lim }$ ($\underrightarrow{\lim }$). This
implies that $\mathbf{C}$ contains a terminal (initial) object as a
(co)limit of an empty diagram.

\item \label{Not-Categories-(Co)-continuous}Let $F:\mathbf{C}\rightarrow 
\mathbf{D}$ be a functor.

\begin{enumerate}
\item $F$ is \textbf{(co)continuous} iff it preserves small (co)limits $%
\underleftarrow{\lim }$ ($\underrightarrow{\lim }$). Equivalently, iff $F$
preserves \textbf{(co)kernels} and small \textbf{(co)products}.

\item $F$ is left (right) \textbf{exact} iff it preserves \textbf{finite}
(co)limits $\underleftarrow{\lim }$ ($\underrightarrow{\lim }$).
Equivalently, iff $F$ preserves (co)kernels and \textbf{finite} (co)products.

\item $F$ is \textbf{exact} iff it is \textbf{both} left and right exact.
\end{enumerate}

\item \label{Not-Categories-Yoneda}The Yoneda functors will be denoted $%
h_{\bullet }$ and $h^{\bullet }$. If $X\in \mathbf{C}$ then:

\begin{enumerate}
\item $h_{X}%
{:=}%
\Hom%
_{\mathbf{C}}\left( \bullet ,X\right) \in \mathbf{Set}^{\mathbf{C}^{op}}$.

\item $h^{X}%
{:=}%
\Hom%
_{\mathbf{C}}\left( X,\bullet \right) \in \mathbf{Set}^{\mathbf{C}}$.

\item We will also consider the third Yoneda functor%
\begin{equation*}
\left( h^{\bullet }\right) ^{op}:\mathbf{C=}\left( \mathbf{C}^{op}\right)
^{op}\rightarrow \left( \mathbf{Set}^{\mathbf{C}}\right) ^{op}.
\end{equation*}

\item $h_{\bullet }$, $h^{\bullet }$ and $\left( h^{\bullet }\right) ^{op}$
are full embeddings due to Yoneda's lemma.
\end{enumerate}

\item \label{Not-Categories-(co)filtered}A category $\mathbf{I}$ is called 
\textbf{filtered} iff:

\begin{enumerate}
\item It is not empty.

\item For every two objects $i,j\in \mathbf{I}$ there exists an object $k$
and two morphisms $i\rightarrow k$, $j\rightarrow k$.

\item For every two parallel morphisms $u,v:i\rightarrow j$ there exists an
object $k$ and a morphism $w:j\longrightarrow k$, such that $w\circ u=w\circ
v$.
\end{enumerate}

\item A category $\mathbf{I}$ is called \textbf{cofiltered} if $\mathbf{I}%
^{op}$ is filtered. A diagram $D:\mathbf{I\rightarrow C}$ is called
(co)filtered if $\mathbf{I}$ is a (co)filtered category.

\item \label{Not-Categories-Category-for-poset}For any poset (and even a 
\textbf{pre}-ordered set) $\left( I,\leq \right) $ there corresponds the
category $\mathbf{I}$ with $Ob\left( \mathbf{I}\right) =I$ and $%
\Hom%
_{\mathbf{I}}\left( i,j\right) $ is either a singleton (if $i\leq j$) or
empty (otherwise).

\item A poset $\left( I,\leq \right) $ is called \textbf{(co)directed} iff
the corresponding category $\mathbf{I}$ is (co)filtered.

\item See, e.g., \cite[Chapter IX.1]{Mac-Lane-Categories-1998-MR1712872} for
filtered, and \cite[Chapter I.1.4]{Mardesic-Segal-MR676973} for cofiltered
categories. In \cite{Kashiwara-Categories-MR2182076}, such categories and
diagrams are called \textbf{(co)filtrant}.

\item Let $\varphi :\mathbf{C\rightarrow D}$ be a functor, and let $d\in 
\mathbf{D}$.

\begin{enumerate}
\item The \textbf{comma-category} $\varphi \downarrow d$ is defined as
follows:%
\begin{eqnarray*}
&&Ob\left( \varphi \downarrow d\right) 
{:=}%
\left\{ \left( \varphi \left( c\right) \rightarrow d\right) \in 
\Hom%
_{\mathbf{D}}\left( \varphi \left( c\right) ,d\right) \right\} , \\
&&%
\Hom%
_{\varphi \downarrow d}\left( \left( \alpha _{1}:\varphi \left( c_{1}\right)
\rightarrow d\right) ,\left( \alpha _{2}:\varphi \left( c_{2}\right)
\rightarrow d\right) \right) 
{:=}%
\left\{ \beta :c_{1}\rightarrow c_{2}~|~\alpha _{2}\circ \varphi \left(
\beta \right) =\alpha _{1}\right\} .
\end{eqnarray*}

\item Another \textbf{comma-category} $d\downarrow \varphi =\left( \varphi
^{op}\downarrow d\right) ^{op}$ is defined as follows:%
\begin{eqnarray*}
&&Ob\left( d\downarrow \varphi \right) 
{:=}%
\left\{ \left( d\rightarrow \varphi \left( c\right) \right) \in 
\Hom%
_{\mathbf{D}}\left( d,\varphi \left( c\right) \right) \right\} , \\
&&%
\Hom%
_{\varphi \downarrow d}\left( \left( \alpha _{1}:d\rightarrow \varphi \left(
c_{1}\right) \right) ,\left( \alpha _{2}:d\rightarrow \varphi \left(
c_{2}\right) \right) \right) 
{:=}%
\left\{ \beta :c_{1}\rightarrow c_{2}~|~\varphi \left( \beta \right) \circ
\alpha _{1}=\alpha _{2}\right\} .
\end{eqnarray*}
\end{enumerate}

\item \label{Not-Categories-(co)cofinal}\label{Def-Co-cofinal}See \cite[%
Definition 2.5.1]{Kashiwara-Categories-MR2182076}.

\begin{enumerate}
\item A functor $\varphi :\mathbf{J\longrightarrow I}$ is \textbf{cofinal}
if the comma-category $i\downarrow \varphi $ is \textbf{connected} \cite[p.
13]{Kashiwara-Categories-MR2182076} for any $i\in \mathbf{I}$.

\item A functor $\varphi :\mathbf{J\longrightarrow I}$ is \textbf{co-cofinal}
if $\varphi ^{op}:\mathbf{J}^{op}\longrightarrow \mathbf{I}^{op}$ is
cofinal, that is, if the comma-category $\varphi \downarrow i$ is connected
for any $i\in \mathbf{I}$.
\end{enumerate}

\item \label{Not-Categories-adjoint}~

\begin{enumerate}
\item Denote by $F\dashv G$ a pair of adjoint functors (\textbf{left}
adjoint $F$ and \textbf{right} adjoint $G$)%
\begin{equation*}
\left( F:\mathbf{C}\longrightarrow \mathbf{D,}G:\mathbf{D}\longrightarrow 
\mathbf{C}\right)
\end{equation*}%
such that there exists an isomorphism of functors%
\begin{equation*}
\Hom%
_{\mathbf{D}}\left( F\left( \bullet \right) ,\bullet \right) 
\simeq%
\Hom%
_{\mathbf{C}}\left( \bullet ,G\left( \bullet \right) \right) :\mathbf{C}%
^{op}\times \mathbf{D}\longrightarrow \mathbf{Set.}
\end{equation*}

\item \label{Not-Categories-(co)reflective}\label{Def-(co)reflective}A
subcategory $\mathbf{C\subseteq D}$ is called \textbf{reflective}
(respectively \textbf{coreflective}) iff the embedding $\mathbf{%
C\hookrightarrow D}$ is a right (respectively left) adjoint. The left
(respectively right) adjoint $\mathbf{D\rightarrow C}$ is called a \textbf{%
reflection} (respectively \textbf{coreflection}).
\end{enumerate}

\item \label{Def-Quarrable}A morphism $V\rightarrow U$ in a category $%
\mathbf{C}$ is called \textbf{base-changeable} (\textquotedblleft
quarrable\textquotedblright\ in (\cite[Def. II.1.3]{SGA4-1-MR0354652}), iff
for every other morphism $U^{\prime }\rightarrow U$ the fiber product $%
V\times _{U}U^{\prime }$ exists.

\item \label{Def-Kan-extensions}Let $\mathbf{I}$ and $\mathbf{J}$ be small
categories and let $\mathbf{C}$ be an arbitrary category.

\begin{enumerate}
\item \label{Def-Kan-extensions-direct}For $\varphi :\mathbf{J\rightarrow I}$
denote by $\varphi _{\ast }$ the following functor:%
\begin{equation*}
\varphi _{\ast }:\mathbf{C}^{\mathbf{I}}\longrightarrow \mathbf{C}^{\mathbf{J%
}}~\left( \varphi _{\ast }\left( f\right) 
{:=}%
f\circ \varphi \right) ,
\end{equation*}%
where $f:\mathbf{I\rightarrow C}$ is an arbitrary diagram.

\item \label{Def-Kan-extensions-left}The following \textbf{left} adjoint ($%
\varphi ^{\dag }\dashv \varphi _{\ast }$) $\varphi ^{\dag }:\mathbf{C}^{%
\mathbf{J}}\rightarrow \mathbf{C}^{\mathbf{I}}$ to $\varphi _{\ast }$ (if
exists!) is called the \textbf{left} Kan extension of $\varphi $. See \cite[%
Definition 2.3.1(ii)]{Kashiwara-Categories-MR2182076}. Assume that $\beta
^{\prime }\left( i\right) 
{:=}%
\underrightarrow{\lim }_{\left( \varphi \left( j\right) \rightarrow i\right)
\in \varphi \downarrow i}\beta \left( j\right) $ exists in $\mathbf{C}$ for
any $i\in \mathbf{I}$. Then $\varphi ^{\dag }\beta $ exists, and we have $%
\varphi ^{\dag }\beta \left( i\right) =\beta ^{\prime }\left( i\right) $ for 
$i\in \mathbf{I}$. See \cite[Theorem 2.3.3(i)]%
{Kashiwara-Categories-MR2182076}.

\item \label{Def-Kan-extensions-right}The following \textbf{right} adjoint ($%
\varphi _{\ast }\dashv \varphi ^{\ddag }$) $\varphi ^{\ddag }:\mathbf{C}^{%
\mathbf{J}}\rightarrow \mathbf{C}^{\mathbf{I}}$ to $\varphi _{\ast }$ (if
exists!) is called the \textbf{right} Kan extension of $\varphi $. See \cite[%
Definition 2.3.1(iii)]{Kashiwara-Categories-MR2182076}. Assume that $\beta
^{\prime \prime }\left( i\right) 
{:=}%
\underleftarrow{\lim }_{\left( i\rightarrow \varphi \left( j\right) \right)
\in i\downarrow \varphi }\beta \left( j\right) $ exists in $\mathbf{C}$ for
any $i\in \mathbf{I}$. Then $\varphi ^{\ddag }\beta $ exists, and we have $%
\varphi ^{\ddag }\beta \left( i\right) =\beta ^{\prime \prime }\left(
i\right) $ for $i\in \mathbf{I}$. See \cite[Theorem 2.3.3(ii)]%
{Kashiwara-Categories-MR2182076}.
\end{enumerate}
\end{enumerate}
\end{notation}

\subsection{Pro-objects}

The main reference is \cite[Chapter 6]{Kashiwara-Categories-MR2182076} where
the $\mathbf{Ind}$-objects are considered. The $\mathbf{Pro}$-objects used
in this paper are dual to the $\mathbf{Ind}$-objects%
\begin{equation*}
\mathbf{Pro}\left( \mathbf{C}\right) 
\simeq%
\left( \mathbf{Ind}\left( \mathbf{C}^{op}\right) \right) ^{op}.
\end{equation*}%
See \cite[the remark after Lemma 6.1.2]{Kashiwara-Categories-MR2182076}.

\begin{definition}
\label{Def-Pro-category}\label{Def-Pro-C}Let $\mathbf{K}$ be a category. The
pro-category $\mathbf{Pro}\left( \mathbf{K}\right) $ (see \cite[Definition
6.1.1]{Kashiwara-Categories-MR2182076}, \cite[Remark I.1.4]%
{Mardesic-Segal-MR676973}, or \cite[Appendix]{Artin-Mazur-MR883959}) is the
full subcategory of $\left( \mathbf{Set}^{\mathbf{K}}\right) ^{op}$
consisting of functors that are cofiltered limits of representable functors,
i.e., limits of diagrams of the form%
\begin{equation*}
\mathbf{I}\overset{\mathbf{X}}{\longrightarrow }\mathbf{K}\overset{\left(
h^{\bullet }\right) ^{op}}{\longrightarrow }\left( \mathbf{Set}^{\mathbf{K}%
}\right) ^{op}
\end{equation*}%
where $\mathbf{I}$ is a cofiltered category, $\mathbf{X}:\mathbf{I}%
\rightarrow \mathbf{K}$ is a diagram, and $\left( h^{\bullet }\right) ^{op}$
is the third Yoneda embedding. We will simply denote such diagrams by $%
\mathbf{X}=\left( X_{i}\right) _{i\in \mathbf{I}}$.
\end{definition}

\begin{remark}
\label{Rem-Pro-objects-morphisms}See \cite[Lemma 6.1.2 and formula (2.6.4)]%
{Kashiwara-Categories-MR2182076}:

\begin{enumerate}
\item \textbf{Co}filtered \textbf{limits} in the category $\left( \mathbf{Set%
}^{\mathbf{K}}\right) ^{op}$ are equivalent to filtered \textbf{co}limits in 
$\mathbf{Set}^{\mathbf{K}}.$

\item Let two pro-objects be defined by the diagrams $\mathbf{X}=\left(
X_{i}\right) _{i\in \mathbf{I}}$ and $\mathbf{Y}=\left( Y_{j}\right) _{j\in 
\mathbf{J}}$. Then%
\begin{equation*}
\Hom%
_{\mathbf{Pro}\left( \mathbf{K}\right) }\left( \mathbf{X},\mathbf{Y}\right)
=~\underset{j\in \mathbf{J}}{\underleftarrow{\lim }}~\underset{i\in \mathbf{I%
}}{\underrightarrow{\lim }}~%
\Hom%
_{\mathbf{K}}\left( X_{i},Y_{j}\right) .
\end{equation*}

\item $\mathbf{Pro}\left( \mathbf{K}\right) $ is indeed a category even
though $\left( \mathbf{Set}^{\mathbf{K}}\right) ^{op}$ is a
\textquotedblleft huge\textquotedblright\ category: $%
\Hom%
_{\mathbf{Pro}\left( \mathbf{K}\right) }\left( \mathbf{X},\mathbf{Y}\right) $
is a \textbf{set} for any $\mathbf{X}$ and $\mathbf{Y}$.
\end{enumerate}
\end{remark}

\begin{remark}
\label{Rem-Trivial-pro-object}\label{Rem-Rudimentary}The category $\mathbf{K}
$ is a full subcategory of $\mathbf{Pro}\left( \mathbf{K}\right) $: any
object $X\in \mathbf{K}$ gives rise to the singleton%
\begin{equation*}
\left( X\right) \in \mathbf{Pro}\left( \mathbf{K}\right)
\end{equation*}%
with a trivial index category $\mathbf{I=}\left( \left\{ i\right\} ,\mathbf{1%
}_{i}\right) $. A pro-object $\mathbf{X}$ is called \textbf{rudimentary} 
\cite[\S I.1.1]{Mardesic-Segal-MR676973} iff it is isomorphic to an object
of $\mathbf{K}$:%
\begin{equation*}
\mathbf{X}%
\simeq%
Z\in \mathbf{K}\subseteq \mathbf{Pro}\left( \mathbf{K}\right) .
\end{equation*}
\end{remark}

Below is the list of several useful facts about pro-categories. See \cite[%
Chapter 6]{Kashiwara-Categories-MR2182076}, \cite[\S I.1 and \S II.2]%
{Mardesic-Segal-MR676973}, \cite[Appendix]{Artin-Mazur-MR883959}, and \cite[%
Section 2.1]{Prasolov-Cosheaves-2021-MR4347662}.

\begin{proposition}
\label{Prop-Facts-pro-objects}Let $\mathbf{X=}\left( X_{i}\right) _{i\in 
\mathbf{I}}\in \mathbf{Pro}\left( \mathbf{K}\right) $.

\begin{enumerate}
\item \label{Prop-Facts-pro-objects-Co-cofinal}Let $\mathbf{J}$ be a
cofiltered (small!) index category, and $\varphi $:$~\mathbf{J\rightarrow I}$
a co-cofinal functor. Then%
\begin{equation*}
\mathbf{X}%
\simeq%
\mathbf{X}\circ \varphi 
{:=}%
\left( X_{\varphi \left( j\right) }\right) _{j\in \mathbf{J}}.
\end{equation*}

\item \label{Prop-Facts-pro-objects-Cofinite}There exists a \textbf{directed}
(\textbf{not} \textbf{co}directed, due to the notations in the cited book
below!) poset $\left( M,\leq \right) $ and a co-cofinal functor $\mathbf{M}%
^{op}\rightarrow \mathbf{I}$ \cite[Theorem I.1.4]{Mardesic-Segal-MR676973}.
Moreover, the poset $M$ could be made \textbf{cofinite}, i.e., such that for
any $m\in M$, the set $\left\{ x\in M~|~x\leq m\right\} $ is finite.

\item \label{Prop-Facts-pro-objects-Mor(K)-finite}If $Mor\left( \mathbf{I}%
\right) $ is finite, there exists an initial object $i\in \mathbf{I}$, and $%
\mathbf{X}$ is rudimentary: $\mathbf{X}%
\simeq%
X_{i}\in \mathbf{K}$.

\item \label{Prop-Facts-pro-objects-Mor(K)-countable}If $Mor\left( \mathbf{I}%
\right) $ is infinite countable, then $\mathbf{X}$ is either rudimentary, or 
$\mathbf{X}%
\simeq%
\mathbf{Y}$ for some \textbf{tower} $\mathbf{Y}$:%
\begin{equation*}
\mathbf{Y=}\left( Y_{0}\overset{\rho _{0}}{\longleftarrow }Y_{1}\overset{%
\rho _{1}}{\longleftarrow }Y_{2}\overset{\rho _{2}}{\longleftarrow }\dots 
\overset{\rho _{i}}{\longleftarrow }Y_{i}\overset{\rho _{i+1}}{%
\longleftarrow }\dots \right) .
\end{equation*}%
In fact, towers could also be rudimentary when, say, all $\rho _{i}$ (or all 
$\rho _{i}$ with $i\geq i_{0}$, for some $i_{0}$) are isomorphisms.

\item \label{Prop-Facts-pro-objects-Mor(K)-UNcountable}For uncountable index
posets, see, e.g., \cite[Section 14, especially Lemma 14.15]%
{Mardesic-Strong-shape-and-homology-MR1740831}.

\item \label{Prop-Facts-pro-objects-Diagrams-levelized}The following
diagrams in $\mathbf{Pro}\left( \mathbf{K}\right) $ may be \textquotedblleft
levelized\textquotedblright\ \cite[Definition 2.1.10(3)]%
{Prasolov-Cosheaves-2021-MR4347662}.

\begin{enumerate}
\item \label{Prop-Facts-pro-objects-Diagrams-without-loops}All finite
diagrams without loops \cite[dual to Theorem 6.4.3]%
{Kashiwara-Categories-MR2182076}.

\item \label{Prop-Facts-pro-objects-Diagrams-One-morphism}One morphism \cite[%
dual to Corollary 6.1.14]{Kashiwara-Categories-MR2182076}.

\item \label{Prop-Facts-pro-objects-Diagrams-Pair-of-morphisms}A pair of
parallel morphisms \cite[dual to Corollary 6.1.15]%
{Kashiwara-Categories-MR2182076}.
\end{enumerate}

\item \label{Prop-Facts-pro-objects-admits-(co)limits}$~$

\begin{enumerate}
\item \label{Prop-Facts-pro-objects-admits-cofiltered-limits}$\mathbf{Pro}%
\left( \mathbf{K}\right) $ admits small \textbf{co}filtered limits $%
\underleftarrow{\lim }$ \cite[dual to Theorem 6.1.8]%
{Kashiwara-Categories-MR2182076}. See Example \ref{Ex-Limits-in-Pro(K)}(\ref%
{Ex-Limits-in-Pro(K)-cofiltered-limits}).

\item \label{Prop-Facts-pro-objects-admits-kernels}$\mathbf{Pro}\left( 
\mathbf{K}\right) $ admits kernels if $\mathbf{K}$ admits kernels \cite[dual
to Proposition 6.1.18(i)]{Kashiwara-Categories-MR2182076} See Example \ref%
{Ex-Limits-in-Pro(K)}(\ref{Ex-Limits-in-Pro(K)-kernels}).

\item \label{Prop-Facts-pro-objects-admits-products}$\mathbf{Pro}\left( 
\mathbf{K}\right) $ admits \textbf{small} products if $\mathbf{K}$ admits 
\textbf{finite} products \cite[dual to Proposition 6.1.18(ii)]%
{Kashiwara-Categories-MR2182076}. See Example \ref{Ex-Limits-in-Pro(K)}(\ref%
{Ex-Limits-in-Pro(K)-products}).

\item \label{Prop-Facts-pro-objects-admits-small-limits}$\mathbf{Pro}\left( 
\mathbf{K}\right) $ admits \textbf{small} limits $\underleftarrow{\lim }$ if 
$\mathbf{K}$ admits \textbf{finite} limits $\underleftarrow{\lim }$ \cite[%
dual to Proposition 6.1.18(iii)]{Kashiwara-Categories-MR2182076}.

\item \label{Prop-Facts-pro-objects-admits-cokernels}$\mathbf{Pro}\left( 
\mathbf{K}\right) $ admits cokernels if $\mathbf{K}$ admits cokernels \cite[%
dual to Proposition 6.1.16(i)]{Kashiwara-Categories-MR2182076}. See Example %
\ref{Ex-Colimits-in-Pro(K)}(\ref{Ex-Colimits-in-Pro(K)-cokernels}).

\item \label{Prop-Facts-pro-objects-admits-small-coproducts}$\mathbf{Pro}%
\left( \mathbf{K}\right) $ admits small coproducts if $\mathbf{K}$ admits
small coproducts \cite[dual to Proposition 6.1.16(ii)]%
{Kashiwara-Categories-MR2182076}. See Example \ref{Ex-Colimits-in-Pro(K)}(%
\ref{Ex-Colimits-in-Pro(K)-coproducts}).

\item \label{Prop-Facts-pro-objects-admits-small-colimits}$\mathbf{Pro}%
\left( \mathbf{K}\right) $ admits finite (small) colimits $\underrightarrow{%
\lim }$ if $\mathbf{K}$ admits finite (small) colimits $\underrightarrow{%
\lim }$ \cite[dual to Corollary 6.1.17]{Kashiwara-Categories-MR2182076}. See
Example \ref{Ex-Colimits-in-Pro(K)}( \ref%
{Ex-Colimits-in-Pro(K)-small-colimits}).
\end{enumerate}

\item \label{Prop-Facts-pro-objects-cofltered-limits-exact}Assume that $%
\mathbf{K}$ admits \textbf{finite} limits $\underleftarrow{\lim }$ and 
\textbf{finite} \textbf{co}limits $\underrightarrow{\lim }$. Then \textbf{%
small} \textbf{co}filtered limits $\underleftarrow{\lim }$ are \textbf{exact}
in $\mathbf{Pro}\left( \mathbf{C}\right) $ \cite[dual to Proposition 6.1.19]%
{Kashiwara-Categories-MR2182076}.

\item \label{Prop-Facts-pro-objects-Embedding}Let $\iota _{\mathbf{K}}:%
\mathbf{K}\rightarrow \mathbf{Pro}\left( \mathbf{K}\right) $ be the full
embedding from Remark \ref{Rem-Rudimentary}, i.e., $\iota _{\mathbf{K}%
}\left( X\right) 
{:=}%
$the singleton $\left( X\right) \in \mathbf{Pro}\left( \mathbf{K}\right) $.

\begin{enumerate}
\item \label{Prop-Facts-pro-objects-Embedding-cocontinuous}$\iota _{\mathbf{K%
}}$ is \textbf{co}continuous \cite[dual to Corollary 6.1.17(ii)]%
{Kashiwara-Categories-MR2182076}, therefore \textbf{right} exact.

\item \label{Prop-Facts-pro-objects-Embedding-NOT-continuous}$\iota _{%
\mathbf{K}}$ is \textbf{not} in general continuous. See Example \ref%
{Ex-Limits-in-Pro(K)}.

\item \label{Prop-Facts-pro-objects-Embedding-exact}$\iota _{\mathbf{K}}$ is 
\textbf{left }exact \cite[dual to Corollary 6.1.17(i)]%
{Kashiwara-Categories-MR2182076}, therefore \textbf{exact}.

\item \label{Prop-Facts-pro-objects-Embedding-Coreflective}See \cite[dual to
Proposition 6.3.1]{Kashiwara-Categories-MR2182076}: assume that $\mathbf{K}$
admits \textbf{small} \textbf{co}filtered limits $\underleftarrow{\lim }$.

\begin{enumerate}
\item There exists $\sigma _{\mathbf{K}}\left( =\underleftarrow{\lim }%
\right) \mathbf{:Pro}\left( \mathbf{K}\right) \rightarrow \mathbf{K}\ $such
that $\iota _{\mathbf{K}}\dashv \sigma _{\mathbf{K}}$.

\item We have $\sigma _{\mathbf{K}}\circ \iota _{\mathbf{C}}%
\simeq%
\mathbf{1}_{\mathbf{K}}$. In other words, $\mathbf{K}$ is a \textbf{co}%
reflective subcategory of $\mathbf{Pro}\left( \mathbf{K}\right) $.
\end{enumerate}
\end{enumerate}

\item If $\mathbf{K}$ is abelian, $\mathbf{Pro}\left( \mathbf{K}\right) $ is
abelian as well \cite[dual to Theorem 8.6.5(i)]%
{Kashiwara-Categories-MR2182076}.
\end{enumerate}
\end{proposition}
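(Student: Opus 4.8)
The plan is to reduce every assertion to the corresponding statement about $\mathbf{Ind}$-objects in \cite[Chapter 6]{Kashiwara-Categories-MR2182076}, carried across the identification $\mathbf{Pro}(\mathbf{K})\simeq(\mathbf{Ind}(\mathbf{K}^{op}))^{op}$ recorded just before the proposition, together with the reindexing theorems of \cite[\S I.1]{Mardesic-Segal-MR676973}. Item \ref{Prop-Facts-pro-objects-Mor(K)-UNcountable} is only a pointer to the literature and contains nothing to prove.

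First I would fix the duality dictionary once and for all: passing from $\mathbf{K}$ to $\mathbf{K}^{op}$, hence from $\mathbf{Pro}$ to $\mathbf{Ind}$, interchanges cofiltered and filtered categories, cofiltered limits $\underleftarrow{\lim}$ and filtered colimits $\underrightarrow{\lim}$, kernels and cokernels, (finite or small) products and the corresponding coproducts, left and right exactness, co-cofinal and cofinal functors (Notation \ref{Def-Co-cofinal}), and rudimentary pro-objects and rudimentary ind-objects (Remark \ref{Rem-Rudimentary}). With this in hand, the co-cofinality invariance \ref{Prop-Facts-pro-objects-Co-cofinal}, the levelization statements \ref{Prop-Facts-pro-objects-Diagrams-levelized}, all of the existence-of-(co)limits claims \ref{Prop-Facts-pro-objects-admits-(co)limits}, the exactness of small cofiltered limits \ref{Prop-Facts-pro-objects-cofltered-limits-exact}, the properties of the full embedding $\iota_{\mathbf{K}}$ and of its right adjoint $\sigma_{\mathbf{K}}=\underleftarrow{\lim}$ \ref{Prop-Facts-pro-objects-Embedding}, and the abelianness of $\mathbf{Pro}(\mathbf{K})$ when $\mathbf{K}$ is abelian are in each case exactly the opposite of the cited Kashiwara result; for each I would merely spell out the translation and check the routine compatibility of the duality with the (co)limit construction involved. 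The cofinite-reindexing statement \ref{Prop-Facts-pro-objects-Cofinite} I would quote verbatim from \cite[Theorem I.1.4]{Mardesic-Segal-MR676973}, and the finite case \ref{Prop-Facts-pro-objects-Mor(K)-finite} is then elementary: a finite cofiltered (hence, after reindexing, directed) index category has a greatest element, so $\mathbf{X}$ is rudimentary.

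The one item demanding more than a translation is the countable case \ref{Prop-Facts-pro-objects-Mor(K)-countable}. Assuming $Mor(\mathbf{I})$ countably infinite, I would first use \ref{Prop-Facts-pro-objects-Cofinite} to replace $\mathbf{I}$ by a countable cofinite directed poset $M$; if $M$ has a greatest element then $\mathbf{X}$ is rudimentary as above, and otherwise the usual interleaving of an enumeration of $M$ with directedness produces a cofinal chain $m_{0}\le m_{1}\le m_{2}\le\dots$ in $M$; restriction along the inclusion of this chain is co-cofinal, so \ref{Prop-Facts-pro-objects-Co-cofinal} exhibits $\mathbf{X}$ as a tower $\mathbf{Y}$. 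The closing remark, that a tower whose bonding maps $\rho_{i}$ are eventually invertible is again rudimentary, is the elementary observation that such a tower is isomorphic in $\mathbf{Pro}(\mathbf{K})$ to the singleton $(Y_{i_{0}})$. The only mildly delicate point of the whole proof is this passage to a cofinal chain and the verification that restriction along it is co-cofinal; everything else is the bookkeeping of transporting Kashiwara's $\mathbf{Ind}$-results through the duality, so I anticipate no genuine obstacle, the proposition being a compilation of standard facts gathered for later use.
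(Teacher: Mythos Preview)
Your proposal is correct and matches the paper's approach exactly: the paper gives no separate proof for this proposition, treating it as a list of facts with the proofs residing in the inline citations to \cite{Kashiwara-Categories-MR2182076}, \cite{Mardesic-Segal-MR676973}, \cite{Artin-Mazur-MR883959}, and \cite{Prasolov-Cosheaves-2021-MR4347662}. Your write-up is in fact more explicit than the paper's, especially for item~\ref{Prop-Facts-pro-objects-Mor(K)-countable}, where the paper offers no argument at all; one small point is that for item~\ref{Prop-Facts-pro-objects-Mor(K)-finite} your reindexing route yields the rudimentary conclusion but not directly the existence of an initial object in $\mathbf{I}$ itself---that is a separate (standard) fact about finite cofiltered categories which you may want to mention.
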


\subsection{Pro-modules}

We fix a commutative ring $k$, and let $\mathbf{Mod}\left( k\right) $ be the
category of $k$-modules. We will use a shorter notation $\mathbf{Pro}\left(
k\right) $ for the category $\mathbf{Pro}\left( \mathbf{Mod}\left( k\right)
\right) $ of pro-modules over $k$. Clearly the following categories are
equivalent:%
\begin{eqnarray*}
&&\mathbf{Mod}\left( \mathbb{Z}\right) 
\simeq%
\mathbf{Ab}, \\
&&\mathbf{Pro}\left( \mathbb{Z}\right) 
\simeq%
\mathbf{Pro}\left( \mathbf{Ab}\right) .
\end{eqnarray*}%
The category $\mathbf{Pro}\left( k\right) $ is abelian, and satisfies the $%
AB5^{\ast }$ axiom (\cite[1.5]{Grothendieck-Tohoku-1957-MR0102537}, \cite[%
Ch. 5.8]{Bucur-Deleanu-1968-Introduction-categories-functors-MR0236236}):
cofiltered limits are exact in the category $\mathbf{Pro}\left( k\right) $.
It satisfies also $AB4$, but not $AB5$. See \cite[Proposition A.2.8 (1,10,12)%
]{Prasolov-Cosheaves-2021-MR4347662}. Unfortunately, $\mathbf{Pro}\left(
k\right) $ does \textbf{not} have enough projectives. Fortunately, it 
\textbf{does} have enough \textbf{quasi-projectives} \cite[Proposition
A.2.8(5)]{Prasolov-Cosheaves-2021-MR4347662}.

\begin{definition}
\label{Def-quasi-projective}\cite[dual to Definition 15.2.1]%
{Kashiwara-Categories-MR2182076} A pro-module $\mathbf{P}\in \mathbf{Pro}%
\left( k\right) $ is called \textbf{quasi-projective} if the functor%
\begin{equation*}
\Hom%
_{\mathbf{Pro}\left( k\right) }\left( \mathbf{P},\bullet \right) :\mathbf{Mod%
}\left( k\right) \longrightarrow \mathbf{Mod}\left( k\right)
\end{equation*}%
is exact.
\end{definition}

\begin{proposition}
\label{Prop-quasi-projective}A pro-module $\mathbf{P}\in \mathbf{Pro}\left(
k\right) $ is quasi-projective iff it can be represented as%
\begin{equation*}
\mathbf{P}=\left( P_{i}\right) _{i\in I}
\end{equation*}%
where all $P_{i}$ are projective modules.
\end{proposition}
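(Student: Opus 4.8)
The plan is to prove the two implications separately; the forward direction is the one that requires real work. \textbf{Sufficiency.} Suppose $\mathbf{P}=\left( P_{i}\right) _{i\in I}$ with every $P_{i}$ projective. For a $k$-module $M$, viewed through the exact embedding $\iota _{k}:\mathbf{Mod}\left( k\right) \hookrightarrow \mathbf{Pro}\left( k\right) $ as a rudimentary pro-object, the morphism formula of Remark \ref{Rem-Pro-objects-morphisms}(2) (the second index category being a point) gives
\[
\Hom_{\mathbf{Pro}\left( k\right) }\left( \mathbf{P},M\right) =\underset{i\in I}{\underrightarrow{\lim }}\,\Hom_{k}\left( P_{i},M\right) ,
\]
a \emph{filtered} colimit. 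Each $\Hom_{k}\left( P_{i},\bullet \right) $ is exact because $P_{i}$ is projective, and filtered colimits are exact in $\mathbf{Mod}\left( k\right) $ (axiom $AB5$); hence the composite functor $M\mapsto \Hom_{\mathbf{Pro}\left( k\right) }\left( \mathbf{P},M\right) $ is exact, i.e.\ $\mathbf{P}$ is quasi-projective.

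\textbf{Necessity.} Suppose $\mathbf{P}$ is quasi-projective. By Proposition \ref{Prop-Facts-pro-objects}(\ref{Prop-Facts-pro-objects-Cofinite}) we may write $\mathbf{P}=\left( P_{\mu }\right) _{\mu \in M}$ with $\left( M,\leq \right) $ a cofinite directed poset; let $p_{\mu \nu }:P_{\nu }\rightarrow P_{\mu }$ ($\mu \leq \nu $) be the bonding maps and $\pi _{\mu }:\mathbf{P}\rightarrow P_{\mu }$ the structure morphisms. Since $\iota _{k}$ is exact, $\Hom_{\mathbf{Pro}\left( k\right) }\left( \mathbf{P},\bullet \right) $ is automatically left exact on $\mathbf{Mod}\left( k\right) $, so quasi-projectivity means precisely that it takes epimorphisms of modules to epimorphisms. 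For each $\mu $ choose a free $k$-module $F_{\mu }$ and an epimorphism $\varepsilon _{\mu }:F_{\mu }\twoheadrightarrow P_{\mu }$ (say the canonical one on the underlying set, so that $\mu \mapsto F_{\mu }$ is functorial). Then $\pi _{\mu }$ lifts through $\varepsilon _{\mu }$ to a pro-morphism $\mathbf{P}\rightarrow F_{\mu }$, and unravelling this lift through the morphism formula yields an index $\lambda \left( \mu \right) \geq \mu $ and a $k$-linear map $s_{\mu }:P_{\lambda \left( \mu \right) }\rightarrow F_{\mu }$ with $\varepsilon _{\mu }\circ s_{\mu }=p_{\mu ,\lambda \left( \mu \right) }$. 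Thus every bonding map of the form $p_{\mu ,\lambda \left( \mu \right) }$ factors through the projective module $F_{\mu }$, and $s_{\mu }\circ \pi _{\lambda \left( \mu \right) }$ is an explicit lift of $\pi _{\mu }$ along $\varepsilon _{\mu }$.

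It then remains to organize the $F_{\mu }$ (with the maps $\varepsilon _{\mu }$ and $s_{\mu }$) into a pro-object isomorphic to $\mathbf{P}$. The prototype is the tower case $M=\mathbb{N}$: pick $\nu \left( 0\right) <\nu \left( 1\right) <\cdots $ with $\nu \left( n+1\right) \geq \lambda \left( \nu \left( n\right) \right) $, put $L_{n}=F_{\nu \left( n\right) }$ and take the bond $L_{n+1}\rightarrow L_{n}$ to be $s_{\nu \left( n\right) }\circ p_{\lambda \left( \nu \left( n\right) \right) ,\,\nu \left( n+1\right) }\circ \varepsilon _{\nu \left( n+1\right) }$; the families $\left( \varepsilon _{\nu \left( n\right) }\right) $ and $\left( s_{\nu \left( n\right) }\right) $ then define mutually inverse morphisms between $\mathbf{L}=\left( L_{n}\right) $ and $\mathbf{P}$, the verification coming down to the fact that a pro-morphism whose components are bonding maps is the identity. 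For a general cofinite directed $M$ one runs the same construction in parallel over $M$: since cofinite posets are well-founded, the indices $\lambda \left( \mu \right) $ and all comparison maps can be chosen by induction on $\mu $, the candidate bonds being $\theta _{\mu \mu ^{\prime }}=s_{\mu }\circ p_{\lambda \left( \mu \right) ,\mu ^{\prime }}\circ \varepsilon _{\mu ^{\prime }}$ for $\mu ^{\prime }$ sufficiently large, after which one passes to a co-cofinal reindexing (Proposition \ref{Prop-Facts-pro-objects}(\ref{Prop-Facts-pro-objects-Co-cofinal})) on which the $\theta $'s are strictly compatible. The main obstacle is exactly this combinatorial bookkeeping over the poset $M$ — arranging the parallel choices so that the $\theta _{\mu \mu ^{\prime }}$ compose correctly and the composites $\mathbf{L}\rightarrow \mathbf{P}\rightarrow \mathbf{L}$ and $\mathbf{P}\rightarrow \mathbf{L}\rightarrow \mathbf{P}$ reduce to the respective identities; sufficiency, by contrast, is immediate. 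Alternatively, the statement can be obtained by dualizing the analogous description of quasi-injective objects in an $\mathbf{Ind}$-category over a category with enough injectives, in the style of \cite[Ch.~15]{Kashiwara-Categories-MR2182076}.
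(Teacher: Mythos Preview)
The paper's own proof is a one-line citation: ``See \cite[dual to Proposition 15.2.3(ii)]{Kashiwara-Categories-MR2182076}.'' Your closing alternative is therefore \emph{exactly} the paper's argument, so your proposal is certainly correct.

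Your direct attempt goes further than the paper does. The sufficiency direction is clean and complete. For necessity, the key computation you set up actually works better than you indicate: once you have $\varepsilon_{\mu}\circ s_{\mu}=p_{\mu,\lambda(\mu)}$, the candidate bonds $\theta_{\mu\mu'}=s_{\mu}\circ p_{\lambda(\mu),\mu'}\circ\varepsilon_{\mu'}$ satisfy $\theta_{\mu\mu'}\circ\theta_{\mu'\mu''}=\theta_{\mu\mu''}$ automatically whenever all three are defined, since the middle $\varepsilon_{\mu'}\circ s_{\mu'}$ collapses to a bonding map. What does \emph{not} come for free is the reindexing you flag: you need a co-cofinal sub-poset $M'\subseteq M$ on which $\mu<\mu'$ in $M'$ forces $\mu'\geq\lambda(\mu)$ in $M$, and for a general cofinite directed $M$ such a sub-poset need not exist (and even when it does, $\theta_{\mu\mu}$ is typically not the identity, so one cannot simply keep the same object set). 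The fix in Kashiwara--Schapira's argument (dualized) is not a sub-poset but a genuinely new cofiltered index category built from the factorization data; your tower case is the special situation where a cofinal $\mathbb{N}$-chain suffices. So you have correctly isolated both the mechanism and the obstacle; the paper simply outsources the bookkeeping to the reference rather than carrying it out.
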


\begin{proof}
See \cite[dual to Proposition 15.2.3 (ii)]{Kashiwara-Categories-MR2182076}.
\end{proof}

In order to build a satisfactory homology theory for (pre)cosheaves (with
values in $\mathbf{Pro}\left( k\right) $), we have to reduce the class of
rings we are using: from now on, let $k$ be a \textbf{quasi-noetherian} ring
(Definition \ref{Def-quasi-noetherian}). After the reduction, the class is
still very large: it includes all \textbf{noetherian} rings \cite[%
Proposition 2.28]{Prasolov-universal-coefficients-formula-2013-MR3095217}.
It became clear, however, that the two classes \textbf{coincide}: a ring $k$
is quasi-noetherian \textbf{iff} it is noetherian (Theorem \ref%
{Th-(Quasi-)noetherian}).

\subsection{Sites}

\begin{definition}
\label{Def-Site}\cite[Chapter 3.1, p. 31]%
{Jardine-2015-Local-homotopy-theory-MR3309296} A Grothendieck site $X$ is a
pair $X=\left( \mathbf{C}_{X},Cov\left( X\right) \right) $ where $\mathbf{C}%
_{X}$ is a category, and%
\begin{equation*}
Cov\left( X\right) =\left( Cov\left( U\right) \right) _{U\in \mathbf{C}_{X}}
\end{equation*}%
is a set (or class) of \textbf{sieves}, i.e., subfunctors of $h_{U}$
satisfying:

\begin{enumerate}
\item (base change) If $\alpha :V\rightarrow U$ is a morphism in $\mathbf{C}%
_{X}$ and $R\in Cov\left( U\right) $, then%
\begin{equation*}
\left( h_{\alpha }\right) ^{-1}\left( R\right) \in Cov\left( V\right) .
\end{equation*}

\item (local character) Suppose that $R$,$R^{\prime }\subseteq h_{U}$ are
sieves over $U$, and $R$ is covering. Assume that%
\begin{equation*}
\left( h_{\alpha }\right) ^{-1}\left( R^{\prime }\right) \in Cov\left(
V\right)
\end{equation*}%
for any%
\begin{equation*}
\left( \alpha :V\longrightarrow U\right) \in R\left( V\right) .
\end{equation*}%
Then $R^{\prime }\in Cov\left( U\right) $.

\item $h_{U}\in Cov\left( U\right) $.
\end{enumerate}

The sieves from $Cov\left( X\right) $ are called \textbf{covering} sieves.
The site is called \textbf{small} iff $\mathbf{C}_{X}$ is a small category.
The class (or a set, if $X$ is small) $Cov\left( X\right) $ is called the 
\textbf{topology} on $X$.
\end{definition}

\begin{remark}
\label{Rem-Only-small-sites}\label{Rem-Extra-axiom-Kashiwara}~

\begin{enumerate}
\item We will consider only \textbf{small} sites in this paper.

\item \cite[Definition 16.1.2]{Kashiwara-Categories-MR2182076} includes also
the following condition in the definition of a site:
\end{enumerate}

(GT2) If $R_{1}\subseteq R_{2}\subseteq h_{U}$ and $R_{1}\in Cov\left(
U\right) $, then $R_{2}\in Cov\left( U\right) $.

It follows however from \cite[Lemma 3.12(1)]%
{Jardine-2015-Local-homotopy-theory-MR3309296} that the three conditions
from Definition \ref{Def-Site} imply (GT2).
\end{remark}

\begin{definition}
\label{Def-Chaotic}Given a site $X=\left( \mathbf{C}_{X},Cov\left( X\right)
\right) $, we say that the topology is \textbf{chaotic} iff for any $U\in 
\mathbf{C}_{X}$, $Cov\left( U\right) =h_{U}$.
\end{definition}

\begin{definition}
\label{Def-Pretopology}We say that the topology on a small site $X$ is
induced by a \textbf{pretopology} if each object $U\in \mathbf{C}_{X}$ is
supplied with base-changeable (Definition \ref{Def-Quarrable}) \textbf{covers%
} $\left\{ U_{i}\rightarrow U\right\} _{i\in I}$, satisfying \cite[%
Definition II.1.3]{SGA4-1-MR0354652} (compare to \cite[Definition 16.1.5]%
{Kashiwara-Categories-MR2182076}), and the covering sieves $R\in Cov\left(
X\right) $ are \textbf{generated} by covers:%
\begin{equation*}
R=R_{\left\{ U_{i}\rightarrow U\right\} }\subseteq h_{U},
\end{equation*}%
where $R_{\left\{ U_{i}\rightarrow U\right\} }\left( V\right) $ consists of
morphisms $\left( V\rightarrow U\right) \in h_{U}\left( V\right) $ admitting
a decomposition%
\begin{equation*}
\left( V\rightarrow U\right) =\left( V\rightarrow U_{j}\rightarrow U\right)
\end{equation*}%
for some cover $\left\{ U_{i}\rightarrow U\right\} _{i\in I}$ and a $j\in I$.
\end{definition}

\begin{definition}
\label{Def-(Pre)cosheaves}(compare to \cite[Definition 2.2.1]%
{Prasolov-Cosheaves-2021-MR4347662}) Given a site $X=\left( \mathbf{C}%
_{X},Cov\left( X\right) \right) $.

\begin{enumerate}
\item A \textbf{precosheaf} $\mathcal{A}$ on $X$ with values in $\mathbf{K}$
is a functor $\mathcal{A}:\mathbf{C}_{X}\rightarrow \mathbf{K}$.

\item For any $U\in \mathbf{C}_{X}$ and a covering sieve (Definition \ref%
{Def-Site}) $R$ over $U$ there is a natural morphism%
\begin{equation*}
\varphi \left( U,R\right) :\left( \underset{\left( V\rightarrow U\right) \in
R\left( V\right) }{\underrightarrow{\lim }}\mathcal{A}\left( V\right)
\right) \longrightarrow \mathcal{A}\left( U\right)
\end{equation*}

\begin{enumerate}
\item A precosheaf $\mathcal{A}$ on $X$ is \textbf{coseparated} provided $%
\varphi \left( U,R\right) $ is an \textbf{epimorphism} for any $U\in \mathbf{%
C}_{X}$ and for any covering sieve $R\in Cov\left( U\right) $.

\item A precosheaf $\mathcal{A}$ on $X$ is a \textbf{cosheaf} provided $%
\varphi \left( U,R\right) $ is an \textbf{isomorphism} for any $U\in \mathbf{%
C}_{X}$ and for any covering sieve $R\in Cov\left( U\right) $.
\end{enumerate}
\end{enumerate}
\end{definition}

\begin{remark}
\label{Rem-Cosheaf-via-pairing}~

\begin{enumerate}
\item An equivalent definition of $\varphi \left( U,R\right) $ is the
following:%
\begin{equation*}
\varphi \left( U,R\right) :\mathcal{A}\otimes _{\mathbf{Set}^{\mathbf{C}%
_{X}}}R\longrightarrow \mathcal{A}\left( U\right)
\end{equation*}%
where $\otimes _{\mathbf{Set}^{\mathbf{C}_{X}}}$is the pairing defined in 
\cite[Definition A.1.1(3-5)]{Prasolov-Cosheaves-2021-MR4347662}.

\item If the topology is defined using a pretopology (Definition \ref%
{Def-Pretopology}), and a sieve $R$ is generated by a family $\left\{
U_{i}\rightarrow U\right\} _{i\in I}$ then $\varphi \left( U,R\right) $
becomes%
\begin{equation*}
\varphi \left( U,R\right) :%
\coker%
\left( \dcoprod\limits_{i,j\in I}\mathcal{A}\left( U_{i}\underset{U}{\times }%
U_{j}\right) \rightrightarrows \dcoprod\limits_{i\in I}\mathcal{A}\left(
U_{i}\right) \right) \longrightarrow \mathcal{A}\left( U\right) .
\end{equation*}
\end{enumerate}
\end{remark}

\begin{notation}
\label{Not-Categories-of-(pre)(co)sheaves}~

\begin{enumerate}
\item $\mathbf{pCS}\left( X,\mathbf{K}\right) $ is the category of pre%
\textbf{co}sheaves with values in $\mathbf{K}$.

\item $\mathbf{CS}\left( X,\mathbf{K}\right) $ is the category of \textbf{co}%
sheaves with values in $\mathbf{K}$.

\item $\mathbf{pS}\left( X,\mathbf{K}\right) $ is the category of presheaves
with values in $\mathbf{K}$.

\item $\mathbf{S}\left( X,\mathbf{K}\right) $ is the category of sheaves
with values in $\mathbf{K}$.
\end{enumerate}
\end{notation}

\begin{remark}
\label{Rem-Chaotic}If the topology on $X$ is chaotic (Definition \ref%
{Def-Chaotic}), then any pre(co)sheaf is a (co)sheaf, therefore%
\begin{eqnarray*}
\mathbf{pCS}\left( X,\mathbf{K}\right) &=&\mathbf{CS}\left( X,\mathbf{K}%
\right) , \\
\mathbf{pS}\left( X,\mathbf{K}\right) &=&\mathbf{S}\left( X,\mathbf{K}\right)
\end{eqnarray*}%
in this case.
\end{remark}

\subsection{Pairing}

There is an important pairing%
\begin{eqnarray*}
\left\langle \bullet ,\bullet \right\rangle &:&\mathbf{Pro}\left( k\right)
^{op}\times \mathbf{Mod}\left( k\right) \longrightarrow \mathbf{Mod}\left(
k\right) : \\
&&\left\langle \mathbf{A},T\right\rangle 
{:=}%
\Hom%
_{\mathbf{Pro}\left( k\right) }\left( \mathbf{A},T\right) .
\end{eqnarray*}%
The pairing can be easily extended to (pre)cosheaves:%
\begin{eqnarray*}
\left\langle \bullet ,\bullet \right\rangle &:&\mathbf{pCS}\left( X,\mathbf{%
Pro}\left( k\right) \right) ^{op}\times \mathbf{Mod}\left( k\right)
\longrightarrow \mathbf{pS}\left( X,\mathbf{Mod}\left( k\right) \right) : \\
&&\left\langle \mathcal{A},T\right\rangle 
{:=}%
\left( U\longmapsto \left\langle \mathcal{A}\left( U\right) ,T\right\rangle
\right) .
\end{eqnarray*}%
See \cite[Definition A.1.1(1,2) ]{Prasolov-Cosheaves-2021-MR4347662}. The
importance lies in the following principle:

\begin{proposition}
\label{Prop-Duality}A statement on a pro-module $\mathbf{A}$ (respectively a
(pre)\textbf{co}sheaf $\mathcal{B}$) is correct iff for any \textbf{injective%
} $T\in \mathbf{Mod}\left( k\right) $ the dual statement is correct for a $k$%
-module $\left\langle \mathbf{A},T\right\rangle $ (respectively a (pre)sheaf 
$\left\langle \mathcal{B},T\right\rangle $).
\end{proposition}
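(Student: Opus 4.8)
The plan is to establish this duality principle as a consequence of three standard facts: that $\mathbf{Pro}(k)$ has enough injectives among the cogenerators coming from $\mathbf{Mod}(k)$, that injective modules cogenerate $\mathbf{Mod}(k)$, and that the functor $\langle\bullet,T\rangle$ is exact in a suitable sense when $T$ is injective.

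First I would make the statement precise. A "statement" about $\mathbf{A}$ here means a finite conjunction/disjunction of assertions of the form: a morphism is zero, a morphism is an epimorphism/monomorphism/isomorphism, a sequence is exact, an object is zero, etc. The "dual statement" reverses arrows: epi becomes mono, right-exact becomes left-exact, and so on. The key observation is that $\langle\bullet,T\rangle = \Hom_{\mathbf{Pro}(k)}(\bullet,T)$ is a \emph{contravariant} functor from $\mathbf{Pro}(k)$ to $\mathbf{Mod}(k)$, so it automatically converts each categorical notion into its formal dual; the content of the proposition is that for \emph{injective} $T$ this conversion loses no information, i.e.\ the family $\{\langle\bullet,T\rangle\}_{T\text{ injective}}$ is jointly faithful and jointly exact.

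The core step is therefore: (i) $\langle\bullet,T\rangle$ sends cokernels in $\mathbf{Pro}(k)$ to kernels in $\mathbf{Mod}(k)$ and, when $T$ is injective, sends kernels to cokernels --- in other words it is exact as a contravariant functor on $\mathbf{Pro}(k)$ when $T$ is injective. Since $\mathbf{Pro}(k)$ is abelian (Proposition~\ref{Prop-Facts-pro-objects}) and $\Hom_{\mathbf{Pro}(k)}(\bullet,\bullet)$ is always left exact in the first variable (turning cokernels to kernels), only the half involving injectivity of $T$ needs argument; this should follow from the fact that the restriction of $\Hom_{\mathbf{Pro}(k)}(\bullet, T)$ to $T\in\mathbf{Mod}(k)\subseteq\mathbf{Pro}(k)$ is computed via the formula in Remark~\ref{Rem-Pro-objects-morphisms}(2) as a filtered colimit of $\Hom_k(X_i,T)$, and filtered colimits of exact sequences of $k$-modules are exact, combined with exactness of $\Hom_k(\bullet,T)$ for injective $T$. (ii) The family $\{\langle\bullet,T\rangle : T\text{ injective in }\mathbf{Mod}(k)\}$ is jointly faithful on $\mathbf{Pro}(k)$: a morphism $\mathbf{A}\to\mathbf{B}$ in $\mathbf{Pro}(k)$ that induces the zero map $\Hom(\mathbf{B},T)\to\Hom(\mathbf{A},T)$ for all injective $T$ must itself be zero, and more generally a subquotient or quotient object that is killed by all such $\langle\bullet,T\rangle$ must vanish --- this is because $\mathbf{Mod}(k)$ has an injective cogenerator and one checks that injectives in $\mathbf{Mod}(k)$ cogenerate $\mathbf{Pro}(k)$ as well (any nonzero pro-module admits a nonzero map to some $T\in\mathbf{Mod}(k)$, which follows from $\mathbf{K}=\mathbf{Mod}(k)$ having an injective cogenerator and the colimit formula for $\Hom$). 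Once (i) and (ii) are in hand, the "(pre)cosheaf" version is immediate: $\langle\mathcal{B},T\rangle$ is defined objectwise by $U\mapsto\langle\mathcal{B}(U),T\rangle$, so exactness and faithfulness are inherited pointwise, and the cosheaf/coseparated conditions of Definition~\ref{Def-(Pre)cosheaves}, being expressed via the morphisms $\varphi(U,R)$ together with the colimit description, dualize precisely to the separated/sheaf conditions for presheaves under $\langle\bullet,T\rangle$.

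The main obstacle I anticipate is step (ii), specifically verifying that injective $k$-modules form a cogenerating family for the whole category $\mathbf{Pro}(k)$ and not merely for $\mathbf{Mod}(k)$ --- one must rule out the possibility of a nonzero pro-module all of whose maps to every $T\in\mathbf{Mod}(k)$ vanish. The right tool is the $\Hom$-formula $\Hom_{\mathbf{Pro}(k)}(\mathbf{X},T)=\underrightarrow{\lim}_i\Hom_k(X_i,T)$ from Remark~\ref{Rem-Pro-objects-morphisms}(2): if $\mathbf{X}=(X_i)$ is not rudimentary-zero, some structure map $X_i\to X_j$ is nonzero after passing far enough in the index category, and composing with a map $X_j\to T$ detecting a nonzero element (using that $\mathbf{Mod}(k)$ has an injective cogenerator) yields a nonzero element of the colimit --- here one has to be slightly careful that the element survives in the colimit, which uses that the cogenerator can be chosen so that the relevant composite does not become zero further along. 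A clean way to finish is to invoke the duality $\mathbf{Pro}(\mathbf{Mod}(k))^{op}\simeq\mathbf{Ind}(\mathbf{Mod}(k)^{op})$ and the standard fact that $\mathbf{Ind}$ of an abelian category with a generator has a generator, dualized. I would present the argument mostly by citing Proposition~\ref{Prop-Facts-pro-objects} for the ambient properties of $\mathbf{Pro}(k)$ and Remark~\ref{Rem-Pro-objects-morphisms} for the $\Hom$-computation, then spelling out only the cogeneration claim and the exactness-of-$\langle\bullet,T\rangle$ claim in detail.
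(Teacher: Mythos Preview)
The paper does not actually prove this proposition. It is stated as a guiding principle, followed only by a remark and a list of citations to the author's earlier paper \cite{Prasolov-Cosheaves-2021-MR4347662} where various concrete instances (exactness, mono/epi detection, (co)sheaf conditions, etc.) are verified individually. So there is no ``paper's own proof'' to compare against; the proposition functions as a meta-statement whose precise cases are established elsewhere.

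Your proposal, by contrast, tries to prove the principle outright by isolating the two facts that make it work: exactness of $\langle\bullet,T\rangle$ for injective $T$, and joint conservativity of the family $\{\langle\bullet,T\rangle\}_{T\text{ injective}}$. That is exactly the right decomposition, and both claims are true. Your exactness argument is fine: levelize, use exactness of $\Hom_k(\bullet,T)$ on each level, then pass to the filtered colimit. For the cogeneration step you are making life harder than necessary. You worry about finding a \emph{single} $f:X_i\to T$ whose composite with every transition map stays nonzero, but the statement only requires the \emph{family} of all injective $T$ to be jointly conservative. So for a given index $i$, take $T$ to be any injective module containing $X_i$ (e.g.\ its injective hull) and let $f$ be the inclusion $X_i\hookrightarrow T$. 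If $\langle\mathbf X,T\rangle=0$ then this $f$ dies in the colimit, meaning some transition $X_j\to X_i$ composes with the inclusion to zero, hence $X_j\to X_i=0$. Since $i$ was arbitrary, $\mathbf X=0$. This replaces the delicate ``survives in the colimit'' analysis you sketch and avoids any appeal to the $\mathbf{Ind}/\mathbf{Pro}$ duality.

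In short: your plan is sound and more explicit than what the paper offers here; just simplify the cogeneration step as above.
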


\begin{remark}
Quite often the requirement that $T$ is injective may be dropped.
\end{remark}

See \cite{Prasolov-Cosheaves-2021-MR4347662}: Theorem 2.2.6 (5-7), Theorem
3.1.1 (2-8), Theorem 3.2.1 (4), Theorem 3.3.1 (2-7), Theorem 3.4.1 (4a),
Proposition A.2.8 (2-4, 7, 8).

The following definition is perfectly consistent with the above principle.

\begin{definition}
\label{Def-quasi-projective-(pre)cosheaf}~

\begin{enumerate}
\item A precosheaf $\mathcal{A}\in \mathbf{pCS}\left( X,\mathbf{Pro}\left(
k\right) \right) $ is called \textbf{quasi-projective} iff for any injective 
$T\in \mathbf{Mod}\left( k\right) $ the presheaf $\left\langle \mathcal{A}%
,T\right\rangle \in \mathbf{pS}\left( X,\mathbf{Mod}\left( k\right) \right) $
is injective.

\item A cosheaf $\mathcal{A}\in \mathbf{CS}\left( X,\mathbf{Pro}\left(
k\right) \right) $ is called \textbf{quasi-projective} iff for any injective 
$T\in \mathbf{Mod}\left( k\right) $ the sheaf $\left\langle \mathcal{A}%
,T\right\rangle \in \mathbf{S}\left( X,\mathbf{Mod}\left( k\right) \right) $
is injective.
\end{enumerate}
\end{definition}

\begin{definition}
\label{Def-Cech-H0}Let $\mathcal{A}\in \mathbf{pCS}\left( X,\mathbf{Pro}%
\left( k\right) \right) $, $U\in \mathbf{C}_{X}$ and $R\subseteq h_{U}$ be a
sieve. Define:

\begin{enumerate}
\item 
\begin{equation*}
H_{0}\left( R,\mathcal{A}\right) 
{:=}%
\underset{\left( V\rightarrow U\right) \in R\left( V\right) }{%
\underrightarrow{\lim }}~\mathcal{A}\left( R\right) 
\simeq%
\mathcal{A}\otimes _{\mathbf{Set}^{\mathbf{C}_{X}}}R\in \mathbf{Mod}\left(
k\right) .
\end{equation*}

\item 
\begin{equation*}
\check{H}_{0}\left( U,\mathcal{A}\right) 
{:=}%
\underset{R\in Cov\left( U\right) }{\underleftarrow{\lim }}~H_{0}\left( R,%
\mathcal{A}\right) \in \mathbf{Mod}\left( k\right) .
\end{equation*}

\item 
\begin{equation*}
\mathcal{A}_{+}%
{:=}%
\left( U\longmapsto \check{H}_{0}\left( U,\mathcal{A}\right) \right) \in 
\mathbf{pCS}\left( X,\mathbf{Pro}\left( k\right) \right) .
\end{equation*}

\item 
\begin{equation*}
\mathcal{A}_{\#}%
{:=}%
\mathcal{A}_{++}\in \mathbf{pCS}\left( X,\mathbf{Pro}\left( k\right) \right)
.
\end{equation*}
\end{enumerate}

See \cite[Definition B.2.5]{Prasolov-Cosheaves-2021-MR4347662}.
\end{definition}

\begin{proposition}
\label{Prop-Cosheafification}For any injective $T\in \mathbf{Mod}\left(
k\right) $:

\begin{enumerate}
\item 
\begin{equation*}
\left\langle H_{0}\left( R,\mathcal{A}\right) ,T\right\rangle 
\simeq%
H^{0}\left( R,\left\langle \mathcal{A},T\right\rangle \right) .
\end{equation*}

\item 
\begin{equation*}
\left\langle \check{H}_{0}\left( U,\mathcal{A}\right) ,T\right\rangle 
\simeq%
\check{H}^{0}\left( U,\left\langle \mathcal{A},T\right\rangle \right) .
\end{equation*}

\item 
\begin{equation*}
\left\langle \mathcal{A}_{+},T\right\rangle 
\simeq%
\left\langle \mathcal{A},T\right\rangle ^{+}.
\end{equation*}

\item 
\begin{equation*}
\left\langle \mathcal{A}_{\#},T\right\rangle 
\simeq%
\left\langle \mathcal{A},T\right\rangle ^{\#}.
\end{equation*}

\item $\mathcal{A}_{+}$ is coseparated.

\item If $\mathcal{A}$ is coseparated, then $\mathcal{A}_{+}$ is a cosheaf.

\item The functor $\left( \bullet \right) _{+}$ is right exact (preserves
finite colimits).

\item The functor $\left( \bullet \right) _{\#}$ is right adjoint to the
inclusion%
\begin{equation*}
\iota :\mathbf{CS}\left( X,\mathbf{Pro}\left( k\right) \right)
\longrightarrow \mathbf{pCS}\left( X,\mathbf{Pro}\left( k\right) \right) .
\end{equation*}

\item \label{Prop-Cosheafification-exact}The functor $\left( \bullet \right)
_{\#}$ is exact. In fact, it preserves arbitrary (small) limits and finite
colimits.
\end{enumerate}
\end{proposition}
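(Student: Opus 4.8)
The plan is to reduce everything to the already-established sheaf-theoretic facts via the duality principle (Proposition \ref{Prop-Duality}) together with the compatibility formulas of Proposition \ref{Prop-Cosheafification}(1--4). Statement (8) — that $(\bullet)_{\#}$ is right adjoint to the inclusion $\iota:\mathbf{CS}\hookrightarrow\mathbf{pCS}$ — should be proved first, since it is the structural backbone. For a precosheaf $\mathcal{A}$ and a cosheaf $\mathcal{B}$, one wants a natural isomorphism $\Hom_{\mathbf{pCS}}(\mathcal{B},\mathcal{A})\simeq\Hom_{\mathbf{CS}}(\mathcal{B},\mathcal{A}_{\#})$. I would construct the unit morphism $\mathcal{A}\to\mathcal{A}_{+}$ (the canonical map $\mathcal{A}(U)\to\check{H}_{0}(U,\mathcal{A})$ coming from the terminal sieve $h_{U}\in Cov(U)$), iterate to get $\mathcal{A}\to\mathcal{A}_{\#}=\mathcal{A}_{++}$, and check that this is universal among maps from $\mathcal{A}$ into cosheaves. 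By parts (5) and (6), $\mathcal{A}_{\#}$ is indeed a cosheaf; the universality is then verified after pairing against an arbitrary injective $T\in\mathbf{Mod}(k)$, where by (4) the statement becomes the classical adjunction for sheafification $(\bullet)^{\#}\dashv$ (inclusion of sheaves into presheaves), which is standard. The duality principle then lifts this back to $\mathbf{Pro}(k)$-valued (pre)cosheaves.

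For statement (9), exactness of $(\bullet)_{\#}$: again apply Proposition \ref{Prop-Duality}. A sequence of cosheaves (or precosheaves) is exact iff its pairing with every injective $T$ is an exact sequence of (pre)sheaves; by Proposition \ref{Prop-Cosheafification}(4), $\langle\mathcal{A}_{\#},T\rangle\simeq\langle\mathcal{A},T\rangle^{\#}$, and classical sheafification is exact, so $(\bullet)_{\#}$ preserves kernels and cokernels, hence is exact. Preservation of finite colimits is immediate from (7) (right-exactness of $(\bullet)_{+}$, iterated) combined with exactness. The more delicate claim is preservation of \emph{arbitrary small limits}. Here I would argue that $(\bullet)_{\#}$, being right adjoint to $\iota$ by part (8), automatically preserves all limits that exist in $\mathbf{pCS}$; since $\mathbf{Pro}(k)$ admits small limits (Proposition \ref{Prop-Facts-pro-objects}(8d), as $\mathbf{Mod}(k)$ has finite limits — indeed all limits — and we only need the relevant ones), $\mathbf{pCS}(X,\mathbf{Pro}(k))$ is complete, so all small limits are preserved. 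One must separately confirm that a small limit of cosheaves computed in $\mathbf{pCS}$ is again a cosheaf, equivalently that $\mathbf{CS}$ is closed under small limits in $\mathbf{pCS}$; this follows because the defining condition (the pairing $\varphi(U,R)$ being an isomorphism) is preserved under limits, as $\underrightarrow{\lim}$ over the sieve is a \emph{finite}-type colimit issue only in the pretopology case — more safely, one pairs with injective $T$ and uses that an inverse limit of sheaves is a sheaf.

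The main obstacle I anticipate is the limit-preservation clause in (9): the subtlety is that colimits over covering sieves $R$ appear in the definition of $\check{H}_{0}$, and these do \emph{not} in general commute with limits. The clean route is to avoid computing $(\mathcal{A}_{\#})$ on limits directly and instead lean entirely on the adjunction from (8): right adjoints preserve limits, full stop. So the real work is (i) making the adjunction (8) airtight, including naturality, and (ii) verifying that $\mathbf{CS}(X,\mathbf{Pro}(k))$ is a reflective — here \emph{co}reflective in the terminology of Definition \ref{Def-(co)reflective}, since $(\bullet)_{\#}$ is a \emph{right} adjoint to the inclusion — full subcategory closed under the relevant limits, which again I would reduce to the sheaf case through Proposition \ref{Prop-Duality} and the stability of sheaves under inverse limits. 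Everything else is either cited from \cite{Prasolov-Cosheaves-2021-MR4347662} or a routine transcription of the classical (pre)sheaf arguments under the contravariant pairing $\langle\bullet,\bullet\rangle$.
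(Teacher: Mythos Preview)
Your overall strategy---reduce (1)--(4) directly to the pairing, and then deduce (5)--(9) from (1)--(4) plus the classical (pre)sheaf statements via Proposition~\ref{Prop-Duality}---is exactly what the paper does. The paper's own proof is a two-line reference to duality and to \cite[Theorem 2.2.6]{Prasolov-Cosheaves-2021-MR4347662} and \cite[Theorem 3.1, Corollary 3.2]{Prasolov-Cosheafification-2016-MR3660525}, so your write-up is in fact more detailed than the original.

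There is, however, a directional slip in your treatment of (8). You write ``construct the unit morphism $\mathcal{A}\to\mathcal{A}_{+}$'' and speak of universality ``among maps from $\mathcal{A}$ into cosheaves''. This is the sheaf picture, not the cosheaf one. Since $\check{H}_{0}(U,\mathcal{A})=\underleftarrow{\lim}_{R\in Cov(U)}H_{0}(R,\mathcal{A})$ is a \emph{limit}, the terminal sieve $h_{U}$ gives a projection $\check{H}_{0}(U,\mathcal{A})\to H_{0}(h_{U},\mathcal{A})=\mathcal{A}(U)$, i.e.\ the canonical morphism runs $\mathcal{A}_{+}\to\mathcal{A}$ and hence $\mathcal{A}_{\#}\to\mathcal{A}$. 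The correct universal property is that every morphism $\mathcal{B}\to\mathcal{A}$ with $\mathcal{B}$ a cosheaf factors uniquely through $\mathcal{A}_{\#}\to\mathcal{A}$; this is precisely what makes $(\bullet)_{\#}$ a \emph{right} adjoint (which you correctly invoke later for limit preservation). Under $\langle\bullet,T\rangle$ the arrow reverses to $\langle\mathcal{A},T\rangle\to\langle\mathcal{A},T\rangle^{\#}$, matching the sheafification unit, so your duality argument goes through once the arrow is pointed the right way. Fix this and the proof is complete and aligned with the paper.
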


\begin{proof}
(1-4) follows from Proposition \ref{Prop-Duality}. (5-9) follows from (1-4),
Proposition \ref{Prop-Duality}, and the corresponding properties for
(pre)sheaves. See also \cite[Theorem 2.2.6]%
{Prasolov-Cosheaves-2021-MR4347662} and \cite[Theorem 3.1 and Corollary 3.2]%
{Prasolov-Cosheafification-2016-MR3660525}.
\end{proof}

\subsection{(Pre)cosheaf homology}

Since the categories $\mathbf{pCS}\left( X,\mathbf{Pro}\left( k\right)
\right) $ and $\mathbf{CS}\left( X,\mathbf{Pro}\left( k\right) \right) $ do
not have enough projective objects, one cannot construct a \textquotedblleft
classical\textquotedblright\ homology theory for (pre)cosheaves. However,
the machinery of derived categories helps us to build the desired homology
theories. In a \textquotedblleft classical\textquotedblright\ homology
theory, one deals with projective resolutions and chain homotopies. Any
additive functor $F$ preserves chain homotopies, therefore it is always
possible to define the left satellites $L_{i}F$. If $F$ is right exact then $%
L_{0}F%
\simeq%
F$. In our situation, we can only use quasi-projective resolutions, and
quasi-isomorphisms between chain complexes.

\begin{notation}
$~$

\begin{enumerate}
\item $Q\left( \mathbf{pCS}\left( X,\mathbf{Pro}\left( k\right) \right)
\right) \subseteq \mathbf{pCS}\left( X,\mathbf{Pro}\left( k\right) \right) $
is the full subcategory consisting of quasi-projective precosheaves.

\item $Q\left( \mathbf{CS}\left( X,\mathbf{Pro}\left( k\right) \right)
\right) \subseteq \mathbf{CS}\left( X,\mathbf{Pro}\left( k\right) \right) $
is the full subcategory consisting of quasi-projective cosheaves.
\end{enumerate}
\end{notation}

The following proposition is of great importance for us.

\begin{proposition}
\label{Prop-Generating}~

\begin{enumerate}
\item $Q\left( \mathbf{CS}\left( X,\mathbf{Pro}\left( k\right) \right)
\right) $ is generating and quasi-projective (see Definition \ref%
{Def-F-projective}).

\item $Q\left( \mathbf{pCS}\left( X,\mathbf{Pro}\left( k\right) \right)
\right) $ is generating and quasi-projective.
\end{enumerate}
\end{proposition}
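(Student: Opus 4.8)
The plan is to verify, for each of the two subcategories, the conditions of Definition \ref{Def-F-projective}: that it is \emph{generating} (every (pre)cosheaf receives an epimorphism from an object of the subcategory) and \emph{quasi-projective} (closure under finite coproducts, together with the requirement that in a short exact sequence $0\to\mathcal{A}'\to\mathcal{A}\to\mathcal{A}''\to0$ whose two right-hand terms lie in the subcategory, so does $\mathcal{A}'$). The whole argument will run through the duality principle of Proposition \ref{Prop-Duality}: by Definition \ref{Def-quasi-projective-(pre)cosheaf} a (pre)cosheaf $\mathcal{B}$ with values in $\mathbf{Pro}(k)$ is quasi-projective precisely when $\langle\mathcal{B},T\rangle$ is an injective object of $\mathbf{S}(X,\mathbf{Mod}(k))$ (resp.\ $\mathbf{pS}(X,\mathbf{Mod}(k))$) for every injective $T\in\mathbf{Mod}(k)$, so each statement about quasi-projective (pre)cosheaves becomes the dual statement about injectives in a Grothendieck abelian category, where it is classical.

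For the generating property in case (1) I would simply invoke \cite[Theorem 3.4.1(1)]{Prasolov-Cosheaves-2021-MR4347662} (or, when $X$ is a topological space, the shorter Proposition \ref{Prop-Enough-quasi-projective}), which produces an epimorphism $\mathcal{P}\twoheadrightarrow\mathcal{A}$ with $\mathcal{P}$ a quasi-projective cosheaf. For case (2), since $\mathbf{pCS}(X,\mathbf{Pro}(k))=\mathbf{Pro}(k)^{\mathbf{C}_X}$ with $\mathbf{C}_X$ small, I would give the construction directly: for each object $U$ choose an epimorphism $\mathbf{M}_U\twoheadrightarrow\mathcal{A}(U)$ from a quasi-projective pro-module (possible because $\mathbf{Pro}(k)$ has enough quasi-projectives, \cite[Proposition A.2.8(5)]{Prasolov-Cosheaves-2021-MR4347662}), form the left Kan extension of $\mathbf{M}_U$ along $\{U\}\hookrightarrow\mathbf{C}_X$, i.e.\ the precosheaf $V\mapsto\mathbf{M}_U^{(\Hom_{\mathbf{C}_X}(U,V))}$, and take the coproduct over all $U$; the resulting map to $\mathcal{A}$ is an epimorphism since at each object $V$ it is already surjective on the summand indexed by $U=V$ and $\mathrm{id}_V$. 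That this precosheaf is quasi-projective follows by duality: applying $\langle\bullet,T\rangle$ turns the left Kan extension into $V\mapsto\langle\mathbf{M}_U,T\rangle^{\Hom_{\mathbf{C}_X}(U,V)}$, an injective precosheaf of $k$-modules provided $\langle\mathbf{M}_U,T\rangle$ is an injective $k$-module — and it is: writing $\mathbf{M}_U=(M_i)$ with the $M_i$ projective (Proposition \ref{Prop-quasi-projective}) over the quasi-noetherian, hence noetherian (Theorem \ref{Th-(Quasi-)noetherian}), ring $k$, one has $\langle\mathbf{M}_U,T\rangle\simeq\underrightarrow{\lim}_i\Hom_k(M_i,T)$, a filtered colimit of injective modules, which over a noetherian ring is injective. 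Coproducts of quasi-projectives are again quasi-projective because $\langle\bullet,T\rangle$ sends them to products of injectives.

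The remaining conditions are handled uniformly for (1) and (2) by the same duality. Closure under finite coproducts: $\langle\bigoplus_j\mathcal{A}_j,T\rangle\simeq\prod_j\langle\mathcal{A}_j,T\rangle$ and a finite product of injectives is injective. The kernel condition: a short exact sequence $0\to\mathcal{A}'\to\mathcal{A}\to\mathcal{A}''\to0$ with $\mathcal{A},\mathcal{A}''$ quasi-projective corresponds, via Proposition \ref{Prop-Duality}, to a short exact sequence $0\to\langle\mathcal{A}'',T\rangle\to\langle\mathcal{A},T\rangle\to\langle\mathcal{A}',T\rangle\to0$ of (pre)sheaves whose first two terms are injective; the monomorphism out of the injective object $\langle\mathcal{A}'',T\rangle$ splits, so $\langle\mathcal{A}',T\rangle$ is a direct summand of the injective $\langle\mathcal{A},T\rangle$ and hence injective, i.e.\ $\mathcal{A}'$ is quasi-projective.

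The main obstacle is the generating property in part (1). One would like to cover a cosheaf $\mathcal{A}$ by cosheafifying the precosheaf covering built in part (2), using that $(\bullet)_{\#}$ is exact (Proposition \ref{Prop-Cosheafification-exact}), but this fails because the sheafification of an injective presheaf need not be an injective sheaf, so $\langle\mathcal{P},T\rangle^{\#}\simeq\langle\mathcal{P}_{\#},T\rangle$ need not be injective; circumventing this is exactly the ``rather complicated'' content of \cite[Theorem 3.4.1(1)]{Prasolov-Cosheaves-2021-MR4347662}, with Proposition \ref{Prop-Enough-quasi-projective} supplying a cleaner route on topological spaces. Everything else reduces, through the injective--quasi-projective duality, to standard facts about injective objects in Grothendieck abelian categories.
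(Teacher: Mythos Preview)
Your argument is correct. The paper's proof, however, is minimal: it simply cites \cite[Theorem 3.4.1(2)]{Prasolov-Cosheaves-2021-MR4347662} for (1) and \cite[Theorem 3.2.1(2)]{Prasolov-Cosheaves-2021-MR4347662} for (2), adding for (2) the observation that it already follows from (1) together with Remark \ref{Rem-Chaotic}: equipping $\mathbf{C}_X$ with the chaotic topology makes every precosheaf a cosheaf, so the statement for precosheaves is the statement for cosheaves on that site. Your route for (2) is genuinely different---a direct construction of a quasi-projective precosheaf covering $\mathcal{A}$ via coproducts of left Kan extensions of quasi-projective pro-modules, with the quasi-projectivity verified by duality and the noetherian hypothesis (Theorem \ref{Th-(Quasi-)noetherian}). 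This is more explicit and self-contained than the paper's bare citation, and it avoids the chaotic-topology reduction; on the other hand, the paper's reduction is a one-line observation that unifies the two cases and spares the reader the bookkeeping of your Kan-extension argument. Your treatment of the quasi-projective (kernel-closure) condition via splitting of the dual short exact sequence is exactly the argument that appears later in the paper, in the proof of Proposition \ref{Prop-Site-morphism-satellites}.
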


\begin{proof}
~

\begin{enumerate}
\item See \cite[Theorem 3.4.1(2)]{Prasolov-Cosheaves-2021-MR4347662}.

\item See \cite[Theorem 3.2.1(2)]{Prasolov-Cosheaves-2021-MR4347662}. In
fact, the statement follows also from (1) and Remark \ref{Rem-Chaotic}.
\end{enumerate}
\end{proof}

\begin{remark}
The proof is much simpler in the case of topological spaces, see Proposition %
\ref{Prop-Enough-quasi-projective}.
\end{remark}

\begin{theorem}
\label{Th-Three-F-projective}$Q\left( \mathbf{CS}\left( X,\mathbf{Pro}\left(
k\right) \right) \right) $ is $F$-projective \cite[ Definition A.3.1]%
{Prasolov-Cosheaves-2021-MR4347662} with respect to the functors:

\begin{enumerate}
\item 
\begin{equation*}
F\left( \bullet \right) =\Gamma \left( U,\bullet \right) 
{:=}%
\bullet \left( U\right) .
\end{equation*}

\item 
\begin{equation*}
F=\iota :\mathbf{CS}\left( X,\mathbf{Pro}\left( k\right) \right)
\hookrightarrow \mathbf{pCS}\left( X,\mathbf{Pro}\left( k\right) \right)
\end{equation*}%
(inclusion of cosheaves into precosheaves).

\item 
\begin{equation*}
F=f_{\ast }:\mathbf{CS}\left( X,\mathbf{Pro}\left( k\right) \right)
\longrightarrow \mathbf{CS}\left( Y,\mathbf{Pro}\left( k\right) \right)
\end{equation*}%
(see Proposition \ref{Prop-Site-morphism-cosheaves}).
\end{enumerate}
\end{theorem}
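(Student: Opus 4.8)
The plan is to verify the abstract notion of $F$-projectivity (in the sense of \cite[Definition A.3.1]{Prasolov-Cosheaves-2021-MR4347662}) for the class $Q\left(\mathbf{CS}\left(X,\mathbf{Pro}\left(k\right)\right)\right)$ against each of the three functors in turn. Recall that, informally, a class $\mathcal{P}$ of objects is $F$-projective when it is generating, closed under finite direct sums, and for every short exact sequence $0\to A'\to A\to A''\to 0$ with $A,A''\in\mathcal{P}$ one also has $A'\in\mathcal{P}$ and $0\to F(A')\to F(A)\to F(A'')\to 0$ exact. By Proposition \ref{Prop-Generating}(1), $Q\left(\mathbf{CS}\left(X,\mathbf{Pro}\left(k\right)\right)\right)$ is already known to be generating and quasi-projective; what remains is, for each $F$, the exactness clause: that $F$ carries exact sequences of quasi-projective cosheaves to exact sequences, together with the closure property that a kernel of an epimorphism between quasi-projectives (inside the category of cosheaves) is again quasi-projective.

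For the closure property, the plan is to invoke Proposition \ref{Prop-Duality}: a cosheaf $\mathcal{C}$ is quasi-projective iff $\left\langle\mathcal{C},T\right\rangle$ is an injective sheaf for every injective $T\in\mathbf{Mod}\left(k\right)$, and applying $\left\langle\bullet,T\right\rangle$ to a short exact sequence of cosheaves yields (after noting the functor is exact on the relevant pieces, cf. the references cited after Proposition \ref{Prop-Duality}) a short exact sequence of sheaves; since the class of injective sheaves is closed under taking kernels of epimorphisms between injectives (a split-exactness argument, the dual of the usual fact that a quotient of an injective by an injective is injective, valid because injectives are also relatively injective), the kernel $\mathcal{A}'$ is quasi-projective. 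For the three exactness statements:

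\begin{enumerate}
\item For $F=\Gamma\left(U,\bullet\right)=\bullet\left(U\right)$: dualizing via $\left\langle\bullet,T\right\rangle$, one must check that $\check{H}^0\left(U,\left\langle\mathcal{A},T\right\rangle\right)\to\cdots$ — more precisely that evaluation at $U$ of an injective sheaf is exact on short exact sequences of injective sheaves. This is the classical fact that the sections functor is exact on injective sheaves; transported back through Proposition \ref{Prop-Duality} it gives exactness of $\Gamma\left(U,\bullet\right)$ on $Q\left(\mathbf{CS}\right)$.
\item For $F=\iota$, the inclusion of cosheaves into precosheaves: dually this is the statement that an exact sequence of injective sheaves is exact as a sequence of presheaves, i.e.\ sectionwise exact — again the exactness of sections of injective sheaves. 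Alternatively one observes directly that a short exact sequence of quasi-projective \emph{cosheaves} is already sectionwise exact because at each $U$ it becomes a short exact sequence of quasi-projective pro-modules, which splits after pairing with any injective $T$.
\item For $F=f_{\ast}$: using Proposition \ref{Prop-Duality} again, $\left\langle f_{\ast}\mathcal{A},T\right\rangle$ is the sheaf-theoretic direct image $f^{*}\left\langle\mathcal{A},T\right\rangle$ (the right adjoint $f^{*}$ on sheaf categories from Proposition \ref{Prop-Site-morphism-cosheaves}, under the pairing), and the needed exactness reduces to the classical fact that the direct-image functor on sheaves is exact on injective sheaves (equivalently, $R^{\bullet}f_{*}$ vanishes on injectives).
\end{enumerate}

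The main obstacle I anticipate is bookkeeping rather than conceptual: making the passage through Proposition \ref{Prop-Duality} fully rigorous requires knowing that each functor in play commutes appropriately with the pairing $\left\langle\bullet,T\right\rangle$ — for $\Gamma$ and $\iota$ this is essentially the definition, but for $f_{\ast}$ one must cite the compatibility $\left\langle f_{\ast}\mathcal{A},T\right\rangle\simeq\left\langle\mathcal{A},T\right\rangle\circ(\text{pullback})$ established in Proposition \ref{Prop-Site-morphism-cosheaves}. Once these identifications are in hand, all three exactness clauses collapse to the single well-known principle that sections, the forgetful functor to presheaves, and direct image are all exact on injective sheaves, and the closure clause follows from injectives being closed under kernels of surjections among themselves. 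Hence the cleanest write-up is: state the three dual facts about injective sheaves, invoke Proposition \ref{Prop-Duality} and Proposition \ref{Prop-Generating}(1), and conclude.
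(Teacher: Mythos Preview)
Your approach is correct and essentially the same as the paper's: for (1) and (2) the paper simply cites \cite[Theorem 3.4.1(2)]{Prasolov-Cosheaves-2021-MR4347662}, while for (3) Proposition \ref{Prop-Site-morphism-satellites} carries out exactly the duality-and-splitting argument you outline (the dualized short exact sequence of sheaves has injective first term, hence splits, hence is preserved by the additive functor $F_{*}$, and one concludes via Proposition \ref{Prop-Duality}). One notational slip to fix: in your item (3) the functor appearing after pairing is the sheaf direct image $F_{*}\langle\mathcal{A},T\rangle$ (Proposition \ref{Prop-Site-morphism-cosheaves}(1)), not $f^{*}$.
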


\begin{proof}
(1) and (2) were proved in \cite[ Theorem 3.4.1(2)]%
{Prasolov-Cosheaves-2021-MR4347662}. (3) is proved in Proposition \ref%
{Prop-Site-morphism-satellites} below.
\end{proof}

In the three cases above, one can define the left satellite functors (see
Definition \ref{Def-Left-satellites})%
\begin{eqnarray*}
\mathbb{L}F &:&D^{\ast }\left( \mathbf{C}\right) \longrightarrow D^{\ast
}\left( \mathbf{E}\right) , \\
L_{s}F &:&D^{\ast }\left( \mathbf{C}\right) \longrightarrow \mathbf{E},s\geq
0,
\end{eqnarray*}%
where $\ast =+$ or $\geq 0$, $\mathbf{C=CS}\left( X,\mathbf{Pro}\left(
k\right) \right) $, and $\mathbf{E}$ is either $\mathbf{Pro}\left( k\right) $%
, $\mathbf{pCS}\left( X,\mathbf{Pro}\left( k\right) \right) $ or $\mathbf{CS}%
\left( Y,\mathbf{Pro}\left( k\right) \right) $.

\begin{remark}
The same construction as in Definition \ref{Def-Left-satellites} can be
applied to \textbf{pre}cosheaves.
\end{remark}

\begin{notation}
\label{Not-Various-left-satellites}~

\begin{enumerate}
\item If $F=\Gamma \left( U,\bullet \right) =\bullet \left( U\right) $, let%
\begin{equation*}
H_{s}\left( U,\bullet \right) 
{:=}%
L_{s}F\left( \bullet \right) :\mathbf{CS}\left( X,\mathbf{Pro}\left(
k\right) \right) \longrightarrow \mathbf{Mod}\left( k\right) .
\end{equation*}

\item 
\begin{equation*}
\mathcal{H}_{s}\left( \bullet \right) 
{:=}%
L_{s}\iota \left( \bullet \right) :\mathbf{CS}\left( X,\mathbf{Pro}\left(
k\right) \right) \longrightarrow \mathbf{pCS}\left( X,\mathbf{Pro}\left(
k\right) \right)
\end{equation*}%
where $\iota $ is the inclusion functor $\mathbf{CS}\left( X,\mathbf{Pro}%
\left( k\right) \right) \subseteq \mathbf{pCS}\left( X,\mathbf{Pro}\left(
k\right) \right) $.

\item If $F=H_{0}\left( R,\bullet \right) $, where $R\subseteq h_{U}$, let%
\begin{equation*}
H_{s}\left( R,\bullet \right) 
{:=}%
L_{s}F\left( \bullet \right) :\mathbf{pCS}\left( X,\mathbf{Pro}\left(
k\right) \right) \longrightarrow \mathbf{Mod}\left( k\right) .
\end{equation*}

\item If $F=\check{H}_{0}\left( \bullet \right) $, let%
\begin{equation*}
\check{H}_{s}\left( \bullet \right) 
{:=}%
L_{s}F\left( \bullet \right) :\mathbf{pCS}\left( X,\mathbf{Pro}\left(
k\right) \right) \longrightarrow \mathbf{Mod}\left( k\right) .
\end{equation*}
\end{enumerate}
\end{notation}

\begin{proposition}
\label{Prop-Various-left-satellites}Let $T\in \mathbf{Mod}\left( k\right) $
be injective. Then:

\begin{enumerate}
\item $\left\langle H_{s}\left( U,\bullet \right) ,T\right\rangle 
\simeq%
H^{s}\left( U,\left\langle \bullet ,T\right\rangle \right) $.

\item $\left\langle \mathcal{H}_{s}\left( \bullet \right) ,T\right\rangle 
\simeq%
\mathcal{H}^{s}\left( \left\langle \bullet ,T\right\rangle \right) $.

\item $\left\langle L_{s}f_{\ast }\left( \bullet \right) ,T\right\rangle 
\simeq%
R^{s}F_{\ast }\left\langle \bullet ,T\right\rangle $ (see Proposition \ref%
{Prop-Site-morphism-cosheaves}).

\item $\left\langle H_{s}\left( R,\bullet \right) ,T\right\rangle 
\simeq%
H^{s}\left( R,\left\langle \bullet ,T\right\rangle \right) $.

\item $\left\langle \check{H}_{s}\left( \bullet \right) ,T\right\rangle 
\simeq%
\check{H}^{s}\left( \left\langle \bullet ,T\right\rangle \right) $.
\end{enumerate}
\end{proposition}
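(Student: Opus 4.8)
The plan is to deduce all five isomorphisms from the construction of left satellites via $Q$-projective resolutions together with the duality principle of Proposition~\ref{Prop-Duality}. Write $F^{\vee}$ for the functor on (pre)sheaves that is dual to the given functor $F$ on (pre)cosheaves; thus $F^{\vee}$ is, in the five cases, $\Gamma(U,\bullet)=\bullet(U)$, the inclusion $\iota$ of sheaves into presheaves, the sheaf pushforward $F_{\ast}$, $\check H^{0}(R,\bullet)\,($i.e.\ $H^{0}(R,\bullet))$, and $\check H^{0}(\bullet)$, so that $R^{s}F^{\vee}$ is $H^{s}(U,\bullet)$, $\mathcal H^{s}$, $R^{s}F_{\ast}$, $H^{s}(R,\bullet)$ and $\check H^{s}$ respectively. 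The key preliminary observation is that the degree-$0$ identity $\langle F(\bullet),T\rangle\simeq F^{\vee}(\langle\bullet,T\rangle)$ is already available: for $F=\Gamma(U,\bullet)$ it is just the definition of the pairing extended to precosheaves; for $F=\iota$ it is the compatibility of the pairing with the two inclusions; for $F=f_{\ast}$ it is part of Proposition~\ref{Prop-Site-morphism-cosheaves}; and for $F=H_{0}(R,\bullet)$ and $F=\check H_{0}(\bullet)$ it is Proposition~\ref{Prop-Cosheafification}(1) and (2).

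Next I would fix $\mathcal A$ in the relevant category ($\mathbf{CS}(X,\mathbf{Pro}(k))$ in cases (1)--(3), $\mathbf{pCS}(X,\mathbf{Pro}(k))$ in cases (4)--(5)) and choose a $Q$-projective resolution $\mathcal P_{\bullet}\to\mathcal A$. Since $Q$ is $F$-projective for the functor in question (Theorem~\ref{Th-Three-F-projective} for $\Gamma(U,\bullet)$, $\iota$ and $f_{\ast}$; the analogous statement, available from the foundational results, for $H_{0}(R,\bullet)$ and $\check H_{0}$ on precosheaves), Definition~\ref{Def-Left-satellites} gives $L_{s}F(\mathcal A)=H_{s}(F(\mathcal P_{\bullet}))$. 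Now apply $\langle\bullet,T\rangle$. Because $T$ is injective, $\langle\bullet,T\rangle$ is exact (this exactness is precisely what makes Proposition~\ref{Prop-Duality} work), so it carries the resolution $\mathcal P_{\bullet}\to\mathcal A$ to a resolution $\langle\mathcal A,T\rangle\to\langle\mathcal P_{\bullet},T\rangle$; moreover, by Definition~\ref{Def-quasi-projective-(pre)cosheaf} each $\langle\mathcal P_{i},T\rangle$ is an \emph{injective} (pre)sheaf, so this is an injective resolution of $\langle\mathcal A,T\rangle$. Exactness of $\langle\bullet,T\rangle$ also allows it to commute with the homology of a complex, converting homological into cohomological degree, and combining this with the degree-$0$ identity on $\mathcal P_{\bullet}$ yields
\begin{equation*}
\langle L_{s}F(\mathcal A),T\rangle=\langle H_{s}(F(\mathcal P_{\bullet})),T\rangle\simeq H^{s}(\langle F(\mathcal P_{\bullet}),T\rangle)\simeq H^{s}(F^{\vee}(\langle\mathcal P_{\bullet},T\rangle))=R^{s}F^{\vee}(\langle\mathcal A,T\rangle).
\end{equation*}

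The last equality uses that each $F^{\vee}$ is left exact and that $R^{s}F^{\vee}$ is its classical right derived functor, computed by any injective (indeed $F^{\vee}$-acyclic) resolution; injective (pre)sheaves are acyclic for all of $H^{s}(U,\bullet)$, $\mathcal H^{s}$, $R^{s}F_{\ast}$, $H^{s}(R,\bullet)$ and $\check H^{s}$, and $\langle\mathcal P_{\bullet},T\rangle$ is such a resolution of $\langle\mathcal A,T\rangle$. Naturality of the isomorphism in $\mathcal A$, and the fact that it is an isomorphism of (co)homological $\delta$-functors rather than merely of individual groups, follow from the functoriality of $Q$-projective resolutions up to chain homotopy and from the naturality of the degree-$0$ identities listed above.

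The main obstacle I anticipate is bookkeeping rather than any single computation: one must verify that $Q$ is $F$-projective for \emph{all five} functors --- Theorem~\ref{Th-Three-F-projective} covers three of them on cosheaves, while the $H_{0}(R,\bullet)$ and $\check H_{0}$ cases on precosheaves require the corresponding $F$-projectivity of $Q(\mathbf{pCS}(X,\mathbf{Pro}(k)))$ --- and one must check that the degree-$0$ dualities are natural enough (in $\mathcal A$, in $U$, in $R$, in $f$) to be transported term-by-term through a resolution and to be compatible with the connecting homomorphisms. Everything else --- exactness of $\langle\bullet,T\rangle$ on injective $T$, its commuting with homology, and the acyclicity of injective (pre)sheaves --- is routine homological algebra once those inputs are in place.
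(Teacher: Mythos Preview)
Your proposal is correct and is precisely the detailed unpacking of the paper's own one-line proof, which simply reads ``Follows from Proposition~\ref{Prop-Duality}.'' What you have written out explicitly --- dualizing a quasi-projective resolution to an injective resolution via the exact pairing $\langle\bullet,T\rangle$, invoking the degree-$0$ identities, and computing the derived functors on the sheaf side --- is exactly the content that the paper's reference to the duality principle is meant to encode.
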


\begin{proof}
Follows from Proposition \ref{Prop-Duality}.
\end{proof}

\section{\label{Sec-Site-morphisms}Site morphisms}

\subsection{Satellites $L_{s}f_{\ast }$}

Following \cite[p. 409]%
{MacLane-Moerdijk-1994-Sheaves-in-geometry-and-logic-MR1300636}, define a
morphism by the following.

\begin{definition}
\label{Def-Site-morphism}Let $X=\left( \mathbf{C}_{X},Cov\left( X\right)
\right) $ and $Y=\left( \mathbf{C}_{Y},Cov\left( Y\right) \right) $ be
sites, and let $F:\mathbf{C}_{Y}\longrightarrow \mathbf{C}_{X}$ be a
functor. We say that $F$ is a \textbf{site morphism} (in the sense of \cite[%
Theorem VII.10.2]%
{MacLane-Moerdijk-1994-Sheaves-in-geometry-and-logic-MR1300636}), iff:

\begin{enumerate}
\item $\mathbf{C}_{Y}$ admits finite limits.

\item $F$ is left exact, i.e., preserves finite limits.

\item For any sieve $R\in Cov\left( U\right) $, $U\in \mathbf{C}_{Y}$ the
sieve generated by $F\left( R\right) $ is covering (belongs to $Cov\left(
F\left( U\right) \right) $).
\end{enumerate}
\end{definition}

\begin{proposition}
\label{Prop-Site-morphism}The functor $F$ from Definition \ref%
{Def-Site-morphism} induces a geometric morphism%
\begin{equation*}
\left( \varphi ^{\ast },\varphi _{\ast }\right) :\mathbf{S}\left( X,\mathbf{%
Set}\right) \longrightarrow \mathbf{S}\left( Y,\mathbf{Set}\right)
\end{equation*}%
where:

\begin{enumerate}
\item $\varphi _{\ast }\mathcal{A}%
{:=}%
\mathcal{A}\circ F^{op}$, $\mathcal{A}\in \mathbf{S}\left( X,\mathbf{Set}%
\right) $.

\item $\varphi ^{\ast }\mathcal{B}%
{:=}%
\left( F^{\dag }\mathcal{B}\right) ^{\#}$, $\mathcal{B}\in \mathbf{S}\left(
Y,\mathbf{Set}\right) $, where $\left( \bullet \right) ^{\dag }$ is the left
Kan extension, see Notation \ref{Not-Categories}(\ref{Def-Kan-extensions}).

\item $\varphi ^{\ast }$ is exact.
\end{enumerate}
\end{proposition}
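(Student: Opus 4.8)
The plan is to reconstruct the classical geometric morphism attached to a site morphism (\cite[Theorem VII.10.2]{MacLane-Moerdijk-1994-Sheaves-in-geometry-and-logic-MR1300636}) from the Kan-extension adjunction of Notation \ref{Not-Categories}(\ref{Def-Kan-extensions}) together with sheafification. Throughout, write $\Phi _{\ast }:=\left( \bullet \right) \circ F^{op}:\mathbf{pS}\left( X,\mathbf{Set}\right) \rightarrow \mathbf{pS}\left( Y,\mathbf{Set}\right) $ for precomposition at the presheaf level; by Notation \ref{Not-Categories}(\ref{Def-Kan-extensions-left}) it has a left adjoint $F^{\dag }$ (the left Kan extension along $F^{op}$, which exists since $\mathbf{Set}$ is cocomplete and $\mathbf{C}_{Y}$ small), and $F^{\dag }$ sends the representable $h_{V}$ to $h_{F\left( V\right) }$. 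Let $\iota _{X},\iota _{Y}$ denote the inclusions of sheaves into presheaves, $\left( \bullet \right) ^{\#}$ sheafification on $X$, and abbreviate $\mathbf{S}\left( X\right) :=\mathbf{S}\left( X,\mathbf{Set}\right) $, etc. \emph{Step 1: $\varphi _{\ast }$ lands in sheaves.} By axiom (3) of Definition \ref{Def-Site-morphism}, for $U\in \mathbf{C}_{Y}$ and $R\in Cov\left( U\right) $ the sieve $\overline{F\left( R\right) }$ generated by $F\left( R\right) $ lies in $Cov\left( F\left( U\right) \right) $. I would verify the sheaf condition for $\varphi _{\ast }\mathcal{A}=\mathcal{A}\circ F^{op}$ at $\left( U,R\right) $ by a standard (if slightly fiddly) comparison identifying $\underleftarrow{\lim }_{\left( V\rightarrow U\right) \in R}\mathcal{A}\left( F\left( V\right) \right) $ with the limit expressing the sheaf condition for $\mathcal{A}$ at $\left( F\left( U\right) ,\overline{F\left( R\right) }\right) $; here left exactness of $F$ (axiom (2)) is exactly what makes the compatibility pullbacks $V\times _{U}V^{\prime }$ correspond to $F\left( V\right) \times _{F\left( U\right) }F\left( V^{\prime }\right) $. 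This proves item (1) and, moreover, yields the identity $\iota _{Y}\circ \varphi _{\ast }=\Phi _{\ast }\circ \iota _{X}$.

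\emph{Step 2: the adjunction $\varphi ^{\ast }\dashv \varphi _{\ast }$ (items (1)--(2)).} Put $\varphi ^{\ast }:=\left( \bullet \right) ^{\#}\circ F^{\dag }\circ \iota _{Y}$, which is precisely $\mathcal{B}\mapsto \left( F^{\dag }\mathcal{B}\right) ^{\#}$. The adjunction then follows formally by splicing three adjunctions --- sheafification $\dashv \iota _{X}$ on $X$, the Kan adjunction $F^{\dag }\dashv \Phi _{\ast }$, and full faithfulness of $\iota _{Y}$ --- using the identity $\Phi _{\ast }\iota _{X}=\iota _{Y}\varphi _{\ast }$ from Step 1: this produces a natural isomorphism $\Hom_{\mathbf{S}\left( X\right) }\left( \varphi ^{\ast }\mathcal{B},\mathcal{A}\right) \cong \Hom_{\mathbf{S}\left( Y\right) }\left( \mathcal{B},\varphi _{\ast }\mathcal{A}\right) $, a routine diagram chase.

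\emph{Step 3: exactness of $\varphi ^{\ast }$ (item (3)) --- the main obstacle.} Being a left adjoint, $\varphi ^{\ast }$ preserves all colimits, so only preservation of finite limits is in doubt. Since $\iota _{Y}$ is a right adjoint it is left exact, and sheafification on $X$ is exact (classical; cf. the dual statement Proposition \ref{Prop-Cosheafification}(\ref{Prop-Cosheafification-exact})); hence the entire question reduces to showing that \emph{$F^{\dag }$ preserves finite limits}. This is the one genuinely non-formal point of the proof, and it is where axioms (1)--(2) of Definition \ref{Def-Site-morphism} enter decisively. The functor $F^{\dag }$ is cocontinuous, and its restriction along the Yoneda embedding $\mathbf{C}_{Y}\hookrightarrow \mathbf{pS}\left( Y,\mathbf{Set}\right) $ is $V\mapsto h_{F\left( V\right) }$, that is, the Yoneda embedding of $X$ composed with $F$; since Yoneda is left exact and $F$ preserves finite limits, this restriction preserves the finite limits that exist in $\mathbf{C}_{Y}$ (which it has, by axiom (1)). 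By the standard criterion that a colimit-preserving functor out of a presheaf topos into a category with finite limits is left exact precisely when its restriction to representables is (\cite[VII.6--VII.7 and Theorem VII.10.2]{MacLane-Moerdijk-1994-Sheaves-in-geometry-and-logic-MR1300636}), $F^{\dag }$ is left exact. Therefore $\varphi ^{\ast }$ is a composite of left-exact functors, hence left exact, hence --- being also a left adjoint --- exact, which is item (3).
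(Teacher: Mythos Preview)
Your argument is correct and is essentially a reconstruction of the result the paper invokes: the paper's own proof is simply the one-line citation ``See \cite[Theorem VII.10.2]{MacLane-Moerdijk-1994-Sheaves-in-geometry-and-logic-MR1300636}'', so you have supplied the content behind that reference rather than taken a different route.

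One small remark on Step~3: your appeal to the abstract ``cocontinuous and left exact on representables $\Rightarrow$ left exact'' criterion is fine, but the paper (in the proof of Proposition~\ref{Prop-Site-morphism-cosheaves}(7), which treats the $\mathbf{Mod}(k)$ analogue) spells this out more concretely via the pointwise Kan-extension formula: since $F$ preserves finite limits and $\mathbf{C}_{Y}$ has them, each comma category $W\downarrow F$ is cofiltered, so $(F^{\dag}\mathcal{B})(W)=\varinjlim_{W\to F(V)}\mathcal{B}(V)$ is a filtered colimit and hence commutes with finite limits in $\mathbf{Set}$. That direct argument is what your cited criterion unpacks to, and it avoids the need to quote the general flatness theorem.
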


\begin{proof}
See \cite[Theorem VII.10.2]%
{MacLane-Moerdijk-1994-Sheaves-in-geometry-and-logic-MR1300636}.
\end{proof}

\begin{proposition}
\label{Prop-Site-morphism-cosheaves}Let%
\begin{equation*}
F:\mathbf{C}_{Y}\longrightarrow \mathbf{C}_{X}
\end{equation*}%
be a site morphism (Definition \ref{Def-Site-morphism})%
\begin{equation*}
X=\left( \mathbf{C}_{X},Cov\left( X\right) \right) \longrightarrow Y=\left( 
\mathbf{C}_{Y},Cov\left( Y\right) \right) .
\end{equation*}%
Denote by%
\begin{equation*}
f_{\ast }:\mathbf{pCS}\left( X,\mathbf{Pro}\left( k\right) \right)
\longrightarrow \mathbf{pCS}\left( Y,\mathbf{Pro}\left( k\right) \right)
\end{equation*}%
the functor $\mathcal{A}\longmapsto \mathcal{A}\circ F^{op}$, by%
\begin{equation*}
F_{\ast }:\mathbf{pS}\left( X,\mathbf{Pro}\left( k\right) \right)
\longrightarrow \mathbf{pS}\left( Y,\mathbf{Pro}\left( k\right) \right)
\end{equation*}%
the functor $\mathcal{B}\longmapsto \mathcal{B}\circ F^{op}$, by 
\begin{equation*}
f^{\ast }\left( \mathcal{A}\right) 
{:=}%
\left( F^{\ddag }\left( \mathcal{A}\right) \right) _{\#},
\end{equation*}%
where%
\begin{equation*}
F^{\ddag }:\mathbf{pCS}\left( Y,\mathbf{Pro}\left( k\right) \right)
\longrightarrow \mathbf{pCS}\left( X,\mathbf{Pro}\left( k\right) \right)
\end{equation*}%
is the right Kan extension, see Notation \ref{Not-Categories}(\ref%
{Def-Kan-extensions}), and by%
\begin{equation*}
F^{\ast }\left( \mathcal{B}\right) 
{:=}%
\left( F^{\dag }\left( \mathcal{B}\right) \right) ^{\#},
\end{equation*}%
where%
\begin{equation*}
F^{\dag }:\mathbf{pS}\left( Y,\mathbf{Mod}\left( k\right) \right)
\longrightarrow \mathbf{pS}\left( X,\mathbf{Mod}\left( k\right) \right)
\end{equation*}%
is the left Kan extension, see Notation \ref{Not-Categories}(\ref%
{Def-Kan-extensions}). Let also $T$ be an \textbf{arbitrary} injective $k$%
-module. Then:

\begin{enumerate}
\item $\left\langle f_{\ast }\bullet ,T\right\rangle 
\simeq%
F_{\ast }\left\langle \bullet ,T\right\rangle $.

\item $\left\langle f^{\ast }\bullet ,T\right\rangle 
\simeq%
F^{\ast }\left\langle \bullet ,T\right\rangle $.

\item $F_{\ast }$ sends sheaves to sheaves.

\item $f_{\ast }$ sends \textbf{co}sheaves to \textbf{co}sheaves.

\item $F^{\ast }\dashv F_{\ast }$, i.e., $F^{\ast }$ is left adjoint to $%
F_{\ast }$. Moreover,%
\begin{equation*}
F^{\ast }\circ \left( \bullet \right) ^{\#}%
\simeq%
\left( \bullet \right) ^{\#}\circ F^{\dag }:\mathbf{pS}\left( Y,\mathbf{Mod}%
\left( k\right) \right) \longrightarrow \mathbf{S}\left( X,\mathbf{Mod}%
\left( k\right) \right) .
\end{equation*}

\item \label{Prop-Site-morphism-cosheaves-right-adjoint}$f_{\ast }\dashv
f^{\ast }$, i.e., $f^{\ast }$ is right adjoint to $f_{\ast }$. Moreover,%
\begin{equation*}
f^{\ast }\circ \left( \bullet \right) _{\#}%
\simeq%
\left( \bullet \right) _{\#}\circ f^{\ddag }:\mathbf{pCS}\left( Y,\mathbf{Pro%
}\left( k\right) \right) \longrightarrow \mathbf{CS}\left( X,\mathbf{Pro}%
\left( k\right) \right) .
\end{equation*}

\item $F^{\ast }$ is exact.

\item \label{Prop-Site-morphism-cosheaves-exact}$f^{\ast }$ is exact.

\item $F_{\ast }$ sends injective sheaves to injective sheaves.

\item \label{Prop-Site-morphism-cosheaves-quasi-projective}$f_{\ast }$ sends
quasi-projective cosheaves to quasi-projective cosheaves.
\end{enumerate}
\end{proposition}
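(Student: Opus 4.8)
The plan is to derive everything from the duality principle of Proposition~\ref{Prop-Duality}, translating each assertion about (pre)cosheaves into the corresponding assertion about (pre)sheaves for $F_{\ast }$, $F^{\ast }$, $F^{\dag }$, $F^{\ddag }$ with values in $\mathbf{Mod}\left( k\right) $, where the statements are either the $\mathbf{Mod}\left( k\right) $-valued forms of Proposition~\ref{Prop-Site-morphism} (whose proofs in \cite[Theorem~VII.10.2]{MacLane-Moerdijk-1994-Sheaves-in-geometry-and-logic-MR1300636} carry over verbatim from $\mathbf{Set}$, the sheaf and descent conditions being limit conditions insensitive to the complete target category) or routine homological algebra. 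First one records that $F^{\ddag }$ and $F^{\dag }$ exist and are given by the pointwise formulas of Notation~\ref{Not-Categories}(\ref{Def-Kan-extensions}): the comma-categories $U\downarrow F$, $U\in \mathbf{C}_{X}$, are small since the sites are small, $\mathbf{Mod}\left( k\right) $ is complete and cocomplete, and $\mathbf{Pro}\left( k\right) $ admits small limits by Proposition~\ref{Prop-Facts-pro-objects}(\ref{Prop-Facts-pro-objects-admits-small-limits}); hence $f_{\ast }\dashv F^{\ddag }$ on precosheaves and $F^{\dag }\dashv F_{\ast }$ on presheaves.

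Item (1) is immediate, since $f_{\ast }$ and $F_{\ast }$ are both ``precompose with $F$'' and the pairing is computed objectwise: $\left\langle f_{\ast }\mathcal{A},T\right\rangle \left( U\right) =\left\langle \mathcal{A}\left( F\left( U\right) \right) ,T\right\rangle =\left( F_{\ast }\left\langle \mathcal{A},T\right\rangle \right) \left( U\right) $, naturally in $U$. Item (3) is the $\mathbf{Mod}\left( k\right) $-valued form of Proposition~\ref{Prop-Site-morphism}, using only axiom~(3) of Definition~\ref{Def-Site-morphism}. Item (4) then follows from (3) by Proposition~\ref{Prop-Duality}: if $\mathcal{A}$ is a cosheaf, each $\left\langle \mathcal{A},T\right\rangle $ is a sheaf, hence each $F_{\ast }\left\langle \mathcal{A},T\right\rangle \simeq \left\langle f_{\ast }\mathcal{A},T\right\rangle $ is a sheaf, hence $f_{\ast }\mathcal{A}$ is a cosheaf.

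For (2) I would combine Proposition~\ref{Prop-Cosheafification}(4), giving $\left\langle \left( F^{\ddag }\mathcal{A}\right) _{\#},T\right\rangle \simeq \left\langle F^{\ddag }\mathcal{A},T\right\rangle ^{\#}$, with the compatibility $\left\langle F^{\ddag }\mathcal{A},T\right\rangle \simeq F^{\dag }\left\langle \mathcal{A},T\right\rangle $; the latter is obtained by applying $\left\langle \bullet ,T\right\rangle $ to the adjunction $f_{\ast }\dashv F^{\ddag }$ and invoking (1), which transposes it into the presheaf adjunction with left adjoint $F^{\dag }$ that characterises the left Kan extension (equivalently, one checks it from the pointwise limit formula of Notation~\ref{Not-Categories}(\ref{Def-Kan-extensions-right}) via Proposition~\ref{Prop-Duality} applied term by term). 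Item (5) is the $\mathbf{Mod}\left( k\right) $-valued form of Proposition~\ref{Prop-Site-morphism}: from $F^{\dag }\dashv F_{\ast }$, the presheaf--sheaf adjunction, and $\left( F^{\dag }\left( \mathcal{B}^{\#}\right) \right) ^{\#}\simeq \left( F^{\dag }\mathcal{B}\right) ^{\#}$ (which follows from (3), as in \cite[Theorem~VII.10.2]{MacLane-Moerdijk-1994-Sheaves-in-geometry-and-logic-MR1300636}), one gets that $F^{\ast }=\left( F^{\dag }\left( \bullet \right) \right) ^{\#}$ is left adjoint to $F_{\ast }$ and $F^{\ast }\circ \left( \bullet \right) ^{\#}\simeq \left( \bullet \right) ^{\#}\circ F^{\dag }$. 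Then (6) follows either by dualizing (5) through (1), (2) and Proposition~\ref{Prop-Duality}, or directly by composing the adjunctions $f_{\ast }\dashv F^{\ddag }$ and $\iota \dashv \left( \bullet \right) _{\#}$ (Proposition~\ref{Prop-Cosheafification}(8)), which also yields $f^{\ast }\circ \left( \bullet \right) _{\#}\simeq \left( \bullet \right) _{\#}\circ F^{\ddag }$.

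Finally, (7) is the exactness of the geometric inverse image ($F^{\ast }$ is right exact as a left adjoint and left exact because $F$ preserves finite limits), once more the $\mathbf{Mod}\left( k\right) $-valued form of Proposition~\ref{Prop-Site-morphism}. Item (8) follows by duality: exactness of $f^{\ast }$ between the abelian categories $\mathbf{CS}\left( Y,\mathbf{Pro}\left( k\right) \right) $ and $\mathbf{CS}\left( X,\mathbf{Pro}\left( k\right) \right) $ means preservation of kernels and cokernels, which by (2) and Proposition~\ref{Prop-Duality} is equivalent to preservation of cokernels and kernels by $F^{\ast }$, i.e. to (7). Item (9) is then formal: since $F^{\ast }$ is exact and left adjoint to $F_{\ast }$, the functor $\Hom _{\mathbf{S}\left( Y\right) }\left( \bullet ,F_{\ast }I\right) \simeq \Hom _{\mathbf{S}\left( X\right) }\left( F^{\ast }\bullet ,I\right) $ is exact for injective $I$, so $F_{\ast }I$ is injective. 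Item (10) follows from (9) and (1) via Definition~\ref{Def-quasi-projective-(pre)cosheaf}: if $\mathcal{A}$ is quasi-projective then $\left\langle \mathcal{A},T\right\rangle $ is an injective sheaf for every injective $T$, hence so is $\left\langle f_{\ast }\mathcal{A},T\right\rangle \simeq F_{\ast }\left\langle \mathcal{A},T\right\rangle $, hence $f_{\ast }\mathcal{A}$ is quasi-projective. The main obstacle is (2): one must be certain the duality machinery genuinely converts the (possibly infinite) limits defining $F^{\ddag }$ into the colimits defining $F^{\dag }$ after pairing with an injective module, with all variances tracked correctly; granting that, everything else is bookkeeping with adjoints.
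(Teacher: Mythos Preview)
Your proof is correct and follows essentially the same route as the paper: items (1), (3), (5), (7), (9) are handled as the $\mathbf{Mod}(k)$-valued sheaf statements, and items (2), (4), (6), (8), (10) are deduced from their sheaf counterparts via the duality pairing $\langle\bullet,T\rangle$, exactly as you outline. The only place the paper adds detail is (7), where instead of citing Proposition~\ref{Prop-Site-morphism} it spells out that $F$ preserving finite limits makes each comma-category $U\downarrow F$ cofiltered (via \cite[Proposition~3.3.3]{Kashiwara-Categories-MR2182076}), so that $F^{\dag}\mathcal{A}(U)$ is a \emph{filtered} colimit in $\mathbf{Mod}(k)$ and hence left exact.
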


\begin{proof}
~

\begin{enumerate}
\item Evident.

\item Let $\mathcal{A}\in \mathbf{pCS}\left( X,\mathbf{Pro}\left( k\right)
\right) $. Then%
\begin{equation*}
\left\langle f^{\ast }\mathcal{A},T\right\rangle 
\simeq%
\left\langle \left( F^{\ddag }\left( \mathcal{A}\right) \right)
_{\#},T\right\rangle 
\simeq%
\left\langle F^{\ddag }\left( \mathcal{A}\right) ,T\right\rangle ^{\#}%
\simeq%
\left( F^{\dag }\left\langle \mathcal{A},T\right\rangle \right) ^{\#}%
\simeq%
F^{\ast }\left\langle \mathcal{A},T\right\rangle .
\end{equation*}

\item Follows from Proposition \ref{Prop-Site-morphism}.

\item Let $\mathcal{A}\in \mathbf{CS}\left( X,\mathbf{Pro}\left( k\right)
\right) $. It follows that $\left\langle \mathcal{A},T\right\rangle $ is a
sheaf. Then $\left\langle f_{\ast }\mathcal{A},T\right\rangle =F_{\ast
}\left\langle \mathcal{A},T\right\rangle $ is a sheaf as well, therefore $%
f_{\ast }\mathcal{A}$ is a \textbf{co}sheaf.

\item Let $\mathcal{A}\in \mathbf{S}\left( X,\mathbf{Mod}\left( k\right)
\right) $, and $\mathcal{B}\in \mathbf{S}\left( Y,\mathbf{Mod}\left(
k\right) \right) $. Then, naturally in $\mathcal{A}$ and $\mathcal{B}$,%
\begin{eqnarray*}
&&%
\Hom%
_{\mathbf{S}\left( Y,\mathbf{Mod}\left( k\right) \right) }\left( \mathcal{A}%
,F_{\ast }\mathcal{B}\right) 
\simeq%
\Hom%
_{\mathbf{pS}\left( Y,\mathbf{Mod}\left( k\right) \right) }\left( \mathcal{A}%
,F_{\ast }\mathcal{B}\right) 
\simeq%
\Hom%
_{\mathbf{pS}\left( X,\mathbf{Mod}\left( k\right) \right) }\left( F^{\dag }%
\mathcal{A},\mathcal{B}\right) 
\simeq
\\
&&%
\Hom%
_{\mathbf{S}\left( X,\mathbf{Mod}\left( k\right) \right) }\left( \left(
F^{\dag }\left( \mathcal{A}\right) \right) ^{\#},\mathcal{B}\right) 
\simeq%
\Hom%
_{\mathbf{S}\left( X,\mathbf{Mod}\left( k\right) \right) }\left( F^{\ast
}\left( \mathcal{A}\right) ,\mathcal{B}\right) .
\end{eqnarray*}%
Moreover, consider the following commutative (up to an isomorphism) diagram
of functors%
\begin{equation*}
\begin{diagram}[size=3.0em,textflow]
\mathbf{S}\left( X,\mathbf{Mod}\left( k\right) \right) & \rTo^{F_{\ast }} & \mathbf{S}\left( Y,\mathbf{Mod}\left( k\right) \right) \\
\dTo^{\iota _{X}} & & \dTo^{\iota _{Y}} \\
\mathbf{pS}\left( X,\mathbf{Mod}\left( k\right) \right) & \rTo^{F_{\ast }} & \mathbf{pS}\left( Y,\mathbf{Mod}\left( k\right) \right) \\
\end{diagram}%
\end{equation*}%
It follows that the corresponding diagram of \textbf{left} adjoints%
\begin{equation*}
\begin{diagram}[size=3.0em,textflow]
\mathbf{S}\left( X,\mathbf{Mod}\left( k\right) \right) & \lTo^{F^{\ast }} & \mathbf{S}\left( Y,\mathbf{Mod}\left( k\right) \right) \\
\uTo_{\left( \bullet \right)^{\#}} & & \uTo_{\left( \bullet \right)^{\#}} \\
\mathbf{pS}\left( X,\mathbf{Mod}\left( k\right) \right) & \lTo^{F^{\dag }} & \mathbf{pS}\left( Y,\mathbf{Mod}\left( k\right) \right) \\
\end{diagram}
%
\end{equation*}%
is commutative up to an isomorphism, too.

\item Let $\mathcal{A}\in \mathbf{CS}\left( X,\mathbf{Pro}\left( k\right)
\right) $ and $\mathcal{B}\in \mathbf{CS}\left( Y,\mathbf{Pro}\left(
k\right) \right) $. Then, naturally in $\mathcal{A}$ and $\mathcal{B}$,%
\begin{eqnarray*}
&&%
\Hom%
_{\mathbf{CS}\left( Y,\mathbf{Pro}\left( k\right) \right) }\left( f_{\ast }%
\mathcal{A},\mathcal{B}\right) 
\simeq%
\Hom%
_{\mathbf{pCS}\left( Y,\mathbf{Pro}\left( k\right) \right) }\left( f_{\ast }%
\mathcal{A},\mathcal{B}\right) 
\simeq%
\Hom%
_{\mathbf{pCS}\left( X,\mathbf{Pro}\left( k\right) \right) }\left( \mathcal{A%
},f^{\ddag }\left( \mathcal{B}\right) \right) 
\simeq
\\
&&%
\Hom%
_{\mathbf{CS}\left( X,\mathbf{Pro}\left( k\right) \right) }\left( \mathcal{A}%
,\left( f^{\ddag }\left( \mathcal{B}\right) \right) _{\#}\right) 
\simeq%
\Hom%
_{\mathbf{CS}\left( X,\mathbf{Pro}\left( k\right) \right) }\left( \mathcal{A}%
,f^{\ast }\left( \mathcal{B}\right) \right) .
\end{eqnarray*}%
Moreover, consider the following commutative (up to an isomorphism) diagram
of functors%
\begin{equation*}
\begin{diagram}[size=3.0em,textflow]
\mathbf{S}\left( X,\mathbf{Pro}\left( k\right) \right) & \rTo^{f_{\ast }} & \mathbf{S}\left( Y,\mathbf{Pro}\left( k\right) \right) \\
\dTo^{\iota _{X}} & & \dTo^{\iota _{Y}} \\
\mathbf{pS}\left( X,\mathbf{Pro}\left( k\right) \right) & \rTo^{f_{\ast }} & \mathbf{pS}\left( Y,\mathbf{Pro}\left( k\right) \right) \\
\end{diagram}
%
\end{equation*}%
It follows that the corresponding diagram of \textbf{right} adjoints%
\begin{equation*}
\begin{diagram}[size=3.0em,textflow]
\mathbf{S}\left( X,\mathbf{Pro}\left( k\right) \right) & \lTo^{f^{\ast }} & \mathbf{S}\left( Y,\mathbf{Pro}\left( k\right) \right) \\
\uTo_{\left( \bullet \right)^{\#}} & & \uTo_{\left( \bullet \right)^{\#}} \\
\mathbf{pS}\left( X,\mathbf{Pro}\left( k\right) \right) & \lTo^{f^{\ddag }} & \mathbf{pS}\left( Y,\mathbf{Pro}\left( k\right) \right) \\
\end{diagram}
%
\end{equation*}%
is commutative up to an isomorphism, too.

\item ~

\begin{enumerate}
\item Since $F^{\ast }$ is left adjoint, it preserves arbitrary small (e.g.,
finite) colimits, i.e., $F^{\ast }$ is \textbf{right} exact.

\item $F^{\ast }$ is the composition $\left( \bullet \right) ^{\#}\circ
F^{\dag }\circ \iota $ of three functors%
\begin{eqnarray*}
\iota &:&\mathbf{S}\left( Y,\mathbf{Mod}\left( k\right) \right)
\longrightarrow \mathbf{pS}\left( Y,\mathbf{Mod}\left( k\right) \right) , \\
F^{\dag } &:&\mathbf{pS}\left( Y,\mathbf{Mod}\left( k\right) \right)
\longrightarrow \mathbf{pS}\left( X,\mathbf{Mod}\left( k\right) \right) , \\
\left( \bullet \right) ^{\#} &:&\mathbf{pS}\left( X,\mathbf{Mod}\left(
k\right) \right) \longrightarrow \mathbf{S}\left( X,\mathbf{Mod}\left(
k\right) \right) .
\end{eqnarray*}%
The inclusion $\iota $, being right adjoint to $\left( \bullet \right) ^{\#}$%
, preserves arbitrary small (e.g., finite) limits, therefore is \textbf{left}
exact. It is well-known that the functor $\left( \bullet \right) ^{\#}$ is
exact.

\item It remains to show that $F^{\dag }$ is left exact. Since $F$ preserves
finite limits, it follows from \cite[Proposition 3.3.3 and Definition 3.3.1]%
{Kashiwara-Categories-MR2182076}, that the comma-category $U\downarrow F$ is 
\textbf{co}filtered for any $U\in \mathbf{C}_{X}$. Therefore the category%
\begin{equation*}
\left( U\downarrow F\right) ^{op}%
\simeq%
F^{op}\downarrow U
\end{equation*}%
is filtered. Remind that%
\begin{equation*}
F^{\dag }\mathcal{A}\left( U\right) =\underset{F\left( V\right) \rightarrow V%
}{\underrightarrow{\lim }}~\mathcal{A}\left( V\right) \in \mathbf{Mod}\left(
k\right) .
\end{equation*}%
Filtered colimits are exact in the category $\mathbf{Ab}$, therefore $%
F^{\dag }$ and $F^{\ast }$ are exact.
\end{enumerate}

\item 
\begin{equation*}
\left\langle f^{\ast }\bullet ,T\right\rangle 
\simeq%
F^{\ast }\left\langle \bullet ,T\right\rangle :\mathbf{S}\left( Y,\mathbf{Mod%
}\left( k\right) \right) \longrightarrow \mathbf{S}\left( X,\mathbf{Mod}%
\left( k\right) \right)
\end{equation*}%
is exact, therefore%
\begin{equation*}
f^{\ast }:\mathbf{CS}\left( Y,\mathbf{Pro}\left( k\right) \right)
\longrightarrow \mathbf{CS}\left( X,\mathbf{Pro}\left( k\right) \right)
\end{equation*}%
is exact as well.

\item Let $\mathcal{B}\in \mathbf{S}\left( Y,\mathbf{Mod}\left( k\right)
\right) $ be an injective sheaf. The functor%
\begin{equation*}
\Hom%
_{\mathbf{S}\left( X,\mathbf{Mod}\left( k\right) \right) }\left( \bullet
,F_{\ast }\mathcal{B}\right) 
\simeq%
\Hom%
_{\mathbf{S}\left( Y,\mathbf{Mod}\left( k\right) \right) }\left( F^{\ast
}\bullet ,\mathcal{B}\right)
\end{equation*}%
is exact because $F^{\ast }$ is exact and $\mathcal{B}$ is injective.
Therefore $F_{\ast }\mathcal{B}$ is injective.

\item Let $\mathcal{A}\in \mathbf{CS}\left( Y,\mathbf{Pro}\left( k\right)
\right) $ be quasi-projective, i.e., $\left\langle \mathcal{A}%
,T\right\rangle $ is an injective sheaf. Then%
\begin{equation*}
\left\langle f_{\ast }\mathcal{A},T\right\rangle 
\simeq%
F_{\ast }\left\langle \mathcal{A},T\right\rangle
\end{equation*}%
is injective, thus $f_{\ast }\mathcal{A}$ is quasi-projective.
\end{enumerate}
\end{proof}

\begin{proposition}
\label{Prop-Site-morphism-satellites}The full subcategory of
quasi-projective cosheaves%
\begin{equation*}
\mathbf{P=}Q\left( \mathbf{CS}\left( X,\mathbf{Pro}\left( k\right) \right)
\right) \subseteq \mathbf{CS}\left( X,\mathbf{Pro}\left( k\right) \right)
\end{equation*}%
is $f_{\ast }$-projective where%
\begin{equation*}
f_{\ast }:\mathbf{CS}\left( X,\mathbf{Pro}\left( k\right) \right)
\longrightarrow \mathbf{CS}\left( Y,\mathbf{Pro}\left( k\right) \right) .
\end{equation*}
\end{proposition}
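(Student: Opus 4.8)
The plan is to verify the two conditions that define an $f_{\ast}$-projective subcategory (\cite[Definition A.3.1]{Prasolov-Cosheaves-2021-MR4347662}): first, that $\mathbf{P}=Q\left( \mathbf{CS}\left( X,\mathbf{Pro}\left( k\right) \right) \right) $ is a generating quasi-projective subcategory of $\mathbf{CS}\left( X,\mathbf{Pro}\left( k\right) \right) $; and second, that for every short exact sequence $0\rightarrow \mathcal{A}^{\prime }\rightarrow \mathcal{A}\rightarrow \mathcal{A}^{\prime \prime }\rightarrow 0$ of cosheaves with $\mathcal{A}^{\prime \prime }\in \mathbf{P}$ the sequence $0\rightarrow f_{\ast }\mathcal{A}^{\prime }\rightarrow f_{\ast }\mathcal{A}\rightarrow f_{\ast }\mathcal{A}^{\prime \prime }\rightarrow 0$ is again exact in $\mathbf{CS}\left( Y,\mathbf{Pro}\left( k\right) \right) $. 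The first condition is nothing but Proposition \ref{Prop-Generating}(1); moreover Proposition \ref{Prop-Site-morphism-cosheaves}(\ref{Prop-Site-morphism-cosheaves-quasi-projective}) already guarantees that $f_{\ast }$ carries $\mathbf{P}$ into $Q\left( \mathbf{CS}\left( Y,\mathbf{Pro}\left( k\right) \right) \right) $. So the whole content reduces to the exactness condition.

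To obtain that, I would pass to the dual side via the pairing. Fix an injective $k$-module $T$. By the duality principle (Proposition \ref{Prop-Duality}), exactness of the given sequence of cosheaves amounts to exactness of the dual sequence of sheaves
\begin{equation*}
0\longrightarrow \langle \mathcal{A}^{\prime \prime },T\rangle \longrightarrow \langle \mathcal{A},T\rangle \longrightarrow \langle \mathcal{A}^{\prime },T\rangle \longrightarrow 0
\end{equation*}
(the arrows are reversed, the pairing being contravariant in the first variable), and here the subobject $\langle \mathcal{A}^{\prime \prime },T\rangle $ is an \emph{injective} sheaf because $\mathcal{A}^{\prime \prime }$ is quasi-projective. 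A short exact sequence whose subobject is injective splits, and the precomposition functor $F_{\ast }$ is additive, hence it preserves this split sequence. Using the natural isomorphism $\langle f_{\ast }\bullet ,T\rangle \simeq F_{\ast }\langle \bullet ,T\rangle $ of Proposition \ref{Prop-Site-morphism-cosheaves}(1), this gives that
\begin{equation*}
0\longrightarrow \langle f_{\ast }\mathcal{A}^{\prime \prime },T\rangle \longrightarrow \langle f_{\ast }\mathcal{A},T\rangle \longrightarrow \langle f_{\ast }\mathcal{A}^{\prime },T\rangle \longrightarrow 0
\end{equation*}
is exact for every injective $T$. Applying the duality principle once more, now in the other direction, yields the exactness of $0\rightarrow f_{\ast }\mathcal{A}^{\prime }\rightarrow f_{\ast }\mathcal{A}\rightarrow f_{\ast }\mathcal{A}^{\prime \prime }\rightarrow 0$, as required.

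The step I expect to need the most care is this two-way use of Proposition \ref{Prop-Duality}: one must be sure that ``a three-term sequence of cosheaves is exact'' genuinely belongs to the class of statements governed by the duality principle — that is, that exactness of cosheaf sequences is detected objectwise in $\mathbf{Pro}\left( k\right) $ and that exactness in $\mathbf{Pro}\left( k\right) $ is tested by $\langle \bullet ,T\rangle $ over all injective $T$ — both of which are among the uses of the principle catalogued in \cite{Prasolov-Cosheaves-2021-MR4347662}. Once that bookkeeping is in place, combining the generating and quasi-projective property of $\mathbf{P}$ from Proposition \ref{Prop-Generating}(1), its stability under $f_{\ast }$, and the exactness just established completes the proof that $\mathbf{P}$ is $f_{\ast }$-projective.
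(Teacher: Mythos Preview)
Your proof is correct and follows essentially the same route as the paper: cite Proposition~\ref{Prop-Generating} for the generating and quasi-projective property, then dualize via $\langle\bullet,T\rangle$, observe that the resulting short exact sequence of sheaves splits because the subobject $\langle\mathcal{A}'',T\rangle$ is injective, apply the additive functor $F_{\ast}$, and invoke Proposition~\ref{Prop-Duality} to return to cosheaves. The only cosmetic difference is that the paper states the exactness condition with both $\mathcal{A},\mathcal{A}''\in\mathbf{P}$ (matching Definition~\ref{Def-F-projective}(3b) here), whereas you assume only $\mathcal{A}''\in\mathbf{P}$; your weaker hypothesis already suffices for the splitting argument, so this is harmless.
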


\begin{proof}
We have already proved that $\mathbf{P}$ is generating and quasi-projective
(see Proposition \ref{Prop-Generating}). Consider now an exact sequence%
\begin{equation*}
0\longrightarrow \mathcal{A}^{\prime }\longrightarrow \mathcal{A}%
\longrightarrow \mathcal{A}^{\prime \prime }\longrightarrow 0
\end{equation*}%
in $\mathbf{CS}\left( X,\mathbf{Pro}\left( k\right) \right) $ with $\mathcal{%
A}$, $\mathcal{A}^{\prime \prime }\in \mathbf{P}$. For any injective $T\in 
\mathbf{Mod}\left( k\right) $, the sequence in $\mathbf{S}\left( X,\mathbf{%
Pro}\left( k\right) \right) $%
\begin{equation*}
0\longrightarrow \left\langle \mathcal{A}^{\prime \prime },T\right\rangle
\longrightarrow \left\langle \mathcal{A},T\right\rangle \longrightarrow
\left\langle \mathcal{A}^{\prime },T\right\rangle \longrightarrow 0
\end{equation*}%
is exact in $\mathbf{S}\left( X,\mathbf{Mod}\left( k\right) \right) $, while 
$\left\langle \mathcal{A}^{\prime \prime },T\right\rangle $ and $%
\left\langle \mathcal{A},T\right\rangle $ are injective. Therefore the above
sequence splits, and%
\begin{equation*}
\left\langle \mathcal{A},T\right\rangle 
\simeq%
\left\langle \mathcal{A}^{\prime \prime },T\right\rangle \oplus \left\langle 
\mathcal{A}^{\prime },T\right\rangle .
\end{equation*}%
Apply the functor $\mathcal{B}\mapsto F_{\ast }\mathcal{B}$ to the split
exact sequence above, and get the following split exact sequences in $%
\mathbf{Mod}\left( k\right) $%
\begin{equation*}
\begin{diagram}[size=3.0em,textflow]
0 & \rTo & F_{\ast }\left\langle \mathcal{A}^{\prime \prime},T\right\rangle & \rTo & F_{\ast }\left\langle \mathcal{A}^{\prime },T\right\rangle & \rTo &  F_{\ast }\left\langle \mathcal{A}^{\prime },T\right\rangle & \rTo & 0  \\
& & \dTo_{=} & & \dTo_{=} & &  \dTo_{=} \\ 
0 & \rTo & \left\langle f_{\ast }\mathcal{A}^{\prime \prime},T\right\rangle & \rTo & \left\langle f_{\ast }\mathcal{A}^{\prime },T\right\rangle & \rTo &  \left\langle f_{\ast }\mathcal{A}^{\prime },T\right\rangle & \rTo & 0  \\
\end{diagram}
%
\end{equation*}%
Due to Proposition \ref{Prop-Duality}, the sequence%
\begin{equation*}
0\longrightarrow f_{\ast }\mathcal{A}^{\prime }\longrightarrow f_{\ast }%
\mathcal{A}\longrightarrow f_{\ast }\mathcal{A}^{\prime \prime
}\longrightarrow 0
\end{equation*}%
is exact in $\mathbf{CS}\left( Y,\mathbf{Pro}\left( k\right) \right) $.
\end{proof}

\begin{corollary}
\label{Cor-Satellites-site-morphisms}~

\begin{enumerate}
\item The left satellites%
\begin{equation*}
L_{s}f_{\ast }:\mathbf{CS}\left( X,\mathbf{Pro}\left( k\right) \right)
\longrightarrow \mathbf{pCS}\left( Y,\mathbf{Pro}\left( k\right) \right)
\end{equation*}%
are well defined.

\item $L_{0}f_{\ast }%
\simeq%
f_{\ast }$.
\end{enumerate}
\end{corollary}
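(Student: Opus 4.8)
The plan is to deduce both assertions by assembling earlier results: part (1) from the general machinery of left satellites attached to an $F$-projective subcategory, and part (2) from the right exactness of $f_{\ast}$.

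For (1): By Proposition \ref{Prop-Generating} the subcategory $\mathbf{P}=Q\left( \mathbf{CS}\left( X,\mathbf{Pro}\left( k\right) \right) \right) $ is generating, and by Proposition \ref{Prop-Site-morphism-satellites} it is $f_{\ast }$-projective. These are precisely the hypotheses under which Definition \ref{Def-Left-satellites} and Proposition \ref{Prop-Left-derived} yield the left derived functor $\mathbb{L}f_{\ast }:D^{\ast }\left( \mathbf{CS}\left( X,\mathbf{Pro}\left( k\right) \right) \right) \rightarrow D^{\ast }\left( \mathbf{pCS}\left( Y,\mathbf{Pro}\left( k\right) \right) \right) $ together with its components $L_{s}f_{\ast }$, $s\geq 0$: one chooses a quasi-projective resolution $\mathcal{P}_{\bullet }\rightarrow \mathcal{A}$ (possible since $\mathbf{P}$ is generating), applies $f_{\ast }$, and sets $L_{s}f_{\ast }\mathcal{A}:=H_{s}\left( f_{\ast }\mathcal{P}_{\bullet }\right) $, the $f_{\ast }$-projectivity of $\mathbf{P}$ guaranteeing independence of the chosen resolution. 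This is the same construction already invoked after Theorem \ref{Th-Three-F-projective}, so nothing beyond citing it is required.

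For (2): It suffices to prove that $f_{\ast }:\mathbf{CS}\left( X,\mathbf{Pro}\left( k\right) \right) \rightarrow \mathbf{pCS}\left( Y,\mathbf{Pro}\left( k\right) \right) $ is right exact, since for a right exact functor the cokernel of $f_{\ast }\mathcal{P}_{1}\rightarrow f_{\ast }\mathcal{P}_{0}$ computes $f_{\ast }\mathcal{A}$, i.e. $L_{0}f_{\ast }\simeq f_{\ast }$. I would establish right exactness in either of two equivalent ways. Directly, $f_{\ast }$ factors as $\mathbf{CS}\left( X,\mathbf{Pro}\left( k\right) \right) \overset{\iota _{X}}{\hookrightarrow }\mathbf{pCS}\left( X,\mathbf{Pro}\left( k\right) \right) \overset{\left( \bullet \right) \circ F^{op}}{\longrightarrow }\mathbf{pCS}\left( Y,\mathbf{Pro}\left( k\right) \right) $; the first functor is a left adjoint, its right adjoint being $\left( \bullet \right) _{\#}$ by Proposition \ref{Prop-Cosheafification}(8), hence preserves colimits, and the second, being precomposition with a functor, commutes with the pointwise colimits of the functor category $\mathbf{pCS}\left( Y,\mathbf{Pro}\left( k\right) \right) $, hence is cocontinuous; so the composite is right exact. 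Alternatively, via the duality principle: by Proposition \ref{Prop-Various-left-satellites}(3), $\left\langle L_{0}f_{\ast }\bullet ,T\right\rangle \simeq R^{0}F_{\ast }\left\langle \bullet ,T\right\rangle \simeq F_{\ast }\left\langle \bullet ,T\right\rangle \simeq \left\langle f_{\ast }\bullet ,T\right\rangle $ for every injective $T$, since $F_{\ast }$ on sheaves is left exact ($F^{\ast }\dashv F_{\ast }$, Proposition \ref{Prop-Site-morphism-cosheaves}(5)); Proposition \ref{Prop-Duality} then gives $L_{0}f_{\ast }\simeq f_{\ast }$.

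There is no genuine obstacle here, as the corollary merely packages earlier results; the one place to exercise care is the orientation of the cosheafification adjunction. Contrary to the sheaf situation, it is the inclusion $\iota _{X}$, not the cosheafification $\left( \bullet \right) _{\#}$, that is the left adjoint and hence right exact, and this is exactly what makes the composite $f_{\ast }$ right exact (rather than left exact) as a functor out of $\mathbf{CS}\left( X,\mathbf{Pro}\left( k\right) \right) $. Once this is observed, both statements follow at once.
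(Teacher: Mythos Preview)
Your proposal is correct and follows essentially the same approach as the paper. For part (1) the paper simply cites Proposition~\ref{Prop-Site-morphism-satellites}, exactly as you do; for part (2) the paper is slightly more direct, observing in one line that $f_{\ast}$ is left adjoint to $f^{\ast}$ (Proposition~\ref{Prop-Site-morphism-cosheaves}(\ref{Prop-Site-morphism-cosheaves-right-adjoint})) and hence right exact, whereas you recover the same right exactness by factoring $f_{\ast}$ through $\iota_{X}$ and precomposition --- a correct but marginally longer route to the same conclusion.
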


\begin{proof}
~

\begin{enumerate}
\item Follows from Proposition \ref{Prop-Site-morphism-satellites}.

\item $f_{\ast }$, being \textbf{left} adjoint to $f^{\ast }$, is \textbf{%
right} exact.
\end{enumerate}
\end{proof}

\subsection{Spectral sequences}

The main result of this section is the following

\begin{theorem}
\label{Th-Grothendieck-spectral-sequence-(pre)cosheaves}Let $f:X\rightarrow
Y $ and $g:Y\rightarrow Z$ be morphisms of sites induced by functors $F:%
\mathbf{C}_{Y}\rightarrow \mathbf{C}_{X}$ and $G:\mathbf{C}_{Z}\rightarrow 
\mathbf{C}_{Y}$, respectively.

\begin{enumerate}
\item If $\mathcal{A}\in \mathbf{CS}\left( X,\mathbf{Pro}\left( k\right)
\right) $ is quasi-projective as a cosheaf, then $\iota \mathcal{A}\in 
\mathbf{pCS}\left( X,\mathbf{Pro}\left( k\right) \right) $ is
quasi-projective as a precosheaf.

\item \label{Th-Grothendieck-spectral-sequence-compare-to-Cech}Let $U\in 
\mathbf{C}_{X}$ and $\mathcal{A}\in \mathbf{CS}\left( X,\mathbf{Pro}\left(
k\right) \right) $

\begin{enumerate}
\item \label{Th-Grothendieck-spectral-sequence-compare-to-Cech-sieve}For any
covering sieve $R$ on $U$ there exists a natural spectral sequence%
\begin{equation*}
E_{s,t}^{2}%
{:=}%
H_{t}\left( R,\mathcal{H}_{s}\left( \mathcal{A}\right) \right) \implies
H_{s+t}\left( U,\mathcal{A}\right) .
\end{equation*}

\item \label{Th-Grothendieck-spectral-sequence-compare-to-Cech-limit}There
exists a natural spectral sequence%
\begin{equation*}
E_{s,t}^{2}=\check{H}_{t}\left( U,\mathcal{H}_{s}\left( \mathcal{A}\right)
\right) \implies H_{s+t}\left( U,\mathcal{A}\right) .
\end{equation*}
\end{enumerate}

\item \label{Th-Grothendieck-spectral-sequence-Leray}(Leray) For each $U\in 
\mathbf{C}_{Y}$ and $\mathcal{A}\in \mathbf{CS}\left( X,\mathbf{Pro}\left(
k\right) \right) $ there exists a natural spectral sequence%
\begin{equation*}
E_{s,t}^{2}%
{:=}%
H_{t}\left( U,L_{s}f_{\ast }\left( \mathcal{A}\right) \right) \implies
H_{s+t}\left( F\left( U\right) ,\mathcal{A}\right) .
\end{equation*}

\item \label{Th-Grothendieck-spectral-sequence-Grothendieck}(Grothendieck)
For each $\mathcal{A}\in \mathbf{CS}\left( X,\mathbf{Pro}\left( k\right)
\right) $ there exists a natural spectral sequence%
\begin{equation*}
E_{s,t}^{2}%
{:=}%
L_{t}g_{\ast }\left( L_{s}f_{\ast }\left( \mathcal{A}\right) \right)
\implies L_{s+t}\left( g\circ f\right) _{\ast }\left( \mathcal{A}\right) .
\end{equation*}
\end{enumerate}
\end{theorem}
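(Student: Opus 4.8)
The whole theorem is a string of applications of the abstract Grothendieck spectral sequence, Theorem \ref{Th-Grothendieck-spectral-sequence}: in each of cases (2)--(4) I would write the target functor as a composite $G\circ F$, verify that $F$ carries an $F$-projective class into a $G$-projective class, and then read off the spectral sequence $L_tG(L_sF(\mathcal{A}))\implies L_{s+t}(G\circ F)(\mathcal{A})$, together with the identification $L_0(G\circ F)\simeq G\circ F$. Part (1) is the little preliminary that makes the compatibility checks in case (2) go through, and it is immediate from the duality principle (Proposition \ref{Prop-Duality}): $\mathcal{A}$ is a quasi-projective cosheaf iff $\langle\mathcal{A},T\rangle$ is an injective sheaf for every injective $T$, and $\iota\mathcal{A}$ is a quasi-projective precosheaf iff $\langle\iota\mathcal{A},T\rangle\simeq\iota\langle\mathcal{A},T\rangle$ is an injective presheaf; so (1) reduces to the well-known fact that the inclusion $\mathbf{S}(X,\mathbf{Mod}(k))\hookrightarrow\mathbf{pS}(X,\mathbf{Mod}(k))$, being right adjoint to the exact sheafification functor $(\bullet)^{\#}$, preserves injectives.

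For (2), the key point is that for a \emph{cosheaf} $\mathcal{A}$ and a covering sieve $R$ on $U$ the morphism $\varphi(U,R)$ is an isomorphism, so that $H_0(R,-)\circ\iota\simeq\Gamma(U,-)$ as functors $\mathbf{CS}(X,\mathbf{Pro}(k))\to\mathbf{Mod}(k)$; likewise $\check{H}_0(U,-)\circ\iota\simeq\Gamma(U,-)$, since $\check{H}_0(U,\iota\mathcal{A})=\underleftarrow{\lim}_{R\in Cov(U)}H_0(R,\iota\mathcal{A})=\underleftarrow{\lim}_{R\in Cov(U)}\mathcal{A}(U)=\mathcal{A}(U)$. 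Now apply Theorem \ref{Th-Grothendieck-spectral-sequence} with $F=\iota\colon\mathbf{CS}(X,\mathbf{Pro}(k))\to\mathbf{pCS}(X,\mathbf{Pro}(k))$ and $G=H_0(R,-)$, respectively $G=\check{H}_0(U,-)$. The $\iota$-projective class is $Q(\mathbf{CS}(X,\mathbf{Pro}(k)))$ (Theorem \ref{Th-Three-F-projective}(2)); by part (1) it is carried by $\iota$ into $Q(\mathbf{pCS}(X,\mathbf{Pro}(k)))$, which by Proposition \ref{Prop-Generating}(2) and the duality principle is $H_0(R,-)$-projective and $\check{H}_0(U,-)$-projective — here one uses that an injective presheaf is acyclic for the left-exact functors $H^0(R,-)$ and $\check{H}^0(U,-)$, so that a short exact sequence of quasi-projective precosheaves stays exact after applying $H_0(R,-)$ or $\check{H}_0(U,-)$. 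Since $L_s\iota=\mathcal{H}_s$, $L_tH_0(R,-)=H_t(R,-)$, $L_t\check{H}_0(U,-)=\check{H}_t(U,-)$ and $L_{s+t}\Gamma(U,-)=H_{s+t}(U,-)$, the spectral sequence reads exactly as in (\ref{Th-Grothendieck-spectral-sequence-compare-to-Cech-sieve}) and (\ref{Th-Grothendieck-spectral-sequence-compare-to-Cech-limit}).

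For (3) and (4) I would use the identities $\Gamma(F(U),-)=\Gamma(U,-)\circ f_{\ast}$ (since $(f_{\ast}\mathcal{A})(U)=\mathcal{A}(F(U))$) and $(g\circ f)_{\ast}=g_{\ast}\circ f_{\ast}$ (the morphism $g\circ f$ is induced by $F\circ G$, and restriction along $(F\circ G)^{op}$ is restriction along $G^{op}$ after restriction along $F^{op}$; one checks en route that $F\circ G$ is again a site morphism, composition of left exact functors being left exact and covers behaving well). Then Theorem \ref{Th-Grothendieck-spectral-sequence} is applied with $F=f_{\ast}$: its $f_{\ast}$-projective class $Q(\mathbf{CS}(X,\mathbf{Pro}(k)))$ (Proposition \ref{Prop-Site-morphism-satellites}) is carried into $Q(\mathbf{CS}(Y,\mathbf{Pro}(k)))$ by Proposition \ref{Prop-Site-morphism-cosheaves}(\ref{Prop-Site-morphism-cosheaves-quasi-projective}), and $Q(\mathbf{CS}(Y,\mathbf{Pro}(k)))$ is $\Gamma(U,-)$-projective for (3) (Theorem \ref{Th-Three-F-projective}(1)) and $g_{\ast}$-projective for (4) (Proposition \ref{Prop-Site-morphism-satellites} applied to $g$). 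Reading off the resulting spectral sequences and using $L_0f_{\ast}\simeq f_{\ast}$ (Corollary \ref{Cor-Satellites-site-morphisms}) gives (\ref{Th-Grothendieck-spectral-sequence-Leray}) and (\ref{Th-Grothendieck-spectral-sequence-Grothendieck}).

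I expect the main obstacle to be bookkeeping rather than any new ingredient: by Corollary \ref{Cor-Satellites-site-morphisms} the satellites $L_sf_{\ast}$ a priori take values in precosheaves on $Y$, so some care is needed when feeding them into $H_t(U,-)$, respectively into $L_tg_{\ast}$, in (3) and (4). The clean way around this is to compute $L_sf_{\ast}$ from a resolution of $\mathcal{A}$ by quasi-projective cosheaves: applying $f_{\ast}$ yields a complex of quasi-projective cosheaves on $Y$ (Proposition \ref{Prop-Site-morphism-cosheaves}(\ref{Prop-Site-morphism-cosheaves-quasi-projective})), and exactness of cosheafification (Proposition \ref{Prop-Cosheafification}(\ref{Prop-Cosheafification-exact})) lets one pass freely between the homology of that complex taken in cosheaves and in precosheaves; since a quasi-projective cosheaf is acyclic both for $\Gamma(U,-)$ and for $g_{\ast}$, the hypotheses of Theorem \ref{Th-Grothendieck-spectral-sequence} are met in the form needed.
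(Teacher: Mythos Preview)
Your proposal is correct and follows essentially the same route as the paper: parts (2)--(4) are all obtained by applying the abstract Grothendieck spectral sequence (Theorem~\ref{Th-Grothendieck-spectral-sequence}) to the composites $\Gamma(U,-)=H_0(R,-)\circ\iota$, $\Gamma(U,-)=\check H_0(U,-)\circ\iota$, $\Gamma(F(U),-)=\Gamma(U,-)\circ f_\ast$, and $(g\circ f)_\ast=g_\ast\circ f_\ast$, after checking that the first functor in each composite carries quasi-projective cosheaves to objects acyclic for the second.

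The only noteworthy difference is in part~(1). You prove it directly by duality: the inclusion of sheaves into presheaves is right adjoint to the exact sheafification functor, hence preserves injectives, and Proposition~\ref{Prop-Duality} translates this into the statement for cosheaves. The paper instead observes that $\iota$ is itself a special case of $f_\ast$: taking $Y$ to be the site with underlying category $\mathbf{C}_X$ and the \emph{chaotic} topology, the identity functor induces a site morphism $h:X\to Y$ with $h_\ast=\iota$ (Remark~\ref{Rem-Chaotic}), and then Proposition~\ref{Prop-Site-morphism-cosheaves}(\ref{Prop-Site-morphism-cosheaves-quasi-projective}) applies. Your argument is exactly what underlies that proposition, so the two are really the same; the paper's version has the virtue of making (1) a formal corollary of the machinery already set up for (3) and (4).
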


\begin{proof}
~

\begin{enumerate}
\item Let $Y=\left( \left( \mathbf{C}_{Y}=\mathbf{C}_{X}\right) ,\text{%
chaotic}\right) $ be the site on the same category $\mathbf{C}_{X}$, but
with the chaotic topology (Definition \ref{Def-Chaotic}), and let $%
h:X\rightarrow Y$ be the morphism induced by the identity functor%
\begin{equation*}
H=1_{\mathbf{C}_{X}}:\mathbf{C}_{Y}\longrightarrow \mathbf{C}_{X}.
\end{equation*}%
Due to Remark \ref{Rem-Chaotic}, $h_{\ast }$ is in fact the inclusion%
\begin{equation*}
h_{\ast }=\iota :\mathbf{CS}\left( X,\mathbf{Pro}\left( k\right) \right)
\longrightarrow \mathbf{pCS}\left( X,\mathbf{Pro}\left( k\right) \right) .
\end{equation*}%
It follows from Proposition \ref{Prop-Site-morphism-cosheaves}(\ref%
{Prop-Site-morphism-cosheaves-quasi-projective}) that $\iota \mathcal{A}%
=h_{\ast }\mathcal{A}$ is quasi-projective.

\item For an alternative proof, see \cite[Theorem 3.4.1(6ab)]%
{Prasolov-Cosheaves-2021-MR4347662}.

\begin{enumerate}
\item Consider the functors%
\begin{eqnarray*}
\iota &:&\mathbf{CS}\left( X,\mathbf{Pro}\left( k\right) \right)
\longrightarrow \mathbf{pCS}\left( X,\mathbf{Pro}\left( k\right) \right) , \\
H_{0}\left( R,\bullet \right) &:&\mathbf{pCS}\left( X,\mathbf{Pro}\left(
k\right) \right) \longrightarrow \mathbf{Pro}\left( k\right) , \\
H_{0}\left( R,\bullet \right) \circ \iota &=&H_{0}\left( R,\bullet \right)
=\Gamma \left( U,\bullet \right) : \\
\mathbf{CS}\left( X,\mathbf{Pro}\left( k\right) \right) &\longrightarrow &%
\mathbf{Pro}\left( k\right) .
\end{eqnarray*}%
Since $\iota $ sends quasi-projective cosheaves to quasi-projective
(therefore $H_{0}\left( R,\bullet \right) $-acyclic) precosheaves, one may
apply Theorem \ref{Th-Grothendieck-spectral-sequence}. Since%
\begin{eqnarray*}
L_{s}\iota &=&\mathcal{H}_{s}, \\
L_{t}H_{0}\left( R,\bullet \right) &=&H_{t}\left( R,\bullet \right) , \\
L_{n}\left( H_{0}\left( R,\bullet \right) \circ \iota \right) &=&H_{n}\left(
U,\mathcal{A}\right) .
\end{eqnarray*}%
one obtains a spectral sequence%
\begin{equation*}
E_{s,t}^{2}=L_{t}H_{0}\left( R,L_{s}\iota \mathcal{A}\right) =H_{t}\left( R,%
\mathcal{H}_{s}\mathcal{A}\right) \implies H_{s+t}\left( U,\mathcal{A}%
\right) .
\end{equation*}

\item Consider the functors%
\begin{eqnarray*}
\check{H}_{0}\left( U,\bullet \right) &:&\mathbf{pCS}\left( X,\mathbf{Pro}%
\left( k\right) \right) \longrightarrow \mathbf{Pro}\left( k\right) , \\
\check{H}_{0}\left( U,\bullet \right) \circ \iota &=&H_{0}\left( U,\bullet
\right) =\Gamma \left( U,\bullet \right) : \\
\mathbf{CS}\left( X,\mathbf{Pro}\left( k\right) \right) &\longrightarrow &%
\mathbf{Pro}\left( k\right) .
\end{eqnarray*}%
Apply again Theorem \ref{Th-Grothendieck-spectral-sequence}. Since%
\begin{eqnarray*}
L_{t}\check{H}_{0}\left( U,\bullet \right) &=&H_{t}\left( U,\bullet \right) ,
\\
L_{n}\left( H_{0}\left( U,\bullet \right) \circ \iota \right) &=&H_{n}\left(
U,\mathcal{A}\right)
\end{eqnarray*}%
one obtains a spectral sequence%
\begin{equation*}
E_{s,t}^{2}=L_{t}\check{H}_{0}\left( U,L_{s}\iota \mathcal{A}\right) =\check{%
H}_{t}\left( U,\mathcal{H}_{s}\mathcal{A}\right) \implies H_{s+t}\left( U,%
\mathcal{A}\right) .
\end{equation*}
\end{enumerate}

\item Consider the functors%
\begin{eqnarray*}
f_{\ast } &:&\mathbf{CS}\left( X,\mathbf{Pro}\left( k\right) \right)
\longrightarrow \mathbf{CS}\left( Y,\mathbf{Pro}\left( k\right) \right) , \\
\Gamma \left( U,\bullet \right) &:&\mathbf{CS}\left( Y,\mathbf{Pro}\left(
k\right) \right) \longrightarrow \mathbf{Pro}\left( k\right) , \\
\Gamma \left( U,\bullet \right) \circ f_{\ast } &=&\Gamma \left( F\left(
U\right) ,\bullet \right) :\mathbf{CS}\left( X,\mathbf{Pro}\left( k\right)
\right) \longrightarrow \mathbf{Pro}\left( k\right) .
\end{eqnarray*}%
Since $f_{\ast }$ sends quasi-projective cosheaves on $X$ to
quasi-projective (therefore $\Gamma \left( U,\bullet \right) $-acyclic)
cosheaves on $Y$ (Proposition \ref{Prop-Site-morphism-cosheaves}(\ref%
{Prop-Site-morphism-cosheaves-quasi-projective})), one may apply Theorem \ref%
{Th-Grothendieck-spectral-sequence}. Since%
\begin{eqnarray*}
L_{t}\Gamma \left( U,\bullet \right) &=&H_{t}\left( U,\bullet \right) , \\
L_{n}\left( \Gamma \left( U,\bullet \right) \circ f_{\ast }\right)
&=&H_{n}\left( F\left( U\right) ,\bullet \right) .
\end{eqnarray*}%
one obtains a spectral sequence%
\begin{equation*}
E_{s,t}^{2}=L_{t}\Gamma \left( U,L_{s}f_{\ast }\mathcal{A}\right)
=H_{t}\left( U,\left( L_{s}f_{\ast }\right) \mathcal{A}\right) \implies
H_{s+t}\left( F\left( U\right) ,\mathcal{A}\right) .
\end{equation*}

\item Consider the functors%
\begin{eqnarray*}
f_{\ast } &:&\mathbf{CS}\left( X,\mathbf{Pro}\left( k\right) \right)
\longrightarrow \mathbf{CS}\left( Y,\mathbf{Pro}\left( k\right) \right) , \\
g_{\ast } &:&\mathbf{CS}\left( Y,\mathbf{Pro}\left( k\right) \right)
\longrightarrow \mathbf{CS}\left( Z,\mathbf{Pro}\left( k\right) \right) , \\
g_{\ast }\circ f_{\ast } &=&\left( g\circ f\right) _{\ast }:\mathbf{CS}%
\left( X,\mathbf{Pro}\left( k\right) \right) \longrightarrow \mathbf{CS}%
\left( Z,\mathbf{Pro}\left( k\right) \right) .
\end{eqnarray*}%
Since $f_{\ast }$ sends quasi-projective cosheaves on $X$ to
quasi-projective (therefore $g_{\ast }$-acyclic) cosheaves on $Y$, one may
apply Theorem \ref{Th-Grothendieck-spectral-sequence} and obtain a spectral
sequence%
\begin{equation*}
E_{s,t}^{2}=L_{t}g_{\ast }\left( L_{s}f_{\ast }\left( \mathcal{A}\right)
\right) \implies L_{s+t}\left( g\circ f\right) _{\ast }\left( \mathcal{A}%
\right) .
\end{equation*}
\end{enumerate}
\end{proof}

\section{\label{Sec-Hypercoverings}Hypercoverings}

Throughout this section, $X=\left( \mathbf{C}_{X},Cov\left( X\right) \right) 
$ is a site such that $\mathbf{C}_{X}$ admits fibre products.

\begin{definition}
\label{Def-Tensor-Set}Given $A\in \mathbf{Set}$ and $B\in \mathbf{K}$, let%
\begin{equation*}
A\otimes _{\mathbf{Set}}B%
{:=}%
\dcoprod\limits_{A}B\in \mathbf{K}
\end{equation*}%
\cite[Definition A.1.1(3)]{Prasolov-Cosheaves-2021-MR4347662}. Clearly, $%
\otimes _{\mathbf{Set}}$is a functor%
\begin{equation*}
\bullet \otimes _{\mathbf{Set}}\bullet :\mathbf{Set}\times \mathbf{K}%
\longrightarrow \mathbf{K.}
\end{equation*}
\end{definition}

\begin{definition}
\label{Def-Tensor-Presheaves}Given $\mathcal{A}\in \mathbf{pS}\left( X,%
\mathbf{Set}\right) $ and $\mathcal{B}\in \mathbf{pCS}\left( X,\mathbf{Pro}%
\left( k\right) \right) $, let $\mathcal{A\otimes }_{\mathbf{pS}\left( X,%
\mathbf{Set}\right) }\mathcal{B}$ be the following pro-module:%
\begin{equation*}
\mathcal{A\otimes }_{\mathbf{pS}\left( X,\mathbf{Set}\right) }\mathcal{B}%
{:=}%
\coker%
\left( \dbigoplus\limits_{\substack{ V,W\in \mathbf{C}_{X}  \\ \left(
V\rightarrow W\right) \in \mathbf{C}_{X}\left( V\rightarrow W\right) }}%
\mathcal{A}\left( W\right) \otimes _{\mathbf{Set}}\mathcal{B}\left( V\right) 
\overset{\alpha ,\beta }{\rightrightarrows }\dbigoplus\limits_{U\in \mathbf{C%
}_{X}}\mathcal{A}\left( U\right) \otimes _{\mathbf{Set}}\mathcal{B}\left(
U\right) \right)
\end{equation*}%
where $\alpha $ and $\beta $ are the compositions%
\begin{eqnarray*}
\alpha &=&\left( 
\begin{diagram}[size=3.0em,textflow]
\mathcal{A}\left( W\right) \otimes _{\mathbf{Set}}\mathcal{B}\left( V\right) & \rTo^{\mathcal{A}\left( V\rightarrow W\right) \otimes _{\mathbf{Set}}\mathbf{1}_{\mathcal{B}\left( V\right) }} & \mathcal{A}\left( V\right) \otimes _{\mathbf{Set}}\mathcal{B}\left( V\right) & \rTo^{\iota _{V}} & \dbigoplus\limits_{U\in \mathbf{C}_{X}}\mathcal{A}\left( U\right) \otimes _{\mathbf{Set}}\mathcal{B}\left( U\right) \\
\end{diagram}
%
\right) , \\
\beta &=&\left( 
\begin{diagram}[size=3.0em,textflow]
\mathcal{A}\left( W\right) \otimes _{\mathbf{Set}}\mathcal{B}\left( V\right) & \rTo^{\mathbf{1}_{\mathcal{A}\left( W\right) }\otimes _{\mathbf{Set}}\mathcal{A}\left( V\rightarrow W\right) } & \mathcal{A}\left( W\right) \otimes _{\mathbf{Set}}\mathcal{B}\left( W\right) & \rTo^{\iota _{W}} & \dbigoplus\limits_{U\in \mathbf{C}_{X}}\mathcal{A}\left( U\right) \otimes _{\mathbf{Set}}\mathcal{B}\left( U\right) \\
\end{diagram}
%
\right) ,
\end{eqnarray*}%
while $\iota _{V}$ and $\iota _{W}$ are the corresponding embeddings.
\end{definition}

\begin{proposition}
\label{Prop-Dual-to-tensor}For any injective $T\in \mathbf{Mod}\left(
k\right) $%
\begin{equation*}
\left\langle \mathcal{A\otimes }_{\mathbf{pS}\left( X,\mathbf{Set}\right) }%
\mathcal{B},T\right\rangle 
\simeq%
\Hom%
_{\mathbf{pS}\left( X,\mathbf{Set}\right) }\left( \mathcal{A},\left\langle 
\mathcal{B},T\right\rangle \right)
\end{equation*}%
naturally in $\mathcal{A}$, $\mathcal{B}$ and $T$.
\end{proposition}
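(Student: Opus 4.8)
The plan is to apply the contravariant functor $\langle\bullet,T\rangle=\Hom_{\mathbf{Pro}(k)}(\bullet,T)$ directly to the cokernel presentation of $\mathcal{A}\otimes_{\mathbf{pS}(X,\mathbf{Set})}\mathcal{B}$ given in Definition \ref{Def-Tensor-Presheaves}, and to recognize the output as the standard equalizer presentation of a set of presheaf morphisms.

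First I would use that a contravariant $\Hom$-functor out of an abelian category is left exact and carries arbitrary coproducts to products. Hence $\langle\bullet,T\rangle$ sends the defining coequalizer to an equalizer and converts the two direct sums into products:
\[
\langle\mathcal{A}\otimes_{\mathbf{pS}(X,\mathbf{Set})}\mathcal{B},T\rangle\;\simeq\;\mathrm{eq}\Big(\prod_{U}\langle\mathcal{A}(U)\otimes_{\mathbf{Set}}\mathcal{B}(U),T\rangle\rightrightarrows\prod_{(V\to W)}\langle\mathcal{A}(W)\otimes_{\mathbf{Set}}\mathcal{B}(V),T\rangle\Big).
\]
(Incidentally, injectivity of $T$ is not used at this or any later point; compare the remark following Proposition \ref{Prop-Duality}.) Next I would record, from Definition \ref{Def-Tensor-Set} and the universal property of the coproduct, the natural isomorphism
\[
\langle A\otimes_{\mathbf{Set}}B,T\rangle=\Hom_{\mathbf{Pro}(k)}\Big(\coprod_{A}B,T\Big)\;\simeq\;\prod_{A}\Hom_{\mathbf{Pro}(k)}(B,T)\;=\;\Hom_{\mathbf{Set}}\big(A,\langle B,T\rangle\big)
\]
for $A\in\mathbf{Set}$, $B\in\mathbf{Pro}(k)$, natural in $A$, $B$, $T$. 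Substituting $(A,B)=(\mathcal{A}(U),\mathcal{B}(U))$ and $(A,B)=(\mathcal{A}(W),\mathcal{B}(V))$, and recalling that $\langle\mathcal{B},T\rangle$ is by definition the presheaf $U\mapsto\langle\mathcal{B}(U),T\rangle$, the displayed equalizer becomes
\[
\mathrm{eq}\Big(\prod_{U}\Hom_{\mathbf{Set}}\big(\mathcal{A}(U),\langle\mathcal{B},T\rangle(U)\big)\rightrightarrows\prod_{(V\to W)}\Hom_{\mathbf{Set}}\big(\mathcal{A}(W),\langle\mathcal{B},T\rangle(V)\big)\Big).
\]

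Then I would identify this equalizer with $\Hom_{\mathbf{pS}(X,\mathbf{Set})}(\mathcal{A},\langle\mathcal{B},T\rangle)$ via the end formula $\mathrm{Nat}(\mathcal{A},\langle\mathcal{B},T\rangle)=\int_{U}\Hom_{\mathbf{Set}}(\mathcal{A}(U),\langle\mathcal{B},T\rangle(U))$. The point to be checked — and the one step I expect to require genuine care rather than formal bookkeeping — is that applying $\langle\bullet,T\rangle$ to the maps $\alpha$ and $\beta$ of Definition \ref{Def-Tensor-Presheaves} reproduces exactly the two parallel arrows of the end diagram: the arrow coming from $\alpha$ sends $(\phi_U)_U$ to $\big(\phi_V\circ\mathcal{A}(V\to W)\big)_{(V\to W)}$, i.e. precomposition with the restriction maps of the presheaf $\mathcal{A}$, while the arrow coming from $\beta$ sends $(\phi_U)_U$ to $\big(\langle\mathcal{B}(V\to W),T\rangle\circ\phi_W\big)_{(V\to W)}$, i.e. postcomposition with the restriction maps of the presheaf $\langle\mathcal{B},T\rangle$; their equalizer is precisely the naturality condition defining a morphism of set-valued presheaves $\mathcal{A}\to\langle\mathcal{B},T\rangle$. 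Since every isomorphism used above is natural in $\mathcal{A}$, $\mathcal{B}$ and $T$, this yields the asserted natural isomorphism.

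As an alternative, one could argue more abstractly: as a functor of $\mathcal{A}$ both sides carry colimits of presheaves to limits, and they agree on representables, since $h_U\otimes_{\mathbf{pS}(X,\mathbf{Set})}\mathcal{B}\simeq\mathcal{B}(U)$ (co-Yoneda, using that $\mathbf{Pro}(k)$ is cocomplete, cf. Proposition \ref{Prop-Facts-pro-objects}) and $\Hom_{\mathbf{pS}(X,\mathbf{Set})}(h_U,\langle\mathcal{B},T\rangle)\simeq\langle\mathcal{B}(U),T\rangle$ (Yoneda); as every presheaf of sets is a colimit of representables, the two functors then coincide. The direct computation, however, is self-contained and no longer than this sketch.
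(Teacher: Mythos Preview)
Your proof is correct and follows essentially the same route as the paper: apply $\langle\bullet,T\rangle$ to the cokernel presentation of Definition~\ref{Def-Tensor-Presheaves}, use the tensor--Hom adjunction $\langle A\otimes_{\mathbf{Set}}B,T\rangle\simeq\Hom_{\mathbf{Set}}(A,\langle B,T\rangle)$ componentwise, and recognize the resulting kernel/equalizer as the naturality condition for a presheaf morphism $\mathcal{A}\to\langle\mathcal{B},T\rangle$. Your remark that injectivity of $T$ is nowhere used is accurate and worth keeping; the alternative Yoneda/co-Yoneda argument you sketch is genuinely different but, as you say, no shorter.
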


\begin{proof}
Clearly%
\begin{equation*}
\Hom%
_{\mathbf{Pro}\left( k\right) }\left( X\otimes _{\mathbf{Set}}Y,T\right) 
\simeq%
\dprod\limits_{X}%
\Hom%
_{\mathbf{Set}}\left( X,%
\Hom%
_{\mathbf{Pro}\left( k\right) }\left( Y,T\right) \right) ,
\end{equation*}%
naturally in $X\in \mathbf{Set}$, $Y\in \mathbf{Pro}\left( k\right) $ and $%
T\in \mathbf{Mod}\left( k\right) $. Therefore%
\begin{eqnarray*}
&&\left\langle \mathcal{A\otimes }_{\mathbf{pS}\left( X,\mathbf{Set}\right) }%
\mathcal{B},T\right\rangle 
\simeq
\\
&&\ker \left( \dprod\limits_{U\in \mathbf{C}_{X}}%
\Hom%
_{\mathbf{Set}}\left( \mathcal{A}\left( U\right) ,\left\langle \mathcal{B}%
\left( U\right) ,T\right\rangle \right) \overset{\left\langle \alpha
,T\right\rangle ,\left\langle \beta ,T\right\rangle }{\rightrightarrows }%
\dprod\limits_{\substack{ V,W\in \mathbf{C}_{X}  \\ \left( V\rightarrow
W\right) \in \mathbf{C}_{X}\left( V\rightarrow W\right) }}%
\Hom%
_{\mathbf{Set}}\left( \mathcal{A}\left( W\right) ,\left\langle \mathcal{B}%
\left( V\right) ,T\right\rangle \right) \right) ,
\end{eqnarray*}%
naturally in $\mathcal{A}\in \mathbf{pS}\left( X,\mathbf{Set}\right) $, $%
\mathcal{B}\in \mathbf{pCS}\left( X,\mathbf{Pro}\left( k\right) \right) $
and $T\in \mathbf{Mod}\left( k\right) $. The latter set consists of families%
\begin{equation*}
\left( \varphi _{U}\in 
\Hom%
_{\mathbf{Set}}\left( \mathcal{A}\left( U\right) ,\left\langle \mathcal{B}%
\left( U\right) ,T\right\rangle \right) \right) ,
\end{equation*}%
such that the diagrams%
\begin{equation*}
\begin{diagram}[size=3.0em,textflow]
\mathcal{A}\left( W\right) & \rTo & \left\langle \mathcal{B} \left( W\right) ,T\right\rangle \\ 
\dTo &  & \dTo \\ 
\mathcal{A}\left( V\right) & \rTo & \left\langle \mathcal{B} \left( V\right) ,T\right\rangle \\
\end{diagram}%
\end{equation*}%
commute for all $\left( V\rightarrow W\right) \in 
\Hom%
_{\mathbf{C}_{X}}\left( V,W\right) $. It follows that such families $\left(
\varphi _{U}\right) _{U\in \mathbf{C}_{X}}$ are \textbf{exactly} the functor
morphisms $\mathcal{A}\rightarrow \left\langle \mathcal{B},T\right\rangle $,
i.e. the morphisms of presheaves.
\end{proof}

\begin{definition}
\label{Def-Semi-representable}A presheaf $\mathcal{A}\in \mathbf{pS}\left( X,%
\mathbf{Set}\right) $ is called semi-representable iff $\mathcal{A}$ is a
coproduct of representable presheaves:%
\begin{equation*}
\mathcal{A}%
\simeq%
\dcoprod\limits_{i\in I}h_{U_{i}}.
\end{equation*}%
See \cite[V.7.3.0]{SGA4-2-MR0354653}.
\end{definition}

\begin{proposition}
\label{Prop-Semi-representable}Let%
\begin{equation*}
\mathcal{A}%
\simeq%
\dcoprod\limits_{i\in I}h_{U_{i}}
\end{equation*}%
and $\mathcal{B}\in \mathbf{pCS}\left( X,\mathbf{Pro}\left( k\right) \right) 
$. Then%
\begin{equation*}
\mathcal{A\otimes }_{\mathbf{pS}\left( X,\mathbf{Set}\right) }\mathcal{B}%
\simeq%
\dbigoplus\limits_{i\in I}\mathcal{B}\left( U_{i}\right) .
\end{equation*}
\end{proposition}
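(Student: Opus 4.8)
The plan is to reduce everything to an elementary fact about presheaves of sets via the duality principle of Proposition \ref{Prop-Duality}. First I would note that $\dbigoplus_{i\in I}\mathcal{B}\left( U_{i}\right) $ makes sense, since $\mathbf{Pro}\left( k\right) $ admits small coproducts (it satisfies $AB4$; see Proposition \ref{Prop-Facts-pro-objects}(\ref{Prop-Facts-pro-objects-admits-small-coproducts})). Next I would write down the canonical comparison morphism. For each $i\in I$ the identity $\mathbf{1}_{U_{i}}$ is an element of $h_{U_{i}}\left( U_{i}\right) =\mathcal{A}\left( U_{i}\right) $, hence selects a summand inclusion $\mathcal{B}\left( U_{i}\right) \hookrightarrow \mathcal{A}\left( U_{i}\right) \otimes _{\mathbf{Set}}\mathcal{B}\left( U_{i}\right) $; composing it with the embedding $\iota _{U_{i}}$ of Definition \ref{Def-Tensor-Presheaves} and the quotient map onto the cokernel, and then taking the map induced on the coproduct, I obtain
\begin{equation*}
\theta :\dbigoplus\limits_{i\in I}\mathcal{B}\left( U_{i}\right) \longrightarrow \mathcal{A}\otimes _{\mathbf{pS}\left( X,\mathbf{Set}\right) }\mathcal{B}.
\end{equation*}
By Proposition \ref{Prop-Duality} it then suffices to show that $\left\langle \theta ,T\right\rangle $ is an isomorphism of $k$-modules for every injective $T\in \mathbf{Mod}\left( k\right) $.

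To do that I would compute the target of $\left\langle \theta ,T\right\rangle $ by chaining the identifications already available:
\begin{equation*}
\left\langle \mathcal{A}\otimes _{\mathbf{pS}\left( X,\mathbf{Set}\right) }\mathcal{B},T\right\rangle \simeq \Hom_{\mathbf{pS}\left( X,\mathbf{Set}\right) }\left( \mathcal{A},\left\langle \mathcal{B},T\right\rangle \right) \simeq \dprod\limits_{i\in I}\Hom_{\mathbf{pS}\left( X,\mathbf{Set}\right) }\left( h_{U_{i}},\left\langle \mathcal{B},T\right\rangle \right) \simeq \dprod\limits_{i\in I}\left\langle \mathcal{B}\left( U_{i}\right) ,T\right\rangle ,
\end{equation*}
where the first isomorphism is Proposition \ref{Prop-Dual-to-tensor}, the second is the universal property of the coproduct $\mathcal{A}\simeq \dcoprod_{i\in I}h_{U_{i}}$ in $\mathbf{pS}\left( X,\mathbf{Set}\right) $, and the third is Yoneda's lemma together with $\left\langle \mathcal{B},T\right\rangle \left( U\right) =\left\langle \mathcal{B}\left( U\right) ,T\right\rangle $. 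Dually, the universal property of the coproduct in $\mathbf{Pro}\left( k\right) $ gives $\left\langle \dbigoplus_{i\in I}\mathcal{B}\left( U_{i}\right) ,T\right\rangle \simeq \dprod_{i\in I}\left\langle \mathcal{B}\left( U_{i}\right) ,T\right\rangle $. Thus both sides of $\left\langle \theta ,T\right\rangle $ become the same product, and what is left is to verify that the composite isomorphism really is $\left\langle \theta ,T\right\rangle $ — for this I would just track that the $i$-th component on each side is the one governed by $\mathbf{1}_{U_{i}}$ and $\iota _{U_{i}}$.

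I expect this last bookkeeping — checking that the abstract product isomorphism is induced by the concrete morphism $\theta $, so that Proposition \ref{Prop-Duality} genuinely applies — to be the \emph{only} delicate point; the rest is a formal consequence of Yoneda and of $\Hom$ carrying coproducts to products. An alternative that sidesteps the explicit $\theta $: the cokernel defining $\otimes _{\mathbf{pS}\left( X,\mathbf{Set}\right) }$ is a colimit, so $\bullet \otimes _{\mathbf{pS}\left( X,\mathbf{Set}\right) }\mathcal{B}$ is cocontinuous in its first argument, hence carries $\mathcal{A}\simeq \dcoprod_{i}h_{U_{i}}$ to $\dbigoplus_{i}\left( h_{U_{i}}\otimes _{\mathbf{pS}\left( X,\mathbf{Set}\right) }\mathcal{B}\right) $, reducing the claim to the single co-Yoneda computation $h_{U}\otimes _{\mathbf{pS}\left( X,\mathbf{Set}\right) }\mathcal{B}\simeq \mathcal{B}\left( U\right) $ (again most cleanly seen through Propositions \ref{Prop-Duality}, \ref{Prop-Dual-to-tensor} and Yoneda). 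I would present the first route, since it keeps the naturality asserted in Proposition \ref{Prop-Semi-representable} manifest.
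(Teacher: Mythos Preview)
Your proposal is correct and follows essentially the same approach as the paper: construct an explicit comparison morphism, apply $\left\langle \bullet ,T\right\rangle $, and identify both sides with $\dprod_{i}\left\langle \mathcal{B}\left( U_{i}\right) ,T\right\rangle $ via Proposition \ref{Prop-Dual-to-tensor}, the coproduct property, and Yoneda, then conclude by Proposition \ref{Prop-Duality}. The only cosmetic difference is that the paper writes the comparison morphism in the opposite direction (from $\mathcal{A}\otimes _{\mathbf{pS}\left( X,\mathbf{Set}\right) }\mathcal{B}$ to $\dbigoplus_{i}\mathcal{B}\left( U_{i}\right) $, induced by $\mathcal{B}\left( f\right) :\mathcal{B}\left( U\right) \rightarrow \mathcal{B}\left( U_{i}\right) $ for each $f\in h_{U_{i}}\left( U\right) $), but this is immaterial.
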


\begin{proof}
Let $T\in \mathbf{Mod}\left( k\right) $ be an \textbf{arbitrary} injective
module. Define the following morphism%
\begin{equation*}
\dbigoplus\limits_{U\in \mathbf{C}_{X}}\mathcal{A}\left( U\right) \otimes _{%
\mathbf{Set}}\mathcal{B}\left( U\right) \longrightarrow
\dbigoplus\limits_{i\in I}\mathcal{B}\left( U_{i}\right) .
\end{equation*}%
For each summand let%
\begin{equation*}
\mathcal{A}\left( U\right) \otimes _{\mathbf{Set}}\mathcal{B}\left( U\right)
\longrightarrow \dbigoplus\limits_{i\in I}\mathcal{B}\left( U_{i}\right)
\end{equation*}%
be%
\begin{equation*}
\dbigoplus\limits_{\substack{ i\in I  \\ f\in 
\Hom%
_{\mathbf{C}_{X}\left( U,U_{i}\right) }}}\varphi
_{i,f}:\dbigoplus\limits_{i\in I}h_{U_{i}}\left( U\right) \otimes _{\mathbf{%
Set}}\mathcal{B}\left( U\right) =\dbigoplus\limits_{i\in
I}\dbigoplus\limits_{f\in 
\Hom%
_{\mathbf{C}_{X}\left( U,U_{i}\right) }}\mathcal{B}\left( U\right)
\longrightarrow \dbigoplus\limits_{i\in I}\mathcal{B}\left( U_{i}\right)
\end{equation*}
where $\varphi _{i,f}$ is the composition%
\begin{equation*}
\mathcal{B}\left( U\right) \overset{\mathcal{B}\left( f\right) }{%
\longrightarrow }\mathcal{B}\left( U_{i}\right) \longrightarrow
\dbigoplus\limits_{i\in I}\mathcal{B}\left( U_{i}\right) .
\end{equation*}%
Apply $\left\langle \bullet ,T\right\rangle $, and obtain Yoneda's
isomorphism%
\begin{eqnarray*}
&&\left\langle \mathcal{A\otimes }_{\mathbf{pS}\left( X,\mathbf{Set}\right) }%
\mathcal{B},T\right\rangle 
\simeq%
\Hom%
_{\mathbf{pS}\left( X,\mathbf{Set}\right) }\left( \mathcal{A},\left\langle 
\mathcal{B},T\right\rangle \right) 
\simeq
\\
&&%
\simeq%
\dprod\limits_{i\in I}%
\Hom%
_{\mathbf{pS}\left( X,\mathbf{Set}\right) }\left( h_{U_{i}},\left\langle 
\mathcal{B},T\right\rangle \right) 
\simeq%
\dprod\limits_{i\in I}\left\langle \mathcal{B},T\right\rangle \left(
U_{i}\right) 
\simeq%
\left\langle \dbigoplus\limits_{i\in I}\mathcal{B}\left( U_{i}\right)
,T\right\rangle .
\end{eqnarray*}%
It follows from Proposition \ref{Prop-Duality} that%
\begin{equation*}
\mathcal{A\otimes }_{\mathbf{pS}\left( X,\mathbf{Set}\right) }\mathcal{B}%
\simeq%
\dbigoplus\limits_{i\in I}\mathcal{B}\left( U_{i}\right) .
\end{equation*}
\end{proof}

\begin{definition}
(see \cite[Definition 14.12.1]{Stacks-Project-2024})

\begin{enumerate}
\item The category $\Delta $ is the category with objects%
\begin{equation*}
\left[ n\right] 
{:=}%
\left\{ 0<1<\dots <n\right\}
\end{equation*}%
and monotone mappings.

\item The category $\Delta _{\leq n}$ is the full subcategory of $\Delta $
with%
\begin{equation*}
Ob\left( \Delta _{\leq n}\right) =\left\{ \left[ 0\right] ,\left[ 1\right]
,\dots ,\left[ n\right] \right\} .
\end{equation*}

\item The category $Simp\left( \mathbf{K}\right) $ of \textbf{simplicial
objects} of $\mathbf{K}$ is the category of functors $A_{\bullet }:\Delta
^{op}\rightarrow \mathbf{K}$. We denote $A\left( \left[ n\right] \right) $
simply by $A_{n}$.

\item The category $Simp_{n}\left( \mathbf{K}\right) $ of \textbf{truncated
simplicial objects} of $\mathbf{K}$ is the category of functors $\left(
\Delta _{\leq n}\right) ^{op}\rightarrow \mathbf{K}$.

\item For a simplicial object%
\begin{equation*}
A:\Delta ^{op}\rightarrow \mathbf{K}
\end{equation*}%
let $sk_{n}A$ be its restriction to $\left( \Delta _{\leq n}\right) ^{op}$.
We have the \textbf{skeleton} functor%
\begin{equation*}
sk_{n}:Simp\left( \mathbf{K}\right) \longrightarrow Simp_{n}\left( \mathbf{K}%
\right) .
\end{equation*}

\item (see \cite[14.19]{Stacks-Project-2024}) The \textbf{coskeleton} functor%
\begin{equation*}
cosk_{n}:Simp_{n}\left( \mathbf{K}\right) \longrightarrow Simp\left( \mathbf{%
K}\right)
\end{equation*}%
(if exists) is the right adjoint to $sk_{n}$.
\end{enumerate}
\end{definition}

\begin{definition}
\label{Def-local-epi(mono,iso)morphism}A morphism of presheaves%
\begin{equation*}
f\in 
\Hom%
_{\mathbf{pS}\left( X,\mathbf{K}\right) }\left( \mathcal{A},\mathcal{B}%
\right)
\end{equation*}%
is called a \textbf{local} \textbf{epi}(respectively \textbf{mono}, \textbf{%
iso})morphism iff it satisfies condition 1 (respectively 2, 3) in \cite[p.
38-39]{Jardine-2015-Local-homotopy-theory-MR3309296}.
\end{definition}

\begin{lemma}
\label{Lemma-local-epi(mono,iso)morphism}$f\in 
\Hom%
_{\mathbf{pS}\left( X,\mathbf{K}\right) }\left( \mathcal{A},\mathcal{B}%
\right) $ is a local epi(respectively mono, iso)morphism iff the
sheafification%
\begin{equation*}
f^{\#}\in 
\Hom%
_{\mathbf{S}\left( X,\mathbf{K}\right) }\left( \mathcal{A}^{\#},\mathcal{B}%
^{\#}\right)
\end{equation*}%
is an epi(respectively mono, iso)morphism.
\end{lemma}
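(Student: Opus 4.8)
The plan is to prove the three equivalences in parallel by transporting the local conditions of \cite[p.~38--39]{Jardine-2015-Local-homotopy-theory-MR3309296} across the sheafification unit, reducing each one to the corresponding statement for a morphism of sheaves, where the adjective ``local'' is redundant.

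First I would record the inputs. Write $\eta_{\mathcal{A}}:\mathcal{A}\to\mathcal{A}^{\#}$ for the unit of the sheafification adjunction $(\bullet)^{\#}\dashv\iota$ (the sheaf analogue of Proposition~\ref{Prop-Cosheafification}), so that $f^{\#}\circ\eta_{\mathcal{A}}=\eta_{\mathcal{B}}\circ f$. Input (i): the unit $\eta_{\mathcal{A}}$ is always a local isomorphism. Via the plus-construction $\mathcal{A}^{\#}=\mathcal{A}^{++}$ (the sheaf-theoretic dual of Definition~\ref{Def-Cech-H0}), each transition map $\mathcal{C}\to\mathcal{C}^{+}$ is a local epimorphism (every section of $\mathcal{C}^{+}(U)$ restricts, over a covering sieve, to sections coming from $\mathcal{C}$) and a local monomorphism (two sections of $\mathcal{C}(U)$ with equal image in $\mathcal{C}^{+}(U)$ already agree on a covering sieve, since the colimit defining $\mathcal{C}^{+}$ is filtered), hence a local isomorphism; composing twice gives (i). Input (ii): for a morphism $\phi$ of sheaves, $\phi$ is an epimorphism iff it is a local epimorphism, and a monomorphism iff it is a local monomorphism --- the standard fact that epimorphisms and monomorphisms in $\mathbf{S}(X,\mathbf{K})$ are detected by covers (topos-theoretic for $\mathbf{K}=\mathbf{Set}$; a consequence of exactness of sheafification for the abelian targets $\mathbf{Mod}(k),\mathbf{Pro}(k)$ relevant here). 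Input (iii): read off from the sieve-wise definitions the elementary cancellation rules --- local epis and local monos are each closed under composition; if $g\circ h$ is a local epi then $g$ is; if $g\circ h$ is a local epi and $g$ is a local mono then $h$ is a local epi; if $g\circ h$ is a local mono then $h$ is; and if $g\circ h$ is a local mono and $h$ is a local epi then $g$ is a local mono.

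The argument is then a short chase. If $f$ is a local epimorphism, then $\eta_{\mathcal{B}}\circ f=f^{\#}\circ\eta_{\mathcal{A}}$ is a composite of local epimorphisms, hence a local epimorphism, so by (iii) $f^{\#}$ is a local epimorphism, hence by (ii) an epimorphism of sheaves. Conversely, if $f^{\#}$ is an epimorphism it is a local epimorphism by (ii), so $f^{\#}\circ\eta_{\mathcal{A}}=\eta_{\mathcal{B}}\circ f$ is a local epimorphism; since $\eta_{\mathcal{B}}$ is in particular a local monomorphism by (i), (iii) yields that $f$ is a local epimorphism. The monomorphism case is the mirror image, using the complementary halves of (iii) together with the fact that $\eta_{\mathcal{A}}$ is in particular a local epimorphism. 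Finally, a local isomorphism is by definition a morphism that is both a local epimorphism and a local monomorphism, and each $\mathbf{S}(X,\mathbf{K})$ at issue is balanced, so the isomorphism case is precisely the conjunction of the first two, proved above.

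The step I expect to require the most care is Input (ii): the coincidence of epimorphisms (resp. monomorphisms) of sheaves with the locally defined ones, and, relatedly, the sieve bookkeeping behind (i). For $\mathbf{K}=\mathbf{Set}$ this is part of the basic theory of Grothendieck topoi, and for abelian $\mathbf{K}$ it follows from exactness of the sheafification functor (Proposition~\ref{Prop-Cosheafification}, dualized from cosheaves to sheaves). In the write-up it is probably cleanest to cite \cite{Jardine-2015-Local-homotopy-theory-MR3309296} for (i), (ii) and the bookkeeping in (iii), since all of it is developed there for presheaves of sets, and to add one remark on the passage from the set-valued to the $\mathbf{K}$-valued statement.
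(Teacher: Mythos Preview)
Your argument is correct. The paper itself gives no proof at all: it simply cites \cite[Lemma 3.16]{Jardine-2015-Local-homotopy-theory-MR3309296}. What you have written is essentially a self-contained reconstruction of that cited result --- the three inputs (the unit is a local isomorphism, the local/global coincidence for morphisms of sheaves, and the cancellation calculus for local epis/monos) together with the naturality-square chase are exactly the standard ingredients. So there is no genuine difference in strategy; you have supplied the details that the paper outsources. Your closing remark is apt: in a write-up it would be cleanest to cite Jardine for (i)--(iii), which is precisely what the paper does wholesale.
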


\begin{proof}
See \cite[Lemma 3.16]{Jardine-2015-Local-homotopy-theory-MR3309296}.
\end{proof}

\begin{definition}
\label{Def-Hypercovering}(see \cite[Definition 25.6.1]{Stacks-Project-2024}
and \cite[V.7.3.1(HR1 and HR3)]{SGA4-2-MR0354653}) A simplicial presheaf $%
\mathcal{K}_{\bullet }\in Simp\left( \mathbf{pS}\left( X,\mathbf{Set}\right)
\right) $ endowed with an augmentation $\mathcal{K}_{\bullet }\rightarrow
h_{U}$ is called a \textbf{hypercovering} of $U$ iff:

\begin{enumerate}
\item $\mathcal{K}_{0}\rightarrow h_{U}$ is a local epimorphism.

\item $\mathcal{K}_{1}\rightarrow \mathcal{K}_{0}\times _{h_{U}}\mathcal{K}%
_{0}$ is a local epimorphism.

\item $\mathcal{K}_{n+1}\rightarrow \left( cosk_{n}sk_{n}\mathcal{K}\right)
_{n+1}$ for $n\geq 1$ is a local epimorphism.
\end{enumerate}
\end{definition}

The simplest hypercovering of $U$ is the 
\u{C}ech
hypercovering.

\begin{example}
(see \cite[Example 25.3.6]{Stacks-Project-2024}) Let $\left(
U_{i}\rightarrow U\right) _{i\in I}$ be a covering. Define%
\begin{equation*}
\mathcal{K}_{n}%
{:=}%
\dcoprod\limits_{i\left( 0\right) ,i\left( 1\right) ,\dots ,i\left( n\right)
}h_{U_{i\left( 0\right) }\underset{U}{\times }U_{i\left( 1\right) }\underset{%
U}{\times }\dots \underset{U}{\times }U_{i\left( n\right) }}.
\end{equation*}%
Let%
\begin{equation*}
\varphi :\left[ m\right] \longrightarrow \left[ n\right]
\end{equation*}%
be a monotone mapping. To define a morphism from $V$ to $U_{i\left( 0\right)
}\underset{U}{\times }U_{i\left( 1\right) }\underset{U}{\times }\dots 
\underset{U}{\times }U_{i\left( m\right) }$, it is enough to define
morphisms $\psi _{k}$ from $V$ to each $U_{i\left( k\right) }$. Let%
\begin{equation*}
V=U_{j\left( 0\right) }\underset{U}{\times }U_{j\left( 1\right) }\underset{U}%
{\times }\dots \underset{U}{\times }U_{j\left( n\right) }.
\end{equation*}%
For each $\left( i\left( 0\right) ,i\left( 1\right) ,\dots ,i\left( m\right)
\right) $ with $j\left( \varphi \left( k\right) \right) =i\left( k\right) $
define $\psi _{k}$ as%
\begin{equation*}
\psi _{k}:U_{j\left( 0\right) }\underset{U}{\times }U_{j\left( 1\right) }%
\underset{U}{\times }\dots \underset{U}{\times }U_{j\left( n\right)
}\longrightarrow U_{j\left( \varphi \left( k\right) \right) }=U_{i\left(
k\right) }.
\end{equation*}%
This gives us%
\begin{equation*}
\varphi ^{\ast }:U_{j\left( 0\right) }\underset{U}{\times }U_{j\left(
1\right) }\underset{U}{\times }\dots \underset{U}{\times }U_{j\left(
n\right) }\longrightarrow U_{i\left( 0\right) }\underset{U}{\times }%
U_{i\left( 1\right) }\underset{U}{\times }\dots \underset{U}{\times }%
U_{i\left( m\right) },
\end{equation*}%
and, finally,%
\begin{equation*}
h_{\varphi ^{\ast }}:h_{U_{j\left( 0\right) }\underset{U}{\times }U_{j\left(
1\right) }\underset{U}{\times }\dots \underset{U}{\times }U_{j\left(
n\right) }}\longrightarrow h_{U_{i\left( 0\right) }\underset{U}{\times }%
U_{i\left( 1\right) }\underset{U}{\times }\dots \underset{U}{\times }%
U_{i\left( m\right) }}.
\end{equation*}%
Combining all those $h_{\varphi ^{\ast }}$, one obtains the desired morphism%
\begin{equation*}
\mathcal{K}_{\varphi }:\mathcal{K}_{n}\longrightarrow \mathcal{K}_{m},
\end{equation*}%
and the simplicial structure on $\mathcal{K}_{\bullet }$.
\end{example}

\begin{definition}
\label{Def-Cech-homology-hypercovering}Let $\mathcal{K}_{\bullet }\in
Simp\left( \mathbf{pS}\left( X,\mathbf{Set}\right) \right) $ be a
hypercovering, and $\mathcal{A}\in \mathbf{pCS}\left( X,\mathbf{Pro}\left(
k\right) \right) $ a precosheaf. The tensor product%
\begin{equation*}
\mathcal{K}_{\bullet }\otimes _{\mathbf{pS}\left( X,\mathbf{Set}\right) }%
\mathcal{A}\in Simp\left( \mathbf{Pro}\left( k\right) \right)
\end{equation*}%
has a structure of a simplicial pro-module. Let%
\begin{eqnarray*}
d_{s} &:&\mathcal{K}_{n}\otimes _{\mathbf{pS}\left( X,\mathbf{Set}\right) }%
\mathcal{A}\longrightarrow \mathcal{K}_{n-1}\otimes _{\mathbf{pS}\left( X,%
\mathbf{Set}\right) }\mathcal{A}, \\
0 &\leq &s\leq n,
\end{eqnarray*}%
be the face morphisms. Denote by%
\begin{equation*}
\check{C}_{\bullet }\left( \mathcal{K}_{\bullet },\mathcal{A}\right)
\end{equation*}%
the corresponding chain complex%
\begin{eqnarray*}
&&\check{C}_{n}\left( \mathcal{K}_{\bullet },\mathcal{A}\right) 
{:=}%
\mathcal{K}_{n}\otimes _{\mathbf{pS}\left( X,\mathbf{Set}\right) }\mathcal{A}%
, \\
d &=&\dsum\limits_{s=0}^{n}\left( -1\right) ^{s}d_{s}:\check{C}_{n}\left( 
\mathcal{K}_{\bullet },\mathcal{A}\right) \longrightarrow \check{C}%
_{n-1}\left( \mathcal{K}_{\bullet },\mathcal{A}\right) .
\end{eqnarray*}%
The homology of that chain complex will be called the 
\u{C}ech
homology:%
\begin{equation*}
\check{H}_{n}\left( \mathcal{K}_{\bullet },\mathcal{A}\right) 
{:=}%
H_{n}\left( \check{C}_{\bullet }\left( \mathcal{K}_{\bullet },\mathcal{A}%
\right) \right) .
\end{equation*}
\end{definition}

\begin{remark}
\label{Rem-Cech-homology-hypercovering}It is easy to check, using
Proposition \ref{Prop-Dual-to-tensor} and \ref{Prop-Semi-representable},
that, for any injective module $T\in \mathbf{Mod}\left( k\right) $.%
\begin{equation*}
\left\langle \check{H}_{n}\left( \mathcal{K}_{\bullet },\mathcal{A}\right)
,T\right\rangle 
\simeq%
\check{H}^{n}\left( \mathcal{K}_{\bullet },\left\langle \mathcal{A}%
,T\right\rangle \right)
\end{equation*}%
where $\check{H}^{n}$ is the 
\u{C}ech
\textbf{co}homology from \cite[25.5]{Stacks-Project-2024}.
\end{remark}

\begin{lemma}
\label{Lemma-Acyclicity}Let $\mathcal{K}_{\bullet }$ be a hypercovering of $%
U\in \mathbf{C}_{X}$, and $\mathcal{P}\in \mathbf{CS}\left( X,\mathbf{Pro}%
\left( k\right) \right) $ be a quasi-projective cosheaf. Then%
\begin{equation*}
\check{H}_{n}\left( \mathcal{K}_{\bullet },\mathcal{P}\right) =\left\{ 
\begin{array}{ccc}
\mathcal{P}\left( U\right) & \text{if} & n=0 \\ 
0 & \text{if} & n>0%
\end{array}%
\right.
\end{equation*}
\end{lemma}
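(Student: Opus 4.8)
The plan is to reduce, via the duality principle of Proposition~\ref{Prop-Duality}, to the classical statement that an injective \emph{sheaf} has no higher \u{C}ech cohomology along a hypercovering. Concretely, fix an injective $T\in \mathbf{Mod}\left( k\right)$. By Definition~\ref{Def-quasi-projective-(pre)cosheaf}(2) the $k$-module sheaf $\mathcal{I}:=\left\langle \mathcal{P},T\right\rangle$ is \textbf{injective}; by Remark~\ref{Rem-Cech-homology-hypercovering} we have $\left\langle \check{H}_{n}\left( \mathcal{K}_{\bullet },\mathcal{P}\right) ,T\right\rangle \simeq \check{H}^{n}\left( \mathcal{K}_{\bullet },\mathcal{I}\right)$; and $\left\langle \mathcal{P}\left( U\right) ,T\right\rangle \simeq \mathcal{I}\left( U\right)$. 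Thus, by Proposition~\ref{Prop-Duality}, it suffices to prove that for every injective sheaf $\mathcal{I}$ and every hypercovering $\mathcal{K}_{\bullet }$ of $U$ one has $\check{H}^{0}\left( \mathcal{K}_{\bullet },\mathcal{I}\right) \simeq \mathcal{I}\left( U\right)$ and $\check{H}^{n}\left( \mathcal{K}_{\bullet },\mathcal{I}\right) =0$ for $n>0$. It is exactly here that the \textbf{cosheaf} (rather than merely precosheaf) hypothesis enters: only when $\left\langle \mathcal{P},T\right\rangle$ is an injective \emph{sheaf}, not just an injective presheaf, can one exploit local rather than sectionwise exactness below.

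For the sheaf-cohomological statement I would argue as follows. The conditions in Definition~\ref{Def-Hypercovering} say precisely that the augmented simplicial sheaf $\mathcal{K}_{\bullet }^{\#}\longrightarrow h_{U}^{\#}$ is an acyclic fibration of simplicial sheaves, and consequently the augmented complex of free $k$-module sheaves
\begin{equation*}
\dots \longrightarrow k[\mathcal{K}_{1}]^{\#}\longrightarrow k[\mathcal{K}_{0}]^{\#}\longrightarrow k[h_{U}]^{\#}\longrightarrow 0
\end{equation*}
is exact; see \cite[Section~25]{Stacks-Project-2024} and \cite[V.7.3]{SGA4-2-MR0354653}. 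Applying $\Hom_{\mathbf{S}\left( X,\mathbf{Mod}\left( k\right) \right) }\left( -,\mathcal{I}\right)$, which is exact because $\mathcal{I}$ is injective, and noting that by semi-representability of the $\mathcal{K}_{n}$ together with Propositions~\ref{Prop-Dual-to-tensor} and~\ref{Prop-Semi-representable} this recovers the augmented \u{C}ech cochain complex $\mathcal{I}\left( U\right) \longrightarrow \check{C}^{\bullet }\left( \mathcal{K}_{\bullet },\mathcal{I}\right)$, we get that the latter is exact, which is the desired vanishing. Alternatively, one may invoke the hypercovering-to-cohomology spectral sequence $E_{1}^{p,q}=H^{q}\left( \mathcal{K}_{p},\mathcal{I}\right) \implies H^{p+q}\left( U,\mathcal{I}\right)$ of \cite[Section~25]{Stacks-Project-2024}: for injective $\mathcal{I}$ the rows with $q>0$ vanish and the bottom row is the \u{C}ech complex, whence $\check{H}^{n}\left( \mathcal{K}_{\bullet },\mathcal{I}\right) \simeq H^{n}\left( U,\mathcal{I}\right)$, which is $\mathcal{I}\left( U\right)$ for $n=0$ and $0$ for $n>0$ since injective sheaves are $\Gamma\left( U,-\right)$-acyclic.

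The only genuine obstacle I anticipate is the homological input on hypercoverings itself --- that the local-epimorphism conditions of Definition~\ref{Def-Hypercovering} force the augmented complex of sheaves above to be exact (equivalently, that $\mathcal{K}_{\bullet }^{\#}\to h_{U}^{\#}$ is an acyclic fibration of simplicial sheaves). This is the substantive content of the theory of hypercoverings, and I would simply quote it from the cited references rather than reprove it. The remaining ingredients --- passing from $\mathbb{Z}$-coefficients to $k$-coefficients, matching up faces and signs through Remark~\ref{Rem-Cech-homology-hypercovering}, and reading off the degeneration of the spectral sequence --- are routine.
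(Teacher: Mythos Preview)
Your proposal is correct and follows essentially the same route as the paper: reduce via the pairing $\left\langle \bullet ,T\right\rangle$ and Proposition~\ref{Prop-Duality} to the statement that an injective sheaf has vanishing higher \u{C}ech cohomology along a hypercovering, then invoke the standard hypercovering result. The paper simply cites \cite[Lemma 25.5.2]{Stacks-Project-2024} for that input (remarking, as you do, that the passage from $\mathbb{Z}$ to $k$ is immediate), whereas you sketch two proofs of it; the paper is also slightly more explicit than you are in naming the augmentation-induced morphism $\check{H}_{0}\left( \mathcal{K}_{\bullet },\mathcal{P}\right) \to \mathcal{P}\left( U\right)$ before applying duality to conclude it is an isomorphism, but this is a cosmetic difference.
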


\begin{proof}
Let $T\in \mathbf{Mod}\left( k\right) $ be an arbitrary injective module.
Due to \cite[Lemma 25.5.2]{Stacks-Project-2024},%
\begin{equation*}
\left\langle \check{H}_{n}\left( \mathcal{K}_{\bullet },\mathcal{P}\right)
,T\right\rangle =\check{H}^{n}\left( \mathcal{K}_{\bullet },\left\langle 
\mathcal{P},T\right\rangle \right) =\left\{ 
\begin{array}{ccc}
\left\langle \mathcal{P},T\right\rangle \left( U\right) =\left\langle 
\mathcal{P}\left( U\right) ,T\right\rangle & \text{if} & n=0 \\ 
0 & \text{if} & n>0%
\end{array}%
\right.
\end{equation*}%
In fact, the cited lemma deals with a sheaf $\mathcal{I}\in \mathbf{S}\left(
X,\mathbf{Ab}\right) $ of abelian groups (not $k$-modules). However, the
proof may be easily applied to $k$-modules. Just replace the constant
sheaves $\left( \mathbb{Z}_{U_{i}}\right) ^{\#}$ with the constant sheaves $%
\left( k_{U_{i}}\right) ^{\#}$. It follows that $\check{H}_{n}\left( 
\mathcal{K}_{\bullet },\mathcal{P}\right) =0$ if $n>0$. Moreover, the
augmentation $\mathcal{K}_{0}\rightarrow h_{U}$ determines a morphism%
\begin{equation*}
\check{H}_{0}\left( \mathcal{K}_{\bullet },\mathcal{P}\right) =%
\coker%
\left( \check{C}_{1}\left( \mathcal{K}_{\bullet },\mathcal{A}\right)
\rightarrow \check{C}_{0}\left( \mathcal{K}_{\bullet },\mathcal{A}\right)
\right) \longrightarrow h_{U}\left( \mathcal{P}\right) =\mathcal{P}\left(
U\right) .
\end{equation*}%
Since, due to \cite[Lemma 25.5.2]{Stacks-Project-2024},%
\begin{equation*}
\left\langle \mathcal{P}\left( U\right) ,T\right\rangle \longrightarrow
\left\langle \check{H}_{0}\left( \mathcal{K}_{\bullet },\mathcal{P}\right)
,T\right\rangle
\end{equation*}%
is an isomorphism,%
\begin{equation*}
\check{H}_{0}\left( \mathcal{K}_{\bullet },\mathcal{P}\right)
\longrightarrow \mathcal{P}\left( U\right)
\end{equation*}%
is an isomorphism as well.
\end{proof}

\begin{lemma}
\label{Lemma-Spectral-sequence-hypercovering}~

\begin{enumerate}
\item \label{Lemma-Spectral-sequence-hypercovering-morphism}Let $\mathcal{K}%
_{\bullet }$ be a hypercovering of $U\in \mathbf{C}_{X}$. There is a morphism%
\begin{equation*}
H_{\bullet }\left( U,\mathcal{A}\right) \longrightarrow \check{H}_{\bullet
}\left( \mathcal{K}_{\bullet },\mathcal{A}\right) ,
\end{equation*}%
natural in $U$, $\mathcal{A}\in \mathbf{CS}\left( X,\mathbf{Pro}\left(
k\right) \right) $ and $\mathcal{K}_{\bullet }$.

\item Moreover, there exists a natural spectral sequence%
\begin{equation*}
E_{s,t}^{2}=H_{s}\left( \mathcal{K}_{\bullet },\mathcal{H}_{t}\mathcal{A}%
\right) \implies H_{s+t}\left( U,\mathcal{A}\right)
\end{equation*}%
where $\mathcal{H}_{t}$ are left satellites $L_{s}\iota $ of the full
embedding of $\left\{ \text{cosheaves}\right\} $ into $\left\{ \text{\textbf{%
pre}cosheaves}\right\} $:%
\begin{equation*}
\iota :\mathbf{CS}\left( X,\mathbf{Pro}\left( k\right) \right)
\longrightarrow \mathbf{pCS}\left( X,\mathbf{Pro}\left( k\right) \right) .
\end{equation*}
\end{enumerate}
\end{lemma}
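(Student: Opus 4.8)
The plan is to read off both assertions from a single first-quadrant double complex. First I would choose a quasi-projective resolution $\varepsilon :\mathcal{P}_{\bullet }\rightarrow \mathcal{A}$ in $\mathbf{CS}\left( X,\mathbf{Pro}\left( k\right) \right) $; one exists because the full subcategory $Q\left( \mathbf{CS}\left( X,\mathbf{Pro}\left( k\right) \right) \right) $ of quasi-projective cosheaves is generating and quasi-projective (Proposition \ref{Prop-Generating}). Applying the construction of Definition \ref{Def-Cech-homology-hypercovering} in each resolution degree yields a double complex in $\mathbf{Pro}\left( k\right) $,
\begin{equation*}
D_{p,q}:=\check{C}_{p}\left( \mathcal{K}_{\bullet },\mathcal{P}_{q}\right) =\mathcal{K}_{p}\otimes _{\mathbf{pS}\left( X,\mathbf{Set}\right) }\mathcal{P}_{q},
\end{equation*}
whose horizontal differential is the alternating sum of the simplicial faces of $\mathcal{K}_{\bullet }$ and whose vertical differential is induced by $\mathcal{P}_{\bullet }$; these commute up to sign since they act in different arguments of the bifunctor $\otimes _{\mathbf{pS}\left( X,\mathbf{Set}\right) }$. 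Write $T_{\bullet }$ for the associated total complex. As $D_{\bullet ,\bullet }$ is a first-quadrant double complex in the abelian category $\mathbf{Pro}\left( k\right) $, both of its spectral sequences converge to $H_{\bullet }\left( T_{\bullet }\right) $.

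Taking horizontal homology first gives ${}^{\mathrm{I}}E_{p,q}^{1}=\check{H}_{p}\left( \mathcal{K}_{\bullet },\mathcal{P}_{q}\right) $; by the acyclicity of quasi-projective cosheaves on hypercoverings (Lemma \ref{Lemma-Acyclicity}) this vanishes for $p>0$ and equals $\Gamma \left( U,\mathcal{P}_{q}\right) =\mathcal{P}_{q}\left( U\right) $ for $p=0$, so ${}^{\mathrm{I}}E^{2}$ is concentrated in the column $p=0$ with ${}^{\mathrm{I}}E_{0,q}^{2}=H_{q}\left( \Gamma \left( U,\mathcal{P}_{\bullet }\right) \right) $. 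Since $Q\left( \mathbf{CS}\left( X,\mathbf{Pro}\left( k\right) \right) \right) $ is $\Gamma \left( U,\bullet \right) $-projective (Theorem \ref{Th-Three-F-projective}(1)), this computes the left satellite $L_{q}\Gamma \left( U,\bullet \right) \left( \mathcal{A}\right) =H_{q}\left( U,\mathcal{A}\right) $; hence this spectral sequence degenerates and provides a natural isomorphism $H_{n}\left( T_{\bullet }\right) \simeq H_{n}\left( U,\mathcal{A}\right) $.

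Taking vertical homology first gives ${}^{\mathrm{II}}E_{p,q}^{1}=H_{q}\left( \mathcal{K}_{p}\otimes _{\mathbf{pS}\left( X,\mathbf{Set}\right) }\mathcal{P}_{\bullet }\right) $. Here I would invoke that each $\mathcal{K}_{p}$ is semi-representable, so by Proposition \ref{Prop-Semi-representable} the functor $\mathcal{K}_{p}\otimes _{\mathbf{pS}\left( X,\mathbf{Set}\right) }\left( \bullet \right) $ is a coproduct of evaluation functors and hence exact (using that $\mathbf{Pro}\left( k\right) $ satisfies $AB4$); it therefore commutes with homology, and together with the fact that $Q\left( \mathbf{CS}\left( X,\mathbf{Pro}\left( k\right) \right) \right) $ is $\iota $-projective (Theorem \ref{Th-Three-F-projective}(2)) we obtain $H_{q}\left( \iota \mathcal{P}_{\bullet }\right) =L_{q}\iota \left( \mathcal{A}\right) =\mathcal{H}_{q}\left( \mathcal{A}\right) $ in $\mathbf{pCS}\left( X,\mathbf{Pro}\left( k\right) \right) $, whence
\begin{equation*}
{}^{\mathrm{II}}E_{p,q}^{1}\simeq \mathcal{K}_{p}\otimes _{\mathbf{pS}\left( X,\mathbf{Set}\right) }\mathcal{H}_{q}\left( \mathcal{A}\right) =\check{C}_{p}\left( \mathcal{K}_{\bullet },\mathcal{H}_{q}\mathcal{A}\right) ,\qquad {}^{\mathrm{II}}E_{p,q}^{2}=H_{p}\left( \mathcal{K}_{\bullet },\mathcal{H}_{q}\mathcal{A}\right) .
\end{equation*}
By the previous paragraph this spectral sequence abuts to $H_{p+q}\left( U,\mathcal{A}\right) $, which is assertion (2). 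For assertion (1) I would use the induced comparison with the bottom row: the degreewise projection $T_{n}\rightarrow \mathcal{K}_{n}\otimes _{\mathbf{pS}\left( X,\mathbf{Set}\right) }\mathcal{P}_{0}$ onto the $\left( n,0\right) $-summand, composed with $\mathcal{K}_{n}\otimes \varepsilon $, is a morphism of chain complexes into $\check{C}_{\bullet }\left( \mathcal{K}_{\bullet },\mathcal{A}\right) $ — the only term obstructing compatibility with the differentials is $\left( \mathcal{K}\otimes \varepsilon \right) \circ d^{v}$, which vanishes because the composite $\mathcal{P}_{1}\rightarrow \mathcal{P}_{0}\overset{\varepsilon }{\rightarrow }\mathcal{A}$ is zero. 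Passing to homology and using $H_{\bullet }\left( T_{\bullet }\right) \simeq H_{\bullet }\left( U,\mathcal{A}\right) $ yields the desired natural map $H_{\bullet }\left( U,\mathcal{A}\right) \rightarrow \check{H}_{\bullet }\left( \mathcal{K}_{\bullet },\mathcal{A}\right) $; at the $E^{2}$-level it is the edge map ${}^{\mathrm{II}}E_{n,0}^{2}=H_{n}\left( \mathcal{K}_{\bullet },\mathcal{H}_{0}\mathcal{A}\right) =\check{H}_{n}\left( \mathcal{K}_{\bullet },\mathcal{A}\right) $, the last equality because $\iota $ is left adjoint to $\left( \bullet \right) _{\#}$ (Proposition \ref{Prop-Cosheafification}) hence right exact, so $\mathcal{H}_{0}=L_{0}\iota =\iota $. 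Naturality in $U$, $\mathcal{A}$ and $\mathcal{K}_{\bullet }$ follows from the comparison theorem for $F$-projective resolutions (independence from the choice of $\mathcal{P}_{\bullet }$ up to chain homotopy) together with functoriality of the total complex.

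The step I expect to be the main obstacle is the identification of ${}^{\mathrm{II}}E^{1}$: it relies on the $\mathcal{K}_{p}$ being semi-representable so that $\mathcal{K}_{p}\otimes _{\mathbf{pS}\left( X,\mathbf{Set}\right) }\left( \bullet \right) $ is exact, and on the $F$-projectivity statements of Theorem \ref{Th-Three-F-projective} that permit computing both $L_{\bullet }\Gamma \left( U,\bullet \right) $ and $L_{\bullet }\iota $ from the same quasi-projective resolution. An alternative, in the spirit of Proposition \ref{Prop-Duality}, would be to apply $\left\langle \bullet ,T\right\rangle $ for injective $T$ and quote the classical \u{C}ech-to-derived-functor spectral sequence for hypercoverings in sheaf cohomology, transporting back via Proposition \ref{Prop-Various-left-satellites} and Remark \ref{Rem-Cech-homology-hypercovering}; but the direct double-complex argument in $\mathbf{Pro}\left( k\right) $ is self-contained and seems cleanest.
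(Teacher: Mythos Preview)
Your proposal is correct and follows essentially the same approach as the paper: build the bicomplex $\check{C}_{s}(\mathcal{K}_{\bullet},\mathcal{P}_{t})$ from a quasi-projective resolution, use Lemma~\ref{Lemma-Acyclicity} to collapse one spectral sequence and identify the total homology with $H_{\bullet}(U,\mathcal{A})$, use semi-representability (Proposition~\ref{Prop-Semi-representable}) to identify the other $E^{1}$-page, and obtain the map in (1) as the projection to the bottom row followed by $\varepsilon$. Your write-up is slightly more explicit in citing $AB4$ and Theorem~\ref{Th-Three-F-projective}, and in recognising the map of part (1) as the edge morphism, but the argument is the same.
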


\begin{proof}
Compare to \cite[Lemma 25.5.3]{Stacks-Project-2024}. Let%
\begin{equation*}
0\longleftarrow \mathcal{A}\longleftarrow \mathcal{P}_{0}\longleftarrow 
\mathcal{P}_{1}\longleftarrow \mathcal{\dots }\longleftarrow \mathcal{P}%
_{n}\longleftarrow \mathcal{\dots }
\end{equation*}%
be a quasi-projective resolution. Consider the following bicomplex $\left(
B_{\bullet \bullet },d_{\bullet },\delta _{\bullet }\right) $%
\begin{equation*}
B_{s,t}=\check{C}_{s}\left( \mathcal{K}_{\bullet },\mathcal{P}_{t}\right)
\end{equation*}%
where $d_{\bullet }$ is inherited from $\check{C}_{\bullet }$ and $\delta
_{\bullet }$ is inherited from $\mathcal{P}_{\bullet }$. Consider also the
corresponding total complex $Tot_{\bullet }\left( B_{\bullet \bullet
}\right) $:%
\begin{equation*}
Tot_{n}\left( B_{\bullet \bullet }\right) =\dbigoplus\limits_{s+t=n}B_{s,t}%
\simeq%
\dprod\limits_{s+t=n}B_{s,t}
\end{equation*}%
with the differential%
\begin{equation*}
\partial _{n}:Tot_{n+1}\left( B_{\bullet \bullet }\right) \longrightarrow
Tot_{n}\left( B_{\bullet \bullet }\right) ,
\end{equation*}%
given by%
\begin{equation*}
\partial _{n}\circ \iota _{s,t}=\iota _{s-1,t}\circ d+\left( -1\right)
^{s}\iota _{s,t-1}\circ \delta ,
\end{equation*}%
where%
\begin{equation*}
\iota _{s,t}:B_{s,t}\rightarrowtail Tot_{n}\left( B_{\bullet \bullet }\right)
\end{equation*}%
is the natural embedding into the coproduct.

\begin{enumerate}
\item We will prove below that%
\begin{equation*}
H_{n}\left( Tot_{\bullet }\left( B_{\bullet \bullet }\right) \right) 
\simeq%
H_{n}\left( U,\mathcal{A}\right) .
\end{equation*}%
Consider the compositions%
\begin{equation*}
\varphi _{s}:Tot_{s}\left( B_{\bullet \bullet }\right) \longrightarrow
B_{s,0}\longrightarrow \check{C}_{s}\left( \mathcal{K}_{\bullet },\mathcal{P}%
_{0}\right) \longrightarrow \check{C}_{s}\left( \mathcal{K}_{\bullet },%
\mathcal{A}\right) .
\end{equation*}%
Clearly $\varphi _{s}\circ \partial _{n}=d_{s-1}\circ \varphi _{s-1}$. This
gives us the desired natural morphisms%
\begin{equation*}
H_{n}\left( U,\mathcal{A}\right) 
\simeq%
H_{n}\left( Tot_{\bullet }\left( B_{\bullet \bullet }\right) \right)
\longrightarrow H_{n}\left( \check{C}_{\bullet }\left( \mathcal{K}_{\bullet
},\mathcal{A}\right) \right) =\check{H}_{n}\left( \mathcal{K}_{\bullet },%
\mathcal{A}\right) .
\end{equation*}

\item Consider the two spectral sequences from \cite[Theorem A.4.3]%
{Prasolov-Cosheaves-2021-MR4347662}.%
\begin{eqnarray*}
~^{ver}E_{s,t}^{2} &=&~^{hor}H_{s}\left( ^{ver}H_{t}B_{\bullet \bullet
}\right) \implies H_{s+t}\left( Tot\left( B_{\bullet \bullet }\right)
\right) , \\
~^{hor}E_{s,t}^{2} &=&~^{ver}H_{t}\left( ^{hor}H_{s}B_{\bullet \bullet
}\right) \implies H_{s+t}\left( Tot\left( B_{\bullet \bullet }\right)
\right) .
\end{eqnarray*}%
It follows from Lemma \ref{Lemma-Acyclicity} that%
\begin{equation*}
^{hor}H_{s}B_{\bullet ,t}=\check{H}_{s}\left( \mathcal{K}_{\bullet },%
\mathcal{P}_{t}\right) =\left\{ 
\begin{array}{ccc}
\mathcal{P}_{t}\left( U\right) =\Gamma \left( U,\mathcal{P}_{t}\right) & 
\text{if} & s=0 \\ 
0 & \text{if} & s>0%
\end{array}%
\right.
\end{equation*}%
and%
\begin{equation*}
~^{hor}E_{s,t}^{2}=\left\{ 
\begin{array}{ccc}
H_{t}\left( U,\mathcal{A}\right) & \text{if} & s=0 \\ 
0 & \text{if} & s>0%
\end{array}%
\right.
\end{equation*}%
The spectral sequence degenerates from $E^{2}$ on, and%
\begin{equation*}
H_{n}\left( Tot\left( B_{\bullet \bullet }\right) \right) 
\simeq%
H_{t}\left( U,\mathcal{A}\right) .
\end{equation*}%
On the other hand, due to Proposition \ref{Prop-Semi-representable},%
\begin{equation*}
B_{s,t}=\dbigoplus\limits_{i\in I_{s}}\mathcal{P}_{t}\left( U_{s,i}\right)
\end{equation*}%
where%
\begin{equation*}
\mathcal{K}_{s}=\dcoprod\limits_{i\in I_{s}}h_{U_{s,i}}.
\end{equation*}%
Consider now the cosheaves $\mathcal{P}_{t}$ as \textbf{pre}cosheaves. It
follows that%
\begin{equation*}
^{ver}H_{t}B_{\bullet \bullet }=\dbigoplus\limits_{i\in I_{s}}\mathcal{H}_{t}%
\mathcal{A}\left( U_{s,i}\right) =\mathcal{K}_{s}\otimes _{\mathbf{pS}\left(
X,\mathbf{Set}\right) }\mathcal{H}_{t}\mathcal{A}
\end{equation*}%
and%
\begin{equation*}
~^{ver}E_{s,t}^{2}=\check{H}_{s}\left( \mathcal{K}_{\bullet },\mathcal{H}_{t}%
\mathcal{A}\right) \implies H_{s+t}\left( Tot\left( B_{\bullet \bullet
}\right) \right) =H_{s+t}\left( U,\mathcal{A}\right) .
\end{equation*}
\end{enumerate}
\end{proof}

\begin{notation}
\label{Not-Hypercoverings-up-to-homotopy}Denote by $HR\left( U\right) $ the
category of hypercoverings of $U\in \mathbf{C}_{X}$, and by $\mathcal{HR}%
\left( U\right) $ the category of hypercoverings of $U$ up to simplicial
homotopy.
\end{notation}

\begin{remark}
\label{Rem-Hypercoverings-up-to-homotopy}(see \cite[Lemmas 25.3.7, 25.7.2
and 25.9.2]{Stacks-Project-2024})

\begin{enumerate}
\item $\mathcal{HR}\left( U\right) $ is a \textbf{small} category.

\item $\mathcal{HR}\left( U\right) $ is \textbf{co}filtered.
\end{enumerate}
\end{remark}

\begin{theorem}
\label{Th-Hypercoverings-vs-resolutions}Let $\mathcal{A}\in \mathbf{CS}%
\left( X,\mathbf{Pro}\left( k\right) \right) $.

\begin{enumerate}
\item If the two morphisms%
\begin{equation*}
f,g:\mathcal{K}_{\bullet }\longrightarrow \mathcal{L}_{\bullet }
\end{equation*}%
are simplicially homotopic, then%
\begin{equation*}
\check{H}_{\bullet }\left( f,\mathcal{A}\right) =\check{H}_{\bullet }\left(
f,\mathcal{A}\right) :\check{H}_{\bullet }\left( \mathcal{K}_{\bullet },%
\mathcal{A}\right) \longrightarrow \check{H}_{\bullet }\left( \mathcal{L}%
_{\bullet },\mathcal{A}\right) .
\end{equation*}

\item Let%
\begin{equation*}
\check{H}_{\bullet }^{\mathcal{HR}}\left( U,\mathcal{A}\right) 
{:=}%
\underset{\mathcal{K}_{\bullet }\in \mathcal{HR}\left( U\right) }{%
\underleftarrow{\lim }}\check{H}_{\bullet }\left( \mathcal{K}_{\bullet },%
\mathcal{A}\right) .
\end{equation*}%
Then%
\begin{equation*}
\check{H}_{\bullet }^{\mathcal{HR}}\left( U,\mathcal{A}\right) 
\simeq%
H_{\bullet }\left( U,\mathcal{A}\right) ,
\end{equation*}%
naturally in $U$ and $\mathcal{A}$.
\end{enumerate}
\end{theorem}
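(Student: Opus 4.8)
For part (1), note that $\check{C}_{\bullet}\left(\mathcal{K}_{\bullet},\mathcal{A}\right)$ is obtained by applying the functor $\left(\bullet\right)\otimes _{\mathbf{pS}\left( X,\mathbf{Set}\right)}\mathcal{A}:\mathbf{pS}\left( X,\mathbf{Set}\right)\to\mathbf{Pro}\left(k\right)$ degreewise to $\mathcal{K}_{\bullet}$ and then forming the alternating-sum chain complex of the resulting simplicial pro-module. A simplicial homotopy from $f$ to $g$ is a family of morphisms $h_{i}:\mathcal{K}_{n}\to\mathcal{L}_{n+1}$, $0\le i\le n$, subject to the standard simplicial identities, and these identities involve only faces, degeneracies and composition; hence applying the functor above degreewise produces a simplicial homotopy between $\check{C}_{\bullet}\left(f,\mathcal{A}\right)$ and $\check{C}_{\bullet}\left(g,\mathcal{A}\right)$ as maps of simplicial objects in the abelian category $\mathbf{Pro}\left(k\right)$. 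Since a simplicial homotopy between maps of simplicial objects in an abelian category induces a chain homotopy on the associated chain complexes, we get $\check{H}_{\bullet}\left(f,\mathcal{A}\right)=\check{H}_{\bullet}\left(g,\mathcal{A}\right)$. (Alternatively, dualize through Remark \ref{Rem-Cech-homology-hypercovering} and Proposition \ref{Prop-Duality} and quote the analogous statement for \v{C}ech cohomology from \cite{Stacks-Project-2024}.)

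By part (1) the assignment $\mathcal{K}_{\bullet}\mapsto\check{H}_{\bullet}\left(\mathcal{K}_{\bullet},\mathcal{A}\right)$ descends to a functor on $\mathcal{HR}\left(U\right)$, which by Remark \ref{Rem-Hypercoverings-up-to-homotopy} is a small cofiltered category; hence $\check{H}_{\bullet}^{\mathcal{HR}}\left(U,\mathcal{A}\right)$ is a well-defined cofiltered limit in $\mathbf{Pro}\left(k\right)$ (Proposition \ref{Prop-Facts-pro-objects}(\ref{Prop-Facts-pro-objects-admits-cofiltered-limits})). The morphisms of Lemma \ref{Lemma-Spectral-sequence-hypercovering}(\ref{Lemma-Spectral-sequence-hypercovering-morphism}) are natural in $\mathcal{K}_{\bullet}$, so they are compatible over $\mathcal{HR}\left(U\right)$ and assemble into a morphism $\theta:H_{\bullet}\left(U,\mathcal{A}\right)\to\check{H}_{\bullet}^{\mathcal{HR}}\left(U,\mathcal{A}\right)$ that is natural in $U$ and $\mathcal{A}$; it remains to show $\theta$ is an isomorphism.

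To prove this I will apply the duality principle (Proposition \ref{Prop-Duality}): it suffices to check that $\left\langle\theta,T\right\rangle$ is an isomorphism for every injective $T\in\mathbf{Mod}\left(k\right)$. On one side, Proposition \ref{Prop-Various-left-satellites}(1) identifies $\left\langle H_{\bullet}\left(U,\mathcal{A}\right),T\right\rangle$ with the sheaf cohomology $H^{\bullet}\left(U,\left\langle\mathcal{A},T\right\rangle\right)$ of the sheaf $\left\langle\mathcal{A},T\right\rangle$. On the other side, $\Hom_{\mathbf{Pro}\left(k\right)}\left(\bullet,T\right)$ sends cofiltered limits in $\mathbf{Pro}\left(k\right)$ to filtered colimits, because $T$ viewed as a rudimentary pro-module is cocompact — this follows from the formula for morphisms in Remark \ref{Rem-Pro-objects-morphisms} together with the way cofiltered limits in a pro-category are built. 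Combining this with Remark \ref{Rem-Cech-homology-hypercovering} gives
\begin{equation*}
\left\langle\check{H}_{\bullet}^{\mathcal{HR}}\left(U,\mathcal{A}\right),T\right\rangle\;\simeq\;\underset{\mathcal{K}_{\bullet}\in\mathcal{HR}\left(U\right)^{op}}{\underrightarrow{\lim}}\;\check{H}^{\bullet}\left(\mathcal{K}_{\bullet},\left\langle\mathcal{A},T\right\rangle\right).
\end{equation*}
Under these identifications — and after checking that $\left\langle\theta,T\right\rangle$ is the map produced by the bicomplex of Lemma \ref{Lemma-Spectral-sequence-hypercovering}, dualized — $\left\langle\theta,T\right\rangle$ is exactly the comparison morphism from \v{C}ech cohomology over hypercoverings to sheaf cohomology, which is an isomorphism by Verdier's hypercovering theorem \cite{Stacks-Project-2024}. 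Hence $\left\langle\theta,T\right\rangle$ is an isomorphism for all injective $T$, and so $\theta$ is an isomorphism by Proposition \ref{Prop-Duality}.

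The main obstacle is the final step: verifying rigorously that $\Hom_{\mathbf{Pro}\left(k\right)}\left(\bullet,T\right)$ commutes with the particular cofiltered limit over $\mathcal{HR}\left(U\right)$, and that the resulting map is literally the classical Verdier comparison morphism rather than merely some natural transformation between groups that are abstractly isomorphic. Both points require carefully unwinding how cofiltered limits in $\mathbf{Pro}\left(k\right)$ are computed and how the bicomplex of Lemma \ref{Lemma-Spectral-sequence-hypercovering} dualizes to the one used in \cite{Stacks-Project-2024}; everything else is a formal manipulation of results already at hand.
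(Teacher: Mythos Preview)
Your proof is correct and follows essentially the same route as the paper: construct the comparison morphism from Lemma \ref{Lemma-Spectral-sequence-hypercovering}(\ref{Lemma-Spectral-sequence-hypercovering-morphism}), apply $\left\langle\bullet,T\right\rangle$, identify the dual with the classical Verdier comparison, invoke \cite[Theorem 25.10.1]{Stacks-Project-2024}, and conclude by Proposition \ref{Prop-Duality}. The paper treats part (1) by a bare citation to \cite[25.10]{Stacks-Project-2024}, whereas you give the direct functoriality argument; and the paper simply writes the equality $\left\langle\underleftarrow{\lim}_{\mathcal{K}_{\bullet}}\check{H}_{\bullet}\left(\mathcal{K}_{\bullet},\mathcal{A}\right),T\right\rangle=\underrightarrow{\lim}_{\mathcal{K}_{\bullet}}\left\langle\check{H}_{\bullet}\left(\mathcal{K}_{\bullet},\mathcal{A}\right),T\right\rangle$ without comment, so your ``main obstacle'' is not treated as an obstacle there---it follows because $\mathbf{Pro}\left(k\right)$ sits fully in $\left(\mathbf{Set}^{\mathbf{Mod}\left(k\right)}\right)^{op}$ with cofiltered limits computed as filtered colimits on the other side, and $\Hom$ into a rudimentary object is evaluation of the corresponding functor, hence commutes with these colimits.
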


\begin{proof}
~

\begin{enumerate}
\item See \cite[25.10]{Stacks-Project-2024}.

\item Due to Lemma \ref{Lemma-Spectral-sequence-hypercovering}(\ref%
{Lemma-Spectral-sequence-hypercovering-morphism}) there exist morphisms%
\begin{equation*}
\varphi _{\mathcal{K}_{\bullet }}:H_{\bullet }\left( U,\mathcal{A}\right)
\rightarrow \check{H}_{\bullet }\left( \mathcal{K}_{\bullet },\mathcal{A}%
\right)
\end{equation*}%
which define a morphism%
\begin{equation*}
\varphi :H_{\bullet }\left( U,\mathcal{A}\right) \rightarrow \underset{%
\mathcal{K}_{\bullet }\in \mathcal{HR}\left( U\right) }{\underleftarrow{\lim 
}}\check{H}_{\bullet }\left( \mathcal{K}_{\bullet },\mathcal{A}\right) =%
\check{H}_{\bullet }^{\mathcal{HR}}\left( U,\mathcal{A}\right) .
\end{equation*}%
Let $T\in \mathbf{Mod}\left( k\right) $ be an \textbf{arbitrary} injective
module. Apply $\left\langle \bullet ,T\right\rangle $ to the above morphism:%
\begin{eqnarray*}
\left\langle \varphi ,T\right\rangle &:&\left\langle \check{H}_{\bullet }^{%
\mathcal{HR}}\left( U,\mathcal{A}\right) ,T\right\rangle =\left\langle 
\underset{\mathcal{K}_{\bullet }\in \mathcal{HR}\left( U\right) }{%
\underleftarrow{\lim }}\check{H}_{\bullet }\left( \mathcal{K}_{\bullet },%
\mathcal{A}\right) ,T\right\rangle = \\
&=&\underset{\mathcal{K}_{\bullet }\in \mathcal{HR}\left( U\right) }{%
\underrightarrow{\lim }}\left\langle \check{H}_{\bullet }\left( \mathcal{K}%
_{\bullet },\mathcal{A}\right) ,T\right\rangle =\underset{\mathcal{K}%
_{\bullet }\in \mathcal{HR}\left( U\right) }{\underrightarrow{\lim }}\check{H%
}_{\bullet }\left( \mathcal{K}_{\bullet },\left\langle \mathcal{A}%
,T\right\rangle \right) \\
&\longrightarrow &\left\langle H_{\bullet }\left( U,\mathcal{A}\right)
,T\right\rangle =H^{\bullet }\left( U,\left\langle \mathcal{A}%
,T\right\rangle \right) .
\end{eqnarray*}%
The latter morphism is an \textbf{iso}morphism, due to \cite[Theorem 25.10.1]%
{Stacks-Project-2024}. Proposition \ref{Prop-Duality} implies that $\varphi $
is an isomorphism as well.
\end{enumerate}
\end{proof}

\section{\label{Sec-topological-spaces}(Pre)cosheaves on topological spaces}

\subsection{General properties}

\begin{notation}
\label{Not-Topological-spaces-as-sites}~

\begin{enumerate}
\item For a topological space $X$ denote by the same letter $X$ the
following site $X=\left( \mathbf{C}_{X},Cov\left( X\right) \right) $ where $%
\mathbf{C}_{X}=Open\left( X\right) $ is the poset of open subsets of $X$ (a
poset could always be considered as a category).

\item The topology $Cov\left( X\right) $ is generated by the following
pre-topology: $\left\{ U_{i}\rightarrow U\right\} _{i\in I}$ is a covering
iff%
\begin{equation*}
\dbigcup\limits_{i\in I}U_{i}=U.
\end{equation*}

\item The corresponding sieve $R\subseteq h_{U}$ is the following: for $%
V\subseteq U$%
\begin{equation*}
R\left( V\right) =\left\{ 
\begin{array}{ccc}
\text{One-point set }\left\{ \left( V,U\right) \right\} & \text{if} & \left(
\exists i\in I\right) \left[ V\subseteq U_{i}\right] \\ 
\varnothing & \text{otherwise} & 
\end{array}%
\right. .
\end{equation*}

\item For a continuous map $f:X\rightarrow Y$ denote by the same letter $f$
the morphism of sites defined via the functor%
\begin{eqnarray*}
F &:&\mathbf{C}_{Y}\longrightarrow \mathbf{C}_{X}, \\
&&F\left( U\right) 
{:=}%
f^{-1}\left( U\right) .
\end{eqnarray*}
\end{enumerate}
\end{notation}

\begin{remark}
\label{Rem-Topological-spaces-as-sites}For a continuous map $f:X\rightarrow
Y $:

\begin{enumerate}
\item The corresponding functors are:

\begin{enumerate}
\item 
\begin{equation*}
f_{\ast }:\mathbf{pCS}\left( X,\mathbf{Pro}\left( k\right) \right)
\longrightarrow \mathbf{pCS}\left( Y,\mathbf{Pro}\left( k\right) \right)
\end{equation*}%
and%
\begin{eqnarray*}
f_{\ast } &:&\mathbf{CS}\left( X,\mathbf{Pro}\left( k\right) \right)
\longrightarrow \mathbf{CS}\left( Y,\mathbf{Pro}\left( k\right) \right) : \\
&&f_{\ast }\mathcal{A}\left( U\right) 
{:=}%
\mathcal{A}\left( f^{-1}U\right) .
\end{eqnarray*}

\item 
\begin{eqnarray*}
f^{\ddag } &:&\mathbf{pCS}\left( Y,\mathbf{Pro}\left( k\right) \right)
\longrightarrow \mathbf{pCS}\left( X,\mathbf{Pro}\left( k\right) \right) : \\
f^{\ddag }\mathcal{A}\left( U\right) &=&\underset{U\subseteq f^{-1}\left(
V\right) }{\underleftarrow{\lim }}~\mathcal{A}\left( V\right) =\underset{%
f\left( U\right) \subseteq V}{\underleftarrow{\lim }}~\mathcal{A}\left(
V\right)
\end{eqnarray*}%
Clearly, $f_{\ast }\dashv f^{\ddag }$, since $f^{\ddag }$ is the right Kan
extension of the functor%
\begin{equation*}
F=f^{-1}:Open\left( Y\right) \longrightarrow Open\left( X\right) .
\end{equation*}

\item 
\begin{eqnarray*}
f\ast &:&\mathbf{CS}\left( Y,\mathbf{Pro}\left( k\right) \right)
\longrightarrow \mathbf{CS}\left( X,\mathbf{Pro}\left( k\right) \right) : \\
&&f^{\ast }\mathcal{A}%
{:=}%
\left( f^{\ddag }\mathcal{A}\right) _{\#}.
\end{eqnarray*}%
Clearly, $f_{\ast }\dashv f^{\ast }$ ($f^{\ast }$ is right adjoint to $%
f_{\ast }$), see Proposition \ref{Prop-Site-morphism-cosheaves}(\ref%
{Prop-Site-morphism-cosheaves-right-adjoint}).
\end{enumerate}

\item $f^{\ast }$ is exact, see Proposition \ref%
{Prop-Site-morphism-cosheaves}(\ref{Prop-Site-morphism-cosheaves-exact}).

\item $f_{\ast }$ sends quasi-projective sheaves on $X$ to quasi-projective
sheaves on $Y$, see Proposition \ref{Prop-Site-morphism-cosheaves}(\ref%
{Prop-Site-morphism-cosheaves-quasi-projective}).
\end{enumerate}
\end{remark}

\subsection{(Co)stalks}

Let $X$ be a topological space, and $x\in X$. Let $\mathbf{D}$ and $\mathbf{E%
}$ be categories. Assume that $\mathbf{D}$ is \textbf{co}complete and admits 
\textbf{co}filtered limits $\underleftarrow{\lim }$, while $\mathbf{E}$ is
complete and admits filtered \textbf{co}limits $\underrightarrow{\lim }$.

\begin{definition}
\label{Def-(Co)stalk}Let $\mathcal{A}$ be a precosheaf, $\mathcal{A}\in 
\mathbf{pCS}\left( X,\mathbf{D}\right) $ and $\mathcal{B}$ be a presheaf, $%
\mathcal{B}\in \mathbf{pS}\left( X,\mathbf{E}\right) $. The \textbf{costalk} 
$\mathcal{A}^{x}$ of $\mathcal{A}$ at $x$\ is%
\begin{equation*}
\mathcal{A}^{x}%
{:=}%
\underset{x\in U\in \mathbf{C}_{X}=Open\left( X\right) }{\underleftarrow{%
\lim }}~\mathcal{A}\left( U\right) .
\end{equation*}%
The \textbf{stalk} $\mathcal{B}_{x}$ of $\mathcal{B}$ at $x$\ is%
\begin{equation*}
\mathcal{B}_{x}%
{:=}%
\underset{x\in U\in \mathbf{C}_{X}=Open\left( X\right) }{\underrightarrow{%
\lim }}~\mathcal{B}\left( U\right) .
\end{equation*}
\end{definition}

\begin{proposition}
\label{Prop-(Co)stalks}Assume that a reflection (Notation \ref%
{Not-Categories}(\ref{Def-(co)reflective}))%
\begin{equation*}
\left( \bullet \right) ^{\#}:\mathbf{pS}\left( X,\mathbf{E}\right)
\longrightarrow \mathbf{S}\left( X,\mathbf{E}\right)
\end{equation*}%
and a coreflection (Notation \ref{Not-Categories}(\ref{Def-(co)reflective})%
\begin{equation*}
\left( \bullet \right) _{\#}:\mathbf{pCS}\left( X,\mathbf{D}\right)
\longrightarrow \mathbf{CS}\left( X,\mathbf{D}\right)
\end{equation*}%
exist, i.e., $\left( \bullet \right) ^{\#}\dashv \iota $ and $\iota \dashv
\left( \bullet \right) _{\#}$ where $\iota $ are the canonical embeddings of
(co)sheaves into pre(co)sheaves. Then, for any precosheaf $\mathcal{A}\in 
\mathbf{pCS}\left( X,\mathbf{D}\right) $ and any presheaf $\mathcal{B}\in 
\mathbf{pS}\left( X,\mathbf{E}\right) $ the canonical morphisms%
\begin{eqnarray*}
\left( \mathcal{A}_{\#}\right) ^{x} &\longrightarrow &\mathcal{A}^{x}, \\
\mathcal{B}_{x} &\longrightarrow &\left( \mathcal{B}^{\#}\right) _{x}
\end{eqnarray*}%
are isomorphisms.
\end{proposition}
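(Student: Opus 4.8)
The plan is to show that the stalk functor on presheaves factors through sheafification, and, dually, that the costalk functor on precosheaves factors through cosheafification; both isomorphisms in the statement then fall out of uniqueness of adjoints. The only input from topology is the elementary observation that, for the open‑cover pretopology on $X$ (Notation \ref{Not-Topological-spaces-as-sites}), every covering $\left\{ U_{i}\rightarrow U\right\}$ of an open set $U$ with $x\in U$ has some member $U_{i}$ containing $x$ (and likewise every covering sieve over such a $U$ contains some $V\ni x$); this is exactly what makes the "skyscraper" objects below into (co)sheaves, and it would fail on a general site.

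For the presheaf assertion, for $E\in\mathbf{E}$ I would define the presheaf $i_{x}^{!}E$ by $\left( i_{x}^{!}E\right)\left( U\right)=E$ if $x\in U$ and $\left( i_{x}^{!}E\right)\left( U\right)=\ast$ (the terminal object of $\mathbf{E}$) otherwise, with the evident restrictions. Using the topological observation and the completeness of $\mathbf{E}$ one checks directly that $i_{x}^{!}E$ satisfies the sheaf axiom, so $i_{x}^{!}$ is a functor $\mathbf{E}\rightarrow\mathbf{S}\left( X,\mathbf{E}\right)$. A morphism $\mathcal{B}\rightarrow\iota i_{x}^{!}E$ of presheaves is the same datum as a compatible family of maps $\mathcal{B}\left( U\right)\rightarrow E$ indexed by the open sets $U\ni x$, i.e. a cocone on the diagram defining $\mathcal{B}_{x}$; hence $\left(\bullet\right)_{x}\colon\mathbf{pS}\left( X,\mathbf{E}\right)\rightarrow\mathbf{E}$ is left adjoint to $\iota i_{x}^{!}$, and the same computation shows the stalk functor on $\mathbf{S}\left( X,\mathbf{E}\right)$ is left adjoint to $i_{x}^{!}$. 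Combining the latter with the sheafification adjunction $\left(\bullet\right)^{\#}\dashv\iota$ (available by hypothesis) exhibits $\mathcal{B}\mapsto\left(\mathcal{B}^{\#}\right)_{x}$ as a second left adjoint of $\iota i_{x}^{!}$. By uniqueness of adjoints $\left(\bullet\right)_{x}\simeq\left(\left(\bullet\right)^{\#}\right)_{x}$, and tracking the units identifies this isomorphism with the map induced by the sheafification unit $\mathcal{B}\rightarrow\mathcal{B}^{\#}$, i.e. with the canonical morphism in the statement.

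The precosheaf assertion is the formal dual. For $D\in\mathbf{D}$ let $j_{x}D$ be the precosheaf with $\left( j_{x}D\right)\left( U\right)=D$ for $x\in U$ and $\left( j_{x}D\right)\left( U\right)=0$ (the initial object) otherwise; the topological observation, cocompleteness of $\mathbf{D}$, and the coequalizer description of the cosheaf condition (Remark \ref{Rem-Cosheaf-via-pairing}(2), valid here since every covering sieve is generated by an open cover) show that $j_{x}D$ is a cosheaf. A precosheaf morphism $\iota j_{x}D\rightarrow\mathcal{A}$ is exactly a cone from $D$ to the diagram defining $\mathcal{A}^{x}$, so $\iota j_{x}\colon\mathbf{D}\rightarrow\mathbf{pCS}\left( X,\mathbf{D}\right)$ is left adjoint to the costalk $\left(\bullet\right)^{x}$, while $j_{x}\colon\mathbf{D}\rightarrow\mathbf{CS}\left( X,\mathbf{D}\right)$ is left adjoint to the costalk on cosheaves. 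Since $\iota\dashv\left(\bullet\right)_{\#}$ by hypothesis (cf. Proposition \ref{Prop-Cosheafification}(8)), composing adjunctions shows $\mathcal{A}\mapsto\left(\mathcal{A}_{\#}\right)^{x}$ is a right adjoint of $\iota j_{x}$ as well; hence $\left(\bullet\right)^{x}\simeq\left(\left(\bullet\right)_{\#}\right)^{x}$, and the isomorphism is the canonical one induced by the coreflection counit $\mathcal{A}_{\#}\rightarrow\mathcal{A}$. When $\mathbf{D}=\mathbf{Pro}\left( k\right)$ one may instead read this off the presheaf case via Proposition \ref{Prop-Duality}: for injective $T$, Remark \ref{Rem-Pro-objects-morphisms}(2) gives $\left\langle\mathcal{A}^{x},T\right\rangle\simeq\left\langle\mathcal{A},T\right\rangle_{x}$ (the costalk is a cofiltered limit, which amalgamates with the internal cofiltered limits of the pro-modules $\mathcal{A}\left( U\right)$), and Proposition \ref{Prop-Cosheafification}(4) gives $\left\langle\mathcal{A}_{\#},T\right\rangle\simeq\left\langle\mathcal{A},T\right\rangle^{\#}$, so the claim reduces to the presheaf statement for $\left\langle\mathcal{A},T\right\rangle$.

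The main obstacle is not a single hard step but the verifications that are easy to underestimate: checking that the skyscraper presheaf (respectively co‑skyscraper precosheaf) really is a sheaf (respectively cosheaf) in the stated generality, and — more delicately — checking that the isomorphism produced abstractly by uniqueness of adjoints agrees with the specific comparison morphism named in the proposition, and not merely with some isomorphism. The latter is a diagram chase through the relevant units and counits; it is routine but must be carried out.
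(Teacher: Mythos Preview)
Your argument is correct. The paper itself does not supply a proof here: it cites \cite[Proposition 2.21]{Prasolov-Cosheafification-2016-MR3660525} for the precosheaf half and declares the presheaf half well-known, so there is no in-paper argument to compare against. Your route via the skyscraper sheaf $i_{x}^{!}E$ and the co-skyscraper cosheaf $j_{x}D$, exhibiting the (co)stalk functors as adjoints and then invoking uniqueness of adjoints after composing with the (co)sheafification adjunction, is the standard clean way to handle both halves uniformly; it has the merit of being self-contained and of isolating exactly which feature of the open-cover topology is needed (every cover of a neighbourhood of $x$ contains a neighbourhood of $x$). The point you flag as delicate---that the abstract isomorphism from uniqueness of adjoints is the canonical comparison map---is indeed routine once you note that the canonical map $\mathcal{B}_{x}\to(\mathcal{B}^{\#})_{x}$ is obtained by applying $(\bullet)_{x}$ to the unit of $(\bullet)^{\#}\dashv\iota$, which is precisely the map the composition-of-adjunctions argument produces; the dual remark handles the costalk side.
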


\begin{proof}
See \cite[Proposition 2.21]{Prasolov-Cosheafification-2016-MR3660525} for pre%
\textbf{co}sheaves. For presheaves, the proof is analogous (and in fact
well-known).
\end{proof}

\begin{proposition}
\label{Prop-(Co)stalks-pro-modules}Let $\mathcal{C}$ be a precosheaf, $%
\mathcal{C}\in \mathbf{pCS}\left( X,\mathbf{Pro}\left( k\right) \right) $,
and $\mathcal{A}$ and $\mathcal{B}$ be cosheaves, $\mathcal{A},\mathcal{B}%
\in \mathbf{CS}\left( X,\mathbf{Pro}\left( k\right) \right) $. Let also $%
T\in \mathbf{Mod}\left( k\right) $ be an \textbf{arbitrary} injective $k$%
-module.

\begin{enumerate}
\item $\left\langle \mathcal{C}^{x},T\right\rangle 
\simeq%
\left\langle \mathcal{C},T\right\rangle _{x}$.

\item $\mathcal{C}_{+}\rightarrow \mathcal{C}$ induces an isomorphism $%
\left( \mathcal{C}_{+}\right) ^{x}\rightarrow \mathcal{C}^{x}$.

\item \label{Prop-(Co)stalks-pro-modules-morphisms}$f:\mathcal{A}\rightarrow 
\mathcal{B}$ is a monomorphism (epimorphism, isomorphism) iff%
\begin{equation*}
f^{x}:\mathcal{A}^{x}\longrightarrow \mathcal{B}^{x}
\end{equation*}%
is a monomorphism (epimorphism, isomorphism) for all $x\in X$.
\end{enumerate}
\end{proposition}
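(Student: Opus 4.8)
The plan is to reduce all three parts to part (1), and then to transport parts (2) and (3) to the familiar world of (pre)sheaves of $k$-modules via the duality principle of Proposition \ref{Prop-Duality}. For part (1) itself: by Definition \ref{Def-(Co)stalk} the costalk $\mathcal{C}^{x}=\underleftarrow{\lim}_{x\in U}\mathcal{C}(U)$ is a cofiltered limit in $\mathbf{Pro}(k)$, the index poset $\{U\mid x\in U\}$ being codirected (hence cofiltered as a category), and such limits exist by Proposition \ref{Prop-Facts-pro-objects}(\ref{Prop-Facts-pro-objects-admits-cofiltered-limits}). The functor $\langle-,T\rangle=\Hom_{\mathbf{Pro}(k)}(-,T)$ carries cofiltered limits of pro-objects to filtered colimits of $k$-modules: expanding each $\mathcal{C}(U)$ as a cofiltered diagram of modules and using the formula for morphisms in $\mathbf{Pro}(k)$ from Remark \ref{Rem-Pro-objects-morphisms}(2) with the rudimentary second argument $T$, the limit is represented by the pro-object over the total category of $U\mapsto(\text{index of }\mathcal{C}(U))$, whence $\langle\mathcal{C}^{x},T\rangle\simeq\underrightarrow{\lim}_{x\in U}\langle\mathcal{C}(U),T\rangle=\langle\mathcal{C},T\rangle_{x}$, the stalk of the presheaf $\langle\mathcal{C},T\rangle$. (Equivalently: $T$, being rudimentary, is a compact object of $\mathbf{Pro}(k)^{op}$, so $\Hom(-,T)$ commutes with the filtered colimits of $\mathbf{Pro}(k)^{op}$, i.e. with cofiltered limits of $\mathbf{Pro}(k)$.)

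For part (2) I would apply the contravariant functor $\langle-,T\rangle$ to the canonical morphism $\mathcal{C}_{+}\to\mathcal{C}$. Using part (1) at both ends and the natural isomorphism $\langle\mathcal{C}_{+},T\rangle\simeq\langle\mathcal{C},T\rangle^{+}$ of Proposition \ref{Prop-Cosheafification}(3), the image of $(\mathcal{C}_{+})^{x}\to\mathcal{C}^{x}$ under $\langle-,T\rangle$ is identified with the canonical comparison $\langle\mathcal{C},T\rangle_{x}\to(\langle\mathcal{C},T\rangle^{+})_{x}$ on stalks. The latter is an isomorphism, since on a topological space the unit $\mathcal{F}\to\mathcal{F}^{+}$ of the separated modification is always a stalkwise isomorphism (a one-line check: $\mathcal{F}^{+}(U)$ is built from sections over open covers of $U$, and every neighbourhood of $x$ is refined by a member of such a cover). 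Hence $\langle-,T\rangle$ turns $(\mathcal{C}_{+})^{x}\to\mathcal{C}^{x}$ into an isomorphism for every injective $T$, and Proposition \ref{Prop-Duality} concludes that $(\mathcal{C}_{+})^{x}\to\mathcal{C}^{x}$ is an isomorphism.

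For part (3), since $\mathcal{A},\mathcal{B}$ are cosheaves, $\langle\mathcal{A},T\rangle$ and $\langle\mathcal{B},T\rangle$ are sheaves of $k$-modules (Proposition \ref{Prop-Duality}) and $f\colon\mathcal{A}\to\mathcal{B}$ induces a sheaf morphism $\langle f,T\rangle\colon\langle\mathcal{B},T\rangle\to\langle\mathcal{A},T\rangle$. Reading each step as an equivalence over all injective $T$ and all $x\in X$: $f$ is a monomorphism $\Longleftrightarrow$ $\langle f,T\rangle$ is an epimorphism of sheaves (Proposition \ref{Prop-Duality}) $\Longleftrightarrow$ $\langle f,T\rangle$ is an epimorphism on every stalk (epimorphisms of sheaves on a space are detected on stalks) $\Longleftrightarrow$ $\langle\mathcal{B}^{x},T\rangle\to\langle\mathcal{A}^{x},T\rangle$ is an epimorphism for all $x$ (part (1)) $\Longleftrightarrow$ $f^{x}\colon\mathcal{A}^{x}\to\mathcal{B}^{x}$ is a monomorphism for all $x$ (Proposition \ref{Prop-Duality}, now applied to the pro-modules $\mathcal{A}^{x},\mathcal{B}^{x}$). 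The same chain with ``mono'' and ``epi'' interchanged (using that monomorphisms of sheaves are likewise detected on stalks) gives the epimorphism statement, and the isomorphism statement follows by combining the two, both $\mathbf{CS}(X,\mathbf{Pro}(k))$ and $\mathbf{Pro}(k)$ being abelian.

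The only non-formal ingredients are the two classical facts about sheaves on topological spaces used above — that $\mathcal{F}\to\mathcal{F}^{+}$ is a stalkwise isomorphism and that monomorphisms/epimorphisms of sheaves are detected on stalks — both of which rest on such spaces having enough points; everything else is the systematic application of duality. The step demanding the most care is the double interchange of ``monomorphism'' and ``epimorphism'' forced by the contravariance of $\langle-,T\rangle$ (once on the cosheaf side, once on the costalk side), together with the verification that the isomorphism of part (1) is compatible with $f$, so that the sheaf-theoretic stalk of $\langle\mathcal{B},T\rangle$ is genuinely $\langle\mathcal{B}^{x},T\rangle$ as an object receiving $\langle f,T\rangle$; neither is deep, but both must be tracked.
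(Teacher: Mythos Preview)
Your proof is correct and follows essentially the same route as the paper: reduce each statement via the pairing $\langle-,T\rangle$ to the corresponding well-known fact for (pre)sheaves of $k$-modules, then invoke Proposition~\ref{Prop-Duality}. Your part~(1) is more explicit than the paper's one-line appeal to duality (you spell out why $\langle-,T\rangle$ turns the cofiltered limit defining the costalk into the filtered colimit defining the stalk), but this is just unpacking what the paper leaves implicit; parts~(2) and~(3) match the paper's argument step for step.
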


\begin{proof}
~

\begin{enumerate}
\item Follows from Proposition \ref{Prop-Duality}.

\item 
\begin{equation*}
\left\langle \mathcal{C}^{x},T\right\rangle 
\simeq%
\left\langle \mathcal{C},T\right\rangle _{x}\longrightarrow \left\langle 
\mathcal{C}_{+},T\right\rangle _{x}%
\simeq%
\left\langle \left( \mathcal{C}_{+}\right) ^{x},T\right\rangle
\end{equation*}%
is an isomorphism (well-known fact for presheaves), therefore $\left( 
\mathcal{C}_{+}\right) ^{x}\rightarrow \mathcal{C}^{x}$ is an isomorphism,
too.

\item 
\begin{equation*}
\left\langle f,T\right\rangle :\left\langle \mathcal{B},T\right\rangle
\longrightarrow \left\langle \mathcal{A},T\right\rangle
\end{equation*}%
is an epimorphism (monomorphism, isomorphism) of sheaves iff 
\begin{equation*}
\left[ \left\langle f,T\right\rangle _{x}=\left\langle f^{x},T\right\rangle %
\right] :\left[ \left\langle \mathcal{B},T\right\rangle _{x}%
\simeq%
\left\langle \mathcal{B}^{x},T\right\rangle \right] \longrightarrow \left[
\left\langle \mathcal{A},T\right\rangle _{x}%
\simeq%
\left\langle \mathcal{A}^{x},T\right\rangle \right]
\end{equation*}%
is an epimorphism (monomorphism, isomorphism) of modules for all $x\in X$
(well-known fact for sheaves). The result then follows from Proposition \ref%
{Prop-Duality}.
\end{enumerate}
\end{proof}

For topological spaces, the result of \cite[Theorem 3.4.1(1)]%
{Prasolov-Cosheaves-2021-MR4347662} can be significantly simplified:

\begin{proposition}
\label{Prop-Enough-quasi-projective}For an arbitrary cosheaf $\mathcal{A}\in 
\mathbf{CS}\left( X,\mathbf{Pro}\left( k\right) \right) $, there exists a
functorial epimorphism%
\begin{equation*}
\sigma \left( \mathcal{A}\right) :\mathcal{R}\left( \mathcal{A}\right)
\twoheadrightarrow \mathcal{A},
\end{equation*}%
where $\mathcal{R}\left( \mathcal{A}\right) $ is quasi-projective.
\end{proposition}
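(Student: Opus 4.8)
The plan is to build $\mathcal{R}(\mathcal{A})$ as a coproduct of ``elementary'' quasi-projective cosheaves indexed by the data $(U,x,\text{element of }\mathcal{A}^x)$, in a way dual to the standard construction of enough injective presheaves/sheaves via skyscraper objects. Concretely, for a point $x\in X$ and a pro-module $\mathbf{M}\in\mathbf{Pro}(k)$, consider the precosheaf $\widehat{x}_{!}(\mathbf{M})$ defined on $\mathbf{C}_X=Open(X)$ by $U\longmapsto \mathbf{M}$ if $x\in U$ and $U\longmapsto 0$ otherwise (a ``co-skyscraper''), with the identity corestriction maps between open sets both containing $x$. Its cosheafification $\mathcal{S}_{x}(\mathbf{M}):=\left(\widehat{x}_{!}(\mathbf{M})\right)_{\#}$ is the cosheaf I will use as a building block. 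The first step is to check, via Proposition \ref{Prop-Duality}, that $\mathcal{S}_x(\mathbf{M})$ is quasi-projective when $\mathbf{M}$ is quasi-projective: applying $\langle\bullet,T\rangle$ for injective $T$ turns $\widehat{x}_{!}(\mathbf{M})$ into the skyscraper presheaf $x_{*}\langle\mathbf{M},T\rangle$, whose sheafification is the genuine skyscraper sheaf, which is injective whenever $\langle\mathbf{M},T\rangle$ is an injective module; and $\langle\mathbf{M},T\rangle$ is injective for all injective $T$ precisely when $\mathbf{M}$ is quasi-projective (Definition \ref{Def-quasi-projective}, Proposition \ref{Prop-quasi-projective}). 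So the dual statement is ``enough injective sheaves on a space,'' which is classical.

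Next I would assemble the resolution functorially. For each open $U$ and each point $x\in U$, the costalk $\mathcal{A}^{x}$ (Definition \ref{Def-(Co)stalk}) is a pro-module; using that $\mathbf{Pro}(k)$ has enough quasi-projectives (cited in the Pro-modules subsection, \cite[Proposition A.2.8(5)]{Prasolov-Cosheaves-2021-MR4347662}), choose — functorially in $\mathcal{A}$, e.g. via the free-pro-module construction — a quasi-projective $\mathbf{P}(\mathcal{A}^x)$ with an epimorphism $\mathbf{P}(\mathcal{A}^x)\twoheadrightarrow\mathcal{A}^{x}$. Define
\begin{equation*}
\mathcal{R}(\mathcal{A}):=\dbigoplus\limits_{x\in X}\mathcal{S}_{x}\left(\mathbf{P}(\mathcal{A}^{x})\right),
\end{equation*}
which is a cosheaf (coproducts of cosheaves exist and are computed after cosheafification of the precosheaf coproduct), and is quasi-projective because $\langle\bullet,T\rangle$ sends the coproduct to a product of injective sheaves, hence injective. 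The counit-type map is built from the adjunction $\iota\dashv(\bullet)_{\#}$ together with the canonical map $\widehat{x}_{!}(\mathbf{M})\to\mathcal{A}$ induced, on open sets $V\ni x$, by $\mathbf{M}=\mathbf{P}(\mathcal{A}^x)\to\mathcal{A}^{x}=\underleftarrow{\lim}_{x\in W}\mathcal{A}(W)\to\mathcal{A}(V)$; summing over $x$ gives $\sigma(\mathcal{A}):\mathcal{R}(\mathcal{A})\to\mathcal{A}$, evidently natural in $\mathcal{A}$.

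The last step is to verify that $\sigma(\mathcal{A})$ is an epimorphism of cosheaves. By Proposition \ref{Prop-(Co)stalks-pro-modules}(\ref{Prop-(Co)stalks-pro-modules-morphisms}) it suffices to check this on costalks: for each $y\in X$, $\sigma(\mathcal{A})^{y}:\mathcal{R}(\mathcal{A})^{y}\to\mathcal{A}^{y}$ should be an epimorphism in $\mathbf{Pro}(k)$. Using Proposition \ref{Prop-(Co)stalks} (the costalk of a cosheafification equals the costalk of the precosheaf) one computes $\mathcal{S}_{x}(\mathbf{M})^{y}=\mathbf{M}$ if $y\in\overline{\{x\}}$... more carefully, $\widehat{x}_{!}(\mathbf{M})^{y}=\underleftarrow{\lim}_{y\in U}\widehat{x}_{!}(\mathbf{M})(U)$, which is $\mathbf{M}$ when every open neighbourhood of $y$ contains $x$ and is $0$ when some neighbourhood of $y$ misses $x$; in particular the $x=y$ summand contributes $\mathbf{P}(\mathcal{A}^{y})$, and the composite $\mathbf{P}(\mathcal{A}^{y})\to\mathcal{R}(\mathcal{A})^{y}\to\mathcal{A}^{y}$ is precisely the chosen epimorphism $\mathbf{P}(\mathcal{A}^{y})\twoheadrightarrow\mathcal{A}^{y}$, so $\sigma(\mathcal{A})^{y}$ is epi. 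I expect the main obstacle to be bookkeeping around the costalk computation: the interaction of $\underleftarrow{\lim}$ over neighbourhoods with the coproduct $\dbigoplus_{x}$ need not commute in general, so I would instead argue dually — apply $\langle\bullet,T\rangle$, where the statement becomes that the map of stalks of sheaves $\left(\langle\mathcal{A},T\rangle\right)_{y}\to\prod_{x}\left(\text{skyscraper stalks}\right)$ is a monomorphism, which reduces to the familiar fact that a sheaf embeds into the product of its skyscraper stalks — and then invoke Proposition \ref{Prop-Duality} to transfer back. Functoriality of the whole construction is then automatic since every choice (free pro-module cover, skyscraper, cosheafification, coproduct) was made functorially.
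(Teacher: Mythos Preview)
Your proposal is correct and is essentially the paper's proof: the paper packages the same construction as the pushforward $f_{*}$ along the identity map $f:X^{\delta}\to X$ from $X$ with the discrete topology, so that $\mathcal{R}(\mathcal{A})(U)=\bigoplus_{x\in U}\mathbf{F}(\mathcal{A}^{x})$ is exactly your coproduct of co-skyscrapers, and quasi-projectivity then follows in one line from Proposition~\ref{Prop-Site-morphism-cosheaves}(\ref{Prop-Site-morphism-cosheaves-quasi-projective}) rather than via a direct product-of-injective-skyscrapers argument. Your caution about the costalk computation is well placed---the dual argument you sketch (a sheaf embeds into the product of its stalk skyscrapers) is precisely what is needed, and the paper's costalk check is in the same spirit.
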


\begin{proof}
Consider the space $X^{\delta }$ which is $X$ with the discrete topology. It
is easy to check:

\begin{enumerate}
\item ~

\begin{enumerate}
\item Cosheaves $\mathcal{A}\in \mathbf{CS}\left( X^{\delta },\mathbf{Pro}%
\left( k\right) \right) $ on $X^{\delta }$ are completely determined by
families%
\begin{equation*}
\left( \mathcal{A}^{x}\right) _{x\in X}
\end{equation*}%
of their costalks. Indeed, for any such cosheaf, and any $U\in Open\left(
X\right) $%
\begin{equation*}
\mathcal{A}\left( U\right) 
\simeq%
\dcoprod\limits_{x\in U}\mathcal{A}^{x}.
\end{equation*}

\item Sheaves $\mathcal{B}\in \mathbf{S}\left( X^{\delta },\mathbf{Mod}%
\left( k\right) \right) $ on $X^{\delta }$ are completely determined by
families%
\begin{equation*}
\left( \mathcal{B}_{x}\right) _{x\in X}
\end{equation*}%
of their stalks. Indeed, for any such sheaf, and any $U\in Open\left(
X\right) $%
\begin{equation*}
\mathcal{B}\left( U\right) 
\simeq%
\dprod_{x\in U}\mathcal{B}_{x}.
\end{equation*}
\end{enumerate}

\item ~

\begin{enumerate}
\item A cosheaf $\mathcal{A}\in \mathbf{CS}\left( X^{\delta },\mathbf{Pro}%
\left( k\right) \right) $ is quasi-projective iff all the costalks $\mathcal{%
A}^{x}\in \mathbf{Pro}\left( k\right) $ are quasi-projective pro-modules.

\item A sheaf $\mathcal{B}\in \mathbf{S}\left( X^{\delta },\mathbf{Mod}%
\left( k\right) \right) $ is injective iff all the stalks $\mathcal{B}%
_{x}\in \mathbf{Mod}\left( k\right) $ are injective modules.
\end{enumerate}

\item For an arbitrary pro-module $\mathbf{M}\in \mathbf{Pro}\left( k\right) 
$, there exists a functorial epimorphism%
\begin{equation*}
\mathbf{F}\left( \mathbf{M}\right) \twoheadrightarrow \mathbf{M},
\end{equation*}%
where $\mathbf{F}\left( \mathbf{M}\right) $ is quasi-projective, see \cite[%
Proposition A.2.8(5)]{Prasolov-Cosheaves-2021-MR4347662}.
\end{enumerate}

Consider now the family $\left( \mathbf{F}\left( \mathcal{A}^{x}\right)
\right) _{x\in X}$ of quasi-projective pro-modules. The family determines
the quasi-projective cosheaf $\mathcal{P}_{\mathcal{A}}$ on $X^{\delta }$:%
\begin{equation*}
\mathcal{P}_{\mathcal{A}}\left( U\right) =\dcoprod\limits_{x\in U}\left( 
\mathbf{F}\left( \mathcal{A}^{x}\right) \right) .
\end{equation*}%
Let $f:X^{\delta }\rightarrow X$ be the canonical continuous mapping. Due to
Proposition \ref{Prop-Site-morphism-cosheaves}(\ref%
{Prop-Site-morphism-cosheaves-quasi-projective}),%
\begin{equation*}
\mathcal{R}\left( \mathcal{A}\right) 
{:=}%
f_{\ast }\left( \mathcal{P}_{\mathcal{A}}\right) \in \mathbf{CS}\left( X,%
\mathbf{Pro}\left( k\right) \right)
\end{equation*}%
is a quasi-projective cosheaf. More precisely,%
\begin{equation*}
\mathcal{R}\left( \mathcal{A}\right) \left( U\right) =\mathcal{P}_{\mathcal{A%
}}\left( U\right) =\dcoprod\limits_{x\in U}\left( \mathbf{F}\left( \mathcal{A%
}^{x}\right) \right) .
\end{equation*}%
There are canonical morphisms%
\begin{equation*}
\sigma \left( \mathcal{A}\right) \left( U\right) :\mathcal{R}\left( \mathcal{%
A}\right) \left( U\right) =\dcoprod\limits_{x\in U}\left( \mathbf{F}\left( 
\mathcal{A}^{x}\right) \right) \longrightarrow \mathcal{A}\left( U\right) ,
\end{equation*}%
defined on coproduct components by the following compositions:%
\begin{equation*}
\mathbf{F}\left( \mathcal{A}^{x}\right) \twoheadrightarrow \mathcal{A}^{x}%
\overset{\varphi _{U}}{\longrightarrow }\mathcal{A}\left( U\right)
\end{equation*}%
where $\varphi _{U}$ are he canonical compositions%
\begin{equation*}
\varphi _{U}:\mathcal{A}^{x}=\underset{x\in V\subseteq U}{\underleftarrow{%
\lim }}~\mathcal{A}\left( V\right) \overset{\pi _{V}}{\longrightarrow }%
\mathcal{A}\left( V\right) \overset{\mathcal{A}\left( V\rightarrow U\right) }%
{\longrightarrow }\mathcal{A}\left( U\right)
\end{equation*}%
where $\pi _{V}$ are the canonical projections. Clearly, the morphisms%
\begin{equation*}
\sigma \left( \mathcal{A}\right) ^{x}:\mathcal{R}\left( \mathcal{A}\right)
^{x}=\mathbf{F}\left( \mathcal{A}^{x}\right) \longrightarrow \mathcal{A}^{x}
\end{equation*}%
are epimorphisms, therefore $\sigma \left( \mathcal{A}\right) $ is an
epimorphism, due to Proposition \ref{Prop-(Co)stalks-pro-modules}(\ref%
{Prop-(Co)stalks-pro-modules-morphisms}).
\end{proof}

\subsection{Pro-homotopy and pro-homology}

One of the most important tools in strong shape theory is a \emph{strong
expansion} (see \cite{Mardesic-Strong-shape-and-homology-MR1740831},
conditions (S1) and (S2) on p. 129). In this paper, it is sufficient to use
a weaker notion: an $H\left( \mathbf{Top}\right) $-\emph{expansion} (\cite[%
\S I.4.1]{Mardesic-Segal-MR676973}, conditions (E1) and (E2)). Those two
conditions are equivalent to the following

\begin{definition}
\label{HTOP-extension}\label{Def-HTOP-extension}Let $X$ be a topological
space. A morphism $X\rightarrow \left( Y_{j}\right) _{j\in \mathbf{I}}$ in $%
\mathbf{Pro}\left( H\left( \mathbf{Top}\right) \right) $ is called an $%
H\left( \mathbf{Top}\right) $-expansion (or simply \textbf{expansion}) if
for any polyhedron $P$ the following mapping%
\begin{equation*}
\underrightarrow{\lim }_{j}\left[ Y_{j},P\right] =\underrightarrow{\lim }%
_{j}Hom_{H\left( \mathbf{Top}\right) }\left( Y_{j},P\right) \longrightarrow
Hom_{H\left( \mathbf{Top}\right) }\left( X,P\right) =\left[ X,P\right]
\end{equation*}%
is bijective where $\left[ Z,P\right] $ is the set of homotopy classes of
continuous mappings from $Z$ to $P$.

An expansion is called \textbf{polyhedral} (or an $H\left( \mathbf{Pol}%
\right) $-expansion) if all $Y_{j}$ are polyhedra.
\end{definition}

\begin{remark}
\label{Rem-HPOL-extension}~

\begin{enumerate}
\item The pointed version of this notion (an $H\left( \mathbf{Pol}_{\ast
}\right) $-expansion) is defined similarly.

\item For any (pointed) topological space $X$ there exists an $H\left( 
\mathbf{Pol}\right) $-expansion (an $H\left( \mathbf{Pol}_{\ast }\right) $%
-expansion), see \cite[Theorem I.4.7 and I.4.10]{Mardesic-Segal-MR676973}.

\item Any two $H\left( \mathbf{Pol}\right) $-expansions ($H\left( \mathbf{Pol%
}_{\ast }\right) $-expansions) of a (pointed) topological space $X$ are
isomorphic in the category $\mathbf{Pro}\left( H\left( \mathbf{Pol}\right)
\right) $ ($\mathbf{Pro}\left( H\left( \mathbf{Pol}_{\ast }\right) \right) $%
), see \cite[Theorem I.2.6]{Mardesic-Segal-MR676973}.
\end{enumerate}
\end{remark}

\begin{definition}
\label{Def-Normal-covering}An open covering is called \textbf{normal} \cite[%
\S I.6.2]{Mardesic-Segal-MR676973}, iff there is a partition of unity
subordinated to it.
\end{definition}

\begin{remark}
Theorem 8 from \cite[App.1, \S 3.2]{Mardesic-Segal-MR676973}, shows that an $%
H\left( \mathbf{Pol}\right) $- or an $H\left( \mathbf{Pol}_{\ast }\right) $%
-expansion for $X$ can be constructed using nerves of normal (see Definition %
\ref{Def-Normal-covering}) open coverings of $X$.
\end{remark}

Pro-homotopy is defined in \cite[p. 121]{Mardesic-Segal-MR676973}:

\begin{definition}
\label{Def-Pro-homotopy-groups}\label{Pro-homotopy-groups}For a (pointed)
topological space $X$, define its pro-homotopy pro-sets%
\begin{equation*}
pro\text{-}\pi _{n}\left( X\right) 
{:=}%
\left( \pi _{n}\left( Y_{j}\right) \right) _{j\in \mathbf{J}}
\end{equation*}%
where $X\rightarrow \left( Y_{j}\right) _{j\in \mathbf{J}}$ is an $H\left( 
\mathbf{Pol}\right) $-expansion if $n=0$, and an $H\left( \mathbf{Pol}_{\ast
}\right) $-expansion if $n\geq 1$.

Similar to the \textquotedblleft usual\textquotedblright\ algebraic
topology, $pro$-$\pi _{0}$ is a pro-set (an object of $\mathbf{Pro}\left( 
\mathbf{Set}\right) $), $pro$-$\pi _{1}$ is a pro-group (an object of $%
\mathbf{Pro}\left( \mathbf{Group}\right) $), and $pro$-$\pi _{n}$ are
abelian pro-groups (objects of $\mathbf{Pro}\left( \mathbf{Ab}\right) $) for 
$n\geq 2$.
\end{definition}

Pro-homology groups are defined in \cite[\S II.3.2]{Mardesic-Segal-MR676973}%
, as follows:

\begin{definition}
\label{Def-Pro-homology-groups}\label{Pro-homology-groups}For a topological
space $X$, and an abelian group $G$, define its pro-homology groups as%
\begin{equation*}
pro\text{-}H_{n}\left( X,G\right) 
{:=}%
\left( H_{n}\left( Y_{j},G\right) \right) _{j\in \mathbf{J}}
\end{equation*}%
where $X\rightarrow \left( Y_{j}\right) _{j\in \mathbf{J}}$ is a polyhedral
expansion.
\end{definition}

\subsection{Constant (pre)cosheaves}

\begin{lemma}
\label{Lemma-Description-of-costalk}Let $x\in X$, let $\iota _{x}:\left\{
x\right\} \rightarrow X$ be the embedding, and let $\mathcal{A}\in \mathbf{%
pCS}\left( X,\mathbf{Pro}\left( k\right) \right) $, $\mathcal{B}\in \mathbf{%
CS}\left( X,\mathbf{Pro}\left( k\right) \right) $. Then

\begin{enumerate}
\item $\left( \left( \iota _{x}\right) ^{\ddag }\mathcal{A}\right) \left(
\left\{ x\right\} \right) 
\simeq%
\mathcal{A}^{x}$.

\item $\left( \left( \iota _{x}\right) ^{\ast }\mathcal{A}_{\#}\right)
\left( \left\{ x\right\} \right) 
\simeq%
\mathcal{A}^{x}$.

\item $\left( \left( \iota _{x}\right) ^{\ast }\mathcal{B}\right) \left(
\left\{ x\right\} \right) 
\simeq%
\mathcal{B}^{x}$.
\end{enumerate}
\end{lemma}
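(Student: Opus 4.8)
The plan is to reduce all three statements to a short definition-chase on the one-point site, plus one small fact about cosheafification there. I would first prove~(1) directly: by Remark~\ref{Rem-Topological-spaces-as-sites} the right Kan extension along $\iota_x$ is computed by $(\iota_x)^{\ddag}\mathcal{A}(W)=\underleftarrow{\lim}_{\iota_x(W)\subseteq V}\mathcal{A}(V)$ for $W\in Open(\{x\})$, and putting $W=\{x\}$ the index poset becomes $\{V\in Open(X)\mid x\in V\}$, which is precisely the index poset of Definition~\ref{Def-(Co)stalk}; hence $(\iota_x)^{\ddag}\mathcal{A}(\{x\})=\mathcal{A}^x$ literally, from the definitions.

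Next I would establish the auxiliary fact that, for every precosheaf $\mathcal{C}\in\mathbf{pCS}(\{x\},\mathbf{Pro}(k))$, the canonical map $\mathcal{C}(\{x\})\to\mathcal{C}_{\#}(\{x\})$ is an isomorphism. Since $Open(\{x\})=\{\varnothing,\{x\}\}$ and every covering family of $\{x\}$ must contain $\{x\}$ itself, the only covering sieve of $\{x\}$ is $h_{\{x\}}$ (any strictly larger $Cov(\{x\})$ would force $\mathcal{C}(\{x\})\simeq 0$ for all cosheaves via the cosheaf axiom). The index category of $h_{\{x\}}$, i.e.\ the comma category of morphisms $V\to\{x\}$, has $\mathbf{1}_{\{x\}}$ as terminal object, so $H_0(h_{\{x\}},\mathcal{C})\simeq\mathcal{C}(\{x\})$; therefore $\mathcal{C}_{+}(\{x\})=\check{H}_0(\{x\},\mathcal{C})\simeq\mathcal{C}(\{x\})$ by Definition~\ref{Def-Cech-H0}, and iterating once more, $\mathcal{C}_{\#}(\{x\})=\mathcal{C}_{++}(\{x\})\simeq\mathcal{C}(\{x\})$.

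Granting these, (3) is immediate: $(\iota_x)^{\ast}\mathcal{B}=((\iota_x)^{\ddag}\mathcal{B})_{\#}$ by definition, so evaluating at $\{x\}$ and applying the auxiliary fact to $\mathcal{C}=(\iota_x)^{\ddag}\mathcal{B}$ gives $((\iota_x)^{\ast}\mathcal{B})(\{x\})\simeq((\iota_x)^{\ddag}\mathcal{B})(\{x\})=\mathcal{B}^x$ by~(1). And (2) is~(3) applied to the cosheaf $\mathcal{A}_{\#}$, followed by $(\mathcal{A}_{\#})^x\simeq\mathcal{A}^x$, which is Proposition~\ref{Prop-(Co)stalks} (equivalently, two applications of Proposition~\ref{Prop-(Co)stalks-pro-modules}(2), since $\mathcal{A}_{\#}=\mathcal{A}_{++}$).

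The only step needing genuine care is the auxiliary fact — pinning down $Cov(\{x\})$ and checking that $(\bullet)_{\#}$ does not disturb the value at $\{x\}$. Should one prefer to avoid this bookkeeping, (2) and (3) can instead be obtained from the duality principle: applying $\langle\bullet,T\rangle$ for injective $T$ and using $\langle(\iota_x)^{\ast}\bullet,T\rangle\simeq F^{\ast}\langle\bullet,T\rangle$ from Proposition~\ref{Prop-Site-morphism-cosheaves}(2) with $F=\iota_x^{-1}$, one is reduced to the well-known fact that the inverse-image sheaf $F^{\ast}\langle\mathcal{B},T\rangle$ evaluated at the point $\{x\}$ is the stalk $\langle\mathcal{B},T\rangle_x\simeq\langle\mathcal{B}^x,T\rangle$ (Proposition~\ref{Prop-(Co)stalks-pro-modules}(1)), after which Proposition~\ref{Prop-Duality} finishes the argument.
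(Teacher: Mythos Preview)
Your proof is correct and follows essentially the same route as the paper: part~(1) is the identical definition-chase, and for (2)--(3) both you and the paper reduce to the observation that cosheafification on the one-point space does not alter the value at $\{x\}$, then invoke $(\mathcal{A}_{\#})^x\simeq\mathcal{A}^x$. The only differences are cosmetic: the paper deduces~(3) from~(2) while you go the other way, and the paper phrases the auxiliary fact as ``all precosheaves on $\{x\}$ are cosheaves'' (which is a slight overstatement, since a precosheaf need not vanish on $\varnothing$), whereas your formulation --- that $(\bullet)_{\#}$ leaves the value at $\{x\}$ unchanged --- is the precise statement actually needed. Your alternative duality argument is a nice supplement not present in the paper.
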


\begin{proof}
~

\begin{enumerate}
\item For an open subset $U\subseteq X$%
\begin{equation*}
\left( \iota _{x}\right) ^{-1}\left( U\right) =\left\{ 
\begin{array}{ccc}
\left\{ x\right\} & \text{if} & x\in U \\ 
\varnothing & \text{otherwise} & 
\end{array}%
\right. .
\end{equation*}%
Therefore, due to Notation \ref{Not-Categories}(\ref%
{Def-Kan-extensions-right})%
\begin{equation*}
\left( \iota _{x}\right) ^{\ddag }\mathcal{A}\left( \left\{ x\right\}
\right) =\underset{\left\{ x\right\} \subseteq \left( \iota _{x}\right)
^{-1}U}{\underleftarrow{\lim }}~\mathcal{A}\left( U\right) =\underset{x\in U}%
{\underleftarrow{\lim }}~\mathcal{A}\left( U\right) =\mathcal{A}^{x}.
\end{equation*}

\item Since all precosheaves on $\left\{ x\right\} $ are cosheaves, $\left(
\left( \iota _{x}\right) ^{\ddag }\mathcal{A}_{\#}\right) _{\#}=\left( \iota
_{x}\right) ^{\ddag }\mathcal{A}_{\#},$\ and $\left( \iota _{x}\right)
^{\ast }\mathcal{A}_{\#}\left( \left\{ x\right\} \right) =\left( \left(
\iota _{x}\right) ^{\ddag }\mathcal{A}_{\#}\right) _{\#}\left( \left\{
x\right\} \right) =\left( \left( \iota _{x}\right) ^{\ddag }\mathcal{A}%
_{\#}\right) \left( \left\{ x\right\} \right) =\left( \mathcal{A}%
_{\#}\right) ^{x}%
\simeq%
\mathcal{A}^{x}$.

\item Follows easily from the above, because $\mathcal{B}_{\#}%
\simeq%
\mathcal{B}$.
\end{enumerate}
\end{proof}

\begin{definition}
\label{Def-Constant-(pre)cosheaf}Let $G\in \mathbf{Pro}\left( k\right) $.

\begin{enumerate}
\item Denote by $G_{X}$ (or simply $G$) the following \textbf{constant}
precosheaf on $X$:%
\begin{equation*}
G_{X}\left( U\right) 
{:=}%
G.
\end{equation*}

\item The cosheaf $\left( G_{X}\right) _{\#}$ (or simply $G_{\#}$) will be
called the \textbf{constant} cosheaf on $X$.
\end{enumerate}
\end{definition}

\begin{remark}
\label{Rem-Constant-cosheaf-pro-H0}Let $G\in \mathbf{Mod}\left( k\right) $.
It follows from \cite[Theorem 3.11(3)]%
{Prasolov-Cosheafification-2016-MR3660525} that%
\begin{equation*}
G_{\#}%
\simeq%
pro\text{-}H_{0}\left( \bullet ,G\right) ,
\end{equation*}%
i.e., $G_{\#}\left( U\right) 
\simeq%
pro$-$H_{0}\left( U,G\right) $ for all $U\in \mathbf{C}_{X}$. In fact, in
the cited paper the statement was proved for $G\in \mathbf{Ab}$, but the
proof can be easily extended to the case $G\in \mathbf{Mod}\left( k\right) $.
\end{remark}

\begin{proposition}
\label{Prop-Action-of-continuous-mappings}Let $f:X\rightarrow Y$ be a
continuous mapping, and let $\mathcal{A}\in \mathbf{CS}\left( Y,\mathbf{Pro}%
\left( k\right) \right) $.

\begin{enumerate}
\item For any $x\in X$, $\left( f^{\ast }\mathcal{A}\right) ^{x}%
\simeq%
\mathcal{A}^{f\left( x\right) }$.

\item For $G\in \mathbf{Pro}\left( k\right) $%
\begin{equation*}
f^{\ast }\left( \left( G_{Y}\right) _{\#}\right) 
\simeq%
\left( G_{X}\right) _{\#}.
\end{equation*}
\end{enumerate}
\end{proposition}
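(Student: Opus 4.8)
The plan is to compute both costalks directly from the formula for $f^{\ddag}$ recorded in Remark~\ref{Rem-Topological-spaces-as-sites}, using that cosheafification leaves costalks unchanged (Proposition~\ref{Prop-(Co)stalks}); the dualization principle (Proposition~\ref{Prop-Duality}) provides a shorter alternative for part~(1).

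\emph{Part (1).} Since $f^{\ast}\mathcal{A}=(f^{\ddag}\mathcal{A})_{\#}$ and cosheafification does not change costalks, Proposition~\ref{Prop-(Co)stalks} gives $(f^{\ast}\mathcal{A})^{x}\simeq(f^{\ddag}\mathcal{A})^{x}$. Expanding both limits, $(f^{\ddag}\mathcal{A})^{x}=\underset{x\in U}{\underleftarrow{\lim}}\,(f^{\ddag}\mathcal{A})(U)=\underset{x\in U}{\underleftarrow{\lim}}\,\underset{f(U)\subseteq V}{\underleftarrow{\lim}}\mathcal{A}(V)$, which is the limit of $V\mapsto\mathcal{A}(V)$ over the poset of pairs $(U,V)$ with $x\in U\in Open(X)$ and $f(U)\subseteq V\in Open(Y)$, ordered componentwise. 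The projection $(U,V)\mapsto V$ onto $\{V\in Open(Y)\mid f(x)\in V\}$ is well defined (because $f(x)\in f(U)\subseteq V$) and co-cofinal: each $V_{0}$ with $f(x)\in V_{0}$ is hit by $(f^{-1}(V_{0}),V_{0})$, and intersections $(U_{1}\cap U_{2},V_{1}\cap V_{2})$ witness connectedness of the relevant comma categories. Hence the double limit collapses to $\underset{f(x)\in V}{\underleftarrow{\lim}}\mathcal{A}(V)=\mathcal{A}^{f(x)}$. (Alternatively, for an injective $T\in\mathbf{Mod}(k)$ one has $\langle(f^{\ast}\mathcal{A})^{x},T\rangle\simeq\langle f^{\ast}\mathcal{A},T\rangle_{x}\simeq\bigl(F^{\ast}\langle\mathcal{A},T\rangle\bigr)_{x}\simeq\langle\mathcal{A},T\rangle_{f(x)}\simeq\langle\mathcal{A}^{f(x)},T\rangle$ by Propositions~\ref{Prop-(Co)stalks-pro-modules} and~\ref{Prop-Site-morphism-cosheaves} together with the classical formula for the stalk of an inverse-image sheaf; then Proposition~\ref{Prop-Duality} concludes.)

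\emph{Part (2).} By Proposition~\ref{Prop-Site-morphism-cosheaves}(\ref{Prop-Site-morphism-cosheaves-right-adjoint}) we have $f^{\ast}\bigl((G_{Y})_{\#}\bigr)\simeq\bigl(f^{\ddag}(G_{Y})\bigr)_{\#}$, so it suffices to identify the precosheaf $f^{\ddag}(G_{Y})$ on $X$ with the constant precosheaf $G_{X}$. For every $U\in Open(X)$ the formula of Remark~\ref{Rem-Topological-spaces-as-sites} gives $f^{\ddag}(G_{Y})(U)=\underset{f(U)\subseteq V}{\underleftarrow{\lim}}G_{Y}(V)=\underset{f(U)\subseteq V}{\underleftarrow{\lim}}G$; the index category $\{V\in Open(Y)\mid f(U)\subseteq V\}$ is nonempty and closed under pairwise intersection, hence cofiltered and in particular connected, so this limit of the constant diagram $V\mapsto G$ is canonically $G$, and the transition maps are readily seen to be identities. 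Thus $f^{\ddag}(G_{Y})\simeq G_{X}$, whence $f^{\ast}\bigl((G_{Y})_{\#}\bigr)\simeq(G_{X})_{\#}$. (Consistency check: by part~(1), both sides have costalk $G$ at every point.)

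\emph{Main obstacle.} There is no essential obstacle. Part~(2) is formal given Proposition~\ref{Prop-Site-morphism-cosheaves}(\ref{Prop-Site-morphism-cosheaves-right-adjoint}) and the connectedness of the indexing posets, while part~(1) is either a short dualization argument or the collapse of a double limit. The one step that deserves attention is the co-cofinality of $(U,V)\mapsto V$ in part~(1); the dualization route bypasses even that, at the cost of invoking the standard description of stalks of inverse-image sheaves.
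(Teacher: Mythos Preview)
Your proof of Part~(2) is essentially identical to the paper's: compute $f^{\ddag}(G_Y)\simeq G_X$ from the right Kan extension formula and then apply Proposition~\ref{Prop-Site-morphism-cosheaves}(\ref{Prop-Site-morphism-cosheaves-right-adjoint}).

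For Part~(1), your argument is correct but takes a different route from the paper. The paper argues via Lemma~\ref{Lemma-Description-of-costalk}, which identifies the costalk $\mathcal{B}^x$ with $((\iota_x)^*\mathcal{B})(\{x\})$ for the inclusion $\iota_x:\{x\}\hookrightarrow X$; writing $\kappa_x=f\circ\iota_x:\{x\}\to Y$ and using the functoriality $(\kappa_x)^*\simeq(\iota_x)^*\circ f^*$, one gets $(f^*\mathcal{A})^x=((\iota_x)^*f^*\mathcal{A})(\{x\})=((\kappa_x)^*\mathcal{A})(\{x\})=\mathcal{A}^{f(x)}$ in one line, with no limit manipulation at all. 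Your direct approach unfolds the double limit and collapses it via a co-cofinality argument (which is sound, since $(f^{-1}(V_0),V_0)$ hits every $V_0\ni f(x)$ and the posets are closed under intersection), while your dualization alternative trades the co-cofinality check for the classical stalk formula $(F^*\mathcal{B})_x\simeq\mathcal{B}_{f(x)}$. The paper's functoriality trick is shorter and avoids both detours, but your computations have the virtue of being entirely self-contained within the costalk formulas already recorded in the paper.
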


\begin{proof}
~

\begin{enumerate}
\item Let $\kappa _{x}:\left\{ x\right\} \rightarrow Y$ be the composition%
\begin{equation*}
\kappa _{x}=f\circ \iota _{x}:\left\{ x\right\} \overset{\iota _{x}}{%
\longrightarrow }X\overset{f}{\longrightarrow }Y.
\end{equation*}%
Then%
\begin{equation*}
\left( \kappa _{x}\right) ^{\ast }%
\simeq%
\left( \iota _{x}\right) ^{\ast }\circ f^{\ast },
\end{equation*}%
and%
\begin{equation*}
\mathcal{A}^{f\left( x\right) }=\left( \left( \kappa _{x}\right) ^{\ast }%
\mathcal{A}\right) \left( \left\{ x\right\} \right) =\left( \left( \iota
_{x}\right) ^{\ast }f^{\ast }\mathcal{A}\right) \left( \left\{ x\right\}
\right) =\left( f^{\ast }\mathcal{A}\right) ^{x}.
\end{equation*}

\item Calculate $f^{\ddag }\left( G_{Y}\right) $:%
\begin{equation*}
f^{\ddag }\left( G_{Y}\right) \left( U\right) =\underset{U\subseteq
f^{-1}\left( V\right) }{\underleftarrow{\lim }}~G_{Y}\left( V\right) =%
\underset{U\subseteq f^{-1}\left( V\right) }{\underleftarrow{\lim }}~G=G,
\end{equation*}%
i.e., $f^{\ddag }\left( G_{Y}\right) 
\simeq%
G_{X}$. Proposition \ref{Prop-Site-morphism-cosheaves}(\ref%
{Prop-Site-morphism-cosheaves-right-adjoint}, second part) reads%
\begin{equation*}
f^{\ast }\circ \left( \bullet \right) _{\#}%
\simeq%
\left( \bullet \right) _{\#}\circ f^{\ddag }:\mathbf{pCS}\left( Y,\mathbf{Pro%
}\left( k\right) \right) \longrightarrow \mathbf{CS}\left( X,\mathbf{Pro}%
\left( k\right) \right) .
\end{equation*}%
It follows that%
\begin{equation*}
f^{\ast }\left( \left( G_{Y}\right) _{\#}\right) 
\simeq%
\left( f^{\ddag }\left( G_{Y}\right) \right) _{\#}%
\simeq%
\left( G_{X}\right) _{\#}.
\end{equation*}
\end{enumerate}
\end{proof}

It is possible to obtain a simpler description of $pro$-$H_{0}\left( \bullet
,G\right) $ and $G_{\#}$.

\begin{definition}
\label{Def-Decomposition}\label{Def-Partition}Let $X$ be a topological
space. A \textbf{partition} $\mathcal{U}$ of $X$ is a representation of $X$
as a disjoint union of non-empty open subsets, i.e.,

\begin{enumerate}
\item $\mathcal{U}=\left\{ \varnothing \neq U_{i}\in Open\left( X\right)
\right\} _{i\in I}$.

\item 
\begin{equation*}
X=\dbigcup\limits_{i\in I}U_{i}.
\end{equation*}

\item $U_{i}\cap U_{j}=\varnothing $ if $i\neq j$.
\end{enumerate}

We say that another partition $\mathcal{V=}\left\{ \mathcal{V}_{j}\right\}
_{j\in J}$ \textbf{refines} $\mathcal{U}$ ($\mathcal{U\preceq V}$) iff for
any $j\in J$, $V_{j}\subseteq U_{i}$ for some (in fact, unique!) $i\in I$.
Denote by $Part\left( X\right) $ the poset $\left( \left\{ \text{Partitions
of }X\right\} ,\mathcal{\preceq }\right) $.
\end{definition}

\begin{remark}
\label{Rem-Decomposition}\label{Rem-Partition}$Part\left( X\right) $ is a 
\textbf{directed} poset. Indeed, if%
\begin{eqnarray*}
\mathcal{V} &\mathcal{=}&\left\{ V_{j}\right\} _{j\in J}, \\
\mathcal{W} &\mathcal{=}&\left\{ W_{k}\right\} _{k\in K}, \\
\mathcal{U} &\mathcal{\preceq }&\mathcal{V},\mathcal{W},
\end{eqnarray*}%
then%
\begin{equation*}
\mathcal{V}\cap \mathcal{W=}\left\{ V_{j}\cap W_{k}\neq \varnothing \right\}
_{j\in J,k\in K}
\end{equation*}%
clearly refines both $\mathcal{V}$ and $\mathcal{W}$.
\end{remark}

\begin{notation}
\label{Not-H0-for-partition}Let $G\in \mathbf{Mod}\left( k\right) $ and $%
\mathcal{U}=\left\{ U_{i}\right\} _{i\in I}\in Part\left( X\right) $. Denote
by%
\begin{equation*}
H_{0}\left( \mathcal{U},G\right) 
{:=}%
\dbigoplus\limits_{i\in I}G.
\end{equation*}
\end{notation}

\begin{proposition}
\label{Prop-Simpler-description-G-sharp}Let $G\in \mathbf{Mod}\left(
k\right) $. Then the functor%
\begin{equation*}
pro\text{-}H_{0}\left( \bullet ,G\right) :\mathbf{Top}\longrightarrow 
\mathbf{Pro}\left( k\right)
\end{equation*}%
admits the following description:

\begin{enumerate}
\item 
\begin{equation*}
pro\text{-}H_{0}\left( X,G\right) =\left( H_{0}\left( \mathcal{U},G\right)
\right) _{\mathcal{U}=\left\{ U_{i}\right\} _{i\in I}\in Part\left( X\right)
}
\end{equation*}

\item If $f:X\rightarrow Y$ is a continuous mapping, then%
\begin{eqnarray*}
h &=&pro\text{-}H_{0}\left( f,G\right) \in 
\Hom%
_{\mathbf{Pro}\left( k\right) }\left( pro\text{-}H_{0}\left( X,G\right) ,pro%
\text{-}H_{0}\left( Y,G\right) \right) = \\
&=&\underset{\mathcal{V}=\left\{ V_{j}\right\} _{j\in J}\in Part\left(
Y\right) }{\underleftarrow{\lim }}~\underset{\mathcal{U}=\left\{
U_{i}\right\} _{i\in I}\in Part\left( X\right) }{\underrightarrow{\lim }}~%
\Hom%
_{\mathbf{Mod}\left( k\right) }\left( H_{0}\left( \mathcal{U},G\right)
,H_{0}\left( \mathcal{V},G\right) \right)
\end{eqnarray*}%
is given by the following data (see \cite[Remark 2.1.9]%
{Prasolov-Cosheaves-2021-MR4347662}):

\begin{enumerate}
\item $\varphi :Part\left( Y\right) \rightarrow Part\left( X\right) $:%
\begin{equation*}
\varphi \left( \mathcal{V}=\left\{ V_{j}\right\} _{j\in J}\right) 
{:=}%
f^{-1}\left( \mathcal{V}\right) =\left\{ f^{-1}\left( V_{j}\right) \neq
\varnothing \right\} _{j\in J}=\left\{ U_{i}\right\} _{i\in I}.
\end{equation*}

\item 
\begin{equation*}
h_{\mathcal{V}}:H_{0}\left( \mathcal{U},G\right) =\dbigoplus\limits_{i\in
I}G\rightarrow H_{0}\left( \mathcal{V},G\right) =\dbigoplus\limits_{j\in J}G
\end{equation*}%
maps identically the $i$-th summand of the first direct sum to the $j$-th
summand of the second one, where%
\begin{equation*}
\varnothing \neq U_{i}=f^{-1}\left( V_{j}\right) .
\end{equation*}
\end{enumerate}
\end{enumerate}
\end{proposition}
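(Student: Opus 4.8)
The plan is to work directly from the definition of $pro\text{-}H_{0}$ via polyhedral expansions (Definition \ref{Def-Pro-homology-groups}) together with the fact, recorded in the Remark following Definition \ref{Def-Normal-covering}, that an $H\left( \mathbf{Pol}\right) $-expansion of $X$ can be built from the nerves of the normal open coverings of $X$. Thus $pro\text{-}H_{0}\left( X,G\right) $ is represented by the system $\mathbf{X}:=\left( H_{0}\left( N\left( \mathcal{W}\right) ,G\right) \right) _{\mathcal{W}}$ indexed by the (cofiltered) category $\mathbf{J}$ whose objects are the normal open coverings of $X$, ordered by refinement, where $N\left( \mathcal{W}\right) $ is the nerve of $\mathcal{W}$. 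The first step is the elementary identification $H_{0}\left( N\left( \mathcal{W}\right) ,G\right) \simeq \bigoplus _{\pi _{0}\left( N\left( \mathcal{W}\right) \right) }G$, together with the observation that $\pi _{0}\left( N\left( \mathcal{W}\right) \right) $ is canonically indexed by a partition $\rho \left( \mathcal{W}\right) \in Part\left( X\right) $: two members $W,W^{\prime }\in \mathcal{W}$ lie in one component of $N\left( \mathcal{W}\right) $ iff there is a chain $W=W_{0},\dots ,W_{m}=W^{\prime }$ with $W_{k}\cap W_{k+1}\neq \varnothing $, and the unions of the members in the several components are non-empty, open, pairwise disjoint, and cover $X$. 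Hence $H_{0}\left( N\left( \mathcal{W}\right) ,G\right) \simeq H_{0}\left( \rho \left( \mathcal{W}\right) ,G\right) $ in the sense of Notation~\ref{Not-H0-for-partition}.

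Next I would check that a refinement $\mathcal{W}\to \mathcal{V}$ of coverings makes $\rho \left( \mathcal{W}\right) $ refine $\rho \left( \mathcal{V}\right) $, and that the nerve projection $N\left( \mathcal{W}\right) \rightarrow N\left( \mathcal{V}\right) $ induces on $H_{0}$ exactly the corresponding map $H_{0}\left( \rho \left( \mathcal{W}\right) ,G\right) \rightarrow H_{0}\left( \rho \left( \mathcal{V}\right) ,G\right) $ of the partition system (identity on each summand, via the refinement, as in the statement). This makes $\rho $ a functor from $\mathbf{J}$ to the cofiltered category $\mathbf{I}$ attached to $Part\left( X\right) $ (directed by Remark~\ref{Rem-Partition}, with morphisms pointing from finer to coarser partitions), and yields a natural isomorphism $\mathbf{X}\simeq \mathbf{X}_{0}\circ \rho $, where $\mathbf{X}_{0}:=\left( H_{0}\left( \mathcal{U},G\right) \right) _{\mathcal{U}\in Part\left( X\right) }$. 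I then claim $\rho $ is co-cofinal (Definition~\ref{Def-Co-cofinal}), i.e., that $\rho \downarrow \mathcal{U}$ is connected for every partition $\mathcal{U}$. It is non-empty because $\mathcal{U}$ is itself a normal covering (its members are clopen, so their indicator functions form a partition of unity subordinate to $\mathcal{U}$) and $\rho \left( \mathcal{U}\right) =\mathcal{U}$; it is connected because any two of its objects admit a common refinement in the cofiltered category $\mathbf{J}$ mapping to both. Proposition~\ref{Prop-Facts-pro-objects}(\ref{Prop-Facts-pro-objects-Co-cofinal}) then gives $\mathbf{X}_{0}\simeq \mathbf{X}_{0}\circ \rho \simeq \mathbf{X}\simeq pro\text{-}H_{0}\left( X,G\right) $, which is statement (1).

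For (2) I would use the naturality in $X$ of all the identifications above. For a continuous $f:X\rightarrow Y$ the morphism $pro\text{-}H_{0}\left( f,G\right) $ is induced, on the level of expansions, by the assignment $\mathcal{V}\mapsto f^{-1}\left( \mathcal{V}\right) $ (the preimage of a normal covering is normal, partitions of unity pulling back) together with the simplicial maps $N\left( f^{-1}\mathcal{V}\right) \rightarrow N\left( \mathcal{V}\right) $ sending the vertex $f^{-1}\left( V_{j}\right) $ to $V_{j}$, which is legitimate since $f^{-1}\left( V_{j}\right) \cap f^{-1}\left( V_{j^{\prime }}\right) =f^{-1}\left( V_{j}\cap V_{j^{\prime }}\right) $. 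Restricting to partitions $\mathcal{V}\in Part\left( Y\right) $, where $f^{-1}\left( \mathcal{V}\right) =\left\{ f^{-1}\left( V_{j}\right) \neq \varnothing \right\} $ is again a partition and the nerves are discrete, this map induces on $H_{0}$ precisely the homomorphism $h_{\mathcal{V}}$ of the statement, with index map $\varphi \left( \mathcal{V}\right) =f^{-1}\left( \mathcal{V}\right) $; comparing against the description of morphisms of pro-modules in Remark~\ref{Rem-Pro-objects-morphisms}(2) (and \cite[Remark 2.1.9]{Prasolov-Cosheaves-2021-MR4347662}) yields exactly the asserted data.

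The main obstacle I expect is the bookkeeping linking the three descriptions: verifying carefully that the nerve projections attached to refinements, and the simplicial maps induced by $f$, translate on $\pi _{0}$ into the refinement maps of $Part\left( X\right) $ and the preimage maps $\varphi $, so that the natural isomorphisms above are genuinely natural and the morphism is computed correctly. The purely homological input (that $H_{0}$ of a polyhedron is the free module on its set of components) is routine, and the co-cofinality of $\rho $ is immediate once one takes $\mathbf{J}$ to be the full directed set of normal coverings.
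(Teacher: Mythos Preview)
Your proposal is correct and follows essentially the same route as the paper: both use the \v{C}ech $H(\mathbf{Pol})$-expansion indexed by normal coverings, identify $H_{0}(N(\mathcal{W}),G)\simeq\bigoplus_{\pi_{0}(N(\mathcal{W}))}G$, recognise $\pi_{0}(N(\mathcal{W}))$ as a partition $\rho(\mathcal{W})$ of $X$, observe that every partition is already a normal covering with $\rho(\mathcal{U})=\mathcal{U}$, and conclude by (co)cofinality of $\rho:Norm(X)\to Part(X)$; the treatment of $f$ via $\mathcal{V}\mapsto f^{-1}(\mathcal{V})$ and the induced simplicial maps is likewise the same. The only cosmetic difference is that the paper dispatches cofinality in one line (``surjective, therefore cofinal'') whereas you spell out the comma-category check.
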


\begin{proof}
Due to \cite[Appendix 1.3, p. 324]{Mardesic-Segal-MR676973}, there are the 
\u{C}ech
$H\left( \mathbf{Pol}\right) $-expansions%
\begin{eqnarray*}
X &\longrightarrow &\mathbf{X}\mathcal{=}\left( \left\vert N\mathcal{U}%
\right\vert \right) _{\mathcal{U}\in Norm\left( X\right) }\in \mathbf{Pro}%
\left( H\left( \mathbf{Pol}\right) \right) , \\
Y &\longrightarrow &\mathbf{Y}\mathcal{=}\left( \left\vert N\mathcal{V}%
\right\vert \right) _{\mathcal{V}\in Norm\left( Y\right) }\in \mathbf{Pro}%
\left( H\left( \mathbf{Pol}\right) \right) ,
\end{eqnarray*}%
where $\mathcal{U}$ ($\mathcal{V}$) runs through the set $Norm\left(
X\right) $ ($Norm\left( Y\right) $) of\ \textbf{normal} coverings
(Definition \ref{Def-Normal-covering}) of $X$ ($Y$), and $\left\vert N%
\mathcal{U}\right\vert $ ($\left\vert N\mathcal{V}\right\vert $) is the
geometric realization of the 
\u{C}ech
nerve $N\mathcal{U}$ ($N\mathcal{V}$) of $\mathcal{U}$ ($\mathcal{V}$).

\begin{enumerate}
\item Any \textbf{partition} $\mathcal{U=}\left\{ U_{i}\right\} _{i\in I}$
is clearly a normal covering, because it admits the following \textbf{%
partition of unity}, subordinated to it, i.e., the family of continuous
mappings $\left( \chi _{i}\right) _{i\in I}$, satisfying:%
\begin{eqnarray*}
\chi _{i} &:&X\longrightarrow \left[ 0,1\right] , \\
\chi _{i}\left( x\right) &=&\left\{ 
\begin{array}{ccc}
1 & \text{if} & x\in U_{i} \\ 
0 & \text{otherwise} & 
\end{array}%
\right. , \\
\dsum\limits_{i\in I}\chi _{i}\left( x\right) &\equiv &1.
\end{eqnarray*}%
On the other hand, consider a normal covering $\mathcal{V=}\left\{
V_{j}\right\} _{j\in J}$ of $X$. It follows that%
\begin{equation*}
H_{0}\left( \left\vert N\mathcal{V}\right\vert ,G\right) 
\simeq%
\dbigoplus\limits_{\pi _{0}\left( \left\vert N\mathcal{V}\right\vert \right)
}G.
\end{equation*}%
The $0$-simplices of $N\mathcal{V}$ are the sets $V_{j}$, while the $n$%
-simplices are $\left( n+1\right) $-tuples%
\begin{equation*}
\left( V_{j_{0}},V_{j_{1}},\dots ,V_{j_{n}}\right)
\end{equation*}%
with non-empty intersection. It is well-known that%
\begin{equation*}
\pi _{0}\left( \left\vert N\mathcal{V}\right\vert \right) =\pi _{0}\left( N%
\mathcal{V}\right)
\end{equation*}%
is the partition of $J$ into the equivalence classes for the equivalence
relation $R$ generated by pairs $\left( i,j\right) $ with%
\begin{equation*}
V_{i}\cap V_{j}\neq \varnothing .
\end{equation*}%
For each $j\in J$ consider%
\begin{equation*}
W_{j}=\dbigcup\limits_{iRj}V_{i}.
\end{equation*}%
It is clear that for any pair $\left( s,t\right) $ the sets $W_{s}$ and $%
W_{t}$ are either equal or have empty intersection. Choose one subset from
each group of equal subsets, and get a \textbf{partition} in our sense:%
\begin{equation*}
\mathcal{W}=\left\{ W_{k}\right\} _{k\in K\subseteq J}.
\end{equation*}%
The assignment%
\begin{equation*}
\mathcal{V=}\left\{ V_{j}\right\} _{j\in J}\mapsto \mathcal{W}=\left\{
W_{k}\right\} _{k\in K\subseteq J}
\end{equation*}%
gives a surjective (therefore \textbf{cofinal}) monotone mapping%
\begin{equation*}
\alpha _{X}:Norm\left( X\right) \longrightarrow Part\left( X\right) ,
\end{equation*}%
inducing the desired isomorphism of pro-modules.

\item Since $X\rightarrow \mathbf{X}$ and $Y\rightarrow \mathbf{Y}$ are $%
H\left( \mathbf{Pol}\right) $-expansions, there exists a \textbf{unique} $%
\mathbf{Pro}\left( H\left( \mathbf{Pol}\right) \right) $-morphism $\mathbf{%
f:X}\rightarrow \mathbf{Y}$ such that the diagram%
\begin{equation*}
\begin{diagram}[size=3.0em,textflow]
X & \rTo & \mathbf{X} \\ 
\dTo_{f} &  & \dTo_{\mathbf{f}} \\ 
Y & \rTo & \mathbf{Y} \\
\end{diagram}%
\end{equation*}%
commutes. Let us construct $\mathbf{f}$ given by the following data (see 
\cite[Remark 2.1.9]{Prasolov-Cosheaves-2021-MR4347662}. It is clear that%
\begin{equation*}
\mathcal{V}=\left\{ V_{j}\right\} _{j\in J}\in Norm\left( Y\right) 
\end{equation*}%
implies%
\begin{equation*}
\mathcal{U}=\left\{ f^{-1}\left( V_{j}\right) \neq \varnothing \right\}
_{j\in J}\in Norm\left( X\right) .
\end{equation*}%
The assignments $\mathcal{V}\mapsto \mathcal{U}$ and%
\begin{equation*}
\left( f^{-1}\left( V_{0}\right) ,f^{-1}\left( V_{1}\right) ,\dots
,f^{-1}\left( V_{n}\right) \right) \longmapsto \left( V_{0},V_{1},\dots
,V_{n}\right) 
\end{equation*}%
give us the desired morphism%
\begin{equation*}
\mathbf{f}=\left( \varphi :Norm\left( Y\right) \rightarrow Norm\left(
X\right) ,\left( f_{\mathcal{V}}:N\mathcal{U}\rightarrow N\mathcal{V}\right)
_{\mathcal{V}\in Norm\left( Y\right) }\right) :\mathbf{X}\longrightarrow 
\mathbf{Y.}
\end{equation*}%
Apply the cofinal monotone mappings%
\begin{eqnarray*}
\alpha _{X} &:&Norm\left( X\right) \longrightarrow Part\left( X\right) , \\
\alpha _{Y} &:&Norm\left( Y\right) \longrightarrow Part\left( Y\right) ,
\end{eqnarray*}%
and $H_{0}\left( \bullet ,G\right) $ to obtain the commutative diagram%
\begin{equation*}
\begin{diagram}[size=3.0em,textflow]
H_{0}\left( \mathbf{X},G\right) & \lTo_{\TeXButton{ISO}{\simeq}} & \left( H_{0}\left( \mathcal{U},G\right) \right) _{\mathcal{U}\in Part\left( X\right) } \\ 
\dTo_{H_{0}\left( \mathbf{f},G\right)} &  & \dTo_{\mathbf{f}^{\prime }} \\ 
H_{0}\left( \mathbf{Y},G\right) & \lTo{\TeXButton{ISO}{\simeq}} & \left( H_{0}\left( \mathcal{V},G\right) \right) _{\mathcal{V}\in Part\left( X\right) }\\
\end{diagram}
%
\end{equation*}%
in $\mathbf{Pro}\left( k\right) $, where%
\begin{equation*}
\mathbf{f}^{\prime }=\left( \varphi ^{\prime }:Part\left( Y\right)
\rightarrow Part\left( X\right) ,\left( f_{\mathcal{V}}:H_{0}\left(
f^{-1}\left( \mathcal{V}\right) ,G\right) \rightarrow H_{0}\left( \mathcal{V}%
,G\right) \right) \right) 
\end{equation*}%
is given by%
\begin{equation*}
\varphi ^{\prime }\left( \mathcal{V}=\left\{ V_{j}\right\} _{j\in J}\right)
=f^{-1}\left( \mathcal{V}\right) 
{:=}%
\left\{ f^{-1}\left( V_{j}\right) \neq \varnothing \right\} _{j\in J}
\end{equation*}%
and $f_{\mathcal{V}}$ that sends the $i$-th ($U_{i}=f^{-1}\left(
V_{j}\right) $) summand $G$ of $H_{0}\left( \mathcal{U},G\right) $
identically to the $j$-th summand $G$ of $H_{0}\left( \mathcal{V},G\right) $.
\end{enumerate}
\end{proof}

\begin{corollary}
\label{Cor-Simpler-description-G-sharp}Let $G\in \mathbf{Mod}\left( k\right) 
$, and $G_{X}$ be the corresponding constant precosheaf on $X$. Then the
cosheafification $H_{X,G}=\left( G_{X}\right) _{\#}$ admits the following
description:%
\begin{equation*}
H_{X,G}\left( U\right) =\left( H_{0}\left( \mathcal{U},G\right)
=\dbigoplus\limits_{i\in I}G\right) _{\mathcal{U}=\left\{ U_{i}\right\}
_{i\in I}\in Part\left( U\right) }
\end{equation*}%
with the morphisms $G_{X}\left( U\rightarrow V\right) $ for $U\subseteq V$
like in Proposition \ref{Prop-Simpler-description-G-sharp}.
\end{corollary}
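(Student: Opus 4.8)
The plan is to obtain the corollary simply by combining Remark \ref{Rem-Constant-cosheaf-pro-H0} with Proposition \ref{Prop-Simpler-description-G-sharp}, applied open set by open set. First I would recall that by Remark \ref{Rem-Constant-cosheaf-pro-H0} there is a natural isomorphism $H_{X,G}=\left( G_{X}\right) _{\#}\simeq pro\text{-}H_{0}\left( \bullet ,G\right) $ of precosheaves on $X$: the right-hand side sends an open $U\in \mathbf{C}_{X}=Open\left( X\right) $ to $pro\text{-}H_{0}\left( U,G\right) $, with $U$ regarded as a topological space in its own right, and the transition morphism attached to an inclusion $U\subseteq V$ is $pro\text{-}H_{0}\left( \iota ,G\right) $ for the continuous inclusion map $\iota :U\hookrightarrow V$. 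Hence it suffices to rewrite $pro\text{-}H_{0}\left( U,G\right) $ and these transition morphisms in terms of partitions.

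Next, for each open $U$ I would apply Proposition \ref{Prop-Simpler-description-G-sharp}(1) to the space $U$, obtaining
\begin{equation*}
pro\text{-}H_{0}\left( U,G\right) \simeq \left( H_{0}\left( \mathcal{U},G\right) =\dbigoplus\limits_{i\in I}G\right) _{\mathcal{U}=\left\{ U_{i}\right\} _{i\in I}\in Part\left( U\right) },
\end{equation*}
which is precisely the claimed value of $H_{X,G}\left( U\right) $. For the transition morphisms, given $U\subseteq V$ I would apply Proposition \ref{Prop-Simpler-description-G-sharp}(2) to the inclusion $\iota :U\hookrightarrow V$: since $\iota ^{-1}\left( \mathcal{V}\right) =\left\{ V_{j}\cap U\neq \varnothing \right\} _{j\in J}$ is the restriction to $U$ of a partition $\mathcal{V}=\left\{ V_{j}\right\} _{j\in J}$ of $V$, part (2a) yields the reindexing map $\varphi :Part\left( V\right) \rightarrow Part\left( U\right) $, $\mathcal{V}\mapsto \left\{ V_{j}\cap U\neq \varnothing \right\} $, and part (2b) yields, on each level, the morphism $\dbigoplus_{i\in I}G\rightarrow \dbigoplus_{j\in J}G$ sending the $i$-th summand identically onto the $j$-th whenever $U_{i}=V_{j}\cap U$. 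This is exactly the description of $G_{X}\left( U\rightarrow V\right) $ asserted in the statement.

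Finally I would observe that, because the isomorphism of Remark \ref{Rem-Constant-cosheaf-pro-H0} is an isomorphism of precosheaves — that is, natural in $U$ along open inclusions — the level-wise identifications above assemble into a single isomorphism of precosheaves, hence of cosheaves ($\left( G_{X}\right) _{\#}$ being a cosheaf), compatible with all structure maps; this is the content of the corollary. The only point needing a moment's care is this last compatibility: one must know that the isomorphism $G_{\#}\simeq pro\text{-}H_{0}\left( \bullet ,G\right) $ supplied by \cite[Theorem 3.11(3)]{Prasolov-Cosheafification-2016-MR3660525} is genuinely natural in the open set, and that $pro\text{-}H_{0}$ of an inclusion of spaces is computed by the formula of Proposition \ref{Prop-Simpler-description-G-sharp}(2). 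Both are immediate from the constructions, the latter because Proposition \ref{Prop-Simpler-description-G-sharp}(2) already covers an \emph{arbitrary} continuous map; so I expect no genuine obstacle here, the corollary being essentially a transcription of the two cited results.
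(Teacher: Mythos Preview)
Your proof is correct and follows precisely the intended derivation: the corollary is stated immediately after Proposition \ref{Prop-Simpler-description-G-sharp} with no separate proof, since it is meant to follow exactly as you describe, by combining Remark \ref{Rem-Constant-cosheaf-pro-H0} (the identification $G_{\#}\simeq pro\text{-}H_{0}(\bullet,G)$) with the partition description of $pro\text{-}H_{0}$ from the proposition, applied to each open $U$ and each inclusion $U\subseteq V$.

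It is worth noting that the paper also supplies, in Remark \ref{Rem-Simpler-description-G-sharp}, an alternative and more self-contained argument that bypasses both Remark \ref{Rem-Constant-cosheaf-pro-H0} and Proposition \ref{Prop-Simpler-description-G-sharp}. There one defines $H_{X,G}$ directly by the partition formula, constructs the obvious morphism $H_{X,G}\to G_X$, and checks via the pairing $\langle\bullet,T\rangle$ (for injective $T$) that $\langle H_{X,G},T\rangle$ is the sheaf of locally constant maps into $\langle G,T\rangle$, i.e.\ the sheafification of the constant presheaf; hence $H_{X,G}$ is a cosheaf, and the universal property of cosheafification together with Yoneda forces $H_{X,G}\simeq (G_X)_{\#}$. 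Your route is the ``official'' one implied by the placement as a corollary; the remark's route trades the dependence on the cited results for a direct duality computation.
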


\begin{remark}
\label{Rem-Simpler-description-G-sharp}Below is a shorter proof of Corollary %
\ref{Cor-Simpler-description-G-sharp}. Let $H_{X,G}\rightarrow G$ be the
morphism of precosheaves mapping each copy of $G$ in $H_{0}\left( \mathcal{U}%
,G\right) $ identically to $G$. Assume that $T\in \mathbf{Mod}\left(
k\right) $ is an arbitrary injective $k$-module, and examine the morphism $%
\left\langle G,T\right\rangle \rightarrow \left\langle
H_{X,G},T\right\rangle $:%
\begin{eqnarray*}
\left\langle H_{X,G},T\right\rangle &=&\left( U\longmapsto \underset{%
\mathcal{U}=\left\{ U_{i}\right\} _{i\in I}\in Part\left( U\right) }{%
\underrightarrow{\lim }}%
\Hom%
_{\mathbf{Mod}\left( k\right) }\left( \dbigoplus\limits_{i\in I}G,T\right)
\right) = \\
&=&\left( U\longmapsto \left\{ \text{locally constant mappings }U\rightarrow
\left\langle G,T\right\rangle \right\} \right) .
\end{eqnarray*}%
It is well-known that for any $A\in \mathbf{Mod}\left( k\right) $ the
sheafification of the constant presheaf $A$ is:%
\begin{equation*}
A^{\#}=\left( U\longmapsto \left\{ \text{locally constant mappings }%
U\rightarrow A\right\} \right) ,
\end{equation*}%
and the morphism $A\rightarrow A^{\#}$ is the inclusion of constant mappings
into locally constant mappings. Since $\left\langle H_{X,G},T\right\rangle
=\left\langle G,T\right\rangle ^{\#}$ is a sheaf (for any $T$!), $H_{X,G}$
is a \textbf{co}sheaf. There exists a unique splitting $H_{X,G}\rightarrow
\left( G_{X}\right) _{\#}\rightarrow G$ of the morphism of precosheaves $%
H_{X,G}\rightarrow G$. We have therefore the following commutative diagram:%
\begin{equation*}
\begin{diagram}[size=3.0em,textflow]
\left\langle G,T\right\rangle & \rTo & \left\langle
G,T\right\rangle ^{\#} & 
\TeXButton{ISO}{\simeq} & \left\langle \left( G_{X}\right) _{\#},T\right\rangle \\ 
\dTo_{=} &  & \dTo_{ 
\TeXButton{ISO}{\simeq} } & \ldTo &  \\ 
\left\langle G,T\right\rangle & \longrightarrow & \left\langle
H_{X,G},T\right\rangle &  & \\
\end{diagram}%
\end{equation*}%
It follows that $\left\langle \left( G_{X}\right) _{\#},T\right\rangle
\rightarrow \left\langle H_{X,G},T\right\rangle $ is an isomorphism (for any 
$T$!), therefore%
\begin{equation*}
H_{X,G}\rightarrow \left( G_{X}\right) _{\#}
\end{equation*}%
is an isomorphism as well.
\end{remark}

\subsection{Hausdorff paracompact spaces}

The \textquotedblleft classical\textquotedblright\ definition of 
\u{C}ech
\textbf{co}homology is based on cochains%
\begin{equation*}
\check{C}_{red}^{n}%
{:=}%
\dprod\limits_{\substack{ \left( i_{0},i_{1},\dots ,i_{n}\right)  \\ %
U_{i_{0}}\cap U_{i_{1}}\cap \dots \cap U_{i_{n}}\neq \varnothing }}\mathcal{A%
}\left( U_{i_{0}}\cap U_{i_{1}}\cap \dots \cap U_{i_{n}}\right)
\end{equation*}%
corresponding to $\left( n+1\right) $-tuples of open subsets with \textbf{%
non-empty} intersection. Our definition for topological spaces arises from a
more general definition for arbitrary Grothendieck sites where \textbf{%
non-emptiness} is not defined:%
\begin{equation*}
\check{C}^{n}%
{:=}%
\dprod\limits_{\left( i_{0},i_{1},\dots ,i_{n}\right) }\mathcal{A}\left(
U_{i_{0}}\cap U_{i_{1}}\cap \dots \cap U_{i_{n}}\right) .
\end{equation*}%
Of course, if $\mathcal{A}\left( \varnothing \right) =0$, the two
definitions coincide. If $\mathcal{A}$ is a sheaf, then we have $\mathcal{A}%
\left( \varnothing \right) =0$ automatically. We need, however, to handle
the case of \textbf{pre}sheaves $\mathcal{A}$ with $\mathcal{A}\left(
\varnothing \right) \neq 0$.

\begin{proposition}
\label{Prop-Reduced-Complex}Let $\mathcal{A}$ be a presheaf, $\mathcal{U}%
=\left\{ U_{i}\rightarrow U\right\} _{i\in I}$ be a covering of $U$, and $%
\check{H}^{\bullet }\left( \mathcal{U},\mathcal{A}\right) $ be the 
\u{C}ech
\textbf{co}homology.

\begin{enumerate}
\item If $U=\varnothing $, then $\check{H}^{n}\left( \mathcal{U},\mathcal{A}%
\right) =0$ for $n\geq 1$, and $\check{H}^{0}\left( \mathcal{U},\mathcal{A}%
\right) =\mathcal{A}\left( \varnothing \right) $.

\item If $U\neq \varnothing $, then $\check{H}^{\bullet }\left( \mathcal{U},%
\mathcal{A}\right) $ does not depend on $\mathcal{A}\left( \varnothing
\right) $.
\end{enumerate}
\end{proposition}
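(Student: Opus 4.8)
The plan is to identify the relevant pieces of $\check{H}^{\bullet}(\mathcal{U},\mathcal{A})$ with the cohomology of cochain complexes of explicit simplicial sets carrying \emph{constant} coefficient modules, and to obtain the vanishing and acyclicity assertions from explicit contracting homotopies; the dichotomy $U=\varnothing$ versus $U\neq\varnothing$ organizes the two cases.

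For part (1): if $U=\varnothing$ then every $U_{i}$ is an open subset of $\varnothing$, hence $U_{i}=\varnothing$, so $U_{i_{0}}\cap\dots\cap U_{i_{n}}=\varnothing$ for every tuple. Writing $A:=\mathcal{A}(\varnothing)$, the complex becomes $\check{C}^{n}(\mathcal{U},\mathcal{A})=\prod_{(i_{0},\dots,i_{n})\in I^{n+1}}A$, with all coface maps alternating sums of the identity of $A$. Thus $\check{C}^{\bullet}(\mathcal{U},\mathcal{A})$ is the unnormalized cochain complex, with constant coefficients $A$, of the simplicial set $E$ with $E_{n}=I^{n+1}$. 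Fixing $i_{\ast}\in I$ (the covering being non-empty), the maps $h\colon\check{C}^{n}\to\check{C}^{n-1}$, $(h\varphi)(i_{0},\dots,i_{n-1}):=\varphi(i_{\ast},i_{0},\dots,i_{n-1})$, satisfy $dh+hd=\mathrm{id}$ in degrees $n\ge 1$, while in degree $0$ one gets $hd\varphi=\varphi-c(\varphi(i_{\ast}))$, where $c(a)$ is the constant family of value $a$. Hence $\check{H}^{n}(\mathcal{U},\mathcal{A})=0$ for $n\ge 1$, and $\check{H}^{0}(\mathcal{U},\mathcal{A})$ is the submodule of constant families, canonically isomorphic to $A=\mathcal{A}(\varnothing)$; this is just the contractibility of $E$ when $I\neq\varnothing$.

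For part (2): let $\check{C}^{\bullet}_{\varnothing}(\mathcal{U},\mathcal{A})\subseteq\check{C}^{\bullet}(\mathcal{U},\mathcal{A})$ be the subcomplex of cochains vanishing on every tuple $(i_{0},\dots,i_{n})$ with $U_{i_{0}}\cap\dots\cap U_{i_{n}}\neq\varnothing$. The projection $\check{C}^{\bullet}(\mathcal{U},\mathcal{A})\twoheadrightarrow\check{C}_{red}^{\bullet}(\mathcal{U},\mathcal{A})$ that discards the factors indexed by tuples with empty intersection is a morphism of complexes --- compatibility with the differential uses that whenever $U_{i_{0}}\cap\dots\cap U_{i_{n}}\neq\varnothing$ every face of that tuple also has non-empty intersection --- and its kernel is exactly $\check{C}^{\bullet}_{\varnothing}(\mathcal{U},\mathcal{A})$, whereas $\check{C}_{red}^{\bullet}(\mathcal{U},\mathcal{A})$ visibly depends only on the values of $\mathcal{A}$ on non-empty opens. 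So part (2) reduces to showing that $\check{C}^{\bullet}_{\varnothing}(\mathcal{U},\mathcal{A})$ is acyclic, and it is precisely here that the hypothesis $U\neq\varnothing$ must be used: since $\{U_{i}\}$ covers the non-empty set $U$, one may choose $i_{\ast}\in I$ with $U_{i_{\ast}}\neq\varnothing$, and the natural attempt is to run the homotopy $h$ of part (1), observing that prepending $i_{\ast}$ keeps a tuple with empty intersection empty, so that $h$ preserves $\check{C}^{\bullet}_{\varnothing}$. I expect this acyclicity to be the main obstacle: one must keep careful track of which face-terms survive under the "empty-intersection" support condition, and, should the naive $h$ not yield $dh+hd=\mathrm{id}$ outright, one would instead filter $\check{C}^{\bullet}_{\varnothing}(\mathcal{U},\mathcal{A})$ by the cardinality of the index sets $\{i_{0},\dots,i_{n}\}$ with empty intersection, so that each successive subquotient is again of the contractible type $E$ treated in part (1), and conclude by the spectral sequence of the filtration.
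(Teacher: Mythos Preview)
Your treatment of part~(1) is correct and essentially identical to the paper's argument (you prepend $i_\ast$, the paper appends it with a sign; either gives a contracting homotopy of the full simplex on $I$).

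For part~(2) your strategy coincides with the paper's: both pass to the kernel $K^\bullet=\check C^\bullet_{\varnothing}$ of $\pi$ and attempt to show it is contractible via the same cone homotopy, choosing $i_\ast$ with $U_{i_\ast}\neq\varnothing$. Your instinct that $dh+hd=\mathrm{id}$ may fail on $K^\bullet$ is correct, but the difficulty is worse than a bookkeeping issue to be repaired by filtering: the complex $K^\bullet$ is simply \emph{not acyclic} in general, so neither the naive homotopy nor any filtration can succeed. Take $U=\{0,1\}$ with the discrete topology, $U_0=\{0\}$, $U_1=\{1\}$, and write $A:=\mathcal A(\varnothing)$. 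Then $K^0=0$ (both $U_i$ are non-empty), $K^1\cong A^2$ is indexed by $(0,1)$ and $(1,0)$, and a direct computation gives $H^1(K)=\ker d^1\cong\{(a,b):a+b=0\}\cong A$. Conceptually, $K^\bullet$ is the relative cochain complex of the pair $(E,N)$ with constant coefficients $A$, where $E$ is the full simplex on $I$ and $N$ is the \v Cech nerve of $\{U_i\}$; since $E$ is contractible, $H^n(K)\cong\tilde H^{n-1}(N;A)$, which is nonzero whenever $N$ is disconnected. In this same example $\check H^0(\mathcal U,\mathcal A)=\{(x,y)\in\mathcal A(U_0)\times\mathcal A(U_1):x|_\varnothing=y|_\varnothing\}$ visibly depends on the restriction maps to $\mathcal A(\varnothing)$, whereas $\check H^0_{red}(\mathcal U,\mathcal A)=\mathcal A(U_0)\times\mathcal A(U_1)$.

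The paper's own proof makes the same contraction claim with the same homotopy; its verification of $Sd+dS=\mathrm{id}$ silently uses $(Sa)(i_0,\dots,\widehat{i_k},\dots,i_n)=a(\ast,i_0,\dots,\widehat{i_k},\dots,i_n)$ even when the face $(i_0,\dots,\widehat{i_k},\dots,i_n)$ has non-empty intersection, where $Sa\in K^{n-1}$ must actually vanish. So you have faithfully reproduced the paper's argument, gap included; your proposed filtration cannot rescue it because the target statement --- acyclicity of $K^\bullet$ --- is itself false.
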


\begin{proof}
Let $\check{C}^{\bullet }\left( \mathcal{U},\mathcal{A}\right) $
(respectively $\check{C}_{red}^{\bullet }\left( \mathcal{U},\mathcal{A}%
\right) $) be the \textbf{large} (respectively the \textbf{reduced}) 
\u{C}ech
cochain complex, and let $\pi :\check{C}^{\bullet }\rightarrow \check{C}%
_{red}^{\bullet }$ be the projection:%
\begin{eqnarray*}
&&\check{C}^{n}\left( \mathcal{U},\mathcal{A}\right) 
{:=}%
\dprod\limits_{\left( i_{0},\dots ,i_{n}\right) \in I^{n+1}}\mathcal{A}%
\left( U_{i_{0}}\cap \dots \cap U_{i_{n}}\right) , \\
&&\check{C}_{red}^{n}\left( \mathcal{U},\mathcal{A}\right) 
{:=}%
\dprod\limits_{\substack{ \left( i_{0},\dots ,i_{n}\right) \in I^{n+1}  \\ %
U_{i_{0}}\cap \dots \cap U_{i_{n}}\neq \varnothing }}\mathcal{A}\left(
U_{i_{0}}\cap \dots \cap U_{i_{n}}\right) .
\end{eqnarray*}

\begin{enumerate}
\item If $U=\varnothing $, and $U_{i}=\varnothing $ (even with repetition),
then%
\begin{equation*}
\check{C}^{\bullet }\left( \mathcal{U},\mathcal{A}\right) =\left[
\dprod\limits_{I}\mathcal{A}\left( \varnothing \right) \longrightarrow
\dprod\limits_{I^{2}}\mathcal{A}\left( \varnothing \right) \longrightarrow
\dprod\limits_{I^{3}}\mathcal{A}\left( \varnothing \right) \longrightarrow
\dots \right] ,
\end{equation*}%
and $\check{H}^{\bullet }\left( \mathcal{U},\mathcal{A}\right) $ is the
cohomology $H^{\bullet }\left( U,\mathcal{A}\left( \varnothing \right)
\right) $ (which is the desired) of the contractible full infinite simplex
with the set of vertices $I$. More precisely, consider the augmented complex%
\begin{equation*}
\left[ C^{-1}=\mathcal{A}\left( \varnothing \right) \right] \longrightarrow %
\left[ C^{0}=\dprod\limits_{I}\mathcal{A}\left( \varnothing \right) \right]
\longrightarrow \left[ C^{1}=\dprod\limits_{I^{2}}\mathcal{A}\left(
\varnothing \right) \right] \longrightarrow \dots ,
\end{equation*}%
and build the contraction%
\begin{eqnarray*}
S^{n} &:&C^{n+1}\longrightarrow C^{n}: \\
\left( S^{n}a\right) \left( i_{0},i_{1},\dots ,i_{n}\right) &=&\left(
-1\right) ^{n+1}a\left( i_{0},i_{1},\dots ,i_{n},\ast \right) ,
\end{eqnarray*}%
where $\ast \in I$ is an arbitrary chosen index. $S^{\bullet }$ is indeed a
contraction:%
\begin{eqnarray*}
\left[ S^{n}d^{n}a\right] \left( i_{0},i_{1},\dots ,i_{n}\right) &=&\left(
-1\right) ^{n+1}\left[ da\right] \left( i_{0},i_{1},\dots ,i_{n},\ast \right)
\\
&=&\dsum\limits_{k=0}^{n}\left( -1\right) ^{k+n+1}a\left( i_{0},i_{1},\dots ,%
\widehat{i_{k}},\dots ,i_{n},\ast \right) +a\left( i_{0},i_{1},\dots
,i_{n}\right) , \\
\left[ d^{n-1}S^{n-1}a\right] \left( i_{0},i_{1},\dots ,i_{n}\right)
&=&\dsum\limits_{k=0}^{n}\left( -1\right) ^{k}\left[ Sa\right] \left(
i_{0},i_{1},\dots ,\widehat{i_{k}},\dots ,i_{n}\right) = \\
&=&\dsum\limits_{k=0}^{n}\left( -1\right) ^{k+n}\left[ a\right] \left(
i_{0},i_{1},\dots ,\widehat{i_{k}},\dots ,i_{n},\ast \right) , \\
\left[ \left( Sd+dS\right) a\right] \left( i_{0},i_{1},\dots ,i_{n}\right)
&=&a\left( i_{0},i_{1},\dots ,i_{n}\right) .
\end{eqnarray*}%
Since the augmented complex is contractible, and therefore acyclic, the
desired calculation for the original complex follows easily.

\item If $U\neq \varnothing $, let $K^{\bullet }=\ker \pi $. We claim that $%
K^{\bullet }$ is contractible. Let us build the contraction%
\begin{equation*}
S^{n}:K^{n+1}\longrightarrow K^{n},n\geq -1.
\end{equation*}%
Choose an index $\ast \in I$, such that $U_{\ast }\neq \varnothing $.

\begin{enumerate}
\item $S^{-1}=0$.

\item Let $a\in K^{0}$. Define $S^{0}:K^{1}\rightarrow K^{0}$ by%
\begin{equation*}
\left[ S^{0}b\right] \left( i\right) =b\left( \ast ,i\right) .
\end{equation*}%
It follows that%
\begin{eqnarray*}
\left[ Sda\right] \left( i\right) &=&da\left( \ast ,i\right) =a\left(
i\right) -a\left( \ast \right) =a\left( i\right) , \\
\left[ dSa\right] \left( i\right) &=&0, \\
\left[ \left[ Sd+dS\right] \left( a\right) \right] \left( i\right)
&=&a\left( i\right) ,\text{ i.e.,} \\
Sd+dS &=&1_{K^{0}}.
\end{eqnarray*}

\item If $a\in K^{1}$, then%
\begin{eqnarray*}
\left[ a-dSa\right] \left( i_{0},i_{1}\right) &=&a\left( i_{0},i_{1}\right)
-Sa\left( i_{1}\right) +Sa\left( i_{0}\right) = \\
&=&a\left( i_{0},i_{1}\right) -a\left( \ast ,i_{1}\right) +a\left( \ast
,i_{0}\right) .
\end{eqnarray*}%
Define $S^{1}:K^{2}\rightarrow K^{1}$ by%
\begin{equation*}
\left[ S^{1}b\right] \left( i_{0},i_{1}\right) =b\left( \ast
,i_{0},i_{1}\right) .
\end{equation*}

\item For any $n$, define $S^{n}:K^{n+1}\rightarrow K^{n}$ by%
\begin{equation*}
S^{n}b\left( i_{0},\dots ,i_{n}\right) =b\left( \ast ,i_{0},\dots
,i_{n}\right) .
\end{equation*}%
It follows that for $a\in K^{n}$%
\begin{eqnarray*}
Sda\left( i_{0},\dots ,i_{n}\right) &=&da\left( \ast ,i_{0},\dots
,i_{n}\right) = \\
&=&a\left( i_{0},\dots ,i_{n}\right) +\dsum\limits_{k=0}^{n}\left( -1\right)
^{k+1}a\left( i_{0},i_{1},\dots ,\widehat{i_{k}},\dots ,i_{n}\right) , \\
dSa\left( i_{0},\dots ,i_{n}\right) &=&\dsum\limits_{k=0}^{n}\left(
-1\right) ^{k}\left[ Sa\right] \left( i_{0},i_{1},\dots ,\widehat{i_{k}}%
,\dots ,i_{n}\right) = \\
&=&\dsum\limits_{k=0}^{n}\left( -1\right) ^{k}a\left( \ast
,i_{0},i_{1},\dots ,\widehat{i_{k}},\dots ,i_{n}\right) , \\
\left[ \left[ Sd+dS\right] a\right] \left( i_{0},\dots ,i_{n}\right)
&=&a\left( i_{0},\dots ,i_{n}\right) ,\text{ i.e.,} \\
Sd+dS &=&1_{K^{n}}.
\end{eqnarray*}
\end{enumerate}
\end{enumerate}
\end{proof}

\begin{proposition}
\label{Prop-Zero-Cech-homology}\label{Cor-Zero-Cech-homology}Let $X\neq
\varnothing $ be a Hausdorff paracompact topological space, and let $%
\mathcal{A}\in \mathbf{pCS}\left( X,\mathbf{Pro}\left( k\right) \right) $ be
a pre\textbf{co}sheaf of \textbf{pro}-modules on $X$. Assume that the 
\textbf{co}sheafification $\mathcal{A}_{\#}=0$. Then%
\begin{equation*}
\check{H}_{n}\left( X,\mathcal{A}\right) =0
\end{equation*}%
for all $n\geq 0$.
\end{proposition}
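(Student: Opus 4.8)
The plan is to reduce everything to the dual statement about \v{C}ech \textbf{co}homology via the pairing $\langle\bullet,T\rangle$ of Proposition~\ref{Prop-Duality}, and then invoke a known vanishing theorem for \v{C}ech cohomology on Hausdorff paracompact spaces. Let $T\in\mathbf{Mod}\left(k\right)$ be an arbitrary injective module. First I would record that, by Remark~\ref{Rem-Cech-homology-hypercovering} (applied to \v{C}ech hypercoverings) together with Proposition~\ref{Prop-Duality}, we have natural isomorphisms
\begin{equation*}
\left\langle \check{H}_{n}\left( X,\mathcal{A}\right) ,T\right\rangle
\simeq
\check{H}^{n}\left( X,\left\langle \mathcal{A},T\right\rangle \right) ,
\end{equation*}
where on the right $\left\langle \mathcal{A},T\right\rangle \in\mathbf{pS}\left( X,\mathbf{Mod}\left(k\right)\right)$ is a presheaf (not necessarily a sheaf, and possibly with $\left\langle\mathcal{A},T\right\rangle(\varnothing)\ne 0$). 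By Proposition~\ref{Prop-Cosheafification}(4), the hypothesis $\mathcal{A}_{\#}=0$ translates to $\left\langle\mathcal{A},T\right\rangle^{\#}=\left\langle\mathcal{A}_{\#},T\right\rangle=0$: the sheafification of the presheaf $\left\langle\mathcal{A},T\right\rangle$ vanishes. So it suffices to show: for a nonempty Hausdorff paracompact $X$ and a presheaf $\mathcal{P}$ of $k$-modules with $\mathcal{P}^{\#}=0$, the \v{C}ech cohomology $\check{H}^{n}\left( X,\mathcal{P}\right)=0$ for all $n$.

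The next step handles the nuisance of $\mathcal{P}\left(\varnothing\right)$. Since $X\ne\varnothing$, Proposition~\ref{Prop-Reduced-Complex}(2) says the \v{C}ech cohomology $\check{H}^{\bullet}\left(\mathcal{U},\mathcal{P}\right)$ for any cover $\mathcal{U}$ of $X$ does not depend on $\mathcal{P}\left(\varnothing\right)$; hence we may replace $\mathcal{P}$ by the presheaf $\mathcal{P}'$ agreeing with $\mathcal{P}$ on nonempty opens and with $\mathcal{P}'\left(\varnothing\right)=0$, without changing $\check{H}^{\bullet}\left(X,\mathcal{P}\right)=\varinjlim_{\mathcal{U}}\check{H}^{\bullet}\left(\mathcal{U},\mathcal{P}\right)$, and we still have $(\mathcal{P}')^{\#}=\mathcal{P}^{\#}=0$. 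Now I would appeal to the classical fact (Leray's theorem on paracompact spaces, see e.g.\ \cite[Chapter~3]{Jardine-2015-Local-homotopy-theory-MR3309296}, or Godement): on a paracompact space, \v{C}ech cohomology with presheaf coefficients only depends on the associated sheaf — more precisely, the canonical map $\check{H}^{\bullet}\left(X,\mathcal{P}'\right)\to\check{H}^{\bullet}\left(X,(\mathcal{P}')^{\#}\right)$ is an isomorphism, because refinement by covers indexed via partitions of unity computes sheaf cohomology and kills the \textquotedblleft presheaf part\textquotedblright\ (the presheaf $\mathcal{P}'$ with zero sheafification has all \v{C}ech cohomology in the colimit equal to that of the zero sheaf). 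Since $(\mathcal{P}')^{\#}=0$, we conclude $\check{H}^{n}\left(X,\mathcal{P}'\right)=0$ for all $n\geq 0$, hence $\check{H}^{n}\left(X,\left\langle\mathcal{A},T\right\rangle\right)=0$.

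Finally, having shown $\left\langle \check{H}_{n}\left( X,\mathcal{A}\right),T\right\rangle=0$ for every injective $T$, Proposition~\ref{Prop-Duality} gives $\check{H}_{n}\left( X,\mathcal{A}\right)=0$ in $\mathbf{Pro}\left(k\right)$ for all $n\geq 0$, which is the claim. The main obstacle I anticipate is pinning down the exact form of the paracompactness input: one needs that a presheaf with vanishing sheafification has vanishing \v{C}ech cohomology on a paracompact space. This is standard but requires care — the clean route is that every open cover of a paracompact space is refined by a normal (locally finite, partition-of-unity-admitting) cover, and for the cofinal system of such covers the \v{C}ech complex of $\mathcal{P}'$ is quasi-isomorphic to that of $(\mathcal{P}')^{\#}$; since $(\mathcal{P}')^{\#}=0$, its \v{C}ech complex is acyclic. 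An alternative, entirely internal to this paper, is to use Theorem~\ref{Th-Hypercoverings-vs-resolutions}: the hypercovering description of $H_\bullet$ together with $\mathcal{A}_\#=0$ (and the fact that the constant-cosheaf / sheafification machinery of Corollary~\ref{Cor-Homology-Paracompact} identifies $H_\bullet(X,\mathcal{A}_\#)$ with $\check H_\bullet(X,\mathcal{A})$) forces the limit over hypercoverings to vanish. I would present the duality-plus-paracompactness argument as the main line, since it is shortest, and note the hypercovering route as the reason the two \v{C}ech-vs-homology comparisons of Theorem~\ref{Th-Grothendieck-spectral-sequence-(pre)cosheaves} are consistent with this vanishing.
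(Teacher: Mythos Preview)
Your main line is correct and is essentially the paper's proof: dualize via $\langle\bullet,T\rangle$, use Proposition~\ref{Prop-Cosheafification}(4) to get $\langle\mathcal{A},T\rangle^{\#}=0$, invoke the classical vanishing of \v{C}ech cohomology on Hausdorff paracompact spaces for presheaves with zero sheafification (the paper cites Godement Th\'eor\`eme~II.5.10.2 and Bredon Theorem~III.4.4 rather than Jardine), handle the empty-set issue via Proposition~\ref{Prop-Reduced-Complex}, and conclude by Proposition~\ref{Prop-Duality}. One warning: your suggested alternative route through Corollary~\ref{Cor-Homology-Paracompact} is circular, since that corollary is proved \emph{using} this proposition (via Proposition~\ref{Prop-Homology-like-Paracompact}); drop that remark.
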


\begin{proof}
Let $T\in \mathbf{Mod}\left( k\right) $ be an arbitrary injective module.
Define the presheaf%
\begin{equation*}
\mathcal{B}%
{:=}%
\left\langle \mathcal{A},T\right\rangle \in \mathbf{pS}\left( X,\mathbf{Mod}%
\left( k\right) \right) .
\end{equation*}%
Clearly the sheafification%
\begin{equation*}
\mathcal{B}^{\#}=\left\langle \mathcal{A},T\right\rangle ^{\#}%
\simeq%
\left\langle \mathcal{A}_{\#},T\right\rangle =0.
\end{equation*}%
Apply \cite[Th\'{e}or\`{e}me II.5.10.2]{Godement-MR0102797} or \cite[Theorem
III.4.4 ]{Bredon-Book-MR1481706} to the presheaf $\mathcal{B}$. In the cited
books, the reduced 
\u{C}ech
cohomology is used, therefore the cited theorems read%
\begin{equation*}
\check{H}_{red}^{n}\left( X,\mathcal{B}\right) =0
\end{equation*}%
in our terminology. However, Proposition \ref{Prop-Reduced-Complex} implies%
\begin{equation*}
\left\langle \check{H}_{n}\left( X,\mathcal{A}\right) ,T\right\rangle 
\simeq%
\check{H}^{n}\left( X,\left\langle \mathcal{A},T\right\rangle \right) 
\simeq%
\check{H}_{red}^{n}\left( X,\mathcal{B}\right) =0
\end{equation*}%
for \textbf{any} injective $T$. It follows from Proposition \ref%
{Prop-Duality} that%
\begin{equation*}
\check{H}_{n}\left( X,\mathcal{A}\right) =0.
\end{equation*}
\end{proof}

\begin{proposition}
\label{Prop-Homology-like-Paracompact}Let $X$ be a topological space. Assume
that $\check{H}_{\ast }\left( X,\mathcal{B}\right) =0$ for any precosheaf of
pro-modules $\mathcal{B}\in \mathbf{pCS}\left( X,\mathbf{Pro}\left( k\right)
\right) $ with $\mathcal{B}_{\#}=0$. Let $\mathcal{A}\in \mathbf{pCS}\left(
X,\mathbf{Pro}\left( k\right) \right) $ be a precosheaf. Then the natural
(in $\mathcal{A}$) homomorphism%
\begin{equation*}
H_{n}\left( X,\mathcal{A}_{\#}\right) \longrightarrow \check{H}_{n}\left( X,%
\mathcal{A}\right)
\end{equation*}%
is an isomorphism for all $n$.
\end{proposition}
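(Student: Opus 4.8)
The plan is to insert $\check{H}_{\bullet}(X,\iota\mathcal{A}_{\#})$ between the two sides of the asserted isomorphism and to identify it with each of them, using in both steps the hypothesis that $\check{H}_{\bullet}(X,\mathcal{B})=0$ as soon as $\mathcal{B}_{\#}=0$.

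First I would replace $\check{H}_{\bullet}(X,\mathcal{A})$ by $\check{H}_{\bullet}(X,\iota\mathcal{A}_{\#})$. Let $\epsilon\colon\iota\mathcal{A}_{\#}\rightarrow\mathcal{A}$ be the counit of the adjunction $\iota\dashv(\bullet)_{\#}$ of Proposition~\ref{Prop-Cosheafification}, and let $\mathcal{K}=\ker\epsilon$, $\mathcal{Q}=\coker\epsilon$ and $\mathcal{I}=\mathrm{im}\,\epsilon$ in the abelian category $\mathbf{pCS}\left(X,\mathbf{Pro}\left(k\right)\right)$. Applying the exact functor $(\bullet)_{\#}$ (Proposition~\ref{Prop-Cosheafification}(\ref{Prop-Cosheafification-exact})) to $0\rightarrow\mathcal{K}\rightarrow\iota\mathcal{A}_{\#}\rightarrow\mathcal{A}\rightarrow\mathcal{Q}\rightarrow0$, and using the triangle identity which says that $(\bullet)_{\#}$ applied to $\epsilon$ is an isomorphism $(\iota\mathcal{A}_{\#})_{\#}\simeq\mathcal{A}_{\#}$ (since $(\bullet)_{\#}\circ\iota\simeq\mathbf{1}$ on cosheaves), one gets $\mathcal{K}_{\#}=0$ and $\mathcal{Q}_{\#}=0$. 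Hence $\check{H}_{\bullet}(X,\mathcal{K})=\check{H}_{\bullet}(X,\mathcal{Q})=0$ by hypothesis, and the long exact sequences in \v{C}ech homology — available because $\check{H}_{\bullet}(X,-)$ is a homological $\delta$-functor on $\mathbf{pCS}\left(X,\mathbf{Pro}\left(k\right)\right)$, by the satellite machinery of Proposition~\ref{Prop-Left-derived} together with Proposition~\ref{Prop-Generating} — attached to $0\rightarrow\mathcal{K}\rightarrow\iota\mathcal{A}_{\#}\rightarrow\mathcal{I}\rightarrow0$ and $0\rightarrow\mathcal{I}\rightarrow\mathcal{A}\rightarrow\mathcal{Q}\rightarrow0$ show that $\check{H}_n(X,\epsilon)\colon\check{H}_n(X,\iota\mathcal{A}_{\#})\rightarrow\check{H}_n(X,\mathcal{A})$ is an isomorphism for every $n$.

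Next I would compute $\check{H}_{\bullet}(X,\iota\mathcal{A}_{\#})$ by feeding the cosheaf $\mathcal{A}_{\#}$ and $U=X$ into the spectral sequence
\[
E^2_{s,t}=\check{H}_t\bigl(X,\mathcal{H}_s(\mathcal{A}_{\#})\bigr)\implies H_{s+t}(X,\mathcal{A}_{\#})
\]
of Theorem~\ref{Th-Grothendieck-spectral-sequence-(pre)cosheaves}(\ref{Th-Grothendieck-spectral-sequence-compare-to-Cech}), where $\mathcal{H}_s=L_s\iota$ and $\mathcal{H}_0(\mathcal{A}_{\#})=\iota\mathcal{A}_{\#}$ because $\iota$ is right exact (being a left adjoint). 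The crucial observation is that $\bigl(\mathcal{H}_s(\mathcal{A}_{\#})\bigr)_{\#}=0$ for $s>0$: picking a quasi-projective resolution $\mathcal{P}_{\bullet}\rightarrow\mathcal{A}_{\#}$ in $\mathbf{CS}\left(X,\mathbf{Pro}\left(k\right)\right)$ (Proposition~\ref{Prop-Generating}), one has $\mathcal{H}_s(\mathcal{A}_{\#})=H_s(\iota\mathcal{P}_{\bullet})$, and applying the exact functor $(\bullet)_{\#}$ gives $\bigl(\mathcal{H}_s(\mathcal{A}_{\#})\bigr)_{\#}\simeq H_s\bigl((\iota\mathcal{P}_{\bullet})_{\#}\bigr)\simeq H_s(\mathcal{P}_{\bullet})$, which vanishes for $s>0$ since $\mathcal{P}_{\bullet}$ resolves $\mathcal{A}_{\#}$ in the abelian category of cosheaves. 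By the hypothesis again, $E^2_{s,t}=0$ for all $t$ whenever $s>0$, so the spectral sequence collapses onto the line $s=0$ and its edge homomorphism $\check{H}_n(X,\iota\mathcal{A}_{\#})=E^2_{0,n}\rightarrow H_n(X,\mathcal{A}_{\#})$ is an isomorphism.

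Composing the two isomorphisms yields $H_n(X,\mathcal{A}_{\#})\simeq\check{H}_n(X,\mathcal{A})$, and a routine diagram chase identifies this composite (up to the usual sign conventions) with the natural homomorphism of the statement, naturality in $\mathcal{A}$ being inherited from that of $\epsilon$, of the long exact sequences, and of the spectral sequence. The entire conceptual content is the pair of vanishing facts $\mathcal{K}_{\#}=\mathcal{Q}_{\#}=0$ and $\bigl(\mathcal{H}_s\mathcal{A}_{\#}\bigr)_{\#}=0$ for $s>0$, both immediate consequences of exactness of cosheafification; the only point I expect to need real care is the bookkeeping — verifying that $\check{H}_{\bullet}(X,-)$ genuinely carries the $\delta$-functor structure used in the first step, and that the map extracted from the collapsed spectral sequence is the one the theorem actually produces — rather than anything hard. (Combined with Proposition~\ref{Prop-Zero-Cech-homology}, which verifies the hypothesis for Hausdorff paracompact $X$, this gives the isomorphism $H_{\bullet}(X,\mathcal{A}_{\#})\simeq\check{H}_{\bullet}(X,\mathcal{A})$ of Corollary~\ref{Cor-Homology-Paracompact}.)
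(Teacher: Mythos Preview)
Your proof is correct and follows essentially the same route as the paper's: first reduce $\check{H}_n(X,\mathcal{A})$ to $\check{H}_n(X,\iota\mathcal{A}_{\#})$ via the kernel/image/cokernel of the counit and the hypothesis, then collapse the spectral sequence of Theorem~\ref{Th-Grothendieck-spectral-sequence-(pre)cosheaves}(\ref{Th-Grothendieck-spectral-sequence-compare-to-Cech-limit}) using $(\mathcal{H}_s\mathcal{A}_{\#})_{\#}=0$ for $s>0$. The only cosmetic differences are that the paper cites \cite[Theorem 3.4.1(5)]{Prasolov-Cosheaves-2021-MR4347662} for that vanishing where you supply the short direct argument, and that the edge map in the collapsed spectral sequence actually goes $H_n(X,\mathcal{A}_{\#})\to E^2_{0,n}$ rather than the reverse---which is in fact the direction the statement needs.
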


\begin{proof}
Consider the exact sequence of precosheaves%
\begin{equation*}
0\longrightarrow \ker \varphi \longrightarrow \mathcal{A}_{\#}\overset{%
\varphi }{\longrightarrow }\mathcal{A}\longrightarrow 
\coker%
\varphi \longrightarrow 0.
\end{equation*}%
Apply the exact functor $\left( \bullet \right) _{\#}$. It follows that%
\begin{equation*}
\left( \ker \varphi \right) _{\#}=\left( 
\coker%
\varphi \right) _{\#}=0,
\end{equation*}%
and, due to the assumption,%
\begin{equation*}
\check{H}_{n}\left( X,\ker \varphi \right) =\check{H}_{n}\left( X,%
\coker%
\varphi \right) =0
\end{equation*}%
for all $n$. Consider the two short exact sequences of precosheaves:%
\begin{eqnarray*}
0 &\longrightarrow &\ker \varphi \longrightarrow \mathcal{A}%
_{\#}\longrightarrow 
\Imm%
\left( \varphi \right) \longrightarrow 0, \\
0 &\longrightarrow &%
\Imm%
\left( \varphi \right) \overset{\varphi }{\longrightarrow }\mathcal{A}%
\longrightarrow 
\coker%
\varphi \longrightarrow 0,
\end{eqnarray*}%
and the two corresponding long exact sequences of $\check{H}_{n}$. It
follows that%
\begin{equation*}
\check{H}_{n}\left( X,\mathcal{A}_{\#}\right) \longrightarrow \check{H}%
_{n}\left( X,%
\Imm%
\left( \varphi \right) \right) \longrightarrow \check{H}_{n}\left( X,%
\mathcal{A}\right)
\end{equation*}%
are isomorphisms for all $n$. Apply now the spectral sequence from Theorem %
\ref{Th-Grothendieck-spectral-sequence-(pre)cosheaves}(\ref%
{Th-Grothendieck-spectral-sequence-compare-to-Cech-limit}) to the cosheaf $%
\mathcal{A}_{\#}$:%
\begin{equation*}
E_{s,t}^{2}=\check{H}_{s}\left( X,\mathcal{H}_{t}\left( \mathcal{A}%
_{\#}\right) \right) \implies H_{s+t}\left( X,\mathcal{A}\right) .
\end{equation*}%
Due to \cite[Theorem 3.4.1(5)]{Prasolov-Cosheaves-2021-MR4347662},%
\begin{equation*}
\left( \mathcal{H}_{t}\left( \mathcal{A}_{\#}\right) \right) _{\#}=0,t\geq 1,
\end{equation*}%
and the spectral sequence degenerates, implying%
\begin{equation*}
E_{s,0}^{2}=\check{H}_{s}\left( X,\mathcal{H}_{0}\left( \mathcal{A}%
_{\#}\right) \right) =\check{H}_{s}\left( X,\mathcal{A}_{\#}\right) 
\simeq%
H_{s}\left( X,\mathcal{A}_{\#}\right) .
\end{equation*}%
Finally,%
\begin{equation*}
\check{H}_{s}\left( X,\mathcal{A}\right) 
\simeq%
\check{H}_{s}\left( X,\mathcal{A}_{\#}\right) 
\simeq%
H_{s}\left( X,\mathcal{A}_{\#}\right)
\end{equation*}%
for all $s$.
\end{proof}

\begin{corollary}
\label{Cor-Homology-Paracompact}Let $X$ be a Hausdorff paracompact
topological space, and let $\mathcal{A}\in \mathbf{pCS}\left( X,\mathbf{Pro}%
\left( k\right) \right) $ be a precosheaf of pro-modules on $X$. Then the
natural (in $\mathcal{A}$) homomorphism%
\begin{equation*}
H_{n}\left( X,\mathcal{A}_{\#}\right) \longrightarrow \check{H}_{n}\left( X,%
\mathcal{A}\right)
\end{equation*}%
is an isomorphism for all $n$.
\end{corollary}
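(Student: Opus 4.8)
The plan is to read the corollary straight off the two preceding propositions. Proposition~\ref{Prop-Homology-like-Paracompact} already establishes the desired isomorphism for an \emph{arbitrary} topological space $X$ under the single hypothesis that $\check{H}_{\ast }\left( X,\mathcal{B}\right) =0$ for every precosheaf of pro-modules $\mathcal{B}$ on $X$ with $\mathcal{B}_{\#}=0$. So the only thing left to supply is a verification of that hypothesis in the Hausdorff paracompact case.

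First I would dispose of the degenerate case $X=\varnothing $ separately: here $Open\left( X\right) =\left\{ \varnothing \right\} $, the empty family covers $\varnothing $, so $\mathcal{A}_{\#}\left( \varnothing \right) =0$ and both $H_{n}\left( X,\mathcal{A}_{\#}\right) $ and $\check{H}_{n}\left( X,\mathcal{A}\right) $ vanish in all degrees, whence the asserted map is an isomorphism for purely formal reasons. For $X\neq \varnothing $, Proposition~\ref{Prop-Zero-Cech-homology} is exactly the statement that $\check{H}_{n}\left( X,\mathcal{B}\right) =0$ for all $n\geq 0$ whenever $\mathcal{B}\in \mathbf{pCS}\left( X,\mathbf{Pro}\left( k\right) \right) $ has $\mathcal{B}_{\#}=0$; this is precisely the hypothesis of Proposition~\ref{Prop-Homology-like-Paracompact}. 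Applying the latter to the given precosheaf $\mathcal{A}$ then yields that $H_{n}\left( X,\mathcal{A}_{\#}\right) \rightarrow \check{H}_{n}\left( X,\mathcal{A}\right) $ is an isomorphism, naturally in $\mathcal{A}$, for all $n$.

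I do not expect a genuine obstacle at this step: the substantive work is already isolated in Propositions~\ref{Prop-Zero-Cech-homology} and~\ref{Prop-Homology-like-Paracompact} — the former transporting the classical vanishing of \v{C}ech cohomology on paracompact spaces (Godement, Bredon) through the duality pairing $\left\langle \bullet ,\bullet \right\rangle $ of Proposition~\ref{Prop-Duality}, together with Proposition~\ref{Prop-Reduced-Complex} to pass between the reduced and unreduced \v{C}ech complexes; the latter running the spectral sequence of Theorem~\ref{Th-Grothendieck-spectral-sequence-(pre)cosheaves}(\ref{Th-Grothendieck-spectral-sequence-compare-to-Cech-limit}) applied to $\mathcal{A}_{\#}$ and using that $\left( \mathcal{H}_{t}\left( \mathcal{A}_{\#}\right) \right) _{\#}=0$ for $t\geq 1$. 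The corollary is just the conjunction of these two inputs plus the empty-space bookkeeping.
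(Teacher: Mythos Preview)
Your proposal is correct and follows exactly the paper's approach: invoke Proposition~\ref{Prop-Zero-Cech-homology} to verify the vanishing hypothesis of Proposition~\ref{Prop-Homology-like-Paracompact}, then apply the latter. The only difference is that you are more explicit about the degenerate case $X=\varnothing$ (which Proposition~\ref{Prop-Zero-Cech-homology} excludes) and about the ingredients feeding into the two propositions; the paper's proof is the one-line ``Due to Proposition~\ref{Prop-Zero-Cech-homology}, we may apply Proposition~\ref{Prop-Homology-like-Paracompact}.''
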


\begin{proof}
Due to Proposition \ref{Prop-Zero-Cech-homology}, we may apply Proposition %
\ref{Prop-Homology-like-Paracompact}.
\end{proof}

\begin{theorem}
\label{Cor-Paracompact-vs-Shape}\label{Th-Paracompact-vs-Shape}Let $X$ be a
Hausdorff paracompact topological space, let $G\in \mathbf{Ab}$, and let $%
G_{\#}\in \mathbf{CS}\left( X,\mathbf{Ab}\right) $ be the corresponding
constant cosheaf. Then for all $n$ there is a natural (in $X$ and $G$)
isomorphism%
\begin{equation*}
H_{n}\left( X,G_{\#}\right) \longrightarrow pro\text{-}H_{n}\left(
X,G\right) ,
\end{equation*}%
where $pro$-$H_{n}$ is the shape pro-homology (Definition \ref%
{Def-Pro-homology-groups}).
\end{theorem}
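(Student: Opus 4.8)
\emph{The plan} is to transport both sides, via the duality principle (Proposition~\ref{Prop-Duality}), into ordinary sheaf cohomology with constant coefficients on a paracompact Hausdorff space, where the classical \u{C}ech theorem applies. Fix an injective (equivalently divisible) abelian group $T$ and set $A:=\Hom_{\mathbb{Z}}(G,T)$. Since $G$ is rudimentary, $\langle G_X,T\rangle$ is the constant presheaf with value $A$, and by Proposition~\ref{Prop-Cosheafification}(4) its dual $\langle G_\#,T\rangle=\langle (G_X)_\#,T\rangle\simeq\langle G_X,T\rangle^{\#}$ is the constant sheaf $A^{\#}$. Because the target is a pro-group, to prove the theorem it suffices to exhibit a natural transformation $\Phi_{X,G}\colon H_n(X,G_\#)\to pro\text{-}H_n(X,G)$ and to check that $\langle\Phi_{X,G},T\rangle$ is an isomorphism of abelian groups for every such $T$.

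For the left-hand side I would first apply Corollary~\ref{Cor-Homology-Paracompact}, which already gives a natural isomorphism $H_n(X,G_\#)\simeq\check{H}_n(X,G_X)$ (recall $G_\#=(G_X)_\#$). Dualizing and using Proposition~\ref{Prop-Various-left-satellites}(5) yields $\langle\check{H}_n(X,G_X),T\rangle\simeq\check{H}^n(X,\langle G_X,T\rangle)$, the \u{C}ech cohomology of $X$ with coefficients in the constant presheaf $A$; by Proposition~\ref{Prop-Reduced-Complex} this is insensitive to the value of the presheaf on $\varnothing$ (here $X\neq\varnothing$), hence equals the reduced \u{C}ech cohomology, which on a paracompact Hausdorff space is identified with sheaf cohomology $H^n(X,A^{\#})$ by the classical comparison theorem (\cite[II.5.10.2]{Godement-MR0102797} or \cite[III.4.4]{Bredon-Book-MR1481706}, exactly as invoked in Proposition~\ref{Prop-Zero-Cech-homology}).

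For the shape side, take the \u{C}ech $H(\mathbf{Pol})$-expansion $X\to(|N\mathcal{U}|)_{\mathcal{U}\in Norm(X)}$ built from nerves of normal open coverings; it exists for any space and is canonical up to isomorphism in $\mathbf{Pro}(H(\mathbf{Pol}))$ (recalled before Definition~\ref{Def-Pro-homotopy-groups} and in the proof of Proposition~\ref{Prop-Simpler-description-G-sharp}), so Definition~\ref{Def-Pro-homology-groups} gives $pro\text{-}H_n(X,G)\simeq(H_n(|N\mathcal{U}|,G))_{\mathcal{U}\in Norm(X)}$. Since $T$ is injective the functor $\Hom_{\mathbb{Z}}(\bullet,T)$ is exact, and applying it to the free simplicial chain complex of $|N\mathcal{U}|$ together with tensor--hom adjunction gives $\Hom_{\mathbb{Z}}(H_n(|N\mathcal{U}|,G),T)\simeq H^n(|N\mathcal{U}|,A)\simeq\check{H}^n(\mathcal{U},A)$, naturally in $\mathcal{U}$. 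Hence $\langle pro\text{-}H_n(X,G),T\rangle\simeq\varinjlim_{\mathcal{U}\in Norm(X)}\check{H}^n(\mathcal{U},A)$, and since on a paracompact Hausdorff space every open covering has a normal refinement, this colimit is the full \u{C}ech cohomology $\check{H}^n(X,A)\simeq H^n(X,A^{\#})$ --- the same group that appeared in the previous step.

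Thus after applying $\langle\bullet,T\rangle$ both sides become canonically $H^n(X,A^{\#})$ for every injective $T$, and Proposition~\ref{Prop-Duality} upgrades this to the desired isomorphism of pro-groups; naturality in $G$ is clear, and naturality in $X$ follows from functoriality of polyhedral expansions together with $f^{\ast}G_{\#}\simeq G_{\#}$ (Proposition~\ref{Prop-Action-of-continuous-mappings}(2)). \emph{The main obstacle} is the one already flagged: Proposition~\ref{Prop-Duality} must be applied to the statement ``$\Phi_{X,G}$ is an isomorphism'', so one has to actually construct $\Phi_{X,G}$ and show that its $\langle\bullet,T\rangle$-dual coincides with the composite of the classical comparisons above. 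I would build $\Phi_{X,G}$ out of the canonical maps $H_n(X,G_\#)\to\check{H}_n(\mathcal{K}_\bullet,G_\#)$ of Lemma~\ref{Lemma-Spectral-sequence-hypercovering} specialized to the \u{C}ech hypercoverings of normal covers (equivalently, from Theorem~\ref{Th-Hypercoverings-vs-resolutions}), passed through the expansion; the real work is then the compatibility diagram-chase tying these maps to Godement's \u{C}ech-to-sheaf comparison, together with the genuinely topological inputs --- cofinality of normal covers and the isomorphism $\check{H}^n(X,A)\simeq H^n(X,A^{\#})$ on paracompact Hausdorff spaces --- plus the minor $\varnothing$-bookkeeping handled by Proposition~\ref{Prop-Reduced-Complex}.
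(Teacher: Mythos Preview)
Your argument is correct, but it takes a different route from the paper's. After invoking Corollary~\ref{Cor-Homology-Paracompact} (which you both do), the paper does \emph{not} dualize again. Instead it stays inside $\mathbf{Pro}(\mathbf{Ab})$ and recognizes both sides as literally the same cofiltered limit of rudimentary objects: on one side, the cited \cite[Theorem 3.2.1(3ab)]{Prasolov-Cosheaves-2021-MR4347662} identifies $\check{H}_n(X,G)\simeq\check{H}_n^{red}(X,G)$ with $\underleftarrow{\lim}_{\mathcal{U}}\,H_n\!\bigl(C_\bullet^{red}(\mathcal{U},G)\bigr)$ taken in $\mathbf{Pro}(\mathbf{Ab})$; on the other side, since every open cover of a Hausdorff paracompact space is normal, the \u{C}ech expansion gives $pro\text{-}H_n(X,G)\simeq\underleftarrow{\lim}_{\mathcal{U}}\,H_n(|N\mathcal{U}|,G)$. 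These two cofiltered systems are termwise isomorphic (cellular versus singular homology of the same polyhedron), so the pro-objects agree and the natural map is the one induced by the system of identifications. No second passage through $\langle\bullet,T\rangle$ is needed.

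What your approach buys is a clean reduction to the classical statement $\check{H}^n(X,A)\simeq H^n(X,A^{\#})$ for paracompact spaces; what it costs is exactly the ``main obstacle'' you flag: you must construct $\Phi_{X,G}$ explicitly and then verify that $\langle\Phi_{X,G},T\rangle$ coincides with the composite of classical comparison isomorphisms, a diagram chase the paper's direct argument avoids entirely. Your sketch of $\Phi_{X,G}$ via Lemma~\ref{Lemma-Spectral-sequence-hypercovering} or Theorem~\ref{Th-Hypercoverings-vs-resolutions} would work, but it is more labor than simply observing that both objects are presented by the same inverse system indexed on open covers.
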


\begin{proof}
Due to \cite[Appendix 1.3, p. 324]{Mardesic-Segal-MR676973}, there is the 
\u{C}ech
$H\left( \mathbf{Pol}\right) $-expansion%
\begin{equation*}
X\longrightarrow \mathbf{Y}\mathcal{=}\left( \left\vert N\mathcal{U}%
\right\vert \right) _{\mathcal{U}}\in \mathbf{Pro}\left( H\left( \mathbf{Pol}%
\right) \right) ,
\end{equation*}%
where $\mathcal{U}$ runs through \textbf{normal} coverings of $X$, and $%
\left\vert N\mathcal{U}\right\vert $ is the geometric realization of the 
\u{C}ech
nerve $N\mathcal{U}$ of $\mathcal{U}$.

\begin{enumerate}
\item Since all coverings of a Hausdorff paracompact space are normal \cite[%
Corollary App. 1.1, p. 325]{Mardesic-Segal-MR676973}, $pro$-$H_{n}\left(
X,G\right) \in \mathbf{Pro}\left( \mathbf{Ab}\right) $ is the following
pro-group.

\begin{enumerate}
\item For each $\mathcal{U}=\left\{ U_{i}\rightarrow X\right\} _{i\in I}$
the (singular) homology of $\left\vert N\mathcal{U}\right\vert $ is
naturally isomorphic to the \textbf{cellular} homology of $N\mathcal{U}$,
i.e.,%
\begin{equation*}
H_{n}\left( \left\vert N\mathcal{U}\right\vert ,G\right) 
\simeq%
H_{n}\left( C_{\bullet }^{\mathcal{U}}\right)
\end{equation*}%
where%
\begin{equation*}
C_{n}^{\mathcal{U}}=\dbigoplus\limits_{\substack{ \left( i_{0},i_{1},\dots
,i_{n}\right) \in I^{n+1}  \\ U_{i_{0}}\cap \dots \cap U_{i_{n}}\neq
\varnothing }}G
\end{equation*}%
with evident differentials.

\item Finally,%
\begin{equation*}
pro\text{-}H_{n}\left( X,G\right) 
\simeq%
\left( H_{n}\left( C_{\bullet }^{\mathcal{U}}\right) \right) _{\mathcal{U}}%
\simeq%
\underset{\mathcal{U}}{\underleftarrow{\lim }}~H_{n}\left( C_{\bullet }^{%
\mathcal{U}}\right)
\end{equation*}%
where $\underleftarrow{\lim }$ is taken in the \textbf{pro}category $\mathbf{%
Pro}\left( \mathbf{Ab}\right) $.
\end{enumerate}

\item Corollary \ref{Cor-Homology-Paracompact} and Proposition \ref%
{Prop-Reduced-Complex} establish the natural isomorphism%
\begin{equation*}
H_{n}\left( X,G_{\#}\right) \longrightarrow \check{H}_{n}\left( X,G\right) 
\simeq%
\check{H}_{n}^{red}\left( X,G\right) .
\end{equation*}

\item It follows, due to \cite[Theorem 3.2.1(3ab)]%
{Prasolov-Cosheaves-2021-MR4347662},\ that%
\begin{equation*}
H_{n}\left( X,G_{\#}\right) 
\simeq%
\check{H}_{n}^{red}\left( X,G\right) 
\simeq%
\underset{\mathcal{U}}{\underleftarrow{\lim }}~H_{n}\left( C_{\bullet
}^{red}\left( \mathcal{U},G\right) \right)
\end{equation*}%
where $C_{\bullet }^{red}\left( \mathcal{U},G\right) $ is the following
chain complex:%
\begin{equation*}
C_{n}^{red}\left( \mathcal{U},G\right) =\dbigoplus\limits_{\substack{ \left(
i_{0},i_{1},\dots ,i_{n}\right) \in I^{n+1}  \\ U_{i_{0}}\cap \dots \cap
U_{i_{n}}\neq \varnothing }}G.
\end{equation*}

\item Notice that all $H_{n}\left( C_{\bullet }^{red}\left( \mathcal{U}%
,G\right) \right) $ are \textbf{rudimentary} pro-groups naturally isomorphic
to \textbf{rudimentary} pro-groups $H_{n}\left( \left\vert N\mathcal{U}%
\right\vert ,G\right) $.

\item Finally, take the limit $\underleftarrow{\lim }_{\mathcal{U}}$ in the 
\textbf{pro}category $\mathbf{Pro}\left( \mathbf{Ab}\right) $:%
\begin{equation*}
H_{n}\left( X,G_{\#}\right) 
\simeq%
\underset{\mathcal{U}}{\underleftarrow{\lim }}~H_{n}\left( C_{\bullet
}^{red}\left( \mathcal{U},G\right) \right) 
\simeq%
\underset{\mathcal{U}}{\underleftarrow{\lim }}~H_{n}\left( C_{\bullet }^{%
\mathcal{U}}\right) 
\simeq%
pro\text{-}H_{n}\left( X,G\right) ,
\end{equation*}%
and we are done.
\end{enumerate}
\end{proof}

\begin{corollary}
\label{Cor-Cosheaf-homology-polyhedra}If $X\in \mathbf{Pol}$, then $%
H_{n}\left( X,G_{\#}\right) 
\simeq%
H_{n}^{sing}\left( X,G\right) $ where $H_{\bullet }^{sing}$ is the singular
homology.
\end{corollary}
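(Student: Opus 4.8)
The plan is to deduce this from Theorem \ref{Th-Paracompact-vs-Shape} by observing that for a polyhedron the shape pro-homology is \emph{rudimentary} and coincides with singular homology. First I would note that a polyhedron $X$ is, in particular, a Hausdorff paracompact space (a CW complex, and a fortiori the realization of a simplicial complex, is paracompact and Hausdorff). Hence Theorem \ref{Th-Paracompact-vs-Shape} applies and provides, for every $n$ and every $G\in\mathbf{Ab}$, a natural isomorphism
\[
H_{n}\left( X,G_{\#}\right) \simeq pro\text{-}H_{n}\left( X,G\right).
\]

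Next I would identify $pro\text{-}H_{n}\left(X,G\right)$ for $X\in\mathbf{Pol}$. The point is that the rudimentary pro-object $\left( X\right)$ — the object of $\mathbf{Pro}\left(H\left(\mathbf{Pol}\right)\right)$ with trivial index category $\left(\left\{i\right\},\mathbf{1}_{i}\right)$ and value $X$ — is an $H\left(\mathbf{Pol}\right)$-expansion of $X$ in the sense of Definition \ref{Def-HTOP-extension}. Indeed, over a trivial index category the colimit $\underrightarrow{\lim}_{j}\left[Y_{j},P\right]$ is just $\left[X,P\right]$ and the structure map to $\left[X,P\right]$ is the identity, which is bijective for every polyhedron $P$; moreover each "$Y_{j}$" equals $X$, a polyhedron. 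By the uniqueness of polyhedral expansions up to isomorphism in $\mathbf{Pro}\left(H\left(\mathbf{Pol}\right)\right)$ (Remark \ref{Rem-HPOL-extension}(3)), any polyhedral expansion of $X$ is isomorphic to $\left(X\right)$. Applying the homology functor levelwise and passing to $\mathbf{Pro}\left(\mathbf{Ab}\right)$ as in Definition \ref{Def-Pro-homology-groups}, this gives
\[
pro\text{-}H_{n}\left( X,G\right) \simeq \left( H_{n}\left( X,G\right) \right) \simeq H_{n}^{sing}\left( X,G\right),
\]
where the last identification uses that the homology of the polyhedron $X$ entering Definition \ref{Def-Pro-homology-groups} is the singular (equivalently cellular) homology, and that a rudimentary pro-group is canonically its underlying group. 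Since $X\mapsto\left(X\right)$ is functorial on $\mathbf{Pol}$, the isomorphism is natural in $X$, and clearly natural in $G$.

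Combining the two displays yields $H_{n}\left(X,G_{\#}\right)\simeq H_{n}^{sing}\left(X,G\right)$, naturally in $X$ and $G$. The only mildly delicate points are the bookkeeping of naturality — checking that replacing a chosen polyhedral expansion of $X$ by the trivial one $\left(X\right)$ is compatible with the natural transformations furnished by Theorem \ref{Th-Paracompact-vs-Shape} — and the standard identification, for a polyhedron, of the homology used to define $pro\text{-}H_{n}$ with singular homology $H_{n}^{sing}$; both are routine, so no essential obstacle is expected.
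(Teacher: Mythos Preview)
Your proof is correct and is precisely the argument the paper intends: the corollary is stated immediately after Theorem~\ref{Th-Paracompact-vs-Shape} without proof, and the implicit reasoning is exactly that a polyhedron is Hausdorff paracompact and that the trivial expansion $X\to(X)$ is an $H(\mathbf{Pol})$-expansion, so $pro\text{-}H_n(X,G)\simeq H_n^{sing}(X,G)$ as a rudimentary pro-group (cf.\ Remark~\ref{Rem-Cosheaf-homology-polyhedra}).
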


\begin{remark}
\label{Rem-Cosheaf-homology-polyhedra}When $X\in \mathbf{Pol}$, all $%
H_{n}\left( X,G_{\#}\right) $ are \textbf{rudimentary} pro-objects.
\end{remark}

\section{\label{Sec-A-spaces}(Pre)cosheaves on Alexandroff spaces}

The goal of this section is to calculate $H_{\ast }\left( X,G_{\#}\right) $
for $A$-spaces. Those spaces were introduced in \cite%
{Alexandroff-1937-Diskrete-Raume} under the name \textquotedblleft discrete
spaces\textquotedblright . Mc Cord \cite{McCord-MR0196744} argues that now
\textquotedblleft discrete spaces\textquotedblright\ mean those spaces in
which all subsets are open, and call the spaces from \cite%
{Alexandroff-1937-Diskrete-Raume} \textquotedblleft Alexandroff
spaces\textquotedblright\ or shortly \textquotedblleft $A$%
-spaces\textquotedblright :

\begin{definition}
\label{Def-A-space}A topological space $X$ is an $A$-space iff any
intersection of open subsets of $X$ is open. An $A$-space that satisfies $%
T_{0}$ is called $T_{0}A$-space.
\end{definition}

\begin{remark}
\label{Rem-A-spaces}(see \cite[p. 466]{McCord-MR0196744}) There are \textbf{%
proper} inclusions of the following classes of topological spaces:%
\begin{equation*}
\left\{ \text{finite spaces}\right\} \underset{\neq }{\subset }\left\{ \text{%
locally finite spaces}\right\} \underset{\neq }{\subset }\left\{ A\text{%
-spaces}\right\} .
\end{equation*}%
\textquotedblleft Locally finite spaces\textquotedblright\ are those spaces $%
X$, that admit a finite open neighborhood for any point $x\in X$.
\end{remark}

\subsection{General properties}

Below is the list of facts about $A$-spaces.

\begin{proposition}
\label{Prop-Facts-A-spaces}Let $X$ be an $A$-space.

\begin{enumerate}
\item For any $x\in X$ there exists a minimal open neighborhood $U_{x}$ of $%
x $. Namely, $U_{x}$ is the intersection of all open neighborhoods of $x$.

\item There is an equivalence between the two categories $\mathbf{P}$ and $%
\mathbf{A}$, where $\mathbf{P}$ is the category of pre-ordered sets and
monotone mappings, while $\mathbf{A}$ is the category of $A$-spaces and
continuous mappings.

\begin{enumerate}
\item The functor $\mathbf{A}\rightarrow \mathbf{P}$. Let $X\in \mathbf{A}$.
There is a pre-ordering (transitive and reflective binary relation) $\leq $
on the set of points of $X$:%
\begin{equation*}
x\leq y\iff x\in U_{y}\iff U_{x}\subseteq U_{y}.
\end{equation*}

\item The functor $\mathbf{P}\rightarrow \mathbf{A}$. Let $\left( X,\leq
\right) \in \mathbf{P}$. Introduce a topology on $X$ with the basis%
\begin{equation*}
\left\{ V_{x}=\left\{ y\in X~|~y\leq x\right\} \right\} _{x\in X}.
\end{equation*}%
A subset $W\subseteq X$ is open iff $W$ contains all predecessors of any $%
x\in W$. $X$ becomes clearly an $A$-space, and $U_{x}=V_{x}$ for any $x\in X$%
.
\end{enumerate}

\item $X$ satisfies $T_{0}$ iff the above pre-ordering is anti-symmetric,
i.e. $\left( X,\leq \right) $ is a poset.

\item Let $R$ be the following equivalence relation:%
\begin{equation*}
xRy\iff \left( x\leq y\right) \&\left( y\leq x\right) ,
\end{equation*}%
and let%
\begin{equation*}
\nu _{X}:X\longrightarrow \widehat{X}%
{:=}%
X/R
\end{equation*}%
be the projection of $X$ onto the corresponding quotient space. Then:

\begin{enumerate}
\item $\widehat{X}$ is a $T_{0}A$-space.

\item \label{Prop-Facts-A-spaces-Weak-equivalence}$\nu _{X}$ is a homotopy
equivalence.
\end{enumerate}

\item \label{Prop-Facts-A-spaces-Nerve}For a $T_{0}A$-space $X$, let $%
\mathcal{K}\left( X\right) 
{:=}%
N\mathbf{X}$ be the nerve of the category $\mathbf{X}$ corresponding to the
poset $X$, see Notation \ref{Not-Categories}(\ref%
{Not-Categories-Category-for-poset}). There is a weak equivalence 
\begin{equation*}
f_{X}:\left\vert \mathcal{K}\left( X\right) \right\vert \longrightarrow X,
\end{equation*}%
where $\left\vert \mathcal{K}\left( X\right) \right\vert $ is the geometric
realization of $\mathcal{K}\left( X\right) $. $f_{X}$ is defined as follows:

\begin{enumerate}
\item If $u\in \left\vert \mathcal{K}\left( X\right) \right\vert $ is a
vertex $u=\left( x\right) $, then $f_{X}\left( u\right) =x$.

\item If $u$ belongs to the interior of the $n$-simplex%
\begin{equation*}
\left( x_{0}<x_{1}<\dots <x_{n}\right) ,
\end{equation*}%
then $f_{X}\left( u\right) =x_{0}$. See the nice picture \cite[Fig. 1.3]%
{Barmak-2011-Algebraic-topology-finite-spaces-MR3024764}.
\end{enumerate}
\end{enumerate}
\end{proposition}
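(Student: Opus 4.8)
The plan is to dispatch the five items in order; items (1)--(3) and the categorical bookkeeping in (2) are short direct verifications, whereas (4b) and (5) are precisely the classical theorems of Alexandroff and McCord and will be invoked rather than reproved.

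First I would prove (1): in an $A$-space the intersection $U_x:=\bigcap\{U\in Open(X)\mid x\in U\}$ is open by the defining property of $A$-spaces and is visibly the smallest open set containing $x$. Then for (2) I would exhibit the two functors explicitly. For $\mathbf{A}\to\mathbf{P}$, put $x\le y\iff x\in U_y$; reflexivity is $x\in U_x$, and transitivity holds because $x\le y$ and $y\le z$ force $U_y\subseteq U_z$ (as $U_z$ is an open neighbourhood of $y$), hence $x\in U_y\subseteq U_z$. A continuous $f$ is monotone since $f^{-1}(U_{f(y)})$ is an open neighbourhood of $y$, so it contains $U_y\ni x$ whenever $x\le y$. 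For $\mathbf{P}\to\mathbf{A}$, the down-sets $V_x=\{y\mid y\le x\}$ form a basis (if $y\in V_x\cap V_z$ then $V_y\subseteq V_x\cap V_z$), the topology they generate makes $X$ an $A$-space (an arbitrary intersection of down-closed sets is down-closed), one has $U_x=V_x$ by minimality, and monotone maps pull back down-closed sets to down-closed sets, hence are continuous. Finally I would check that the two constructions are mutually inverse: the down-closed subsets of $(X,\le)$ are exactly the unions of minimal neighbourhoods, so the reconstructed topology agrees with the original one, and the reconstructed pre-order is the original one. This yields the equivalence --- indeed, an isomorphism of categories.

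For (3): $xRy$ gives $U_x=U_y$, so no open set separates $x$ and $y$, and $T_0$ then forces $x=y$; conversely, if $\le$ is antisymmetric and $x\ne y$ then, say, $x\not\le y$, so $U_y$ is an open set containing $y$ but not $x$. For (4), $R$ is an equivalence relation straight from the pre-order axioms, the induced pre-order on $X/R$ is a partial order, and one checks that the quotient topology on $\widehat X$ coincides with the $A$-space topology of that poset, so $\widehat X$ is a $T_0A$-space. That $\nu_X$ is a homotopy equivalence is the classical fact from \cite{McCord-MR0196744} (see also \cite{Barmak-2011-Algebraic-topology-finite-spaces-MR3024764}): the key input is the lemma that pointwise comparable continuous maps between $A$-spaces are homotopic, applied to $\mathrm{id}_X$ and $s\circ\nu_X$ for any section $s$ of $\nu_X$ picking a representative in each class (the other composite $\nu_X\circ s$ being the identity on $\widehat X$).

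For (5): $f_X$ is well defined and continuous by inspection of the simplices, and that it is a weak equivalence is McCord's theorem \cite{McCord-MR0196744}, whose argument I would follow: $\{f_X^{-1}(U_x)\}_{x\in X}$ is a basis-like open cover of $|\mathcal K(X)|$, each member is contractible and is carried by $f_X$ onto the contractible open set $U_x$, so $f_X$ restricts to a weak equivalence over every member of the cover, and McCord's criterion then promotes $f_X$ to a global weak equivalence. The only genuine obstacle is that (4b) and (5) are exactly the Alexandroff--McCord theorems, so there the ``proof'' consists of citing \cite{Alexandroff-1937-Diskrete-Raume} and \cite{McCord-MR0196744} rather than producing independent arguments.
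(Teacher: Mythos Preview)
Your proposal is correct and matches the paper's approach: the paper's entire proof is the single line ``See \cite{McCord-MR0196744}'', so your sketch of (1)--(3) and (4a) followed by citations to McCord for (4b) and (5) is in fact more detailed than what the paper provides. Nothing is missing.
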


\begin{proof}
See \cite{McCord-MR0196744}.
\end{proof}

\subsection{Sheaves on $A$-spaces}

In \cite[\S 1]{Jensen-MR0407091} the higher limits $\underleftarrow{\lim }%
^{s}$ are defined for functors%
\begin{equation*}
A:\mathbf{I}^{op}\longrightarrow \mathbf{Ab}
\end{equation*}%
where $I$ is a \textbf{directed} poset. There is also established a
bijection between such functors and $\mathbf{Ab}$-valued sheaves on the
corresponding topological space $X\left( I\right) $. It is proved that%
\begin{equation*}
\underleftarrow{\lim }^{s}A%
\simeq%
H^{s}\left( X\left( I\right) ,\mathcal{A}\right)
\end{equation*}%
where $\mathcal{A}$ is the corresponding sheaf on $X\left( I\right) $.
However, it is possible to prove the above two statements for \textbf{%
arbitrary} (\textbf{not} necessarily \textbf{directed}) posets, and even for 
\textbf{pre-ordered} sets, like in Proposition \ref{Prop-Facts-A-spaces}.

\begin{theorem}
\label{Th-Poset-vs-sheaves}Let $\left( X,\leq \right) $ be a pre-ordered
set. Denote by the same letter $X$ the corresponding $A$-space, and by $%
\mathbf{X}$ the corresponding category, see Notation \ref{Not-Categories}(%
\ref{Not-Categories-Category-for-poset}).

\begin{enumerate}
\item \label{Th-Poset-vs-sheaves-categories-equivalent}There is an
equivalence between the category $\mathbf{Mod}\left( k\right) ^{\mathbf{X}%
^{op}}$ of functors $\mathbf{X}^{op}\rightarrow \mathbf{Mod}\left( k\right) $
and the category $\mathbf{S}\left( X,\mathbf{Mod}\left( k\right) \right) $
of sheaves on $X$.

\item \label{Th-Poset-vs-sheaves-higher-limits}There is a natural (in $X$
and $F$) isomorphism%
\begin{equation*}
\underleftarrow{\lim }^{\bullet }F%
\simeq%
H^{\bullet }\left( X,\mathcal{F}\right)
\end{equation*}%
where $\mathcal{F}$ is the sheaf corresponding to the functor $F$.

\item \label{Th-Poset-vs-sheaves-Bar-construction}\label%
{Rem-Bar-construction}There is a description (the \textbf{bar-construction})
of $\underleftarrow{\lim }^{\bullet }~F$ for $F\in \mathbf{Mod}\left(
k\right) ^{\mathbf{I}^{op}}$ when $\left( I,\leq \right) $ is a poset, or a
pre-ordered set, or even when $\mathbf{I}$ is an \textbf{arbitrary} small
category:%
\begin{equation*}
\underleftarrow{\lim }^{\bullet }~F=H^{\bullet }\left( C^{\bullet }\left( 
\mathbf{I},F\right) \right)
\end{equation*}%
where%
\begin{equation*}
C^{n}\left( \mathbf{I},F\right) =\dprod\limits_{i_{0}\rightarrow
i_{1}\rightarrow \dots \rightarrow i_{n}}F\left( i_{0}\right)
\end{equation*}%
and $d^{n}:C^{n}\left( \mathbf{I},F\right) \rightarrow C^{n+1}\left( \mathbf{%
I},F\right) $ is given by%
\begin{eqnarray*}
&&\left( d^{n}\varphi \right) \left( i_{0}\rightarrow i_{1}\rightarrow \dots
\rightarrow i_{n+1}\right) \\
&=&F\left( i_{0}\rightarrow i_{1}\right) \left( \varphi \left(
i_{1}\rightarrow i_{2}\rightarrow \dots \rightarrow i_{n+1}\right) \right)
+\dsum\limits_{k=1}^{n+1}\left( -1\right) ^{k}\varphi \left(
i_{0}\rightarrow \dots \rightarrow \widehat{i_{k}}\rightarrow \dots
\rightarrow i_{n+1}\right) .
\end{eqnarray*}%
Compare to \cite[Definition B.2.1(1)]{Prasolov-Cosheaves-2021-MR4347662}. 
\textbf{Remark}: in fact, we do not require here $k$ to be quasi-noetherian
(or even commutative).

\item \label{Th-Poset-vs-sheaves-Quotient-space}Define the binary relation $%
R $ on $X$ by%
\begin{equation*}
xRy\iff \left( x\leq y\right) \&\left( y\leq x\right) ,
\end{equation*}%
let $\widehat{X}=X/R$ be the corresponding poset, and let%
\begin{equation*}
\nu _{X}:X\longrightarrow \widehat{X}
\end{equation*}%
be the projection. Denote by the same letter $\widehat{X}$ the corresponding 
$T_{0}A$-space, and by $\widehat{\mathbf{X}}$ the corresponding category.
Then:

\begin{enumerate}
\item $\left( \nu _{X}\right) ^{-1}$ establishes a bijection between open
subsets of $\widehat{X}$ and $X$.

\item $\left( \nu _{X}\right) ^{-1}$ establishes a categorical equivalence $%
\mathcal{F}\longleftrightarrow \widehat{\mathcal{F}}$ between sheaves on $X$
and sheaves on $\widehat{X}$.

\item The above bijection preserves $H^{\bullet }\left( \bullet ,\bullet
\right) $, i.e., there is a natural (in $X$ and $\mathcal{F}$) isomorphism%
\begin{equation*}
H^{\bullet }\left( \widehat{X},\widehat{\mathcal{F}}\right) 
\simeq%
H^{\bullet }\left( X,\mathcal{F}\right) .
\end{equation*}

\item For any $\widehat{F}:\left( \widehat{\mathbf{X}}\right)
^{op}\rightarrow \mathbf{Mod}\left( k\right) $%
\begin{equation*}
\left( \nu _{X}\right) ^{\ast }:\underleftarrow{\lim }^{\bullet }\widehat{F}%
\longrightarrow \underleftarrow{\lim }^{\bullet }\left( \widehat{F}\circ
\left( \nu _{X}\right) ^{op}\right)
\end{equation*}%
is an isomorphism.
\end{enumerate}
\end{enumerate}
\end{theorem}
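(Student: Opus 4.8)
The plan is to prove the four parts in order; (\ref{Th-Poset-vs-sheaves-categories-equivalent}) and (\ref{Th-Poset-vs-sheaves-higher-limits}) carry the real content, while (\ref{Th-Poset-vs-sheaves-Bar-construction}) is standard homological algebra and (\ref{Th-Poset-vs-sheaves-Quotient-space}) is formal. For (\ref{Th-Poset-vs-sheaves-categories-equivalent}) the geometric input is Proposition \ref{Prop-Facts-A-spaces}(1): every $x\in X$ has a minimal open neighborhood $U_{x}$, with $U_{x}\subseteq U_{y}$ iff $x\leq y$. First I would define a functor $\Phi\colon\mathbf{S}\left(X,\mathbf{Mod}\left(k\right)\right)\rightarrow\mathbf{Mod}\left(k\right)^{\mathbf{X}^{op}}$ by $\Phi\left(\mathcal{F}\right)\left(x\right):=\mathcal{F}\left(U_{x}\right)$, the structure maps being the restrictions $\mathcal{F}\left(U_{y}\right)\rightarrow\mathcal{F}\left(U_{x}\right)$ for $x\leq y$ (contravariant in $\mathbf{X}$, hence a functor on $\mathbf{X}^{op}$), and in the other direction $\Psi\left(F\right)\left(W\right):=\underleftarrow{\lim}_{x\in W}F\left(x\right)$ for $W\in Open\left(X\right)$, the limit taken over the full subcategory of $\mathbf{X}^{op}$ spanned by the points of $W$. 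The key point, which is exactly where the Alexandroff hypothesis is used, is that $W=\bigcup_{x\in W}U_{x}$ and that this cover is cofinal among all covers of $W$, so a presheaf $\mathcal{G}$ on $X$ is a sheaf if and only if $\mathcal{G}\left(W\right)\rightarrow\underleftarrow{\lim}_{x\in W}\mathcal{G}\left(U_{x}\right)$ is an isomorphism for every open $W$. From this one reads off that $\Psi\left(F\right)$ is automatically a sheaf, that $\Psi\left(F\right)\left(U_{x}\right)\simeq F\left(x\right)$ naturally, and that $\Phi$ and $\Psi$ are mutually quasi-inverse.

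For (\ref{Th-Poset-vs-sheaves-higher-limits}), the equivalence $\Phi$ intertwines the global-sections functor with the limit functor: $\Gamma\left(X,\mathcal{F}\right)=\mathcal{F}\left(X\right)=\underleftarrow{\lim}_{x\in X}\mathcal{F}\left(U_{x}\right)=\underleftarrow{\lim}\,\Phi\mathcal{F}$. Both $\mathbf{S}\left(X,\mathbf{Mod}\left(k\right)\right)$ and $\mathbf{Mod}\left(k\right)^{\mathbf{X}^{op}}$ are Grothendieck abelian categories, hence have enough injectives; $H^{\bullet}\left(X,-\right)$ and $\underleftarrow{\lim}^{\bullet}$ are the right derived functors of $\Gamma$ and of $\underleftarrow{\lim}$ respectively. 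Since $\Phi$ is an exact equivalence it preserves injectives and injective resolutions, so the two derived functors agree under $\Phi$, naturally in $X$ and $F$. For (\ref{Th-Poset-vs-sheaves-Bar-construction}) I would use the classical identification $\underleftarrow{\lim}\,F=\Hom_{\mathbf{Mod}\left(k\right)^{\mathbf{I}^{op}}}\left(\underline{k},F\right)$, where $\underline{k}$ is the constant functor, so that $\underleftarrow{\lim}^{\bullet}F=\Ext_{\mathbf{Mod}\left(k\right)^{\mathbf{I}^{op}}}^{\bullet}\left(\underline{k},F\right)$, and compute it by the bar resolution $P_{\bullet}\rightarrow\underline{k}$ with $P_{n}=\bigoplus_{i_{0}\rightarrow\cdots\rightarrow i_{n}}k[h_{i_{0}}]$, where $h_{i}=\Hom_{\mathbf{I}}\left(-,i\right)$ and the $k[h_{i}]$ are the projective generators with $\Hom\left(k[h_{i}],F\right)=F\left(i\right)$; this augmented complex is exact by the usual extra-degeneracy contraction, and $\Hom\left(P_{\bullet},F\right)$ is precisely $C^{\bullet}\left(\mathbf{I},F\right)$ with the displayed differential. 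Only the module structure of $\mathbf{Mod}\left(k\right)$ is used, which yields the remark that $k$ need not be quasi-noetherian or commutative; compare \cite[Definition B.2.1(1)]{Prasolov-Cosheaves-2021-MR4347662} and \cite{Jensen-MR0407091}.

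For (\ref{Th-Poset-vs-sheaves-Quotient-space}): points of $X$ in one $R$-class have the same minimal neighborhood, so every open subset of $X$ is $R$-saturated; hence $\nu_{X}^{-1}$ is a bijection, in fact a frame isomorphism $Open\left(\widehat{X}\right)\simeq Open\left(X\right)$, which is (a). Since the category of sheaves on a space depends only on its frame of opens, this gives the categorical equivalence $\mathcal{F}\leftrightarrow\widehat{\mathcal{F}}$ of (b), and it carries $\Gamma\left(X,-\right)$ to $\Gamma\left(\widehat{X},-\right)$ because $\mathcal{F}\left(X\right)=\widehat{\mathcal{F}}\left(\widehat{X}\right)$; deriving exactly as in part (\ref{Th-Poset-vs-sheaves-higher-limits}) gives (c). Finally, $\nu_{X}\colon\mathbf{X}\rightarrow\widehat{\mathbf{X}}$ is full, essentially surjective, and identifies precisely the isomorphic objects of $\mathbf{X}$ (if $xRy$ then $x\leq y$ and $y\leq x$, so $x$ and $y$ are isomorphic in $\mathbf{X}$), hence is an equivalence of categories; therefore $\left(\nu_{X}^{op}\right)^{\ast}$ is an equivalence of functor categories commuting with $\underleftarrow{\lim}$, and so with $\underleftarrow{\lim}^{\bullet}$, which is (d).

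The main obstacle is the sheaf characterization underlying part (\ref{Th-Poset-vs-sheaves-categories-equivalent}): showing that on an $A$-space a presheaf $\mathcal{G}$ is a sheaf exactly when each comparison map $\mathcal{G}\left(W\right)\rightarrow\underleftarrow{\lim}_{x\in W}\mathcal{G}\left(U_{x}\right)$ is an isomorphism — equivalently, that the covers by minimal open neighborhoods already suffice to test separation and gluing. Once that is in place, everything downstream (transporting derived functors across an equivalence, the bar resolution, and the $T_{0}$-reduction) is routine.
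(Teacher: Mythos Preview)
Your plan is correct and covers the same ground as the paper, but with two genuine methodological differences worth noting.

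For part (\ref{Th-Poset-vs-sheaves-Bar-construction}) you take the projective route: identify $\underleftarrow{\lim}\,F$ with $\Hom(\underline{k},F)$, resolve $\underline{k}$ by the bar complex of representables $P_{n}=\bigoplus_{i_{0}\to\cdots\to i_{n}}k[h_{i_{0}}]$, and apply $\Hom(-,F)$. The paper instead works on the injective side: it introduces the right adjoints $(\rho_{j})^{\ast}X(i)=\prod_{j\to i}X$ to the evaluation functors, builds an explicit contracting homotopy showing $C^{\bullet}(\mathbf{I},(\rho_{j})^{\ast}X)$ is acyclic, deduces that injective functors are $C^{\bullet}$-acyclic (as retracts of products of the $(\rho_{j})^{\ast}X$), and finishes with the two spectral sequences of the bicomplex $C^{s}(\mathbf{I},J^{t})$ for an injective resolution $J^{\bullet}$ of $F$. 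Your argument is shorter and more conceptual; the paper's argument has the advantage that its acyclicity calculation is reused verbatim (via the pairing $\langle\bullet,T\rangle$) in the cosheaf theorem that follows, where projectives are unavailable and one must work with quasi-projectives.

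For part (\ref{Th-Poset-vs-sheaves-Quotient-space})(d) you observe directly that $\nu_{X}\colon\mathbf{X}\to\widehat{\mathbf{X}}$ is an equivalence of categories (full, faithful since hom-sets are singletons or empty, essentially surjective), so $(\nu_{X}^{op})^{\ast}$ preserves $\underleftarrow{\lim}^{\bullet}$. The paper instead threads through sheaf cohomology: $\underleftarrow{\lim}^{\bullet}F\simeq H^{\bullet}(X,\mathcal{F})\simeq H^{\bullet}(\widehat{X},\widehat{\mathcal{F}})\simeq\underleftarrow{\lim}^{\bullet}\widehat{F}$. Your route is more direct.

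Your identified ``main obstacle'' in part (\ref{Th-Poset-vs-sheaves-categories-equivalent}) --- that the sheaf equalizer $\ker\bigl(\prod_{x}\mathcal{G}(U_{x})\rightrightarrows\prod_{x,y}\mathcal{G}(U_{x}\cap U_{y})\bigr)$ coincides with the poset limit $\underleftarrow{\lim}_{x\in W}\mathcal{G}(U_{x})$ --- is exactly where the paper spends its effort: it shows the inclusion $\mathcal{F}(U)\subseteq\underleftarrow{\lim}(F|_{U})$ and then closes it by restricting the difference $\xi(x,y)$ to each $U_{z}\subseteq U_{x}\cap U_{y}$ and using separatedness. You should expect to supply that argument (or the equivalent cofinality statement for sieves) when writing this out in full.
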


\begin{proof}
~

\begin{enumerate}
\item Given a functor $F:\mathbf{X}^{op}\rightarrow \mathbf{Mod}\left(
k\right) $, let $\mathcal{F}$ be the following \textbf{pre}sheaf (in fact, a
sheaf, which will be seen later):%
\begin{equation*}
\mathcal{F}\left( U\right) 
{:=}%
\underleftarrow{\lim }~\left( F|_{U}\right) .
\end{equation*}%
Given a sheaf $\mathcal{G}$, let $\mathcal{G}_{x}$ be the stalk at $x$.
Since $U_{x}$ is a minimal open neighborhood of $x$,%
\begin{equation*}
\mathcal{G}_{x}%
\simeq%
\mathcal{G}\left( U_{x}\right) .
\end{equation*}%
If $x\leq y$, i.e., $U_{x}\subseteq U_{y}$, one has the homomorphism%
\begin{equation*}
\mathcal{G}\left( U_{x}\rightarrow U_{y}\right) :\mathcal{G}_{y}=\mathcal{G}%
\left( U_{y}\right) \longrightarrow \mathcal{G}\left( U_{x}\right) =\mathcal{%
G}_{x}.
\end{equation*}%
We have clearly obtained a functor%
\begin{equation*}
G:\mathbf{X}^{op}\rightarrow \mathbf{Mod}\left( k\right) .
\end{equation*}

\begin{enumerate}
\item $\mathcal{F}$ is a sheaf. Indeed, let $\mathcal{U}=\left( U_{i}\right)
_{i\in I}$, and%
\begin{equation*}
U=\dbigcup\limits_{i\in I}U_{i}.
\end{equation*}%
Consider the homomorphism%
\begin{eqnarray*}
\alpha _{\mathcal{U}} &:&\underleftarrow{\lim }~\left( F|_{U}\right) =%
\mathcal{F}\left( U\right) \longrightarrow \ker \left( \dprod\limits_{i\in I}%
\mathcal{F}\left( U_{i}\right) \rightrightarrows \dprod\limits_{i,j\in I}%
\mathcal{F}\left( U_{i}\cap U_{j}\right) \right) = \\
&=&\ker \left( \dprod\limits_{i\in I}\underleftarrow{\lim }~\left(
F|_{U_{i}}\right) \rightrightarrows \dprod\limits_{i,j\in I}\underleftarrow{%
\lim }~\left( F|_{U_{i}\cap U_{j}}\right) \right) .
\end{eqnarray*}%
\ 

\begin{enumerate}
\item $\alpha _{\mathcal{U}}$ is a \textbf{mono}morphism. Indeed, let $f\in 
\underleftarrow{\lim }~\left( F|_{U}\right) $ be given by the $U$-tuple $%
\left( f_{x}\in F\left( x\right) \right) _{x\in U}$ and assume that $\alpha
_{\mathcal{U}}\left( f\right) =0$. It follows that $f_{x}=0$ for any $i\in I$
and $x\in U_{i}$. Since $U=\cup _{i\in I}U_{i}$, $f_{x}=0$ for all $x\in U$,
and $f=0$.

\item $\alpha _{\mathcal{U}}$ is an \textbf{epi}morphism. Indeed, let%
\begin{equation*}
g\in \ker \left( \dprod\limits_{i\in I}\underleftarrow{\lim }~\left(
F|_{U_{i}}\right) \rightrightarrows \dprod\limits_{i,j\in I}\underleftarrow{%
\lim }~\left( F|_{U_{i}\cap U_{j}}\right) \right)
\end{equation*}%
be given by the $I$-tuple $\left( g_{i}\right) _{i\in I}$, where%
\begin{equation*}
g_{i}=\left( h_{i,x}\in F\left( x\right) \right) _{x\in U_{i}}\in 
\underleftarrow{\lim }~\left( F|_{U_{i}}\right) .
\end{equation*}%
It follows that $h_{i,x}=h_{j,x}$ for any $i,j\in I$ and $x\in U_{i}\cap
U_{j}$, therefore there is an $U$-tuple $\left( p_{x}\in F\left( x\right)
\right) _{x\in U}$ such that $p_{x}=h_{i,x}$ for any $i\in I$ and $x\in
U_{i} $. Let $x,y\in U$ and $x\leq y$. There exist $i,j\in I$ such that $%
x\in U_{i} $ and $y\in U_{j}$. Since $U_{j}$ is open (therefore contains all
predecessors of $y$), $x\in U_{i}\cap U_{j}$. The $U_{j}$-tuple%
\begin{equation*}
\left( p_{z}=h_{j,z}\right) _{z\in U_{j}}
\end{equation*}%
belongs to $\underleftarrow{\lim }~\left( F|_{U_{j}}\right) $, therefore%
\begin{equation*}
p_{x}=h_{j,x}=F\left( x\leq y\right) \left( h_{j,y}\right) =F\left( x\leq
y\right) \left( p_{y}\right) .
\end{equation*}%
It follows that%
\begin{equation*}
\left( p_{x}\right) _{x\in U}\in \underleftarrow{\lim }~\left( F|_{U}\right)
,
\end{equation*}%
and $\alpha _{\mathcal{U}}\left( \left( p_{x}\right) _{x\in U}\right) =g$.
\end{enumerate}

\item ~

\begin{enumerate}
\item The composition $F\mapsto \mathcal{F=G}\mapsto G$ is the identity (up
to an isomorphism), i.e., $G%
\simeq%
F$. Indeed, for any $x\in X$%
\begin{equation*}
G\left( x\right) =\mathcal{G}_{x}=\mathcal{F}_{x}=\mathcal{F}\left(
U_{x}\right) =\underleftarrow{\lim }~\left( F|_{U_{x}}\right) =F\left(
x\right)
\end{equation*}%
because $x$ is a \textbf{terminal} object of the category $\mathbf{U}_{x}$
(therefore an \textbf{initial} object of $\left( \mathbf{U}_{x}\right) ^{op}$%
), corresponding to the pre-ordered set $U_{x}$.

\item The composition $\mathcal{F}\mapsto F=G\mapsto \mathcal{G}$ is the
identity (up to an isomorphism), i.e., $\mathcal{G}%
\simeq%
\mathcal{F}$. Indeed, for any open subset $U\subseteq X$%
\begin{equation*}
\mathcal{G}\left( U\right) =\underleftarrow{\lim }~\left( G|_{U}\right) =%
\underleftarrow{\lim }~\left( F|_{U}\right) .
\end{equation*}%
The elements of $\underleftarrow{\lim }~\left( F|_{U}\right) $ are
represented by $U$-tuples%
\begin{equation*}
\left( f_{x}\in \mathcal{F}_{x}=\mathcal{F}\left( U_{x}\right) \right)
_{x\in U},
\end{equation*}%
satisfying%
\begin{equation*}
f_{x}=\mathcal{F}\left( U_{x}\rightarrow U_{y}\right) \left( f_{y}\right)
\end{equation*}%
for any pair $x\leq y$. Since $\mathcal{F}$ is a sheaf,%
\begin{equation*}
\alpha _{\mathcal{U}}:\mathcal{F}\left( U\right) \longrightarrow \ker \left(
\dprod\limits_{x\in U}\mathcal{F}\left( U_{x}\right) \rightrightarrows
\dprod\limits_{x,y\in U}\mathcal{F}\left( U_{x}\cap U_{y}\right) \right)
\end{equation*}%
is an isomorphism for the open covering $\mathcal{U=}\left( U_{x}\right)
_{x\in U}$ of $U$. It follows that the elements of $\mathcal{F}\left(
U\right) $ are represented by $U$-tuples%
\begin{equation*}
\left( g_{x}\in \mathcal{F}_{x}=\mathcal{F}\left( U_{x}\right) \right)
_{x\in U},
\end{equation*}%
satisfying%
\begin{equation*}
\mathcal{F}\left( U_{x}\cap U_{y}\rightarrow U_{x}\right) \left(
g_{x}\right) =\mathcal{F}\left( U_{x}\cap U_{y}\rightarrow U_{y}\right)
\left( g_{y}\right) \in \mathcal{F}\left( U_{x}\cap U_{y}\right)
\end{equation*}%
for any pair $x,y\in U$. We may therefore assume that both $\underleftarrow{%
\lim }~\left( F|_{U}\right) $ and $\mathcal{F}\left( U\right) $ lie in the
same set:%
\begin{equation}
\mathcal{F}\left( U\right) \subseteq \underleftarrow{\lim }~\left(
F|_{U}\right) \subseteq \dprod\limits_{x\in U}\mathcal{F}\left( U_{x}\right)
.  \label{Line-F(U)-vs-lim-F}
\end{equation}
\end{enumerate}

\item The first inclusion follows from the \textbf{stronger} conditions on $%
\left( g_{x}\right) $ than on $\left( f_{x}\right) $. The conditions on $%
\left( f_{x}\right) $ are valid only for pairs $x\leq y$, while the other
conditions are valid for \textbf{all} pairs $x,y\in U$. Consider $x,y\in X$
and define%
\begin{equation*}
\xi \left( x,y\right) =\mathcal{F}\left( U_{x}\cap U_{y}\rightarrow
U_{x}\right) \left( f_{x}\right) -\mathcal{F}\left( U_{x}\cap
U_{y}\rightarrow U_{y}\right) \left( f_{y}\right) \in \mathcal{F}\left(
U_{x}\cap U_{y}\right) .
\end{equation*}%
For each $z\in U_{x}\cap U_{y}$%
\begin{equation*}
\xi \left( x,y\right) |_{U_{z}}=\xi \left( x,z\right) -\xi \left( y,z\right)
=0-0=0\in \mathcal{F}\left( U_{z}\right) .
\end{equation*}%
Since $\mathcal{F}$ is a sheaf,%
\begin{equation*}
\beta :\mathcal{F}\left( U_{x}\cap U_{y}\right) \longrightarrow
\dprod\limits_{z\in U_{x}\cap U_{y}}\mathcal{F}\left( U_{z}\right)
\end{equation*}%
is a \textbf{mono}morphism. $\beta \left( \xi \left( x,y\right) \right) =0$
implies $\xi \left( x,y\right) =0$. It follows that $\left( f_{x}\right)
_{x\in U}\in \mathcal{F}\left( U\right) $,%
\begin{equation*}
\mathcal{G}\left( U\right) =\underleftarrow{\lim }~\left( F|_{U}\right) =%
\mathcal{F}\left( U\right)
\end{equation*}%
and $\mathcal{G}=\mathcal{F}$.
\end{enumerate}

\item Since the two abelian categories $\mathbf{Mod}\left( k\right) ^{%
\mathbf{X}^{op}}$ and $\mathbf{S}\left( X,\mathbf{Mod}\left( k\right)
\right) $ are equivalent, and the following diagram%
\begin{equation*}
\begin{diagram}[size=3.0em,textflow]
\mathbf{Mod}\left( k\right) ^{\mathbf{X}^{op}} & 
\TeXButton{ISO}{\simeq} & \mathbf{S}\left( X,\mathbf{Mod}\left( k\right) \right) \\ 
\dTo_{\underleftarrow{\lim }} &  & \dTo_{\Gamma \left( X,\bullet
\right) =\bullet \left( X\right)} \\ 
\mathbf{Mod}\left( k\right) & = & \mathbf{Mod}\left( k\right)\\
\end{diagram}%
\end{equation*}%
commutes up to an isomorphism of functors, the two right satellites%
\begin{eqnarray*}
\underleftarrow{\lim }^{\bullet }~ &=&R^{\bullet }~\underleftarrow{\lim }~:%
\mathbf{Mod}\left( k\right) ^{\mathbf{X}^{op}}\longrightarrow \mathbf{Mod}%
\left( k\right) , \\
H^{\bullet }\left( X,\bullet \right) &=&R^{\bullet }\Gamma \left( X,\bullet
\right) :\mathbf{S}\left( X,\mathbf{Mod}\left( k\right) \right)
\longrightarrow \mathbf{Mod}\left( k\right) ,
\end{eqnarray*}%
are naturally isomorphic.

\item ~

\begin{enumerate}
\item Apply the \textbf{cosimplicial replacement} $\Pi ^{\bullet }$ \cite[%
Section XI.5.2]{Bousfield-Kan-MR0365573} to the functor $F\in \mathbf{Mod}%
\left( k\right) ^{\mathbf{I}^{op}}$ and then consider its cohomotopy groups $%
\pi ^{\bullet }$ \cite[Section X.7.1]{Bousfield-Kan-MR0365573}. Then%
\begin{equation*}
\pi ^{\bullet }\left( \Pi ^{\bullet }F\right) 
\simeq%
H^{\bullet }\left( C^{\bullet }\left( \mathbf{I},F\right) \right) .
\end{equation*}%
Due to \cite[Proposition XI.6.2]{Bousfield-Kan-MR0365573}%
\begin{equation*}
\underleftarrow{\lim }^{\bullet }~F%
\simeq%
\pi ^{\bullet }\left( \Pi ^{\bullet }F\right) 
\simeq%
H^{\bullet }\left( C^{\bullet }\left( \mathbf{I},F\right) \right) ,
\end{equation*}%
and we are done.

\item However, since the proof of the cited proposition is only sketched,
let us give here the complete proof. Moreover, the proposition is valid for
functors $F\in \mathbf{Ab}^{\mathbf{I}^{op}}=\mathbf{Mod}\left( \mathbb{Z}%
\right) ^{\mathbf{I}^{op}}$, while we use functors $F\in \mathbf{Mod}\left(
k\right) ^{\mathbf{I}^{op}}$ for a \textbf{general} ring $k$. Notice that in 
\cite[Corollary 11.47]{Mardesic-Strong-shape-and-homology-MR1740831} the
desired statement is proved for $F\in \mathbf{Mod}\left( k\right) ^{\mathbf{I%
}^{op}}$ where $k$ is not necessarily commutative. However, $\mathbf{I}$
there is the category corresponding to a \textbf{directed poset} $\left(
I,\leq \right) $.

\item Let $i\in \mathbf{I}$ and%
\begin{equation*}
\rho _{i}%
{:=}%
\left( F\longmapsto F\left( i\right) \right) :\mathbf{Mod}\left( k\right) ^{%
\mathbf{I}^{op}}\longrightarrow \mathbf{Mod}\left( k\right) .
\end{equation*}%
Define 
\begin{equation*}
\left( \rho _{i}\right) ^{\ast }:\mathbf{Mod}\left( k\right) \longrightarrow 
\mathbf{Mod}\left( k\right) ^{\mathbf{I}^{op}}
\end{equation*}%
by the following. Let $X\in \mathbf{Mod}\left( k\right) $, $Y\in \mathbf{Mod}%
\left( k\right) ^{\mathbf{I}^{op}}$, and let%
\begin{equation*}
\left( \rho _{i}\right) ^{\ast }X%
{:=}%
Z\in \mathbf{Mod}\left( k\right) ^{\mathbf{I}^{op}}
\end{equation*}%
be the following functor:%
\begin{equation*}
Z\left( j\right) =\dprod\limits_{i\rightarrow j}X=\left\{ \text{tuples }%
\left( x_{i\rightarrow j}\in X\right) _{\left( i\rightarrow j\right) \in 
\Hom%
_{\mathbf{I}}\left( i,j\right) }\right\} .
\end{equation*}%
If $\left( k\rightarrow j\right) \in 
\Hom%
_{\mathbf{I}}\left( k,j\right) $, let%
\begin{equation*}
Z\left( k\rightarrow j\right) \left( \left( x_{i\rightarrow j}\right)
_{i\rightarrow j}\right) =\left( z_{i\rightarrow k}%
{:=}%
x_{i\rightarrow k\rightarrow j}\right) _{i\rightarrow k}\in Z\left( k\right)
.
\end{equation*}%
\newline
Clearly, $Z\in \mathbf{Mod}\left( k\right) ^{\mathbf{I}^{op}}$. We claim
that $\rho _{i}\dashv \left( \rho _{i}\right) ^{\ast }$.

\begin{enumerate}
\item Given%
\begin{equation*}
\alpha :Y\left( i\right) =\rho _{i}\left( Y\right) \longrightarrow X,
\end{equation*}%
let $\beta =\alpha ^{\ast }:Y\rightarrow Z$ be the following morphism:%
\begin{equation*}
\beta _{j}\left( y\in Y\left( j\right) \right) 
{:=}%
\left( x_{i\rightarrow j}%
{:=}%
\alpha \left( Y\left( i\rightarrow j\right) \left( y\right) \right) \right)
_{i\rightarrow j}\in Z\left( j\right) .
\end{equation*}%
$\beta $ is a morphism of functors. Indeed, the diagram%
\begin{equation*}
\begin{diagram}[size=3.0em,textflow]
Y\left( j\right) & \rTo^{\beta _{j}} & Z\left( j\right)
\\ 
\dTo^{Y\left( k\rightarrow j\right)} &  & \dTo_{Z\left(
k\rightarrow j\right)} \\ 
Y\left( k\right) & \lTo^{\beta _{k}} & Z\left( k\right) \\
\end{diagram}
%
\end{equation*}%
is commutative because%
\begin{eqnarray*}
&=&Z\left( k\rightarrow j\right) \circ \beta _{j}\left( y\in Y\left(
j\right) \right) =Z\left( k\rightarrow j\right) \left( \left(
x_{i\rightarrow j}=\alpha \left( Y\left( i\rightarrow j\right) \left(
y\right) \right) \right) _{i\rightarrow j}\right) \\
&=&\left( z_{i\rightarrow k}%
{:=}%
x_{i\rightarrow k\rightarrow j}=\alpha \left( Y\left( i\rightarrow
k\rightarrow j\right) \left( y\right) \right) \right) _{i\rightarrow k} \\
&=&\left( \alpha \left( Y\left( i\rightarrow k\right) \circ Y\left(
k\rightarrow j\right) \left( y\right) \right) \right) _{i\rightarrow
k}=\beta _{k}\left( Y\left( k\rightarrow j\right) \left( y\right) \right) .
\end{eqnarray*}

\item Conversely, given $\beta :Y\rightarrow Z$, since%
\begin{equation*}
\beta _{i}\left( y\in Y\left( i\right) \right) =\left( x_{i\rightarrow i}\in
X\right) _{i\rightarrow i},
\end{equation*}%
define%
\begin{eqnarray*}
\alpha &:&Y\left( i\right) \longrightarrow X, \\
\alpha \left( y\right) &=&\left( \beta _{i}\left( y\right) \right) _{\mathbf{%
1}_{i}:i\rightarrow i}.
\end{eqnarray*}

\item Clearly, the correspondences $\alpha \mapsto \beta $ and $\beta
\mapsto \alpha $ define the natural isomorphism%
\begin{eqnarray*}
&&%
\Hom%
_{\mathbf{Mod}\left( k\right) }\left( \rho _{i}\left( Y\right) =Y\left(
i\right) ,X\right) 
\simeq%
\Hom%
_{\mathbf{Mod}\left( k\right) ^{\mathbf{I}^{op}}}\left( Y,\left( \rho
_{i}\right) ^{\ast }X\right) \\
&:&\left( \mathbf{Mod}\left( k\right) ^{\mathbf{I}^{op}}\right) ^{op}\times 
\mathbf{Mod}\left( k\right) \longrightarrow \mathbf{Set,}
\end{eqnarray*}%
and $\rho _{i}\dashv \left( \rho _{i}\right) ^{\ast }$ as desired.
\end{enumerate}

\item Since $\rho _{i}$ is exact, $\left( \rho _{i}\right) ^{\ast }$
converts injective modules into injective functors.

\item The construction of $\left( \rho _{i}\right) ^{\ast }$ shows that it
is exact. Therefore $\rho _{i}$ converts projective functors into projective
modules.

\item For any $j\in \mathbf{I}$%
\begin{equation*}
H^{s}\left( C^{\bullet }\left( \mathbf{I},\left( \rho _{j}\right) ^{\ast
}X\right) \right) =\left\{ 
\begin{array}{ccc}
0 & \text{if} & s>0 \\ 
X & \text{if} & s=0%
\end{array}%
\right.
\end{equation*}%
Indeed, the elements of $C^{n}\left( \mathbf{I},\left( \rho _{j}\right)
^{\ast }X\right) $ can be represented by tuples%
\begin{equation*}
\left( \varphi \left( j\rightarrow i_{0}\rightarrow \dots \rightarrow
i_{n}\right) \in X\right) _{j\rightarrow i_{0}\rightarrow \dots \rightarrow
i_{n}}.
\end{equation*}%
Moreover,%
\begin{eqnarray*}
&&\left( d^{n}\varphi \right) \left( j\rightarrow i_{0}\rightarrow \dots
\rightarrow i_{n+1}\right) \\
&=&\dsum\limits_{s=0}^{n+1}\left( -1\right) ^{s}\varphi \left( j\rightarrow
i_{0}\rightarrow \dots \rightarrow \widehat{i_{s}}\rightarrow \dots
\rightarrow i_{n+1}\right) .
\end{eqnarray*}%
Consider the augmented cochain complex%
\begin{equation*}
0\longrightarrow \left( C^{-1}\left( \mathbf{I},\left( \rho _{j}\right)
^{\ast }X\right) 
{:=}%
X\right) \overset{d^{-1}}{\longrightarrow }C^{0}\left( \mathbf{I},\left(
\rho _{j}\right) ^{\ast }X\right) \longrightarrow \dots \longrightarrow
C^{n}\left( \mathbf{I},\left( \rho _{j}\right) ^{\ast }X\right)
\longrightarrow \dots
\end{equation*}%
where $d^{-1}$ sends $x\in X$ to the constant function%
\begin{equation*}
\left( \varphi \left( i_{0}\right) \left( j\rightarrow i_{0}\right)
=x\right) _{i_{0}\in \mathbf{I}}.
\end{equation*}%
We claim that it is cochain contractible. Let us build the contraction $%
S^{\bullet }$:

\begin{enumerate}
\item For $n\geq 1$, define $S^{n-1}:C^{n}\left( \mathbf{I},\left( \rho
_{j}\right) ^{\ast }X\right) \longrightarrow C^{n-1}\left( \mathbf{I},\left(
\rho _{j}\right) ^{\ast }X\right) $. Given%
\begin{equation*}
\left( \varphi \left( j\rightarrow i_{0}\rightarrow \dots \rightarrow
i_{n}\right) \in X\right) _{j\rightarrow i_{0}\rightarrow \dots \rightarrow
i_{n}}\in C^{n}\left( \mathbf{I},\left( \rho _{j}\right) ^{\ast }X\right) ,
\end{equation*}%
let%
\begin{eqnarray*}
&&\left( S^{n-1}\varphi \right) \left( j\rightarrow i_{0}\rightarrow \dots
\rightarrow i_{n-1}\right) \\
&=&\varphi \left( j\overset{\mathbf{1}_{j}}{\rightarrow }j\rightarrow
i_{0}\rightarrow \dots \rightarrow i_{n-1}\right) .
\end{eqnarray*}

\item It follows that 
\begin{eqnarray*}
&&\left( d^{n-1}\circ S^{n-1}\varphi \right) \left( j\rightarrow
i_{0}\rightarrow \dots \rightarrow i_{n}\right) \\
&=&\varphi \left( j\rightarrow i_{0}\rightarrow \dots \rightarrow
i_{n}\right) + \\
&&-\dsum\limits_{s=1}^{n}\left( -1\right) ^{s}\varphi \left( j\overset{%
\mathbf{1}_{j}}{\rightarrow }j\rightarrow i_{0}\rightarrow \dots \rightarrow 
\widehat{i_{s}}\rightarrow \dots \rightarrow i_{n}\right)
\end{eqnarray*}%
and%
\begin{eqnarray*}
&&\left( S^{n}\circ d^{n}\varphi \right) \left( j\rightarrow
i_{0}\rightarrow \dots \rightarrow i_{n}\right) \\
&=&\dsum\limits_{s=1}^{n}\left( -1\right) ^{s}\varphi \left( j\overset{%
\mathbf{1}_{j}}{\rightarrow }j\rightarrow i_{0}\rightarrow \dots \rightarrow 
\widehat{i_{s}}\rightarrow \dots \rightarrow i_{n}\right) .
\end{eqnarray*}%
Therefore,%
\begin{equation*}
d^{n-1}\circ S^{n-1}+S^{n}\circ d^{n}=\mathbf{1}_{C^{n}\left( \mathbf{I}%
,\left( \rho _{j}\right) ^{\ast }X\right) }
\end{equation*}%
when $n\geq 1$.

\item Let $n=0$. Given 
\begin{equation*}
\left( \varphi \left( j\rightarrow i_{0}\right) \in X\right) _{i_{0}\in 
\mathbf{I}}\in C^{0}\left( \mathbf{I},\left( \rho _{j}\right) ^{\ast
}X\right) ,
\end{equation*}%
let%
\begin{eqnarray*}
&&S^{-1}\varphi 
{:=}%
\varphi \left( \mathbf{1}_{j}:j\rightarrow j\right) , \\
S^{-1} &:&C^{0}\left( \mathbf{I},\left( \rho _{j}\right) ^{\ast }X\right)
\longrightarrow C^{-1}\left( \mathbf{I},\left( \rho _{j}\right) ^{\ast
}X\right) =X.
\end{eqnarray*}%
Then%
\begin{eqnarray*}
\left( S^{0}\circ d^{0}\varphi \right) \left( j\rightarrow i_{0}\right)
&=&\left( d^{0}\varphi \right) \left( j\overset{\mathbf{1}_{j}}{\rightarrow }%
j\rightarrow i_{0}\right) =\varphi \left( j\rightarrow i_{0}\right) -\varphi
\left( j\overset{\mathbf{1}_{j}}{\rightarrow }j\right) , \\
\left( d^{-1}\circ S^{-1}\varphi \right) \left( j\rightarrow i_{0}\right)
&=&\varphi \left( \mathbf{1}_{j}:j\rightarrow j\right) =\varphi \left( j%
\overset{\mathbf{1}_{j}}{\rightarrow }j\right) , \\
S^{0}\circ d^{0}+d^{-1}\circ S^{-1} &=&\mathbf{1}_{C^{0}\left( \mathbf{I}%
,\left( \rho _{j}\right) ^{\ast }X\right) }.
\end{eqnarray*}

\item Finally%
\begin{equation*}
S^{-1}\circ d^{-1}=\mathbf{1}_{X}=\mathbf{1}_{C^{-1}\left( \mathbf{I},\left(
\rho _{j}\right) ^{\ast }X\right) },
\end{equation*}
and the contraction is built.
\end{enumerate}

\item Let now $X\in \mathbf{Mod}\left( k\right) ^{\mathbf{I}^{op}}$, and
define%
\begin{equation*}
\rho ^{\ast }\left( X\right) 
{:=}%
\dprod\limits_{i\in \mathbf{I}}\left( \rho _{i}\right) ^{\ast }X\left(
i\right)
\end{equation*}%
It follows easily that%
\begin{equation*}
H^{s}\left( C^{\bullet }\left( \mathbf{I},\rho ^{\ast }X\right) \right)
=\left\{ 
\begin{array}{ccc}
0 & \text{if} & s>0 \\ 
\dprod\limits_{i\in \mathbf{I}}X\left( i\right) & \text{if} & s=0%
\end{array}%
\right.
\end{equation*}

\item There is a canonical embedding%
\begin{equation*}
\gamma _{X}:X\longrightarrow \rho ^{\ast }\left( X\right)
\end{equation*}%
given by%
\begin{eqnarray*}
\gamma _{X}\left( j\right) &:&X\left( j\right) \longrightarrow \rho ^{\ast
}\left( X\right) \left( j\right) =\dprod\limits_{i\in \mathbf{I}}\left(
\left( \rho _{i}\right) ^{\ast }X\left( i\right) \right) \left( j\right)
=\dprod\limits_{i\in \mathbf{I}}\dprod\limits_{i\rightarrow j}X\left(
i\right) = \\
&=&\left\{ \text{tuples }\left( x_{i\rightarrow j}\in X\left( i\right)
\right) _{\left( i\rightarrow j\right) \in 
\Hom%
_{\mathbf{I}}\left( i,j\right) }\right\} , \\
&&\gamma _{X}\left( j\right) \left( x\in X\left( j\right) \right) 
{:=}%
\left( x_{i\rightarrow j}%
{:=}%
X\left( i\rightarrow j\right) \left( x\right) \right) _{i\rightarrow j}.
\end{eqnarray*}%
$\gamma _{X}$ is clearly a monomorphism. Indeed, if $x,y\in X\left( j\right) 
$ and $x\neq y$, then%
\begin{equation*}
X\left( i\overset{\mathbf{1}_{i}}{\rightarrow }i\right) \left( y\right)
=y\neq x=X\left( i\overset{\mathbf{1}_{i}}{\rightarrow }i\right) \left(
x\right) ,
\end{equation*}%
thus, $\gamma _{X}\left( j\right) \left( x\right) \neq \gamma _{X}\left(
j\right) \left( y\right) $.

\item If $X\in \mathbf{Mod}\left( k\right) ^{\mathbf{I}^{op}}$ is an \textbf{%
injective} functor, then%
\begin{equation*}
H^{s}\left( C^{\bullet }\left( \mathbf{I},X\right) \right) =\left\{ 
\begin{array}{ccc}
0 & \text{if} & s>0 \\ 
\underleftarrow{\lim }~X & \text{if} & s=0%
\end{array}%
\right.
\end{equation*}%
Indeed, let 
\begin{equation*}
\gamma _{X}:X\longrightarrow \rho ^{\ast }\left( X\right)
\end{equation*}%
be the canonical embedding. Since $X$ is injective, $\gamma _{X}$ is a split
monomorphism, therefore%
\begin{equation*}
\rho ^{\ast }\left( X\right) 
\simeq%
X\oplus 
\coker%
\gamma _{X}.
\end{equation*}%
For any $s>0$%
\begin{equation*}
0=H^{s}\left( C^{\bullet }\left( \mathbf{I},\rho ^{\ast }\left( X\right)
\right) \right) 
\simeq%
H^{s}\left( C^{\bullet }\left( \mathbf{I},X\right) \right) \oplus
H^{s}\left( C^{\bullet }\left( \mathbf{I},%
\coker%
\gamma _{X}\right) \right)
\end{equation*}%
and%
\begin{equation*}
H^{s}\left( C^{\bullet }\left( \mathbf{I},X\right) \right) =0.
\end{equation*}%
On the other hand,%
\begin{equation*}
H^{0}\left( C^{\bullet }\left( \mathbf{I},X\right) \right) =\ker \left(
d^{0}:C^{0}\left( \mathbf{I},X\right) \longrightarrow C^{1}\left( \mathbf{I}%
,X\right) \right) .
\end{equation*}%
It is easy to check that the latter module is isomorphic to $\underleftarrow{%
\lim }~X$.

\item Finally, given a functor $X\in \mathbf{Mod}\left( k\right) ^{\mathbf{I}%
^{op}}$, choose an injective coresolution%
\begin{equation*}
0\longrightarrow X\longrightarrow J^{0}\longrightarrow J^{1}\longrightarrow
\dots \longrightarrow J^{n}\longrightarrow \dots
\end{equation*}%
and build the cochain bicomplex%
\begin{equation*}
B^{s,t}%
{:=}%
C^{s}\left( \mathbf{I},J^{t}\right) .
\end{equation*}%
Consider the two spectral sequences for $B^{\bullet \bullet }$:%
\begin{eqnarray*}
^{hor}E_{2}^{s,t} &=&~^{ver}H^{t}~^{hor}H^{s}\left( B^{\bullet \bullet
}\right) \implies H^{s+t}\left( Tot\left( B^{\bullet \bullet }\right)
\right) , \\
^{ver}E_{2}^{s,t} &=&~^{hor}H^{s}~^{ver}H^{t}\left( B^{\bullet \bullet
}\right) \implies H^{s+t}\left( Tot\left( B^{\bullet \bullet }\right)
\right) .
\end{eqnarray*}%
It follows that%
\begin{eqnarray*}
&&~^{hor}H^{s}\left( B^{\bullet \bullet }\right) 
\simeq%
\left\{ 
\begin{array}{ccc}
0 & \text{if} & s>0 \\ 
\underleftarrow{\lim }~J^{t} & \text{if} & s=0%
\end{array}%
\right. , \\
&&^{hor}E_{2}^{s,t}%
\simeq%
\left\{ 
\begin{array}{ccc}
0 & \text{if} & s>0 \\ 
H^{t}\left( \underleftarrow{\lim }~J^{\bullet }\right) 
\simeq%
\underleftarrow{\lim }^{t}~X & \text{if} & s=0%
\end{array}%
\right.
\end{eqnarray*}%
$^{hor}E_{2}^{s,t}$ degenerates from $E_{2}$ on, and%
\begin{equation*}
H^{n}\left( Tot\left( B^{\bullet \bullet }\right) \right) 
\simeq%
\underleftarrow{\lim }^{n}~X.
\end{equation*}%
On the other hand,%
\begin{eqnarray*}
&&~^{ver}H^{t}\left( B^{\bullet \bullet }\right) 
\simeq%
\left\{ 
\begin{array}{ccc}
0 & \text{if} & t>0 \\ 
C^{s}\left( \mathbf{I},X\right) & \text{if} & t=0%
\end{array}%
\right. , \\
&&^{ver}E_{2}^{s,t}%
\simeq%
\left\{ 
\begin{array}{ccc}
0 & \text{if} & t>0 \\ 
H^{s}\left( C^{\bullet }\left( \mathbf{I},X\right) \right) & \text{if} & t=0%
\end{array}%
\right.
\end{eqnarray*}%
It follows that%
\begin{equation*}
H^{n}\left( C^{\bullet }\left( \mathbf{I},X\right) \right) 
\simeq%
H^{n}\left( Tot\left( B^{\bullet \bullet }\right) \right) 
\simeq%
\underleftarrow{\lim }^{n}~X.
\end{equation*}
\end{enumerate}

\item Denote $\nu 
{:=}%
\nu _{X}$.

\begin{enumerate}
\item For any $x\in X$, $U_{x}=\nu ^{-1}\left( U_{\nu \left( x\right)
}\right) $. We have therefore a bijection between the bases of $X$ and $%
\widehat{X}$. This bijection extends clearly to the bijection between
topologies on $X$ and $\widehat{X}$.

\item Given a sheaf $\mathcal{F}$ on $X$, let $\widehat{\mathcal{F}}$ be the
following sheaf on $\widehat{X}$:%
\begin{equation*}
\widehat{\mathcal{F}}\left( U\right) 
{:=}%
\mathcal{F}\left( \nu ^{-1}\left( U\right) \right) .
\end{equation*}%
The assignment $\mathcal{F}\mapsto \widehat{\mathcal{F}}$ is the desired
equivalence between the abelian categories $\mathbf{S}\left( X,\mathbf{Mod}%
\left( k\right) \right) $ and $\mathbf{S}\left( \widehat{X},\mathbf{Mod}%
\left( k\right) \right) $.

\item Clearly $\Gamma \left( X,\mathcal{F}\right) 
\simeq%
\Gamma \left( \widehat{X},\widehat{\mathcal{F}}\right) $, and the above
equivalence of categories establishes the natural isomorphism%
\begin{equation*}
H^{\bullet }\left( X,\mathcal{F}\right) 
\simeq%
R^{\bullet }\Gamma \left( X,\mathcal{F}\right) 
\simeq%
R^{\bullet }\Gamma \left( \widehat{X},\widehat{\mathcal{F}}\right) 
\simeq%
H^{\bullet }\left( \widehat{X},\widehat{\mathcal{F}}\right) .
\end{equation*}

\item Let $F%
{:=}%
\widehat{F}\circ \left( \nu _{X}\right) ^{op}$, and let $\mathcal{F}$ and $%
\widehat{\mathcal{F}}$ be the corresponding sheaves. Then%
\begin{equation*}
\underleftarrow{\lim }^{\bullet }~F%
\simeq%
H^{\bullet }\left( X,\mathcal{F}\right) 
\simeq%
H^{\bullet }\left( \widehat{X},\widehat{\mathcal{F}}\right) 
\simeq%
\underleftarrow{\lim }^{\bullet }~\widehat{F}.
\end{equation*}
\end{enumerate}
\end{enumerate}
\end{proof}

\subsection{Cosheaves on $A$-spaces}

\begin{theorem}
\label{Th-Poset-vs-cosheaves}Let $\left( X,\leq \right) $ be a pre-ordered
set. Denote by the same letter $X$ the corresponding $A$-space, and by $%
\mathbf{X}$ the corresponding category, see Notation \ref{Not-Categories}(%
\ref{Not-Categories-Category-for-poset}).

\begin{enumerate}
\item \label{Th-Poset-vs-cosheaves-categories-equivalent}There is an
equivalence between the category $\mathbf{Pro}\left( k\right) ^{\mathbf{X}}$
of functors $\mathbf{X}\rightarrow \mathbf{Pro}\left( k\right) $ and the
category $\mathbf{CS}\left( X,\mathbf{Pro}\left( k\right) \right) $ of 
\textbf{co}sheaves on $X$.

\item \label{Th-Poset-vs-cosheaves-higher-colimits}There is a natural (in $X$
and $F$) isomorphism%
\begin{equation*}
\underrightarrow{\lim }_{\bullet }~F%
\simeq%
H_{\bullet }\left( X,\mathcal{F}\right)
\end{equation*}%
where $\mathcal{F}$ is the \textbf{co}sheaf corresponding to the functor $F$.

\item \label{Th-Poset-vs-cosheaves-Bar-construction}There is a description
(the \textbf{bar-construction}) of $\underrightarrow{\lim }^{\bullet }~F$
for $F\in \mathbf{Pro}\left( k\right) ^{\mathbf{I}}$ when $\left( I,\leq
\right) $ is a poset, or a pre-ordered set, or even when $\mathbf{I}$ is an 
\textbf{arbitrary} small category:%
\begin{equation*}
\underrightarrow{\lim }_{\bullet }~F=H_{\bullet }\left( C_{\bullet }\left( 
\mathbf{I},F\right) \right)
\end{equation*}%
where%
\begin{equation*}
C_{n}\left( \mathbf{I},F\right) =\dbigoplus\limits_{i_{0}\rightarrow
i_{1}\rightarrow \dots \rightarrow i_{n}}F\left( i_{0}\right)
\end{equation*}%
and $d_{n}:C_{n+1}\left( \mathbf{I},F\right) \rightarrow C_{n}\left( \mathbf{%
I},F\right) $ is given by%
\begin{equation*}
d_{n}\circ \rho _{i_{0}\rightarrow i_{1}\rightarrow \dots \rightarrow
i_{n+1}}=\rho _{i_{1}\rightarrow i_{2}\rightarrow \dots \rightarrow
i_{n+1}}\circ F\left( i_{0}\rightarrow i_{1}\right)
+\dsum\limits_{k=1}^{n+1}\left( -1\right) ^{k}\rho _{i_{0}\rightarrow \dots
\rightarrow \widehat{i_{k}}\rightarrow \dots \rightarrow i_{n+1}},
\end{equation*}%
where%
\begin{equation*}
\rho _{i_{0}\rightarrow i_{1}\rightarrow \dots \rightarrow i_{m}}:F\left(
i_{0}\right) \longrightarrow \dbigoplus\limits_{i_{0}\rightarrow
i_{1}\rightarrow \dots \rightarrow i_{m}}F\left( i_{0}\right)
\end{equation*}%
are the natural embeddings. Compare to \cite[Definition B.2.4]%
{Prasolov-Cosheaves-2021-MR4347662} and to the proof of Theorem \ref%
{Th-Poset-vs-sheaves}(\ref{Th-Poset-vs-sheaves-Bar-construction}).

\item Define the binary relation $R$ on $X$ by%
\begin{equation*}
xRy\iff \left( x\leq y\right) \&\left( y\leq x\right) ,
\end{equation*}%
let $\widehat{X}=X/R$ be the corresponding poset, and let%
\begin{equation*}
\nu _{X}:X\longrightarrow \widehat{X}
\end{equation*}%
be the projection. Denote by the same letter $\widehat{X}$ the corresponding 
$T_{0}A$-space, and by $\widehat{\mathbf{X}}$ the corresponding category.
Then:

\begin{enumerate}
\item $\left( \nu _{X}\right) ^{-1}$ establishes a categorical equivalence $%
\mathcal{F}\longleftrightarrow \widehat{\mathcal{F}}$ between \textbf{co}%
sheaves on $X$ and \textbf{co}sheaves on $\widehat{X}$.

\item The above bijection preserves $H_{\bullet }\left( \bullet ,\bullet
\right) $, i.e., there is a natural (in $X$ and $\mathcal{F}$) isomorphism%
\begin{equation*}
H_{\bullet }\left( \widehat{X},\widehat{\mathcal{F}}\right) 
\simeq%
H_{\bullet }\left( X,\mathcal{F}\right) .
\end{equation*}

\item For any $\widehat{F}:\widehat{\mathbf{X}}\rightarrow \mathbf{Mod}%
\left( k\right) $%
\begin{equation*}
\left( \nu _{X}\right) ^{\ast }:\underrightarrow{\lim }_{\bullet }~\widehat{F%
}\longrightarrow \underrightarrow{\lim }_{\bullet }~\left( \widehat{F}\circ
\nu _{X}\right)
\end{equation*}%
is an isomorphism.
\end{enumerate}
\end{enumerate}
\end{theorem}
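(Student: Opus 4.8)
The plan is to obtain all four assertions as the duals of the corresponding statements in Theorem~\ref{Th-Poset-vs-sheaves}, via the pairing $\left\langle \bullet ,T\right\rangle $ and the duality principle of Proposition~\ref{Prop-Duality}, exactly in the spirit of the sheaf/cosheaf translations used throughout Sections~\ref{Sec-Site-morphisms} and~\ref{Sec-topological-spaces}. To a functor $F\colon \mathbf{X}\rightarrow \mathbf{Pro}\left( k\right) $ I would attach the candidate cosheaf $\mathcal{F}\left( U\right) :=\underrightarrow{\lim }~\left( F|_{U}\right) $ (the colimit exists because $\mathbf{Pro}\left( k\right) $ is cocomplete); for the minimal open neighbourhood $U_{x}$ of $x$ one then has $\mathcal{F}^{x}=\mathcal{F}\left( U_{x}\right) $. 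In the reverse direction the costalk functor $\mathcal{G}\longmapsto \left( \mathcal{G}^{x}\right) _{x\in X}$ takes values in $\mathbf{Pro}\left( k\right) ^{\mathbf{X}}$ and not $\mathbf{Pro}\left( k\right) ^{\mathbf{X}^{op}}$: for $x\leq y$ the inclusion $U_{x}\subseteq U_{y}$ induces a structure morphism $\mathcal{G}^{x}=\mathcal{G}\left( U_{x}\right) \rightarrow \mathcal{G}\left( U_{y}\right) =\mathcal{G}^{y}$, covariant in $x$, which is precisely where the variance flips relative to the sheaf case.

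For item~(\ref{Th-Poset-vs-cosheaves-categories-equivalent}) I would apply $\left\langle \bullet ,T\right\rangle $ for an arbitrary injective $k$-module $T$. Since $\Hom _{\mathbf{Pro}\left( k\right) }\left( \bullet ,T\right) $ turns colimits into limits, $\left\langle \mathcal{F}\left( U\right) ,T\right\rangle \simeq \underleftarrow{\lim }~\left\langle F|_{U},T\right\rangle $, which by Theorem~\ref{Th-Poset-vs-sheaves}(\ref{Th-Poset-vs-sheaves-categories-equivalent}) is the sheaf attached to $\left\langle F,T\right\rangle \in \mathbf{Mod}\left( k\right) ^{\mathbf{X}^{op}}$; being a sheaf for every injective $T$, $\mathcal{F}$ is a cosheaf by Proposition~\ref{Prop-Duality}. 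The two composites are identities for the same reasons as in the sheaf proof: $\mathcal{F}^{x}=\mathcal{F}\left( U_{x}\right) =\underrightarrow{\lim }~\left( F|_{U_{x}}\right) =F\left( x\right) $ because $x$ is a terminal object of $\mathbf{U}_{x}$, and the composite $\mathcal{G}\mapsto \mathcal{F}\mapsto \mathcal{G}$ is an isomorphism after applying $\left\langle \bullet ,T\right\rangle $, using the sheaf statement together with Proposition~\ref{Prop-(Co)stalks-pro-modules}(\ref{Prop-(Co)stalks-pro-modules-morphisms}) (costalks detect isomorphisms of cosheaves). This yields the asserted equivalence of abelian categories.

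For item~(\ref{Th-Poset-vs-cosheaves-higher-colimits}) and the quotient-space assertion, note that under this equivalence the functor $\Gamma \left( X,\bullet \right) =\left( \mathcal{F}\mapsto \mathcal{F}\left( X\right) \right) $ corresponds to $\underrightarrow{\lim }\colon \mathbf{Pro}\left( k\right) ^{\mathbf{X}}\rightarrow \mathbf{Pro}\left( k\right) $, because $\mathcal{F}\left( X\right) =\underrightarrow{\lim }~\left( F|_{X}\right) =\underrightarrow{\lim }~F$. The left satellites $H_{\bullet }\left( X,\bullet \right) $ are computed from the $\Gamma \left( X,\bullet \right) $-projective generating class $Q\left( \mathbf{CS}\left( X,\mathbf{Pro}\left( k\right) \right) \right) $ of Theorem~\ref{Th-Three-F-projective}(1); an equivalence of abelian categories transports this to a class computing $L_{\bullet }\underrightarrow{\lim }=\underrightarrow{\lim }_{\bullet }$, whence $H_{\bullet }\left( X,\mathcal{F}\right) \simeq \underrightarrow{\lim }_{\bullet }~F$, which is~(\ref{Th-Poset-vs-cosheaves-higher-colimits}). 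For the quotient-space assertion I would set $\widehat{\mathcal{F}}\left( U\right) :=\mathcal{F}\left( \left( \nu _{X}\right) ^{-1}\left( U\right) \right) $ and again dualise: applying $\left\langle \bullet ,T\right\rangle $ turns the categorical equivalence $\mathcal{F}\leftrightarrow \widehat{\mathcal{F}}$, the invariance of $H_{\bullet }$, and the invariance of $\underrightarrow{\lim }_{\bullet }$ into exactly the statements of Theorem~\ref{Th-Poset-vs-sheaves}(\ref{Th-Poset-vs-sheaves-Quotient-space}), which are already proved, and Proposition~\ref{Prop-Duality} returns the cosheaf versions.

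Item~(\ref{Th-Poset-vs-cosheaves-Bar-construction}), the bar construction, is where I expect the real work, since here $\mathbf{I}$ is an \emph{arbitrary} small category and the cosheaf equivalence of~(1)--(2) is unavailable. Applying $\left\langle \bullet ,T\right\rangle $ to $C_{\bullet }\left( \mathbf{I},F\right) $ and using $\left\langle \dbigoplus _{i_{0}\rightarrow \dots \rightarrow i_{n}}F\left( i_{0}\right) ,T\right\rangle \simeq \dprod _{i_{0}\rightarrow \dots \rightarrow i_{n}}\left\langle F\left( i_{0}\right) ,T\right\rangle $, plus a routine verification that $d_{n}$ dualises to $d^{n}$, identifies $\left\langle C_{\bullet }\left( \mathbf{I},F\right) ,T\right\rangle $ with the cochain complex $C^{\bullet }\left( \mathbf{I},\left\langle F,T\right\rangle \right) $ of Theorem~\ref{Th-Poset-vs-sheaves}(\ref{Th-Poset-vs-sheaves-Bar-construction}), whose cohomology is $\underleftarrow{\lim }^{\bullet }~\left\langle F,T\right\rangle $. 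What remains, and is the main obstacle, is the satellite duality $\left\langle \underrightarrow{\lim }_{n}~F,T\right\rangle \simeq \underleftarrow{\lim }^{n}~\left\langle F,T\right\rangle $ for a general small $\mathbf{I}$: one must first know $\underrightarrow{\lim }_{\bullet }$ is defined on $\mathbf{Pro}\left( k\right) ^{\mathbf{I}}$, which I would get by showing that the left adjoints $\left( \rho _{i}\right) ^{\dag }$ of the evaluation functors $\rho _{i}=\left( F\mapsto F\left( i\right) \right) $ send quasi-projective pro-modules to quasi-projective functors --- their value at $j$ is the small coproduct $\dbigoplus _{\Hom _{\mathbf{I}}\left( i,j\right) }P$, and $\Hom _{\mathbf{Pro}\left( k\right) }\left( \dbigoplus P,\bullet \right) \simeq \dprod \Hom _{\mathbf{Pro}\left( k\right) }\left( P,\bullet \right) $ is exact on $\mathbf{Mod}\left( k\right) $ --- thereby dualising the construction of $\left( \rho _{i}\right) ^{\ast }$ in the proof of Theorem~\ref{Th-Poset-vs-sheaves}(\ref{Th-Poset-vs-sheaves-Bar-construction}); then a quasi-projective resolution of $F$ dualises under $\left\langle \bullet ,T\right\rangle $ to an injective coresolution of $\left\langle F,T\right\rangle $, so the two families of satellites are dual just as in Proposition~\ref{Prop-Various-left-satellites}. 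Granting this and the dual of the explicit contraction showing $C_{\bullet }\left( \mathbf{I},\bullet \right) $ is acyclic on quasi-projectives, Proposition~\ref{Prop-Duality} upgrades $H^{n}\left( C^{\bullet }\left( \mathbf{I},\left\langle F,T\right\rangle \right) \right) \simeq \underleftarrow{\lim }^{n}~\left\langle F,T\right\rangle $ to $H_{n}\left( C_{\bullet }\left( \mathbf{I},F\right) \right) \simeq \underrightarrow{\lim }_{n}~F$; and, as in the sheaf case, $k$ need not be quasi-noetherian or even commutative for this item.
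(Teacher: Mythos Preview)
Your treatment of items~(\ref{Th-Poset-vs-cosheaves-categories-equivalent}), (\ref{Th-Poset-vs-cosheaves-higher-colimits}) and the quotient-space assertion is essentially the paper's own: define $\mathcal{F}(U):=\underrightarrow{\lim}\,(F|_U)$, dualise via $\langle\bullet,T\rangle$ to land in Theorem~\ref{Th-Poset-vs-sheaves}, and observe that the resulting equivalence carries $\Gamma(X,\bullet)$ to $\underrightarrow{\lim}$ and quasi-projectives to quasi-projectives, so the left satellites agree.

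For item~(\ref{Th-Poset-vs-cosheaves-Bar-construction}) the paper does not argue by a direct satellite-duality identity $\langle\underrightarrow{\lim}_{n}F,T\rangle\simeq\underleftarrow{\lim}^{n}\langle F,T\rangle$ as you outline; instead it runs the bicomplex $B_{s,t}:=C_{s}(\mathbf{I},P_{t})$ built from a quasi-projective resolution $P_{\bullet}\twoheadrightarrow F$ through its two spectral sequences. The horizontal one degenerates because $\langle P_{t},T\rangle$ is injective, whence (by the sheaf bar-construction) $H^{s}\bigl(C^{\bullet}(\mathbf{I},\langle P_{t},T\rangle)\bigr)=0$ for $s>0$ and Proposition~\ref{Prop-Duality} gives $^{hor}H_{s}(B_{\bullet,t})=0$ for $s>0$; the vertical one degenerates because $C_{s}(\mathbf{I},\bullet)$ is a direct sum and $\mathbf{Pro}(k)$ satisfies $AB4$. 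The advantage of this packaging is that the comparison isomorphism $H_{n}(C_{\bullet}(\mathbf{I},F))\simeq H_{n}(Tot\,B)\simeq\underrightarrow{\lim}_{n}F$ comes for free from the total complex, whereas your route still owes a specific morphism in $\mathbf{Pro}(k)$ to which Proposition~\ref{Prop-Duality} can be applied, and your ``dual of the explicit contraction'' only yields acyclicity on the special functors $(\rho_{j})^{\dag}X$, not on arbitrary quasi-projectives (the retract argument from the injective side does not dualise, since quasi-projective is weaker than projective). The ingredients are the same; the bicomplex organises them more cleanly.

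One claim is wrong: your final sentence, that for this item $k$ need not be quasi-noetherian, contradicts your own argument and is not asserted by the paper. Both your route and the paper's hinge on the step that $\langle P_{t},T\rangle$ is an \emph{injective} object of $\mathbf{Mod}(k)^{\mathbf{I}^{op}}$ whenever $P_{t}$ is quasi-projective and $T$ is injective, and that is precisely Definition~\ref{Def-quasi-noetherian}. The waiver you are quoting applies only to the sheaf side, Theorem~\ref{Th-Poset-vs-sheaves}(\ref{Th-Poset-vs-sheaves-Bar-construction}), where one works entirely in $\mathbf{Mod}(k)$ with genuine injectives and no pro-objects; it does not transfer across the pairing.
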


\begin{proof}
Let $T\in \mathbf{Mod}\left( k\right) $ be an \textbf{arbitrary} injective
module.

\begin{enumerate}
\item Given a functor $F:\mathbf{X}\rightarrow \mathbf{Pro}\left( k\right) $%
, let $\mathcal{F}$ be the following \textbf{preco}sheaf (in fact, a \textbf{%
co}sheaf, which will be seen later):%
\begin{equation*}
\mathcal{F}\left( U\right) 
{:=}%
\underrightarrow{\lim }~\left( F|_{U}\right) .
\end{equation*}%
Given a \textbf{co}sheaf $\mathcal{G}$, let $\mathcal{G}^{x}$ be the \textbf{%
co}stalk at $x$. Since $U_{x}$ is a minimal open neighborhood of $x$,%
\begin{equation*}
\mathcal{G}^{x}%
\simeq%
\mathcal{G}\left( U_{x}\right) .
\end{equation*}%
If $x\leq y$, i.e., $U_{x}\subseteq U_{y}$, one has the homomorphism%
\begin{equation*}
\mathcal{G}\left( U_{x}\rightarrow U_{y}\right) :\mathcal{G}^{x}=\mathcal{G}%
\left( U_{x}\right) \longrightarrow \mathcal{G}\left( U_{y}\right) =\mathcal{%
G}^{y}.
\end{equation*}%
We have clearly obtained a functor%
\begin{equation*}
G:\mathbf{X}\rightarrow \mathbf{Mod}\left( k\right) .
\end{equation*}

\begin{enumerate}
\item Theorem \ref{Th-Poset-vs-sheaves} gives us an equivalence between $%
\mathbf{Mod}\left( k\right) ^{\mathbf{X}^{op}}$ and $\mathbf{S}\left( X,%
\mathbf{Mod}\left( k\right) \right) $. Under that equivalence the functor $%
\left\langle F,T\right\rangle :\mathbf{X}^{op}\rightarrow \mathbf{Mod}\left(
k\right) $ is mapped to the sheaf%
\begin{equation*}
U\longmapsto \underleftarrow{\lim }~\left( \left\langle F,T\right\rangle
|_{U}\right) 
\simeq%
\left\langle \underrightarrow{\lim }~\left( F|_{U}\right) ,T\right\rangle .
\end{equation*}%
Therefore $\mathcal{F}=\left( U\mapsto \underrightarrow{\lim }~\left(
F|_{U}\right) \right) $ is a \textbf{co}sheaf.

\item ~

\begin{enumerate}
\item The composition $F\mapsto \mathcal{F=G}\mapsto G$ is the identity (up
to an isomorphism), i.e., $G%
\simeq%
F$. Indeed, for any $x\in X$%
\begin{equation*}
G\left( x\right) =\mathcal{G}^{x}=\mathcal{F}^{x}=\mathcal{F}\left(
U_{x}\right) =\underrightarrow{\lim }~\left( F|_{U_{x}}\right) =F\left(
x\right)
\end{equation*}%
because $x$ is a \textbf{terminal} object of the category $\mathbf{U}_{x}$,
corresponding to the pre-ordered set $U_{x}$.

\item The composition $\mathcal{F}\mapsto F=G\mapsto \mathcal{G}$ is the
identity (up to an isomorphism), i.e., $\mathcal{G}%
\simeq%
\mathcal{F}$. Indeed, for any open subset $U\subseteq X$%
\begin{equation*}
\mathcal{G}\left( U\right) =\underrightarrow{\lim }~\left( G|_{U}\right) =%
\underrightarrow{\lim }~\left( F|_{U}\right) .
\end{equation*}%
Since $\mathcal{F}$ is a cosheaf,%
\begin{equation*}
\mathcal{F}\left( U\right) 
\simeq%
\coker%
\left( \dbigoplus\limits_{x,y\in U}\mathcal{F}\left( U_{x}\cap U_{y}\right)
\rightrightarrows \dbigoplus\limits_{y\in U}\mathcal{F}\left( U_{y}\right)
\right) ,
\end{equation*}%
while%
\begin{equation*}
\underrightarrow{\lim }~\left( F|_{U}\right) 
\simeq%
\coker%
\left( \dbigoplus\limits_{x\leq y\in U}\mathcal{F}\left( U_{x}=U_{x}\cap
U_{y}\right) \rightrightarrows \dbigoplus\limits_{y\in U}\mathcal{F}\left(
U_{y}\right) \right) .
\end{equation*}%
There is a commutative diagram%
\begin{equation*}
\begin{diagram}[size=3.0em,textflow]
\dbigoplus\limits_{x\leq y\in U}\mathcal{F}\left( U_{x}=U_{x}\cap
U_{y}\right) &\pile{\rTo\\ \rTo} & \dbigoplus\limits_{y\in U}\mathcal{F}\left( U_{y}\right) & \rOnto & \underrightarrow{\lim }~\left(
F|_{U}\right) \\ 
\dInto &  & \dTo_{=} &  & \dOnto \\ 
\dbigoplus\limits_{x,y\in U}\mathcal{F}\left( U_{x}\cap U_{y}\right) &  & 
\dbigoplus\limits_{y\in U}\mathcal{F}\left( U_{y}\right) & \rOnto 
& \mathcal{F}\left( U\right) \\
\end{diagram}
%
\end{equation*}%
Apply $\left\langle \bullet ,T\right\rangle $ and get the diagram (\ref%
{Line-F(U)-vs-lim-F}) from the proof of Theorem \ref{Th-Poset-vs-sheaves}%
\begin{eqnarray*}
&&\left( \left\langle \mathcal{F}\left( U\right) ,T\right\rangle
=\left\langle \mathcal{F},T\right\rangle \left( U\right) \right) \overset{%
\varphi }{\rightarrowtail }\left( \underleftarrow{\lim }~\left( \left\langle 
\mathcal{F},T\right\rangle |_{U}\right) =\left\langle \underrightarrow{\lim }%
~\left( F|_{U}\right) ,T\right\rangle \right) \\
&\rightarrowtail &\left( \dprod\limits_{x\in U}\left\langle \mathcal{F}%
,T\right\rangle \left( U_{x}\right) =\left\langle \dbigoplus\limits_{y\in U}%
\mathcal{F}\left( U_{y}\right) ,T\right\rangle \right) .
\end{eqnarray*}%
It was proved in Theorem \ref{Th-Poset-vs-sheaves} that $\varphi $ is an
isomorphism, therefore 
\begin{equation*}
\mathcal{G}\left( U\right) =\underrightarrow{\lim }~\left( F|_{U}\right) =%
\mathcal{F}\left( U\right)
\end{equation*}%
and $\mathcal{G}=\mathcal{F}$.
\end{enumerate}
\end{enumerate}

\item The two abelian categories $\mathbf{Pro}\left( k\right) ^{\mathbf{X}}$
and $\mathbf{CS}\left( X,\mathbf{Pro}\left( k\right) \right) $ are
equivalent, and the following diagram%
\begin{equation*}
\begin{diagram}[size=3.0em,textflow]
\mathbf{Pro}\left( k\right) ^{\mathbf{X}} & 
\TeXButton{ISO}{\simeq} & \mathbf{CS}\left( X,\mathbf{Pro}\left( k\right) \right) \\ 
\dTo^{\underrightarrow{\lim }} &  & \dTo_{\Gamma \left( X,\bullet
\right) =\bullet \left( X\right)} \\ 
\mathbf{Pro}\left( k\right) & = & \mathbf{Pro}\left( k\right)\\
\end{diagram}
%
\end{equation*}%
commutes up to an isomorphism of functors. Moreover, $F\in \mathbf{Pro}%
\left( k\right) ^{\mathbf{X}}$ is quasi-projective $\iff $ $\left\langle
F,T\right\rangle $ is injective for all injective $T$ $\iff $ $\left\langle 
\mathcal{F},T\right\rangle $ is injective for all injective $T$ $\iff $ $%
\mathcal{F}\in \mathbf{CS}\left( X,\mathbf{Pro}\left( k\right) \right) $ is
quasi-projective. Since the left satellites are defined using
quasi-projective resolutions, the two left satellites%
\begin{eqnarray*}
\underrightarrow{\lim }_{\bullet }~ &=&L_{\bullet }~\underrightarrow{\lim }~:%
\mathbf{Pro}\left( k\right) ^{\mathbf{X}}\longrightarrow \mathbf{Pro}\left(
k\right) , \\
H_{\bullet }\left( X,\bullet \right) &=&L_{\bullet }\Gamma \left( X,\bullet
\right) :\mathbf{CS}\left( X,\mathbf{Pro}\left( k\right) \right)
\longrightarrow \mathbf{Pro}\left( k\right) ,
\end{eqnarray*}%
are naturally isomorphic.

\item Let $F\in \mathbf{Pro}\left( k\right) ^{\mathbf{I}}$. Choose a
quasi-projective resolution%
\begin{equation*}
0\longleftarrow F\longleftarrow P_{0}\longleftarrow P_{1}\longleftarrow
\dots \longleftarrow P_{n}\longleftarrow P_{n+1}\longleftarrow \dots
\end{equation*}%
and build the chain bicomplex%
\begin{equation*}
B_{s,t}%
{:=}%
C_{s}\left( \mathbf{I},P_{t}\right) .
\end{equation*}%
Consider the two spectral sequences for $B_{\bullet \bullet }$:%
\begin{eqnarray*}
^{hor}E_{s,t}^{2} &=&~^{ver}H_{t}~^{hor}H_{s}\left( B_{\bullet \bullet
}\right) \implies H_{s+t}\left( Tot\left( B_{\bullet \bullet }\right)
\right) , \\
^{ver}E_{s,t}^{2} &=&~^{hor}H_{s}~^{ver}H_{t}\left( B_{\bullet \bullet
}\right) \implies H_{s+t}\left( Tot\left( B_{\bullet \bullet }\right)
\right) .
\end{eqnarray*}%
It follows that%
\begin{equation*}
0\longrightarrow \left\langle F,T\right\rangle \longrightarrow \left\langle
P_{0},T\right\rangle \longrightarrow \left\langle P_{1},T\right\rangle
\longrightarrow \dots \longrightarrow \left\langle P_{n},T\right\rangle
\longrightarrow \dots
\end{equation*}%
is an \textbf{injective} coresolution for the functor $\left\langle
F,T\right\rangle $, therefore, due to the proof of Theorem \ref%
{Th-Poset-vs-sheaves}(\ref{Th-Poset-vs-sheaves-Bar-construction}),%
\begin{eqnarray*}
&&\left\langle ~^{hor}H_{s}\left( B_{\bullet \bullet }\right)
,T\right\rangle 
\simeq%
\left\{ 
\begin{array}{ccc}
0 & \text{if} & s>0 \\ 
\underleftarrow{\lim }~\left\langle P_{t},T\right\rangle 
\simeq%
\left\langle \underrightarrow{\lim }~P_{t},T\right\rangle & \text{if} & s=0%
\end{array}%
\right. , \\
&&^{hor}E_{2}^{s,t}%
\simeq%
\left\{ 
\begin{array}{ccc}
0 & \text{if} & s>0 \\ 
H_{t}\left( \underrightarrow{\lim }~P_{\bullet }\right) 
\simeq%
\underrightarrow{\lim }_{t}~F & \text{if} & s=0%
\end{array}%
\right.
\end{eqnarray*}%
$^{hor}E_{s,t}^{2}$ degenerates from $E^{2}$ on, and%
\begin{equation*}
H_{n}\left( Tot\left( B_{\bullet \bullet }\right) \right) 
\simeq%
\underrightarrow{\lim }_{n}~F.
\end{equation*}%
On the other hand,%
\begin{eqnarray*}
&&~^{ver}H_{t}\left( B_{\bullet \bullet }\right) 
\simeq%
\left\{ 
\begin{array}{ccc}
0 & \text{if} & t>0 \\ 
C_{s}\left( \mathbf{I},F\right) & \text{if} & t=0%
\end{array}%
\right. , \\
&&^{ver}E_{s,t}^{2}%
\simeq%
\left\{ 
\begin{array}{ccc}
0 & \text{if} & t>0 \\ 
H_{s}\left( C_{\bullet }\left( \mathbf{I},F\right) \right) & \text{if} & t=0%
\end{array}%
\right.
\end{eqnarray*}%
It follows that%
\begin{equation*}
H_{n}\left( C_{\bullet }\left( \mathbf{I},F\right) \right) 
\simeq%
H_{n}\left( Tot\left( B_{\bullet \bullet }\right) \right) 
\simeq%
\underrightarrow{\lim }_{n}~F.
\end{equation*}

\item Analogously to Theorem \ref{Th-Poset-vs-sheaves}(\ref%
{Th-Poset-vs-sheaves-Quotient-space}).
\end{enumerate}
\end{proof}

\begin{theorem}
\label{Th-Cosheaf-homology-for-A-spaces}Let $X$ be an $A$-space, and $G\in 
\mathbf{Mod}\left( k\right) $. Then, naturally in $X$ and $G$,%
\begin{equation*}
H_{n}\left( X,G_{\#}\right) 
\simeq%
H_{n}^{sing}\left( \left\vert \mathcal{K}\left( X\right) \right\vert
,G\right) 
\simeq%
H_{n}^{sing}\left( X,G\right) ,
\end{equation*}%
where $H_{\bullet }^{sing}$ is the singular homology, $\mathcal{K}\left(
X\right) =N\mathbf{X}$ is the 
\u{C}ech
nerve of the category $\mathbf{X}$ corresponding to the pre-ordered set from
Proposition \ref{Prop-Facts-A-spaces}(\ref{Prop-Facts-A-spaces-Nerve}), and $%
\left\vert \bullet \right\vert $ is the geometric realization.
\end{theorem}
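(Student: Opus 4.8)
The plan is to reduce the statement to results already established for $T_0$ $A$-spaces and then to invoke McCord's theorem. First I would dispose of the $T_0$ reduction: by Proposition~\ref{Prop-Facts-A-spaces}(\ref{Prop-Facts-A-spaces-Weak-equivalence}) the projection $\nu_X:X\rightarrow\widehat{X}$ is a homotopy equivalence, so $H^{sing}_n(X,G)\simeq H^{sing}_n(\widehat{X},G)$ and, since homotopy equivalences induce isomorphisms on the nerves up to homotopy, $|\mathcal{K}(X)|\simeq|\mathcal{K}(\widehat{X})|$ as well. On the cosheaf side, Theorem~\ref{Th-Poset-vs-cosheaves}(4) gives $H_\bullet(X,G_{\#})\simeq H_\bullet(\widehat{X},\widehat{G_{\#}})$; one checks that the cosheaf $\widehat{G_{\#}}$ is again the constant cosheaf $G_{\#}$ on $\widehat{X}$, using Corollary~\ref{Cor-Simpler-description-G-sharp} (the partitions of $U$ and of $\nu_X^{-1}(U)$ correspond bijectively). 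So it suffices to treat the case where $X$ is a $T_0$ $A$-space, i.e. a genuine poset.

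For a poset $X$, Theorem~\ref{Th-Poset-vs-cosheaves}(\ref{Th-Poset-vs-cosheaves-higher-colimits}) identifies $H_\bullet(X,G_{\#})$ with $\underrightarrow{\lim}_\bullet F$, where $F:\mathbf{X}\rightarrow\mathbf{Pro}(k)$ is the functor corresponding to the constant cosheaf $G_{\#}$ under the equivalence of part~(\ref{Th-Poset-vs-cosheaves-categories-equivalent}). The key observation is that this $F$ is the \emph{constant} functor with value $G$: indeed its value at $x\in X$ is the costalk $(G_{\#})^x\simeq (G_X)^x$ (using Lemma~\ref{Lemma-Description-of-costalk} and Proposition~\ref{Prop-(Co)stalks}), and the costalk of the constant precosheaf $G_X$ at any point is $\underleftarrow{\lim}_{x\in U}G=G$; the transition maps are identities. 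Then by the bar-construction of Theorem~\ref{Th-Poset-vs-cosheaves}(\ref{Th-Poset-vs-cosheaves-Bar-construction}) applied to the constant functor $F\equiv G$,
\begin{equation*}
H_\bullet(X,G_{\#})\simeq\underrightarrow{\lim}_\bullet F=H_\bullet\bigl(C_\bullet(\mathbf{X},G)\bigr),\qquad C_n(\mathbf{X},G)=\bigoplus_{i_0\rightarrow i_1\rightarrow\dots\rightarrow i_n}G,
\end{equation*}
and since the indexing chains $i_0\rightarrow\dots\rightarrow i_n$ in the poset category $\mathbf{X}$ are exactly the (possibly degenerate) $n$-simplices of the nerve $N\mathbf{X}=\mathcal{K}(X)$, with the differential $d_n$ the alternating sum of face maps, this chain complex is precisely the (non-normalized) simplicial chain complex of $\mathcal{K}(X)$ with coefficients in $G$. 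Hence $H_\bullet(X,G_{\#})\simeq H_\bullet(\mathcal{K}(X),G)\simeq H^{sing}_\bullet(|\mathcal{K}(X)|,G)$.

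Finally I would invoke Proposition~\ref{Prop-Facts-A-spaces}(\ref{Prop-Facts-A-spaces-Nerve}): the McCord map $f_X:|\mathcal{K}(X)|\rightarrow X$ is a weak homotopy equivalence, so it induces isomorphisms $H^{sing}_n(|\mathcal{K}(X)|,G)\simeq H^{sing}_n(X,G)$, giving the second isomorphism in the statement. Naturality in $X$ and $G$ follows because every identification used — the equivalence of categories in Theorem~\ref{Th-Poset-vs-cosheaves}, the computation of the costalk, the bar-construction, and the McCord map — is natural. I expect the main obstacle to be the bookkeeping in the first paragraph: verifying carefully that under the equivalence $\mathbf{Pro}(k)^{\mathbf X}\simeq\mathbf{CS}(X,\mathbf{Pro}(k))$ the constant cosheaf $G_{\#}$ really corresponds to the \emph{constant} functor (one must compute the costalks \emph{and} check all transition maps are identities, not merely isomorphisms), and that the $\widehat{(\,\cdot\,)}$ operation from Theorem~\ref{Th-Poset-vs-cosheaves}(4) sends $G_{\#}$ on $X$ to $G_{\#}$ on $\widehat X$; once those are pinned down, the remaining steps are formal identifications of chain complexes and an appeal to McCord's theorem.
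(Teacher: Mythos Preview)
Your proposal is correct and follows essentially the same route as the paper: identify the functor corresponding to $G_{\#}$ as the constant functor $G$ via costalks, apply the bar-construction of Theorem~\ref{Th-Poset-vs-cosheaves}(\ref{Th-Poset-vs-cosheaves-Bar-construction}) to recognize $C_\bullet(\mathbf{X},G)$ as the simplicial chain complex of $N\mathbf{X}$, and finish with McCord's weak equivalence. The only difference is organizational: you front-load an explicit $T_0$ reduction, whereas the paper applies Theorem~\ref{Th-Poset-vs-cosheaves} directly to the pre-ordered set and handles the passage through $\widehat{X}$ in a single line at the end by citing Proposition~\ref{Prop-Facts-A-spaces}(\ref{Prop-Facts-A-spaces-Weak-equivalence}) and (\ref{Prop-Facts-A-spaces-Nerve}) together; your first paragraph is thus largely redundant, since Theorem~\ref{Th-Poset-vs-cosheaves} already covers pre-ordered sets.
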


\begin{remark}
\label{Rem-Cosheaf-homology-for-A-spaces}All $H_{n}\left( X,G_{\#}\right) $
are \textbf{rudimentary} pro-objects.
\end{remark}

\begin{proof}
Apply Theorem \ref{Th-Poset-vs-cosheaves} to the \textbf{co}sheaf $\mathcal{F%
}=G_{\#}$. Since for any $x\in X$ the costalk $\left( G_{\#}\right) ^{x}%
\simeq%
G$, the corresponding functor $F\in \mathbf{Pro}\left( k\right) ^{\mathbf{X}%
} $ is constant (and rudimentary!). We know that%
\begin{equation*}
H_{n}\left( X,\mathcal{F}\right) 
\simeq%
\underrightarrow{\lim }_{n}~F%
\simeq%
H_{n}\left( C_{\bullet }\left( \mathbf{X},F\right) \right) .
\end{equation*}%
The explicit construction of $H_{n}\left( C_{\bullet }\left( \mathbf{X}%
,F\right) \right) $ coincides with the construction of the \textbf{cellular}
homology of $H_{n}^{cell}\left( N\mathbf{X=}\mathcal{K}\left( X\right)
,G\right) $ which, since $\left\vert \mathcal{K}\left( X\right) \right\vert $
is a polyheder, is isomorphic to $H_{n}^{sing}\left( \left\vert \mathcal{K}%
\left( X\right) \right\vert ,G\right) $. Due to Proposition \ref%
{Prop-Facts-A-spaces}(\ref{Prop-Facts-A-spaces-Weak-equivalence} and \ref%
{Prop-Facts-A-spaces-Nerve}), $\left\vert \mathcal{K}\left( X\right)
\right\vert \rightarrow X$ is a weak equivalence, implying the desired
isomorphisms%
\begin{equation*}
H_{n}\left( X,G_{\#}\right) 
\simeq%
H_{n}^{cell}\left( N\mathbf{X},G\right) 
\simeq%
H_{n}^{sing}\left( \left\vert \mathcal{K}\left( X\right) \right\vert
,G\right) 
\simeq%
H_{n}^{sing}\left( X,G\right) .
\end{equation*}
\end{proof}

See Example \ref{Ex-4-point-circle} and \ref{Ex-6-point-2-sphere}.

\section{\label{Sec-classical-cosheaves}\textquotedblleft
Bad\textquotedblright\ classical cosheaves}

\subsection{General remarks}

Glen E. Bredon in \cite{Bredon-MR0226631} and \cite[Chapter VI]%
{Bredon-Book-MR1481706} was rather skeptical concerning building the
homology theory for cosheaves with values in the category $\mathbf{Ab}$. He
wrote in \cite{Bredon-MR0226631}: \textquotedblleft \emph{The most basic
concept in sheaf theory is that of a sheaf generated by a given presheaf. In
categorical terminology this is the concept of a reflector from presheaves
to sheaves. We believe that there is not much hope for the existence of a
reflector from precosheaves to cosheaves.}\textquotedblright

In fact, the coreflection functor%
\begin{equation*}
\left( \bullet \right) _{\#}:\mathbf{pCS}\left( X,\mathbf{Ab}\right)
\longrightarrow \mathbf{CS}\left( X,\mathbf{Ab}\right)
\end{equation*}%
\textbf{does} exist \cite[Theorem 3.1(1)]%
{Prasolov-Cosheafification-2016-MR3660525}, but does not have a nice
description. It seems also that the category $\mathbf{CS}\left( X,\mathbf{Ab}%
\right) $ is not good for building an appropriate homology theory. In \cite%
{Bredon-MR0226631} and \cite[Chapter VI]{Bredon-Book-MR1481706} such a
theory was built for a very restricted class of topological spaces and a
very restricted class of cosheaves on them.

The reasoning below shows that the category of \emph{classical} cosheaves $%
\mathbf{CS}\left( X,\mathbf{Ab}\right) $ is hardly abelian. The crucial fact
that was used in the proof of \cite[Theorem 3.3.1(1)]%
{Prasolov-Cosheaves-2021-MR4347662} was this: the coreflection functor%
\begin{equation*}
\left( \bullet \right) _{\#}:\mathbf{pCS}\left( X,\mathbf{Pro}\left( \mathbf{%
Ab}\right) \right) \longrightarrow \mathbf{CS}\left( X,\mathbf{Pro}\left( 
\mathbf{Ab}\right) \right)
\end{equation*}%
is \textbf{exact}.

It is shown below (Example \ref{Ex-Non-exactness-Ab-valued}) that the \emph{%
classical} coreflection functor%
\begin{equation*}
\left( \bullet \right) _{\#}:\mathbf{pCS}\left( X,\mathbf{Ab}\right)
\longrightarrow \mathbf{CS}\left( X,\mathbf{Ab}\right)
\end{equation*}%
is \textbf{not} in general exact.

\begin{conjecture}
\label{Conj-Classical-not-abelian}The category of classical cosheaves $%
\mathbf{CS}\left( X,\mathbf{Ab}\right) $ is \textbf{not} in general abelian.
\end{conjecture}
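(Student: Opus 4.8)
The plan is to deduce the assertion from the failure of exactness of the classical cosheafification functor (Example~\ref{Ex-Non-exactness-Ab-valued}), by first recording which finite kernels and cokernels $\mathbf{CS}\left( X,\mathbf{Ab}\right) $ possesses. Because the cosheaf condition says that $\varphi \left( U,R\right) \colon \underrightarrow{\lim }_{\left( V\to U\right) \in R}\mathcal{A}\left( V\right) \to \mathcal{A}\left( U\right) $ is an isomorphism and colimits commute with colimits, the inclusion $\mathbf{CS}\left( X,\mathbf{Ab}\right) \subseteq \mathbf{pCS}\left( X,\mathbf{Ab}\right) $ is closed under objectwise colimits; in particular $\mathbf{CS}\left( X,\mathbf{Ab}\right) $ has cokernels, computed objectwise. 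Using that the cosheafification $\left( \bullet \right) _{\#}$ is the right adjoint of the inclusion $\iota $ (the classical coreflection exists, \cite[Theorem 3.1(1)]{Prasolov-Cosheafification-2016-MR3660525}), the category also has kernels, with $\ker _{\mathbf{CS}}f=\left( \ker _{\mathbf{pCS}}f\right) _{\#}$, and $\left( \bullet \right) _{\#}$ is left exact. Hence $\mathbf{CS}\left( X,\mathbf{Ab}\right) $ always has canonical images and coimages, and to prove it is not abelian it suffices to exhibit one morphism $f$ of $\mathbf{Ab}$-valued cosheaves for which the canonical morphism $\operatorname{coim}f\to \operatorname{im}f$ fails to be an isomorphism.

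The second step computes this comparison morphism explicitly. Let $\mathcal{K}=\ker _{\mathbf{pCS}}f\hookrightarrow \mathcal{A}$, let $\epsilon _{\mathcal{K}}\colon \mathcal{K}_{\#}\to \mathcal{K}$ be the counit, and let $\mathcal{K}^{\prime \prime }=\operatorname{im}_{\mathbf{pCS}}\left( \mathcal{K}_{\#}\overset{\epsilon _{\mathcal{K}}}{\to }\mathcal{K}\hookrightarrow \mathcal{A}\right) $. Then $\operatorname{coim}_{\mathbf{CS}}f=\mathcal{A}/\mathcal{K}^{\prime \prime }$, which is a cosheaf since it is a cokernel in $\mathbf{CS}\left( X,\mathbf{Ab}\right) $, while $\operatorname{im}_{\mathbf{CS}}f=\left( \mathcal{A}/\mathcal{K}\right) _{\#}$; applying the left exact functor $\left( \bullet \right) _{\#}$ to the short exact sequence $0\to \mathcal{K}/\mathcal{K}^{\prime \prime }\to \mathcal{A}/\mathcal{K}^{\prime \prime }\to \mathcal{A}/\mathcal{K}\to 0$ of precosheaves, and noting $\mathcal{K}/\mathcal{K}^{\prime \prime }=\coker \epsilon _{\mathcal{K}}$, one finds that the kernel of $\operatorname{coim}_{\mathbf{CS}}f\to \operatorname{im}_{\mathbf{CS}}f$ is $\left( \coker \epsilon _{\mathcal{K}}\right) _{\#}$. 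So the task reduces to producing a single morphism $f$ of $\mathbf{Ab}$-valued cosheaves whose $\mathbf{pCS}$-kernel $\mathcal{K}$ satisfies $\left( \coker \epsilon _{\mathcal{K}}\right) _{\#}\neq 0$. I would obtain such an $f$ from the short exact sequence of precosheaves underlying Example~\ref{Ex-Non-exactness-Ab-valued}: after passing to a presentation of the shape $\mathcal{K}\hookrightarrow \mathcal{A}\to \mathcal{B}$ with $\mathcal{A}$ and $\mathcal{B}$ cosheaves and $\mathcal{K}=\ker _{\mathbf{pCS}}\left( \mathcal{A}\to \mathcal{B}\right) $, the non-exactness of $\left( \bullet \right) _{\#}$ on that sequence is precisely the statement that $\epsilon _{\mathcal{K}}$ is not an epimorphism after cosheafification, i.e.\ $\left( \coker \epsilon _{\mathcal{K}}\right) _{\#}\neq 0$.

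The main obstacle is exactly this last reduction. One must show that the non-cosheaf subobject $\mathcal{K}$ of a cosheaf $\mathcal{A}$ coming from Example~\ref{Ex-Non-exactness-Ab-valued} is genuinely realizable as the $\mathbf{pCS}$-kernel of a morphism of cosheaves, with no enlargement of the kernel, and that the resulting $\left( \coker \epsilon _{\mathcal{K}}\right) _{\#}$ really does not vanish. Both points are delicate because the classical cosheafification on $\mathbf{pCS}\left( X,\mathbf{Ab}\right) $ has no explicit description comparable to the pro-module case --- where Corollary~\ref{Cor-Simpler-description-G-sharp} and the exactness in Proposition~\ref{Prop-Cosheafification}(\ref{Prop-Cosheafification-exact}) are available --- so $\mathcal{K}_{\#}$ and the cokernel of its counit cannot simply be written down; this is the very reason the assertion is stated only as a conjecture. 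An alternative to the direct approach would be to first establish the abstract principle that a coreflective subcategory of an abelian category, on which the coreflector is left exact, is abelian only if the coreflector is exact, and then invoke Example~\ref{Ex-Non-exactness-Ab-valued}; however, a proof of that principle appears to require the same analysis of $\coker \epsilon _{\mathcal{K}}$, so I do not expect it to circumvent the difficulty.
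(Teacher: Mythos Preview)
The statement in question is a \emph{conjecture}; the paper does not prove it and offers no proof to compare against. What the paper does provide is Remark~\ref{Rem-Classical-not-abelian}, which records exactly the reduction you carry out: to disprove abelianness it suffices to find a monomorphism of precosheaves $h:\mathcal{C}\to\mathcal{A}$ with $\mathcal{A}$ a cosheaf and $(\coker h)_{\#}\not\simeq\coker(h_{\#})$. Your criterion $(\coker\epsilon_{\mathcal{K}})_{\#}\neq 0$, applied to $\mathcal{K}=\ker_{\mathbf{pCS}}f$ for a morphism $f$ of cosheaves, is precisely this condition specialised to $h=(\mathcal{K}\hookrightarrow\mathcal{A})$; your computation of $\operatorname{coim}_{\mathbf{CS}}f$ and $\operatorname{im}_{\mathbf{CS}}f$ is correct and makes the reduction explicit. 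So your strategy and the paper's agree.

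The gap you name in your third paragraph is exactly the gap the paper names. Remark~\ref{Rem-Classical-not-abelian} ends with the sentence ``Unfortunately, Example~\ref{Ex-Non-exactness-Ab-valued} cannot serve as a counter-example, because $h$ there is \textbf{not} a monomorphism.'' The difference map $\mathbb{Z}[\mathcal{A}_{\Delta^{1}}]\to\mathbb{Z}[\mathcal{A}_{\Delta^{0}}]$ has the right target (a cosheaf) and the right failure of $(\bullet)_{\#}$, but it is not injective, so it is not of the form $\mathcal{K}\hookrightarrow\mathcal{A}$ for any morphism $f$ of cosheaves. Your proposed ``passing to a presentation of the shape $\mathcal{K}\hookrightarrow\mathcal{A}\to\mathcal{B}$'' is therefore not a step that Example~\ref{Ex-Non-exactness-Ab-valued} supplies; it is the missing step, and neither you nor the paper indicate how to perform it. In short: your outline is sound, it matches the author's own analysis, and it is honestly labelled as incomplete --- but it does not advance beyond where the paper already stops, and the statement remains a conjecture.
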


\begin{remark}
\label{Rem-Classical-not-abelian}The critical point in the proof of \cite[%
Theorem 3.3.1(1)]{Prasolov-Cosheaves-2021-MR4347662} was the following.
Since $\left( \bullet \right) _{\#}$ is exact,%
\begin{equation*}
\left( 
\coker%
\left( h\right) \right) _{\#}%
\simeq%
\coker%
\left( h_{\#}\right) ,
\end{equation*}%
where%
\begin{equation*}
h:\ker \left( \iota f\right) \longrightarrow \mathcal{A},
\end{equation*}%
$\iota $ is the embedding%
\begin{equation*}
\iota :\mathbf{CS}\left( X,\mathbf{Ab}\right) \longrightarrow \mathbf{pCS}%
\left( X,\mathbf{Ab}\right) ,
\end{equation*}%
and%
\begin{equation*}
f:\mathcal{A\longrightarrow B}
\end{equation*}%
is a morphism of cosheaves. Therefore, to find a counter-example to the
abelianity of $\mathbf{CS}\left( X,\mathbf{Ab}\right) $, it is enough to
find a \textbf{mono}morphism of \textbf{pre}cosheaves%
\begin{equation*}
h:\mathcal{C}\longrightarrow \mathcal{A},
\end{equation*}%
where $\mathcal{A}$ is a cosheaf, such that%
\begin{equation*}
\left( 
\coker%
\left( h\right) \right) _{\#}\not\approxeq 
\coker%
\left( h_{\#}\right) .
\end{equation*}%
Unfortunately, Example \ref{Ex-Non-exactness-Ab-valued} cannot serve as a
counter-example, because $h$ there is \textbf{not} a monomorphism.
\end{remark}

\subsection{Classical vs \textquotedblleft new\textquotedblright\
(pre)cosheaves}

The category $\mathbf{Mod}\left( k\right) $ is a full subcategory of $%
\mathbf{Pro}\left( k\right) =\mathbf{Pro}\left( \mathbf{Mod}\left( k\right)
\right) $. It is clear that the category of classical cosheaves $\mathbf{pCS}%
\left( X,\mathbf{Mod}\left( k\right) \right) $ is a full subcategory of $%
\mathbf{pCS}\left( X,\mathbf{Pro}\left( k\right) \right) $.

\begin{definition}
\label{Def-Rudimentary-precosheave}A precosheaf $\mathcal{A}\in \mathbf{pCS}%
\left( X,\mathbf{Pro}\left( k\right) \right) $ is called \textbf{rudimentary}
iff it is isomorphic to a cosheaf $\mathcal{B}\in \mathbf{pCS}\left( X,%
\mathbf{Mod}\left( k\right) \right) $.
\end{definition}

\begin{remark}
\label{Rem-Rudimentary-precosheave}A precosheaf $\mathcal{A}\in \mathbf{pCS}%
\left( X,\mathbf{Pro}\left( k\right) \right) $ is rudimentary iff $\mathcal{A%
}\left( U\right) $ is a rudimentary pro-module for any $U\in \mathbf{C}_{X}$.
\end{remark}

Let us formulate a list of properties of rudimentary precosheaves. Since
many properties and constructions depend on whether they are
built/formulated in $\mathbf{pCS}\left( X,\mathbf{Pro}\left( k\right)
\right) $ or $\mathbf{pCS}\left( X,\mathbf{Mod}\left( k\right) \right) $,
let us mark them with $\mathbf{Pro}\left( k\right) $ or $\mathbf{Mod}\left(
k\right) $ respectively.

\begin{proposition}
\label{Prop-Rudimentary-precosheave}Let $\mathcal{A}\in \mathbf{pCS}\left( X,%
\mathbf{Mod}\left( k\right) \right) $ and $\mathcal{B}\in \mathbf{pCS}\left(
X,\mathbf{Pro}\left( k\right) \right) $.

\begin{enumerate}
\item $\mathcal{A}$ is $\mathbf{Mod}\left( k\right) $-coseparated iff it is $%
\mathbf{Pro}\left( k\right) $-coseparated.

\item $~$

\begin{enumerate}
\item $\mathcal{A}$ is a $\mathbf{Mod}\left( k\right) $-cosheaf iff it is a $%
\mathbf{Pro}\left( k\right) $-cosheaf.

\item $\mathbf{CS}\left( X,\mathbf{Mod}\left( k\right) \right) $ is a full
subcategory of $\mathbf{CS}\left( X,\mathbf{Pro}\left( k\right) \right) $.
\end{enumerate}

\item There exists a right adjoint 
\begin{equation*}
\left( \bullet \right) _{\#}^{\mathbf{Mod}\left( k\right) }:\mathbf{pCS}%
\left( X,\mathbf{Mod}\left( k\right) \right) \longrightarrow \mathbf{CS}%
\left( X,\mathbf{Mod}\left( k\right) \right)
\end{equation*}%
($\iota \dashv \left( \bullet \right) _{\#}^{\mathbf{Mod}\left( k\right) }$)
to the embedding%
\begin{equation*}
\iota :\mathbf{CS}\left( X,\mathbf{Mod}\left( k\right) \right)
\longrightarrow \mathbf{pCS}\left( X,\mathbf{Mod}\left( k\right) \right) .
\end{equation*}

\item If $\left( \mathcal{A}\right) _{\mathbf{\#}}^{\mathbf{Pro}\left(
k\right) }$ is rudimentary, then $\left( \mathcal{A}\right) _{\mathbf{\#}}^{%
\mathbf{Mod}\left( k\right) }%
\simeq%
\left( \mathcal{A}\right) _{\mathbf{\#}}^{\mathbf{Pro}\left( k\right) }$.

\item The embedding%
\begin{equation*}
\mathbf{pCS}\left( X,\mathbf{Mod}\left( k\right) \right) \longrightarrow 
\mathbf{pCS}\left( X,\mathbf{Pro}\left( k\right) \right)
\end{equation*}%
has a right adjoint%
\begin{equation*}
\mathcal{A}\longmapsto \left( U\longmapsto \underleftarrow{\lim }~\mathcal{A}%
\left( U\right) \right) .
\end{equation*}%
Denote the latter precosheaf shortly by $\underleftarrow{\lim }~\mathcal{A}$.

\item ~

\begin{enumerate}
\item The embedding%
\begin{equation*}
\mathbf{CS}\left( X,\mathbf{Mod}\left( k\right) \right) \longrightarrow 
\mathbf{CS}\left( X,\mathbf{Pro}\left( k\right) \right)
\end{equation*}%
has a right adjoint%
\begin{equation*}
R%
{:=}%
\left( \mathcal{B}\longmapsto \left( \underleftarrow{\lim }~\mathcal{B}%
\right) _{\#}^{\mathbf{Mod}\left( k\right) }\right) .
\end{equation*}

\item Moreover,%
\begin{equation*}
\left( \bullet \right) _{\#}^{\mathbf{Mod}\left( k\right) }%
\simeq%
R\circ \left( \bullet \right) _{\#}^{\mathbf{Pro}\left( k\right) }
\end{equation*}
\end{enumerate}

\item Assume now that $X$ is a topological space, and $x\in X$.

\begin{enumerate}
\item 
\begin{equation*}
\mathcal{A}_{\mathbf{Mod}\left( k\right) }^{x}%
\simeq%
\underleftarrow{\lim }~\mathcal{A}_{\mathbf{Pro}\left( k\right) }^{x}.
\end{equation*}

\item If the costalk $\mathcal{A}_{\mathbf{Pro}\left( k\right) }^{x}$ is
rudimentary then $\mathcal{A}_{\mathbf{Mod}\left( k\right) }^{x}%
\simeq%
\mathcal{A}_{\mathbf{Pro}\left( k\right) }^{x}$.

\item The morphism%
\begin{equation*}
\mathcal{A}_{\#}\longrightarrow \mathcal{A}
\end{equation*}%
is a local isomorphism in the sense that%
\begin{equation*}
\left( \mathcal{A}_{\#}\right) _{\mathbf{Mod}\left( k\right) }^{x}%
\simeq%
\mathcal{A}_{\mathbf{Mod}\left( k\right) }^{x}.
\end{equation*}

\item The morphism%
\begin{equation*}
\mathcal{B}_{\#}\longrightarrow \mathcal{B}
\end{equation*}%
is a \textbf{strong} local isomorphism (or a local isomorphism in the sense
of \cite[Definition.V.12.2]{Bredon-Book-MR1481706}), i.e.,%
\begin{equation*}
\left( \mathcal{B}_{\#}\right) _{\mathbf{Pro}\left( k\right) }^{x}%
\simeq%
\mathcal{B}_{\mathbf{Pro}\left( k\right) }^{x}.
\end{equation*}

\item Bredon \cite[Definition VI.3.4 ]{Bredon-Book-MR1481706} calls $%
\mathcal{A}$ \textbf{smooth} iff $\mathcal{A}_{\#}\longrightarrow \mathcal{A}
$ is a strong local isomorphism. We have the following: $\mathcal{A}$ is
smooth iff $\mathcal{A}_{\#}^{\mathbf{Pro}\left( k\right) }$ is rudimentary.
\end{enumerate}
\end{enumerate}
\end{proposition}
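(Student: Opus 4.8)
The plan is to reduce the whole proposition to adjoint‑functor bookkeeping resting on two facts about the full embedding $\iota := \iota _{\mathbf{Mod}\left( k\right) }:\mathbf{Mod}\left( k\right) \hookrightarrow \mathbf{Pro}\left( k\right) $ recorded in Proposition \ref{Prop-Facts-pro-objects}: $\iota $ is \textbf{cocontinuous} and exact, and $\iota $ is \textbf{coreflective}, with right adjoint $\sigma := \underleftarrow{\lim }:\mathbf{Pro}\left( k\right) \rightarrow \mathbf{Mod}\left( k\right) $ satisfying $\sigma \circ \iota \simeq \mathbf{1}$. Postcomposition with $\iota $ is the embedding $\mathbf{pCS}\left( X,\mathbf{Mod}\left( k\right) \right) \hookrightarrow \mathbf{pCS}\left( X,\mathbf{Pro}\left( k\right) \right) $, and postcomposition with $\sigma $ is its right adjoint $\mathcal{A}\mapsto \underleftarrow{\lim }~\mathcal{A}$ by the usual transport of an adjunction to diagram categories; this is item (5). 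For (1)--(2), the colimit that is the source of $\varphi \left( U,R\right) $ is literally the same object whether formed in $\mathbf{Mod}\left( k\right) $ or in $\mathbf{Pro}\left( k\right) $, because $\iota $ is cocontinuous; moreover a cocontinuous full embedding reflects epimorphisms (if $\iota f$ is an epimorphism then $\iota \left( \coker f\right) =\coker \iota f=0$, hence $\coker f=0$) and reflects isomorphisms (being full and faithful), and it preserves both (being exact). Hence $\mathcal{A}$ is $\mathbf{Mod}\left( k\right) $-coseparated (respectively a $\mathbf{Mod}\left( k\right) $-cosheaf) iff it is $\mathbf{Pro}\left( k\right) $-coseparated (respectively a $\mathbf{Pro}\left( k\right) $-cosheaf), and (2b) is then immediate.

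For (3) I would invoke the classical cosheafification $\left( \bullet \right) _{\#}^{\mathbf{Mod}\left( k\right) }$ directly from \cite[Theorem 3.1(1)]{Prasolov-Cosheafification-2016-MR3660525} (stated there for $\mathbf{Ab}$, the extension to $\mathbf{Mod}\left( k\right) $ being routine). Then (6a) is obtained by composing the three right adjoints $\iota :\mathbf{CS}\left( X,\mathbf{Pro}\left( k\right) \right) \hookrightarrow \mathbf{pCS}\left( X,\mathbf{Pro}\left( k\right) \right) $ (Proposition \ref{Prop-Cosheafification}(8)), $\underleftarrow{\lim }$ (item (5)) and $\left( \bullet \right) _{\#}^{\mathbf{Mod}\left( k\right) }$ (item (3)): their composite is exactly $R$, and the composite of their left adjoints is canonically the embedding $\mathbf{CS}\left( X,\mathbf{Mod}\left( k\right) \right) \hookrightarrow \mathbf{CS}\left( X,\mathbf{Pro}\left( k\right) \right) $, because by (2a) applying $\left( \bullet \right) _{\#}^{\mathbf{Pro}\left( k\right) }$ to an object already lying in $\mathbf{CS}\left( X,\mathbf{Mod}\left( k\right) \right) $ returns it. Item (6b) is uniqueness of adjoints: both $\left( \bullet \right) _{\#}^{\mathbf{Mod}\left( k\right) }$ and the restriction of $R\circ \left( \bullet \right) _{\#}^{\mathbf{Pro}\left( k\right) }$ along the embedding are right adjoint to $\iota :\mathbf{CS}\left( X,\mathbf{Mod}\left( k\right) \right) \hookrightarrow \mathbf{pCS}\left( X,\mathbf{Mod}\left( k\right) \right) $, the $\Hom $-comparison using fullness of the $\mathbf{Mod}$-into-$\mathbf{Pro}$ embeddings at each step. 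Item (4) is the conceptual heart: if $\mathcal{A}_{\#}^{\mathbf{Pro}\left( k\right) }$ is rudimentary it lies, up to isomorphism, in $\mathbf{CS}\left( X,\mathbf{Mod}\left( k\right) \right) $, which is \emph{full} in $\mathbf{CS}\left( X,\mathbf{Pro}\left( k\right) \right) $ by (2b); since $\mathbf{pCS}\left( X,\mathbf{Mod}\left( k\right) \right) $ is also full in $\mathbf{pCS}\left( X,\mathbf{Pro}\left( k\right) \right) $, the adjunction isomorphism of $\left( \bullet \right) _{\#}^{\mathbf{Pro}\left( k\right) }$ restricts to a natural bijection $\Hom _{\mathbf{pCS}\left( X,\mathbf{Mod}\left( k\right) \right) }\left( \iota \mathcal{C},\mathcal{A}\right) \simeq \Hom _{\mathbf{CS}\left( X,\mathbf{Mod}\left( k\right) \right) }\left( \mathcal{C},\mathcal{A}_{\#}^{\mathbf{Pro}\left( k\right) }\right) $ for $\mathcal{C}\in \mathbf{CS}\left( X,\mathbf{Mod}\left( k\right) \right) $, exhibiting $\mathcal{A}_{\#}^{\mathbf{Pro}\left( k\right) }$ (with its counit to $\mathcal{A}$) as a classical cosheafification, whence $\mathcal{A}_{\#}^{\mathbf{Mod}\left( k\right) }\simeq \mathcal{A}_{\#}^{\mathbf{Pro}\left( k\right) }$.

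For the costalk statements (7a)--(7d): the $\mathbf{Mod}\left( k\right) $-costalk of $\mathcal{A}$ is $\underleftarrow{\lim }_{x\in U}\mathcal{A}\left( U\right) $ formed in $\mathbf{Mod}\left( k\right) $, and since the right adjoint $\sigma $ preserves that limit while $\sigma \circ \iota \simeq \mathbf{1}$, one gets $\mathcal{A}_{\mathbf{Mod}\left( k\right) }^{x}\simeq \sigma \bigl( \mathcal{A}_{\mathbf{Pro}\left( k\right) }^{x}\bigr) =\underleftarrow{\lim }~\mathcal{A}_{\mathbf{Pro}\left( k\right) }^{x}$, which is (7a); if $\mathcal{A}_{\mathbf{Pro}\left( k\right) }^{x}$ is rudimentary the coreflection counit identifies $\sigma $ of it with it, which is (7b). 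Item (7c) is the precosheaf half of Proposition \ref{Prop-(Co)stalks} applied with $\mathbf{D}=\mathbf{Mod}\left( k\right) $ (the required coreflection being item (3)), and (7d) follows from Proposition \ref{Prop-(Co)stalks-pro-modules}(2) applied twice via $\mathcal{B}_{\#}=\mathcal{B}_{++}$.

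Item (7e) is the step I expect to be the main obstacle. I would first establish, using (6b) and the counits of the adjunctions entering (6a), that the canonical morphism $\mathcal{A}_{\#}^{\mathbf{Mod}\left( k\right) }\rightarrow \mathcal{A}$ factors as $\mathcal{A}_{\#}^{\mathbf{Mod}\left( k\right) }\rightarrow \mathcal{A}_{\#}^{\mathbf{Pro}\left( k\right) }\rightarrow \mathcal{A}$. The second arrow is a strong local isomorphism by (7d), so $\mathcal{A}$ is smooth iff the first arrow induces isomorphisms on all $\mathbf{Pro}\left( k\right) $-costalks, iff (Proposition \ref{Prop-(Co)stalks-pro-modules}(\ref{Prop-(Co)stalks-pro-modules-morphisms})) it is an isomorphism of $\mathbf{Pro}\left( k\right) $-cosheaves, iff --- the counit of a coreflective subcategory being invertible exactly on that subcategory --- $\mathcal{A}_{\#}^{\mathbf{Pro}\left( k\right) }$ is rudimentary. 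Pinning down that displayed factorization, a genuine chase through the universal properties of $\left( \bullet \right) _{\#}^{\mathbf{Mod}\left( k\right) }$, $\left( \bullet \right) _{\#}^{\mathbf{Pro}\left( k\right) }$ and $R$, is the one place where real work is needed rather than a citation; everything else is formal once the two headline facts about $\iota $ are in hand.
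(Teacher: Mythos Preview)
Your proposal is correct and, for items (1)--(5) and (7a)--(7d), follows essentially the same path as the paper: cocontinuity and exactness of $\iota _{\mathbf{Mod}\left( k\right) }$ for (1)--(2), the citation of \cite{Prasolov-Cosheafification-2016-MR3660525} for (3), the Yoneda/fullness argument for (4), the pointwise transport of $\iota \dashv \sigma $ for (5), and Proposition~\ref{Prop-(Co)stalks} (respectively~\ref{Prop-(Co)stalks-pro-modules}) for (7a)--(7d).

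One mislabeling in your treatment of (6a): the inclusion $\iota _{2}:\mathbf{CS}\left( X,\mathbf{Pro}\left( k\right) \right) \hookrightarrow \mathbf{pCS}\left( X,\mathbf{Pro}\left( k\right) \right) $ is a \emph{left} adjoint (its right adjoint is $\left( \bullet \right) _{\#}^{\mathbf{Pro}\left( k\right) }$), so it cannot be listed among the ``three right adjoints'' whose composite is $R$. What is actually used at that step is simply that $\iota _{2}$ is full and faithful, so that $\Hom_{\mathbf{CS}}\left( \iota _{0}\mathcal{C},\mathcal{B}\right) =\Hom_{\mathbf{pCS}}\left( \iota _{2}\iota _{0}\mathcal{C},\iota _{2}\mathcal{B}\right) $; then the two genuine adjunctions $\iota _{3}\dashv \underleftarrow{\lim }$ and $\iota _{1}\dashv \left( \bullet \right) _{\#}^{\mathbf{Mod}\left( k\right) }$ finish the chain. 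This is exactly the paper's commuting-square-of-adjoints argument for (6), so the substance is identical once the labeling is fixed.

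For (7e) you go further than the paper, which simply cites \cite[Theorem 3.7]{Prasolov-Cosheafification-2016-MR3660525}. Your self-contained argument --- factor the counit as $\mathcal{A}_{\#}^{\mathbf{Mod}\left( k\right) }\rightarrow \mathcal{A}_{\#}^{\mathbf{Pro}\left( k\right) }\rightarrow \mathcal{A}$ via (6b), use (7d) to reduce smoothness to the first arrow being an isomorphism on $\mathbf{Pro}\left( k\right) $-costalks, then Proposition~\ref{Prop-(Co)stalks-pro-modules}(\ref{Prop-(Co)stalks-pro-modules-morphisms}) and the standard fact that a coreflection counit is invertible precisely on the subcategory --- is correct and more informative than the bare citation. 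The ``factorization chase'' you flag as the real work is indeed just the compatibility of counits in the square of adjunctions from (6), so it is no harder than what you have already set up.
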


\begin{proof}
~

\begin{enumerate}
\item The embedding $\iota :\mathbf{Mod}\left( k\right) \rightarrow \mathbf{%
Pro}\left( k\right) $ preserves colimits. Moreover, $f\in 
\Hom%
_{\mathbf{Mod}\left( k\right) }\left( A,B\right) $ is an epimorphism iff $%
\iota f$ is an epimorphism.

\item The embedding $\iota :\mathbf{Mod}\left( k\right) \rightarrow \mathbf{%
Pro}\left( k\right) $ preserves colimits.

\item See \cite[Theorem 3.1(1)]{Prasolov-Cosheafification-2016-MR3660525}.

\item Let $\mathcal{C}\in \mathbf{CS}\left( X,\mathbf{Mod}\left( k\right)
\right) $. Then%
\begin{eqnarray*}
&&%
\Hom%
_{\mathbf{CS}\left( X,\mathbf{Mod}\left( k\right) \right) }\left( \mathcal{C}%
,\mathcal{A}_{\#}^{\mathbf{Pro}\left( k\right) }\right) 
\simeq%
\Hom%
_{\mathbf{CS}\left( X,\mathbf{Pro}\left( k\right) \right) }\left( \mathcal{C}%
,\mathcal{A}_{\#}^{\mathbf{Pro}\left( k\right) }\right) 
\simeq
\\
&&%
\simeq%
_{\mathbf{pCS}\left( X,\mathbf{Mod}\left( k\right) \right) }\left( \mathcal{C%
},\mathcal{A}\right) 
\simeq%
_{\mathbf{pCS}\left( X,\mathbf{Mod}\left( k\right) \right) }\left( \mathcal{C%
},\mathcal{A}_{\#}^{\mathbf{Mod}\left( k\right) }\right) .
\end{eqnarray*}%
Yoneda's lemma implies the result.

\item Due to Proposition \ref{Prop-Facts-pro-objects}(\ref%
{Prop-Facts-pro-objects-Embedding-Coreflective}), the morphisms $\mathcal{A}%
\left( U\right) \rightarrow \mathcal{B}\left( U\right) $ are naturally in
1-1 correspondence with morphisms $\mathcal{A}\left( U\right) \rightarrow 
\underleftarrow{\lim }~\mathcal{B}\left( U\right) $. Combining those
correspondences, one obtains the desired isomorphism%
\begin{equation*}
\Hom%
_{\mathbf{pCS}\left( X,\mathbf{Pro}\left( k\right) \right) }\left( \mathcal{A%
},\mathcal{B}\right) 
\simeq%
\Hom%
_{\mathbf{pCS}\left( X,\mathbf{Mod}\left( k\right) \right) }\left( \mathcal{A%
},\underleftarrow{\lim }~\mathcal{B}\right) .
\end{equation*}

\item Assume that $\mathcal{A}$ and $\mathcal{B}$ are cosheaves. Notice that 
$\underleftarrow{\lim }~\mathcal{B}$ is \textbf{not} in general a cosheaf.
The following diagram of full embeddings%
\begin{equation*}
\begin{diagram}[size=3.0em,textflow]
\mathbf{CS}\left( X,\mathbf{Mod}\left( k\right) \right) & \rTo^{\iota _{0}} & \mathbf{CS}\left( X,\mathbf{Pro}\left( k\right) \right)
\\ 
\dTo<{\iota _{1}} &  & \dTo>{\iota _{2}} \\ 
\mathbf{pCS}\left( X,\mathbf{Mod}\left( k\right) \right) & \rTo^{\iota
_{3}} & \mathbf{pCS}\left( X,\mathbf{Pro}\left( k\right)
\right) \\
\end{diagram}
%
\end{equation*}%
commutes up to a functor isomorphism. Therefore, the diagram of right
adjoints%
\begin{equation*}
\begin{diagram}[size=3.0em,textflow]
\mathbf{CS}\left( X,\mathbf{Mod}\left( k\right) \right) & \lTo{R} & \mathbf{CS}\left( X,\mathbf{Pro}\left( k\right) \right)
\\ 
\uTo<{\left( \bullet \right) _{\#}^{\mathbf{Mod}\left( k\right) }} &  & 
\uTo>{\left( \bullet \right) _{\#}^{\mathbf{Pro}\left( k\right) }} \\ 
\mathbf{pCS}\left( X,\mathbf{Mod}\left( k\right) \right) & \lTo^{\underleftarrow{\lim }} & \mathbf{pCS}\left( X,\mathbf{Pro}\left( k\right) \right) \\
\end{diagram}
%
\end{equation*}
also commutes up to a functor isomorphism. Therefore,%
\begin{equation*}
R\circ \left( \bullet \right) _{\#}^{\mathbf{Pro}\left( k\right) }%
\simeq%
\left( \bullet \right) _{\#}^{\mathbf{Mod}\left( k\right) }\circ 
\underleftarrow{\lim }.
\end{equation*}%
Since $\left( \mathcal{B}\right) _{\#}^{\mathbf{Pro}\left( k\right) }%
\simeq%
\mathcal{B}$ if $\mathcal{B}$ is a cosheaf, one obtains%
\begin{equation*}
R%
\simeq%
\left( \bullet \right) _{\#}^{\mathbf{Mod}\left( k\right) }\circ 
\underleftarrow{\lim }.
\end{equation*}%
On the other hand, since $\underleftarrow{\lim }~\mathcal{A}%
\simeq%
\mathcal{A}$ if $\mathcal{A}\in \mathbf{pCS}\left( X,\mathbf{Mod}\left(
k\right) \right) $, we have%
\begin{equation*}
\left( \bullet \right) _{\#}^{\mathbf{Mod}\left( k\right) }%
\simeq%
R\circ \left( \bullet \right) _{\#}^{\mathbf{Pro}\left( k\right) }.
\end{equation*}

\item ~

\begin{enumerate}
\item $\mathcal{A}_{\mathbf{Mod}\left( k\right) }^{x}=\underset{x\in U\in
Open\left( X\right) }{\underleftarrow{\lim }}\mathcal{A}\left( U\right) $,
while $\mathcal{A}_{\mathbf{Pro}\left( k\right) }^{x}$ is the pro-module%
\begin{equation*}
\left( \mathcal{A}\left( U\right) \right) _{x\in U\in Open\left( X\right) }.
\end{equation*}

\item For a rudimentary pro-object $\mathbf{C}$, $\underleftarrow{\lim }$ $%
\mathbf{C}%
\simeq%
\mathbf{C}$.

\item See \cite[Proposition 2.21]{Prasolov-Cosheafification-2016-MR3660525}.

\item Again \cite[Proposition 2.21]{Prasolov-Cosheafification-2016-MR3660525}%
.

\item See \cite[Theorem 3.7]{Prasolov-Cosheafification-2016-MR3660525}.
\end{enumerate}
\end{enumerate}
\end{proof}

\subsection{$\mathbf{Set}$-valued cosheaves}

The main purpose here is to construct a counter-example showing that the
coreflection functor%
\begin{equation*}
\left( \bullet \right) _{\#}:\mathbf{pCS}\left( X,\mathbf{Set}\right)
\longrightarrow \mathbf{CS}\left( X,\mathbf{Set}\right)
\end{equation*}%
is \textbf{not} in general exact.

The example is similar to \cite[Example 4.6]%
{Prasolov-Cosheafification-2016-MR3660525}. We choose a topological $T_{1}$
space $X$.

\begin{lemma}
\label{Lemma-Separation}Let $U\subseteq X$ be an open subset, and let $%
x,y\in U$ be two different points. Then there exist two open subsets $V$ and 
$W$ such that:

\begin{enumerate}
\item $U=V\cup W$.

\item $x\in V-W$.

\item $y\in W-V$.
\end{enumerate}
\end{lemma}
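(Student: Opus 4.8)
The statement to prove is Lemma~\ref{Lemma-Separation}: in a $T_1$ space $X$, given an open set $U$ and two distinct points $x,y\in U$, we can write $U=V\cup W$ with $x\in V-W$ and $y\in W-V$.

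The plan is to exploit the $T_1$ separation axiom directly. Recall that $X$ is $T_1$ means that for any two distinct points, each has an open neighborhood not containing the other; equivalently, every singleton $\{x\}$ is closed. First I would use this to choose an open neighborhood $P$ of $x$ with $y\notin P$, and an open neighborhood $Q$ of $y$ with $x\notin Q$; such $P,Q$ exist by the $T_1$ property. Then I would set
\begin{equation*}
V


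{:=}

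U\cap (X-\{y\})


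{:=}

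U\cap Q^{c}_{\text{pt}},\qquad W


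{:=}

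U\cap (X-\{x\}),
\end{equation*}
where I am using that $\{x\}$ and $\{y\}$ are closed, so $X-\{x\}$ and $X-\{y\}$ are open, hence $V$ and $W$ are open subsets of $X$. Actually it is cleaner to avoid the intermediate $P,Q$ entirely and just take $V=U\setminus\{y\}$ and $W=U\setminus\{x\}$.

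Next I would verify the three required properties. For property (1): every point $z\in U$ lies in $V$ unless $z=y$, and lies in $W$ unless $z=x$; since $x\neq y$, no point is excluded from both, so $U\subseteq V\cup W$, and the reverse inclusion is clear since $V,W\subseteq U$. Hence $U=V\cup W$. For property (2): $x\in U$ and $x\neq y$ so $x\in V$; also $x\notin W$ since $W=U\setminus\{x\}$; thus $x\in V-W$. For property (3): symmetrically $y\in U$, $y\neq x$ so $y\in W$, and $y\notin V$, giving $y\in W-V$.

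There is essentially no obstacle here: the only substantive input is that singletons are closed in a $T_1$ space, which guarantees $V$ and $W$ are open. The rest is set-theoretic bookkeeping. I would just make sure to state explicitly where the $T_1$ hypothesis is invoked (to know $X-\{x\}$ and $X-\{y\}$ are open), since without it $V$ and $W$ need not be open and the lemma fails.
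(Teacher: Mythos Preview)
Your proof is correct and takes exactly the same approach as the paper: the paper's proof simply reads ``Choose $V=U-\{y\}$ and $W=U-\{x\}$,'' which is precisely your construction. Your write-up is in fact more detailed than the paper's, spelling out where the $T_1$ hypothesis enters and verifying each of the three conditions explicitly.
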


\begin{proof}
Choose $V=U-\left\{ y\right\} $ and $W=U-\left\{ x\right\} $.
\end{proof}

Let $T$ be another topological space. Denote by $\mathcal{A}_{T}\in \mathbf{%
pCS}\left( X,\mathbf{Set}\right) $ the following precosheaf:%
\begin{equation*}
\mathcal{A}_{T}\left( U\right) 
{:=}%
U^{T},
\end{equation*}%
where $U^{T}$ is the set of continuous mappings $T\rightarrow U$. For $%
V\subseteq U$ let%
\begin{equation*}
\mathcal{A}_{T}\left( V\rightarrow U\right) :\mathcal{A}_{T}\left( V\right)
\longrightarrow \mathcal{A}_{T}\left( U\right)
\end{equation*}%
assign the composition%
\begin{equation*}
T\overset{f}{\longrightarrow }V\longrightarrow U
\end{equation*}%
to $f\in V^{T}=\mathcal{A}_{T}\left( V\right) $.

$\mathcal{A}_{T}$ is clearly a precosheaf. For a continuous mapping $\varphi
:S\rightarrow T$ one defines naturally a morphism of precosheaves:%
\begin{eqnarray*}
\varphi ^{\ast } &:&\mathcal{A}_{T}\longrightarrow \mathcal{A}_{S}, \\
\varphi ^{\ast }\left( f\right) 
{:=}%
f\circ \varphi &\in &U^{S}, \\
f &\in &U^{T}.
\end{eqnarray*}

\begin{theorem}
\label{Th-Bad-Set-valued-cosheaf}~

\begin{enumerate}
\item $\left( \mathcal{A}_{T}\right) _{\#}$ is the following cosheaf $%
\mathcal{C}$:%
\begin{equation*}
\left( \mathcal{A}_{T}\right) _{\#}\left( U\right) =\mathcal{C}\left(
U\right) 
{:=}%
U
\end{equation*}%
with the evident corestriction mappings for $V\subseteq U$:%
\begin{equation*}
V=\mathcal{C}\left( V\right) \longrightarrow \mathcal{C}\left( U\right) =U.
\end{equation*}

\item The natural morphism%
\begin{equation*}
\eta :\left( \mathcal{A}_{T}\right) _{\#}\longrightarrow \mathcal{A}_{T}
\end{equation*}%
is given by%
\begin{equation*}
\eta \left( x\right) =\left( \text{\textbf{constant} mapping }%
T\longrightarrow \left\{ x\right\} \right) ,
\end{equation*}%
where%
\begin{equation*}
x\in \left( \mathcal{A}_{T}\right) _{\#}\left( U\right) .
\end{equation*}
\end{enumerate}
\end{theorem}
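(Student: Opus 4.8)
The plan is to use the duality principle (Proposition \ref{Prop-Duality}) to transport the whole computation to the level of presheaves of modules, where the answer is classical. For part (1), I would first identify the sheafification $\left\langle \mathcal{A}_T, T'\right\rangle^{\#}$ for an arbitrary injective $k$-module $T'$. By definition $\left\langle \mathcal{A}_T, T'\right\rangle(U) = \Hom_{\mathbf{Set}}\left(U^{T}, T'\right)$ viewed as a $k$-module (the module structure coming from $T'$). The key elementary observation, using Lemma \ref{Lemma-Separation}, is that for any open $U$ and any point $x\in U$ there is a constant map $c_x:T\to U$, and these constant maps are the ``extreme points'' of $U^T$ under corestriction; more precisely, the sheafification kills everything except the part of $\left\langle \mathcal{A}_T,T'\right\rangle$ that is supported pointwise. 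I would show $\left\langle \mathcal{A}_T, T'\right\rangle^{\#}$ is the sheaf of locally constant $T'$-valued functions on $X$, i.e.\ $\left\langle \mathcal{A}_T, T'\right\rangle^{\#} \simeq \left\langle \mathcal{C}, T'\right\rangle$ where $\mathcal{C}(U) := U$ is the precosheaf in the statement (so $\left\langle \mathcal{C}, T'\right\rangle(U) = \Hom_{\mathbf{Set}}(U,T')$, the $T'$-valued functions on $U$, which sheafifies to locally constant functions). Since $\mathcal{C}$ is manifestly a cosheaf (it is $\bigoplus$-additive on disjoint opens, being the constant-on-points assignment), $\left\langle \mathcal{C},T'\right\rangle$ is a sheaf, and Lemma \ref{Lemma-local-epi(mono,iso)morphism} together with Proposition \ref{Prop-Cosheafification}(4) then gives $\left\langle (\mathcal{A}_T)_{\#},T'\right\rangle \simeq \left\langle \mathcal{A}_T,T'\right\rangle^{\#} \simeq \left\langle \mathcal{C},T'\right\rangle$ for all injective $T'$. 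Proposition \ref{Prop-Duality} then yields $(\mathcal{A}_T)_{\#}\simeq \mathcal{C}$.

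The crucial step — and I expect this to be the main obstacle — is the identification of $\left\langle \mathcal{A}_T,T'\right\rangle^{\#}$ with the locally constant sheaf. One must compute the stalk: $\left(\left\langle \mathcal{A}_T,T'\right\rangle\right)_x = \varinjlim_{x\in U}\Hom_{\mathbf{Set}}(U^{T},T')$. Since $X$ is $T_1$, arbitrarily small punctured neighborhoods are available, and using the decomposition $U = V\cup W$ of Lemma \ref{Lemma-Separation} one sees that as $U$ shrinks toward $x$ the set $U^T$ is ``generated'' by $c_x$ up to the cofinal system — so the colimit of $\Hom_{\mathbf{Set}}(U^T,T')$ collapses to $T'$ itself, matching the stalk of the locally constant sheaf $\underline{T'}$. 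This is where the hypothesis that $X$ is $T_1$ is really used, and where care is needed: one has to check that the transition maps in the colimit really do force every element of $\Hom_{\mathbf{Set}}(U^T,T')$ to be determined by its value on the constant map. The cleanest route is to note that evaluation at $c_x$ gives a natural map $\Hom_{\mathbf{Set}}(U^T,T')\to T'$, compatible with corestriction, hence a map of sheaves $\left\langle \mathcal{A}_T,T'\right\rangle^{\#}\to \underline{T'} = \left\langle\mathcal{C},T'\right\rangle^{\#}$, and then verify it is a stalkwise isomorphism.

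For part (2), the natural morphism $\eta:(\mathcal{A}_T)_{\#}\to \mathcal{A}_T$ is the counit of the adjunction $\iota\dashv(\bullet)_{\#}$ from Proposition \ref{Prop-Cosheafification}(8). Under the identification $(\mathcal{A}_T)_{\#}=\mathcal{C}$ from part (1), $\eta$ on $\mathcal{C}(U)=U$ is forced by naturality and the universal property to send $x\in U$ to the unique ``point-supported'' element of $U^T$ corestricting compatibly through all neighborhoods of $x$, which is exactly the constant map $T\to\{x\}\subseteq U$. I would make this precise by dualizing once more: apply $\left\langle\bullet,T'\right\rangle$ to $\eta$, obtaining the canonical map $\left\langle\mathcal{A}_T,T'\right\rangle\to\left\langle\mathcal{A}_T,T'\right\rangle^{\#}=\left\langle\mathcal{C},T'\right\rangle$, which is precisely the sheafification unit, i.e.\ precomposition with evaluation-at-$c_x$; tracing this through shows $\left\langle\eta,T'\right\rangle$ is the map dual to $x\mapsto c_x$, and Proposition \ref{Prop-Duality} identifies $\eta$ itself as claimed. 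The only routine bookkeeping left is checking that the corestriction maps $\mathcal{C}(V)\to\mathcal{C}(U)$ are the evident inclusions $V\hookrightarrow U$, which is immediate from how sheafification acts on the representable-type presheaf $\mathcal{A}_T$.
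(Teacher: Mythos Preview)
Your approach has a fundamental mismatch with the setting of the theorem. The precosheaf $\mathcal{A}_{T}$ lives in $\mathbf{pCS}(X,\mathbf{Set})$, and the cosheafification $(\bullet)_{\#}$ in question is the coreflection $\mathbf{pCS}(X,\mathbf{Set})\to\mathbf{CS}(X,\mathbf{Set})$ (this is precisely the point of Section~\ref{Sec-classical-cosheaves}: to study the \emph{classical}, $\mathbf{Set}$- and $\mathbf{Ab}$-valued cosheafification). The pairing $\langle\bullet,T'\rangle$, Proposition~\ref{Prop-Duality}, and Proposition~\ref{Prop-Cosheafification} are all formulated for $\mathbf{Pro}(k)$-valued (pre)cosheaves; they simply do not apply here. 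Writing $\langle\mathcal{A}_{T},T'\rangle(U)=\Hom_{\mathbf{Set}}(U^{T},T')$ defines \emph{some} presheaf, but there is no result in the paper linking its sheafification to the $\mathbf{Set}$-valued cosheafification of $\mathcal{A}_{T}$, and no reason such a link should hold: the whole thrust of this section is that $(\bullet)_{\#}^{\mathbf{Set}}$ and $(\bullet)_{\#}^{\mathbf{Ab}}$ behave badly and are \emph{not} governed by the duality available in the $\mathbf{Pro}(k)$ setting. Even the stalk computation you sketch is not right: there is no reason $\varinjlim_{x\in U}\Hom_{\mathbf{Set}}(U^{T},T')$ should collapse to $T'$ (for instance, when $T=\Delta^{0}$ one has $U^{T}=U$, and the colimit of $\Hom_{\mathbf{Set}}(U,T')$ over $U\ni x$ is not $T'$ for a general $T_{1}$ space).

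The paper's proof is entirely different and works directly with the universal property of $(\bullet)_{\#}$ in $\mathbf{Set}$. One takes an arbitrary cosheaf $\mathcal{B}\in\mathbf{CS}(X,\mathbf{Set})$ and a morphism $\xi:\mathcal{B}\to\mathcal{A}_{T}$, and shows that for every $b\in\mathcal{B}(U)$ the image $\xi_{U}(b)\in U^{T}$ is a \emph{constant} map. The argument is: if $\xi_{U}(b)$ took two distinct values $x\neq y$, use Lemma~\ref{Lemma-Separation} to write $U=V\cup W$ with $x\in V\setminus W$ and $y\in W\setminus V$; since $\mathcal{B}$ is a cosheaf, $b$ comes from some $c\in\mathcal{B}(V)$ or $d\in\mathcal{B}(W)$ via corestriction, and naturality of $\xi$ then forces the range of $\xi_{U}(b)$ into $V$ or into $W$, a contradiction. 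Hence every $\xi$ factors uniquely through $\mathcal{C}\to\mathcal{A}_{T}$ (constant maps), and Yoneda gives $(\mathcal{A}_{T})_{\#}\simeq\mathcal{C}$. This is the argument you should reproduce; the duality route cannot be salvaged in this category.
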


\begin{notation}
Denote by $\Delta ^{s}$ the $s$-dimensional geometric simplex.
\end{notation}

\begin{remark}
Theorem \ref{Th-Bad-Set-valued-cosheaf} can be reformulated like this:%
\begin{equation*}
\left( \mathcal{A}_{T}\right) _{\#}%
\simeq%
\mathcal{A}_{\Delta ^{0}}.
\end{equation*}
\end{remark}

\begin{proof}
Let $\mathcal{B}\in \mathbf{CS}\left( X,\mathbf{Set}\right) $ be a cosheaf.
A morphism $\xi :\mathcal{B}\rightarrow \mathcal{A}_{T}$ is given by a family%
\begin{equation*}
\left( \xi _{U}:\mathcal{B}\left( U\right) \longrightarrow \mathcal{A}%
_{T}\left( U\right) =U^{T}\right) _{U\in Open\left( X\right) }.
\end{equation*}%
We claim that for any $b\in \mathcal{B}\left( U\right) $, the mapping%
\begin{equation*}
\xi _{U}\left( b\right) :T\longrightarrow U
\end{equation*}%
is \textbf{constant}. Indeed, assume that it is not the case:%
\begin{eqnarray*}
\xi _{U}\left( b\right) \left( t\right) &=&x\neq y=\xi _{U}\left( b\right)
\left( s\right) , \\
s,t &\in &T.
\end{eqnarray*}%
Apply Lemma \ref{Lemma-Separation}, and find $V$ and $W$ such that

\begin{enumerate}
\item $U=V\cup W$.

\item $x\in V-W$.

\item $y\in W-V$.
\end{enumerate}

Since $\mathcal{B}$ is a cosheaf,%
\begin{equation*}
\mathcal{B}\left( U\right) =%
\coker%
\left( \mathcal{B}\left( V\cap W\right) \rightrightarrows \mathcal{B}\left(
V\right) \amalg \mathcal{B}\left( W\right) \right) ,
\end{equation*}%
therefore either

\begin{enumerate}
\item $b=\mathcal{B}\left( V\rightarrow U\right) \left( c\right) ,c\in 
\mathcal{B}\left( V\right) $, or

\item $b=\mathcal{B}\left( W\rightarrow U\right) \left( d\right) ,d\in 
\mathcal{B}\left( W\right) $.
\end{enumerate}

Both cases lead to a contradiction:

\begin{enumerate}
\item From the commutative diagram%
\begin{equation*}
\begin{diagram}[size=3.0em,textflow]
\mathcal{B}\left( V\right) & \rTo^{\xi _{V}} & \mathcal{A}_{T}\left( V\right) =V^{T} \\ 
\dTo &  & \dTo \\ 
\mathcal{B}\left( U\right) & \rTo{\xi _{U}} & \mathcal{A}_{T}\left( U\right) =U^{T} \\
\end{diagram}
%
\end{equation*}%
one obtains%
\begin{eqnarray*}
y &=&\xi _{U}\left( b\right) \left( s\right) =\xi _{U}\circ \mathcal{B}%
\left( V\rightarrow U\right) \left( c\right) \left( t\right) = \\
&=&\mathcal{A}_{T}\left( V\rightarrow U\right) \circ \xi _{V}\left( c\right)
\left( t\right) \in V.
\end{eqnarray*}%
Contradiction, since $y\in W-V$.

\item From the commutative diagram%
\begin{equation*}
\begin{diagram}[size=3.0em,textflow]
\mathcal{B}\left( W\right) & \rTo^{\xi _{W}} & \mathcal{A}_{T}\left( V\right) =W^{T} \\ 
\dTo &  & \dTo \\ 
\mathcal{B}\left( U\right) & \rTo{\xi _{U}} & \mathcal{A}_{T}\left( U\right) =U^{T} \\
\end{diagram}%
\end{equation*}%
one obtains%
\begin{eqnarray*}
x &=&\xi _{U}\left( b\right) \left( t\right) =\xi _{U}\circ \mathcal{B}%
\left( W\rightarrow U\right) \left( d\right) \left( t\right) = \\
&=&\mathcal{A}_{T}\left( W\rightarrow U\right) \circ \xi _{W}\left( d\right)
\left( t\right) \in W.
\end{eqnarray*}%
Contradiction, since $x\in V-W$.
\end{enumerate}

It follows that any morphism $\mathcal{B}\longrightarrow \mathcal{A}_{T}$
factors uniquely through $\mathcal{C}$:%
\begin{equation*}
\mathcal{B}\longrightarrow \mathcal{C}\longrightarrow \mathcal{A}_{T},
\end{equation*}%
therefore%
\begin{equation*}
Hom_{\mathbf{CS}\left( X,\mathbf{Set}\right) }\left( \mathcal{B},\left( 
\mathcal{A}_{T}\right) _{\#}\right) 
\simeq%
Hom_{\mathbf{pCS}\left( X,\mathbf{Set}\right) }\left( \mathcal{B},\mathcal{A}%
_{T}\right) 
\simeq%
Hom_{\mathbf{CS}\left( X,\mathbf{Set}\right) }\left( \mathcal{B},\mathcal{C}%
\right)
\end{equation*}%
naturally in $\mathcal{B}$. The Yoneda's lemma implies that%
\begin{equation*}
\left( \mathcal{A}_{T}\right) _{\#}%
\simeq%
\mathcal{C}.
\end{equation*}
\end{proof}

\begin{example}
\label{Ex-Non-exactness-Set-valued}Let%
\begin{equation*}
\Delta ^{0}\rightrightarrows \Delta ^{1}
\end{equation*}%
be the two mappings of a point into the endpoints of $\Delta ^{1}$. Then%
\begin{equation*}
\coker%
\left( \mathcal{A}_{\Delta ^{1}}\rightrightarrows \mathcal{A}_{\Delta
^{0}}\right) 
\simeq%
\pi _{0}
\end{equation*}%
($\mathcal{A}_{\Delta ^{0}}$ and $\pi _{0}$ are cosheaves, see \cite[Example
4.2]{Prasolov-Cosheafification-2016-MR3660525}).%
\begin{eqnarray*}
&&\left( \mathcal{A}_{\Delta ^{1}}\right) _{\#}%
\simeq%
\left( \mathcal{A}_{\Delta ^{0}}\right) _{\#}%
\simeq%
\mathcal{A}_{\Delta ^{0}}, \\
&&%
\coker%
\left( \left( \mathcal{A}_{\Delta ^{1}}\right) _{\#}\rightrightarrows \left( 
\mathcal{A}_{\Delta ^{0}}\right) _{\#}\right) 
\simeq%
\coker%
\left( \mathcal{A}_{\Delta ^{0}}\rightrightarrows \mathcal{A}_{\Delta
^{0}}\right) 
\simeq%
\mathcal{A}_{\Delta ^{0}}.
\end{eqnarray*}%
Let $X$ be any $T_{1}$ space such that the natural mapping $X\rightarrow \pi
_{0}\left( X\right) $ is \textbf{not} bijective. Then%
\begin{equation*}
\mathcal{A}_{\Delta ^{0}}%
\simeq%
\coker%
\left( \left( \mathcal{A}_{\Delta ^{1}}\right) _{\#}\rightrightarrows \left( 
\mathcal{A}_{\Delta ^{0}}\right) _{\#}\right) \ncong \left( 
\coker%
\left( \mathcal{A}_{\Delta ^{1}}\rightrightarrows \mathcal{A}_{\Delta
^{0}}\right) \right) _{\#}%
\simeq%
\left( \pi _{0}\right) _{\#}%
\simeq%
\pi _{0}
\end{equation*}%
because%
\begin{equation*}
\mathcal{A}_{\Delta ^{0}}\left( X\right) 
\simeq%
X\longrightarrow \pi _{0}\left( X\right) =\left( 
\coker%
\left( \mathcal{A}_{\Delta ^{1}}\rightrightarrows \mathcal{A}_{\Delta
^{0}}\right) \right) _{\#}\left( X\right)
\end{equation*}%
is not an isomorphism. Therefore,%
\begin{equation*}
\left( \bullet \right) _{\#}:\mathbf{pCS}\left( X,\mathbf{Set}\right)
\longrightarrow \mathbf{CS}\left( X,\mathbf{Set}\right)
\end{equation*}%
is \textbf{not} right exact.
\end{example}

\subsection{$\mathbf{Ab}$-valued cosheaves}

For a set $X$, let $\mathbb{Z}\left[ X\right] $ be the free abelian group
generated by $X$. The functor%
\begin{equation*}
\mathbb{Z}\left[ \bullet \right] :\mathbf{Set}\longrightarrow \mathbf{Ab,}
\end{equation*}%
being left adjoint to the forgetting functor, preserves colimits. It can be
clearly extended to%
\begin{equation*}
\mathbb{Z}\left[ \bullet \right] :\mathbf{pCS}\left( X,\mathbf{Set}\right)
\longrightarrow \mathbf{pCS}\left( X,\mathbf{Ab}\right) ,
\end{equation*}%
and, since the first $\mathbb{Z}\left[ \bullet \right] $ preserves colimits,
the second $\mathbb{Z}\left[ \bullet \right] $ converts cosheaves into
cosheaves.

\begin{theorem}
\label{Th-Bad-Ab-valued-cosheaf}%
\begin{equation*}
\left( \mathbb{Z}\left[ \mathcal{A}_{T}\right] \right) _{\#}%
\simeq%
\mathbb{Z}\left[ \mathcal{A}_{\Delta ^{0}}\right] .
\end{equation*}
\end{theorem}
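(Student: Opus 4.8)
The plan is to mimic the proof of Theorem \ref{Th-Bad-Set-valued-cosheaf}, working through the universal property of the classical coreflection $\left( \bullet \right) _{\#}:\mathbf{pCS}\left( X,\mathbf{Ab}\right) \rightarrow \mathbf{CS}\left( X,\mathbf{Ab}\right) $ together with Yoneda's lemma. Write $\mathcal{C}


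{:=}

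\mathcal{A}_{\Delta ^{0}}$, so that $\mathcal{C}\left( U\right) =U$, and let $\eta :\mathcal{C}\rightarrow \mathcal{A}_{T}$ be the \textquotedblleft constant map\textquotedblright\ morphism of Theorem \ref{Th-Bad-Set-valued-cosheaf}(2). Applying $\mathbb{Z}\left[ \bullet \right] $ yields a morphism of precosheaves $\mathbb{Z}\left[ \eta \right] :\mathbb{Z}\left[ \mathcal{C}\right] \rightarrow \mathbb{Z}\left[ \mathcal{A}_{T}\right] $ which at each $U$ is the injection $\mathbb{Z}\left[ U\right] \hookrightarrow \mathbb{Z}\left[ U^{T}\right] $ of the free abelian group on $U$ onto the subgroup spanned by the constant maps $T\rightarrow U$; moreover $\mathbb{Z}\left[ \mathcal{C}\right] $ is a cosheaf, since $\mathbb{Z}\left[ \bullet \right] $ (being a left adjoint) carries cosheaves to cosheaves and $\mathcal{C}$ is a cosheaf.

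The heart of the argument is the following claim: for every cosheaf $\mathcal{B}\in \mathbf{CS}\left( X,\mathbf{Ab}\right) $, every morphism $\xi :\mathcal{B}\rightarrow \mathbb{Z}\left[ \mathcal{A}_{T}\right] $, every open $U\subseteq X$ and every $b\in \mathcal{B}\left( U\right) $, the element $\xi _{U}\left( b\right) \in \mathbb{Z}\left[ U^{T}\right] $ is a $\mathbb{Z}$-linear combination of constant maps, i.e.\ lies in the image of $\mathbb{Z}\left[ \eta \right] _{U}$. To prove it, write $\xi _{U}\left( b\right) =\sum_{k=1}^{n}m_{k}f_{k}$ with the $f_{k}:T\rightarrow U$ distinct continuous maps and $m_{k}\neq 0$, and suppose some $f_{1}$ is nonconstant, say $f_{1}\left( s\right) =x\neq y=f_{1}\left( t\right) $ for $s,t\in T$. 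Lemma \ref{Lemma-Separation} gives open $V,W$ with $U=V\cup W$, $x\in V-W$, $y\in W-V$. The cosheaf condition for $\mathcal{B}$ on the two-element cover $\left\{ V,W\right\} $ (Remark \ref{Rem-Cosheaf-via-pairing}(2)) lets us write $b=\mathcal{B}\left( V\hookrightarrow U\right) \left( c\right) +\mathcal{B}\left( W\hookrightarrow U\right) \left( d\right) $ with $c\in \mathcal{B}\left( V\right) $, $d\in \mathcal{B}\left( W\right) $, whence
\[
\xi _{U}\left( b\right) =\mathbb{Z}\left[ \mathcal{A}_{T}\right] \left( V\hookrightarrow U\right) \left( \xi _{V}\left( c\right) \right) +\mathbb{Z}\left[ \mathcal{A}_{T}\right] \left( W\hookrightarrow U\right) \left( \xi _{W}\left( d\right) \right) ,
\]
and the right-hand side lies in the $\mathbb{Z}$-span of those continuous maps $T\rightarrow U$ that factor through $V$ or through $W$. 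Since $f_{1}$ factors through neither $V$ (because $y=f_{1}\left( t\right) \in f_{1}\left( T\right) $) nor $W$ (because $x=f_{1}\left( s\right) \in f_{1}\left( T\right) $), and since the continuous maps $T\rightarrow U$ form a basis of $\mathbb{Z}\left[ U^{T}\right] $, the coefficient of $f_{1}$ on the right-hand side is $0$, contradicting $m_{1}\neq 0$. Thus every $f_{k}$ is constant and the claim follows. This coefficient extraction against the free basis, which converts the \textquotedblleft either/or\textquotedblright\ decomposition of the $\mathbf{Set}$-valued proof into the genuine vanishing of a coefficient, is the one new point compared with Theorem \ref{Th-Bad-Set-valued-cosheaf}, and the place where I expect to have to be most careful (in particular about possible cancellation among the $f_{k}$).

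Finally I would assemble the pieces. The claim shows that every precosheaf morphism $\mathcal{B}\rightarrow \mathbb{Z}\left[ \mathcal{A}_{T}\right] $ with $\mathcal{B}$ a cosheaf factors uniquely through the monomorphism $\mathbb{Z}\left[ \eta \right] $, so $\Hom _{\mathbf{pCS}\left( X,\mathbf{Ab}\right) }\left( \mathcal{B},\mathbb{Z}\left[ \mathcal{A}_{T}\right] \right) \simeq \Hom _{\mathbf{pCS}\left( X,\mathbf{Ab}\right) }\left( \mathcal{B},\mathbb{Z}\left[ \mathcal{C}\right] \right) =\Hom _{\mathbf{CS}\left( X,\mathbf{Ab}\right) }\left( \mathcal{B},\mathbb{Z}\left[ \mathcal{C}\right] \right) $, the last equality holding because $\mathbb{Z}\left[ \mathcal{C}\right] $ is a cosheaf. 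On the other hand the coreflection adjunction gives $\Hom _{\mathbf{CS}\left( X,\mathbf{Ab}\right) }\left( \mathcal{B},\left( \mathbb{Z}\left[ \mathcal{A}_{T}\right] \right) _{\#}\right) \simeq \Hom _{\mathbf{pCS}\left( X,\mathbf{Ab}\right) }\left( \mathcal{B},\mathbb{Z}\left[ \mathcal{A}_{T}\right] \right) $. Chaining these natural isomorphisms yields $\Hom _{\mathbf{CS}\left( X,\mathbf{Ab}\right) }\left( \mathcal{B},\left( \mathbb{Z}\left[ \mathcal{A}_{T}\right] \right) _{\#}\right) \simeq \Hom _{\mathbf{CS}\left( X,\mathbf{Ab}\right) }\left( \mathcal{B},\mathbb{Z}\left[ \mathcal{C}\right] \right) $ naturally in $\mathcal{B}$, and Yoneda's lemma gives $\left( \mathbb{Z}\left[ \mathcal{A}_{T}\right] \right) _{\#}\simeq \mathbb{Z}\left[ \mathcal{C}\right] =\mathbb{Z}\left[ \mathcal{A}_{\Delta ^{0}}\right] $, as required.
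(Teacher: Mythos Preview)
Your proposal is correct and follows essentially the same approach as the paper: both arguments show that any morphism from a cosheaf $\mathcal{B}$ into $\mathbb{Z}[\mathcal{A}_T]$ lands in the subgroup generated by constant maps, using Lemma~\ref{Lemma-Separation} and the two-element-cover decomposition $b=\mathcal{B}(V\hookrightarrow U)(c)+\mathcal{B}(W\hookrightarrow U)(d)$, then conclude via the coreflection adjunction and Yoneda. Your worry about cancellation is unfounded---since $f_1$ factors through neither $V$ nor $W$, it simply does not occur among the basis elements appearing on the right-hand side, so its coefficient there is zero regardless of any cancellation among the other terms.
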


\begin{proof}
Analogous to the proof of Theorem \ref{Th-Bad-Set-valued-cosheaf}.

Let $\mathcal{B}\in \mathbf{CS}\left( X,\mathbf{Ab}\right) $ be a cosheaf. A
morphism $\xi :\mathcal{B}\rightarrow \mathcal{A}_{T}$ is given by a family%
\begin{equation*}
\left( \xi _{U}:\mathcal{B}\left( U\right) \longrightarrow \mathcal{A}%
_{T}\left( U\right) =\mathbb{Z}\left[ U^{T}\right] \right) _{U\in Open\left(
X\right) }.
\end{equation*}%
We claim that for any $b\in \mathcal{B}\left( U\right) $, in 
\begin{equation*}
\xi _{U}\left( b\right) =\dsum\limits_{\alpha }^{\text{finite}}n_{\alpha }%
\left[ \varphi _{\alpha }\in U^{T}\right]
\end{equation*}%
all functions $\varphi _{\alpha }$ are \textbf{constant}. Indeed, assume
that it is not the case:%
\begin{eqnarray*}
\varphi _{\alpha }\left( t\right) &=&x\neq y=\varphi _{\alpha }\left(
s\right) , \\
s,t &\in &T.
\end{eqnarray*}%
Apply Lemma \ref{Lemma-Separation}, and find $V$ and $W$ such that

\begin{enumerate}
\item $U=V\cup W$.

\item $x\in V-W$.

\item $y\in W-V$.
\end{enumerate}

Since $\mathcal{B}$ is a cosheaf,%
\begin{equation*}
\mathcal{B}\left( U\right) =%
\coker%
\left( \mathcal{B}\left( V\cap W\right) \rightrightarrows \mathcal{B}\left(
V\right) \oplus \mathcal{B}\left( W\right) \right) ,
\end{equation*}%
therefore%
\begin{eqnarray*}
b &=&\mathcal{B}\left( V\rightarrow U\right) \left( c\right) +\mathcal{B}%
\left( W\rightarrow U\right) \left( d\right) , \\
c &\in &\mathcal{B}\left( V\right) , \\
d &\in &\mathcal{B}\left( W\right) .
\end{eqnarray*}%
It follows that%
\begin{equation*}
\dsum\limits_{\alpha }^{\text{finite}}n_{\alpha }\left[ \varphi _{\alpha
}\in U^{T}\right] =\xi _{U}\left( b\right) =\dsum\limits_{\beta }^{\text{%
finite}}m_{\beta }\left[ \psi _{\beta }\in V^{T}\right] +\dsum\limits_{%
\gamma }^{\text{finite}}k_{\gamma }\left[ \xi _{\gamma }\in W^{T}\right] .
\end{equation*}%
Since $\varphi _{\alpha }\left( T\right) $ intersects both $U-V$ and $U-W$,
no one of the functions $\psi _{\beta }$ or $\xi _{\gamma }$ is equal to $%
\varphi _{\alpha }$. The above equality is therefore impossible.
Contradiction.

It follows that any morphism $\mathcal{B}\longrightarrow \mathcal{A}_{T}$
factors uniquely through $\mathcal{C}$:%
\begin{equation*}
\mathcal{B}\longrightarrow \mathcal{C}\longrightarrow \mathcal{A}_{T},
\end{equation*}%
therefore%
\begin{equation*}
Hom_{\mathbf{CS}\left( X,\mathbf{Ab}\right) }\left( \mathcal{B},\left( 
\mathcal{A}_{T}\right) _{\#}\right) 
\simeq%
Hom_{\mathbf{pCS}\left( X,\mathbf{Ab}\right) }\left( \mathcal{B},\mathcal{A}%
_{T}\right) 
\simeq%
Hom_{\mathbf{CS}\left( X,\mathbf{Ab}\right) }\left( \mathcal{B},\mathcal{C}%
\right)
\end{equation*}%
naturally in $\mathcal{B}$. The Yoneda's lemma implies that%
\begin{equation*}
\left( \mathcal{A}_{T}\right) _{\#}%
\simeq%
\mathcal{C}.
\end{equation*}
\end{proof}

\begin{example}
\label{Ex-Non-exactness-Ab-valued}It is clear that%
\begin{equation*}
\coker%
\left( \mathbb{Z}\left[ \mathcal{A}_{\Delta ^{1}}\right] \rightrightarrows 
\mathbb{Z}\left[ \mathcal{A}_{\Delta ^{0}}\right] \right) 
\simeq%
H_{0}
\end{equation*}%
($\mathbb{Z}\left[ \mathcal{A}_{\Delta ^{0}}\right] $ and $H_{0}$ are
cosheaves, see \cite[Example 4.3]{Prasolov-Cosheafification-2016-MR3660525}).%
\begin{eqnarray*}
&&\left( \mathbb{Z}\left[ \mathcal{A}_{\Delta ^{1}}\right] \right) _{\#}%
\simeq%
\left( \mathbb{Z}\left[ \mathcal{A}_{\Delta ^{0}}\right] \right) _{\#}%
\simeq%
\mathbb{Z}\left[ \mathcal{A}_{\Delta ^{0}}\right] ,\text{ therefore} \\
&&%
\coker%
\left( \left( \mathbb{Z}\left[ \mathcal{A}_{\Delta ^{1}}\right] \right)
_{\#}\rightrightarrows \left( \mathbb{Z}\left[ \mathcal{A}_{\Delta ^{0}}%
\right] \right) _{\#}\right) 
\simeq%
\coker%
\left( \mathbb{Z}\left[ \mathcal{A}_{\Delta ^{0}}\right] \rightrightarrows 
\mathbb{Z}\left[ \mathcal{A}_{\Delta ^{0}}\right] \right) 
\simeq%
\mathbb{Z}\left[ \mathcal{A}_{\Delta ^{0}}\right] .
\end{eqnarray*}%
Let $X$ be any $T_{1}$ space such that the natural mapping $X\rightarrow \pi
_{0}\left( X\right) $ is \textbf{not} bijective. Then%
\begin{eqnarray*}
&&\mathbb{Z}\left[ \mathcal{A}_{\Delta ^{0}}\right] 
\simeq%
\coker%
\left( \left( \mathbb{Z}\left[ \mathcal{A}_{\Delta ^{1}}\right] \right)
_{\#}\rightrightarrows \left( \mathbb{Z}\left[ \mathcal{A}_{\Delta ^{0}}%
\right] \right) _{\#}\right) \\
&\ncong &\left( 
\coker%
\left( \mathbb{Z}\left[ \mathcal{A}_{\Delta ^{1}}\right] \rightrightarrows 
\mathbb{Z}\left[ \mathcal{A}_{\Delta ^{0}}\right] \right) \right) _{\#}%
\simeq%
\left( H_{0}\right) _{\#}%
\simeq%
H_{0}
\end{eqnarray*}%
because%
\begin{equation*}
\mathbb{Z}\left[ \mathcal{A}_{\Delta ^{0}}\left( X\right) \right] 
\simeq%
\mathbb{Z}\left[ X\right] \longrightarrow H_{0}\left( X\right) 
\simeq%
\mathbb{Z}\left[ \pi _{0}\left( X\right) \right]
\end{equation*}%
is not an isomorphism. Therefore,%
\begin{equation*}
\left( \bullet \right) _{\#}:\mathbf{pCS}\left( X,\mathbf{Ab}\right)
\longrightarrow \mathbf{CS}\left( X,\mathbf{Ab}\right)
\end{equation*}%
is \textbf{not} right exact.
\end{example}

\section{\label{Sec-Examples}Examples}

\subsection{Pro-categories}

\begin{example}
\label{Ex-Limits-in-Pro(K)}Below are several examples of limits $%
\underleftarrow{\lim }$ in $\mathbf{Pro}\left( \mathbf{K}\right) $.

\begin{enumerate}
\item \label{Ex-Limits-in-Pro(K)-kernels}Kernels. Given two morphisms%
\begin{equation*}
\alpha ,\beta :\mathbf{X}\longrightarrow \mathbf{Y}
\end{equation*}%
in $\mathbf{Pro}\left( \mathbf{K}\right) $. They may be \textquotedblleft
levelized\textquotedblright , i.e., one may assume that $\mathbf{X}$ and $%
\mathbf{Y}$ are defined with the same index category $\mathbf{I}$, while $%
\alpha $ and $\beta $ are level morphisms:%
\begin{eqnarray*}
\mathbf{X} &\mathbf{=}&\left( X_{i}\right) _{i\in \mathbf{I}},\mathbf{Y=}%
\left( Y_{i}\right) _{i\in \mathbf{I}}, \\
\alpha &=&\left( \alpha _{i}:X_{i}\rightarrow Y_{i}\right) _{i\in \mathbf{I}%
}, \\
\beta &=&\left( \beta _{i}:X_{i}\rightarrow Y_{i}\right) _{i\in \mathbf{I}}.
\end{eqnarray*}%
Take%
\begin{equation*}
Z_{i}%
{:=}%
\ker \left( X_{i}\overset{\alpha ,\beta }{\rightrightarrows }Y_{i}\right) .
\end{equation*}%
It can be proved that%
\begin{equation*}
\mathbf{Z}%
{:=}%
\left( Z_{i}\right) _{i\in \mathbf{I}}
\end{equation*}%
is the kernel of $\left( \alpha ,\beta :\mathbf{X}\longrightarrow \mathbf{Y}%
\right) $ in $\mathbf{Pro}\left( \mathbf{K}\right) $. Indeed, for any $%
\mathbf{T=}\left( T_{s}\right) _{s\in \mathbf{S}}$,%
\begin{eqnarray*}
&&%
\Hom%
_{\mathbf{Pro}\left( \mathbf{K}\right) }\left( \mathbf{T,Z}\right) 
\simeq%
\underleftarrow{\lim }_{i\in \mathbf{I}}~\underrightarrow{\lim }_{s\in 
\mathbf{S}}~%
\Hom%
_{\mathbf{K}}\left( T_{s},Z_{i}\right) 
\simeq
\\
&&%
\simeq%
\underleftarrow{\lim }_{i\in \mathbf{I}}~\underrightarrow{\lim }_{s\in 
\mathbf{S}}~\ker \left( 
\Hom%
_{\mathbf{K}}\left( T_{s},X_{i}\right) \rightrightarrows 
\Hom%
_{\mathbf{K}}\left( T_{s},Y_{i}\right) \right) 
\simeq
\\
&&%
\simeq%
\underleftarrow{\lim }_{i\in \mathbf{I}}~\ker \left( \underrightarrow{\lim }%
_{s\in \mathbf{S}}~%
\Hom%
_{\mathbf{K}}\left( T_{s},X_{i}\right) \rightrightarrows \underrightarrow{%
\lim }_{s\in \mathbf{S}}~%
\Hom%
_{\mathbf{K}}\left( T_{s},Y_{i}\right) \right) 
\simeq
\\
&&%
\simeq%
\ker \left( \underleftarrow{\lim }_{i\in \mathbf{I}}~\underrightarrow{\lim }%
_{s\in \mathbf{S}}~%
\Hom%
_{\mathbf{K}}\left( T_{s},X_{i}\right) \rightrightarrows \underleftarrow{%
\lim }_{i\in \mathbf{I}}~\underrightarrow{\lim }_{s\in \mathbf{S}}~%
\Hom%
_{\mathbf{K}}\left( T_{s},Y_{i}\right) \right) 
\simeq
\\
&&%
\simeq%
\ker \left( 
\Hom%
_{\mathbf{Pro}\left( \mathbf{K}\right) }\left( \mathbf{T},\mathbf{X}\right)
\rightrightarrows 
\Hom%
_{\mathbf{Pro}\left( \mathbf{K}\right) }\left( \mathbf{T},\mathbf{Y}\right)
\right) ,
\end{eqnarray*}%
because $\underleftarrow{\lim }$ preserves all small limits, while \textbf{%
filtered} $\underrightarrow{\lim }$ preserves all \textbf{finite} limits.

\item \label{Ex-Limits-in-Pro(K)-products}Products. Let $\left( \mathbf{X}%
_{\alpha }\right) _{\alpha \in A}$ be a family of pro-objects, and let%
\begin{equation*}
\mathbf{X}=\dprod\limits_{\alpha \in A}\mathbf{X}_{\alpha }
\end{equation*}%
in $\mathbf{Pro}\left( \mathbf{K}\right) $.

\begin{enumerate}
\item $A$ is finite. Proceed as for kernels. One may assume that $\mathbf{X}%
_{\alpha }$ have the same index category:%
\begin{equation*}
\mathbf{X}_{\alpha }=\left( X_{\alpha ,i}\right) _{i\in \mathbf{I}}.
\end{equation*}%
The construction%
\begin{equation*}
\mathbf{X=}\left( \dprod\limits_{\alpha \in A}X_{\alpha ,i}\right) _{i\in 
\mathbf{I}}
\end{equation*}%
gives the desired product.

\item For a general $A$, let $\left( I,\leq \right) $ be the poset of finite
subsets of $A$ with the following ordering%
\begin{equation*}
S\leq T\iff T\subseteq S.
\end{equation*}%
Clearly, $\left( I,\leq \right) $ is \textbf{co}directed, and the
corresponding category $\mathbf{I}$ is \textbf{co}filtered. Moreover,%
\begin{equation*}
\mathbf{X}=\dprod\limits_{\alpha \in A}\mathbf{X}_{\alpha }%
\simeq%
\underset{S\in \left( I,\leq \right) }{\underleftarrow{\lim }}%
~\dprod\limits_{\alpha \in S}\mathbf{X}_{\alpha }.
\end{equation*}%
Notice that $\iota _{\mathbf{K}}:\mathbf{K}\rightarrow \mathbf{Pro}\left( 
\mathbf{K}\right) $ does \textbf{not} in general preserve products. Say, if $%
\mathbf{K}=\mathbf{Ab}$, $\mathbb{N}=\left\{ 0,1,2,\dots \right\} $ and $X$
is a nontrivial abelian group, then%
\begin{equation*}
\mathbf{Y}=\dprod\limits_{n\in \mathbb{N}}X=\left( 0\longleftarrow
X\longleftarrow X^{2}\longleftarrow \dots \longleftarrow X^{n}\longleftarrow
\dots \right)
\end{equation*}%
in $\mathbf{Pro}\left( \mathbf{Ab}\right) $, which is \textbf{not}
isomorphic to the abelian group $\Pi _{n\in \mathbb{N}}X$. In fact, $\mathbf{%
Y}$ is not rudimentary, see Example \ref{Ex-Non-rudimentary-tower}.
\end{enumerate}

\item \label{Ex-Limits-in-Pro(K)-cofiltered-limits}\textbf{Co}filtered
limits.

\begin{enumerate}
\item Proposition \ref{Prop-Facts-pro-objects}(\ref%
{Prop-Facts-pro-objects-admits-cofiltered-limits}) guarantees the existence.
The construction is given in \cite[Proposition A.4.4]{Artin-Mazur-MR883959}.

\item Consider the following simple case. Let%
\begin{equation*}
D:\mathbf{I}\longrightarrow \mathbf{Pro}\left( \mathbf{K}\right)
\end{equation*}%
be a \textbf{co}filtered diagram, and assume that for any $i\in \mathbf{I}$, 
$D\left( i\right) $ is a rudimentary pro-object, $D\left( i\right) 
\simeq%
E_{i}\in \mathbf{K}$. The diagram $\left( E_{i}\right) _{i\in \mathbf{I}}$
represents an object $\mathbf{E}$ in $\mathbf{Pro}\left( \mathbf{K}\right) $%
. It is easy to see that%
\begin{equation*}
\mathbf{E}%
\simeq%
\underset{i\in \mathbf{I}}{\underleftarrow{\lim }}~D\left( i\right) .
\end{equation*}%
It follows that the embedding%
\begin{equation*}
\iota _{\mathbf{K}}:\mathbf{K}\longrightarrow \mathbf{Pro}\left( \mathbf{K}%
\right)
\end{equation*}%
preserves cofiltered limits. However, due to Example \ref%
{Ex-Limits-in-Pro(K)}(\ref{Ex-Limits-in-Pro(K)-products}),%
\begin{equation*}
\iota _{\mathbf{K}}:\mathbf{C}\longrightarrow \mathbf{Pro}\left( \mathbf{K}%
\right)
\end{equation*}%
is \textbf{not} continuous.
\end{enumerate}

\item \label{Ex-Limits-in-Pro(K)-small-limits}Small limits. Any small limit
is a combination of two products and a kernel:%
\begin{equation*}
\underset{i\in \mathbf{I}}{\underleftarrow{\lim }}~X\left( i\right) =\ker
\left( \dprod\limits_{i\in \mathbf{I}}X\left( i\right) \rightrightarrows
\dprod\limits_{\left( i\rightarrow j\right) \in 
\Hom%
_{\mathbf{I}}\left( i,j\right) }X\left( j\right) \right) .
\end{equation*}
\end{enumerate}
\end{example}

\begin{example}
\label{Ex-Colimits-in-Pro(K)}~

\begin{enumerate}
\item \label{Ex-Colimits-in-Pro(K)-cokernels}Cokernels. Given two morphisms%
\begin{equation*}
\alpha ,\beta :\mathbf{X}\longrightarrow \mathbf{Y}
\end{equation*}%
in $\mathbf{Pro}\left( \mathbf{K}\right) $. They may be \textquotedblleft
levelized\textquotedblright , i.e., one may assume that $\mathbf{X}$ and $%
\mathbf{Y}$ are defined with the same index category $\mathbf{I}$, while $%
\alpha $ and $\beta $ are level morphisms:%
\begin{eqnarray*}
\mathbf{X} &\mathbf{=}&\left( X_{i}\right) _{i\in \mathbf{I}},\mathbf{Y=}%
\left( Y_{i}\right) _{i\in \mathbf{I}}, \\
\alpha &=&\left( \alpha _{i}:X_{i}\rightarrow Y_{i}\right) _{i\in \mathbf{I}%
}, \\
\beta &=&\left( \beta _{i}:X_{i}\rightarrow Y_{i}\right) _{i\in \mathbf{I}}.
\end{eqnarray*}%
Take%
\begin{equation*}
Z_{i}%
{:=}%
\coker%
\left( X_{i}\overset{\alpha ,\beta }{\rightrightarrows }Y_{i}\right) .
\end{equation*}%
It can be proved that%
\begin{equation*}
\mathbf{Z}%
{:=}%
\left( Z_{i}\right) _{i\in \mathbf{I}}
\end{equation*}%
is the cokernel of $\left( \alpha ,\beta :\mathbf{X}\longrightarrow \mathbf{Y%
}\right) $ in $\mathbf{Pro}\left( \mathbf{K}\right) $. Indeed, for any $%
\mathbf{T=}\left( T_{s}\right) _{s\in \mathbf{S}}$,%
\begin{eqnarray*}
&&%
\Hom%
_{\mathbf{Pro}\left( \mathbf{K}\right) }\left( \mathbf{Z,T}\right) 
\simeq%
\underleftarrow{\lim }_{s\in \mathbf{S}}~\underrightarrow{\lim }_{i\in 
\mathbf{I}}~%
\Hom%
_{\mathbf{K}}\left( Z_{i},T_{s}\right) 
\simeq
\\
&&%
\simeq%
\underleftarrow{\lim }_{s\in \mathbf{S}}~\underrightarrow{\lim }_{i\in 
\mathbf{I}}~\ker \left( 
\Hom%
_{\mathbf{K}}\left( Y_{i},T_{s}\right) \rightrightarrows 
\Hom%
_{\mathbf{K}}\left( X_{i},T_{s}\right) \right) 
\simeq
\\
&&%
\simeq%
\underleftarrow{\lim }_{s\in \mathbf{S}}~\ker \left( \underrightarrow{\lim }%
_{i\in \mathbf{I}}~%
\Hom%
_{\mathbf{K}}\left( Y_{i},T_{s}\right) \rightrightarrows \underrightarrow{%
\lim }_{s\in \mathbf{S}}~%
\Hom%
_{\mathbf{K}}\left( X_{i},T_{s}\right) \right) 
\simeq
\\
&&%
\simeq%
\ker \left( \underleftarrow{\lim }_{s\in \mathbf{S}}~\underrightarrow{\lim }%
_{i\in \mathbf{I}}~%
\Hom%
_{\mathbf{K}}\left( Y_{i},T_{s}\right) \rightrightarrows \underleftarrow{%
\lim }_{s\in \mathbf{S}}~\underrightarrow{\lim }_{i\in \mathbf{I}}~%
\Hom%
_{\mathbf{K}}\left( X_{i},T_{s}\right) \right) 
\simeq
\\
&&%
\simeq%
\ker \left( 
\Hom%
_{\mathbf{Pro}\left( \mathbf{K}\right) }\left( \mathbf{Y},\mathbf{T}\right)
\rightrightarrows 
\Hom%
_{\mathbf{Pro}\left( \mathbf{K}\right) }\left( \mathbf{X},\mathbf{T}\right)
\right) ,
\end{eqnarray*}%
because $\underleftarrow{\lim }$ preserves all small limits, while \textbf{%
filtered} $\underrightarrow{\lim }$ preserves all \textbf{finite} colimits.

\item \label{Ex-Colimits-in-Pro(K)-coproducts}Coproducts. Let $\left( 
\mathbf{X}_{\alpha }\in \mathbf{Pro}\left( \mathbf{K}\right) \right)
_{\alpha \in A}$ be given by%
\begin{equation*}
\mathbf{X}_{\alpha }=\left( X_{\alpha ,i}\right) _{i\in \mathbf{I}_{\alpha
}}.
\end{equation*}%
Define 
\begin{equation*}
\mathbf{I}=\dprod\limits_{\alpha \in A}\mathbf{I}_{\alpha }.
\end{equation*}%
$\mathbf{I}$ is clearly cofiltered. For $\left( i\left( \alpha \right)
\right) _{\alpha \in A}\in \mathbf{I}$ let%
\begin{equation*}
X_{i}=\dcoprod\limits_{\alpha \in A}X_{\alpha ,i\left( \alpha \right) },
\end{equation*}%
and define $\mathbf{X}=\left( X_{i}\right) _{i\in \mathbf{I}}$. The morphisms%
\begin{equation*}
f_{\alpha }\in 
\Hom%
_{\mathbf{Pro}\left( \mathbf{K}\right) }\left( \mathbf{X}_{\alpha },\mathbf{X%
}\right) =\underleftarrow{\lim }_{i\in \mathbf{I}}~\underrightarrow{\lim }%
_{j\in \mathbf{I}_{\alpha }}~%
\Hom%
_{\mathbf{K}}\left( X_{\alpha ,j},X_{i}\right)
\end{equation*}%
are given by $\left( \left[ \iota _{\alpha ,i\left( \alpha \right) }\right]
\right) _{i\in \mathbf{I}}$ where $\left[ \iota _{\alpha ,i\left( \alpha
\right) }\right] $ is the class (in $\underrightarrow{\lim }_{j\in \mathbf{I}%
_{\alpha }}~$) of the embedding of $X_{\alpha ,i\left( \alpha \right) }$
into the coproduct.

\begin{enumerate}
\item Let $Y\in \mathbf{K}\subseteq \mathbf{Pro}\left( \mathbf{K}\right) $
be a \textbf{rudimentary} pro-object. Let also%
\begin{equation*}
\varphi :%
\Hom%
_{\mathbf{Pro}\left( \mathbf{K}\right) }\left( \mathbf{X},Y\right)
\longrightarrow \dprod\limits_{\alpha \in A}%
\Hom%
_{\mathbf{Pro}\left( \mathbf{K}\right) }\left( \mathbf{X}_{\alpha },Y\right)
\end{equation*}%
be the following mapping: $\varphi \left( g\right) =\left( g\circ f_{\alpha
}\right) _{\alpha \in A}$. We claim that $\varphi $ is an isomorphism.

\begin{enumerate}
\item $\varphi $ is a \textbf{mono}morphism. Indeed, let $g,h\in 
\Hom%
_{\mathbf{Pro}\left( \mathbf{K}\right) }\left( \mathbf{X},Y\right) $ be
represented by $i,j\in \mathbf{I}$ and%
\begin{eqnarray*}
\left( g\left( \alpha \right) :X_{\alpha ,i\left( \alpha \right)
}\rightarrow Y\right) _{\alpha \in A} &:&X_{i}=\dcoprod\limits_{\alpha \in
A}X_{\alpha ,i\left( \alpha \right) }\longrightarrow Y, \\
\left( h\left( \alpha \right) :X_{\alpha ,j\left( \alpha \right)
}\rightarrow Y\right) _{\alpha \in A} &:&X_{j}=\dcoprod\limits_{\alpha \in
A}X_{\alpha ,j\left( \alpha \right) }\longrightarrow Y.
\end{eqnarray*}%
Assume that $\varphi \left( g\right) =\varphi \left( h\right) $. It means
that $g\circ f_{\alpha }=h\circ f_{\alpha }$ for each $\alpha \in A$.
Therefore, for each $\alpha \in A$ there are morphisms%
\begin{eqnarray*}
\left( s\left( \alpha \right) \rightarrow i\left( \alpha \right) \right)
&\in &%
\Hom%
_{\mathbf{I}_{\alpha }}\left( s\left( \alpha \right) ,i\left( \alpha \right)
\right) , \\
\left( s\left( \alpha \right) \rightarrow j\left( \alpha \right) \right)
&\in &%
\Hom%
_{\mathbf{I}_{\alpha }}\left( s\left( \alpha \right) ,j\left( \alpha \right)
\right) ,
\end{eqnarray*}%
such that the diagrams%
\begin{equation*}
\begin{diagram}[size=3.0em,textflow]
X_{\alpha ,s\left( \alpha \right) } & \rTo & X_{\alpha ,i\left(
\alpha \right) } \\ 
\dTo &  & \dTo>{g\left( \alpha \right)} \\ 
X_{\alpha ,j\left( \alpha \right) } & \rTo^{h\left( \alpha \right) } & Y \\
\end{diagram}%
\end{equation*}%
commute in $\mathbf{K}$. The family $\left( s\left( \alpha \right) \right)
_{\alpha \in A}$ determines an object $s$ of $\mathbf{I}$, and we have two
morphisms $s\rightarrow i$ and $s\rightarrow j$ in $\mathbf{I}$. It follows
that the diagram%
\begin{equation*}
\begin{diagram}[size=3.0em,textflow]
X_{s} & \rTo & X_{i} \\ 
\dTo &  & \dTo>{g} \\ 
X_{j} & \rTo^{h} & Y \\
\end{diagram}
%
\end{equation*}%
commutes in $\mathbf{K}$, and therefore $g=h$.

\item $\varphi $ is an \textbf{epi}morphism. Indeed, let%
\begin{equation*}
\left( g\left( \alpha \right) \right) _{\alpha \in A}\in
\dprod\limits_{\alpha \in A}%
\Hom%
_{\mathbf{Pro}\left( \mathbf{K}\right) }\left( \mathbf{X}_{\alpha },Y\right)
,
\end{equation*}%
and let each $g\left( \alpha \right) $ be represented by $i\left( \alpha
\right) \in \mathbf{I}_{\alpha }$ and%
\begin{equation*}
h_{\alpha ,i\left( \alpha \right) }:X_{\alpha ,i\left( \alpha \right)
}\longrightarrow Y.
\end{equation*}%
The $A$-tuple $\,i=\left( i\left( \alpha \right) \right) _{\alpha \in A}$
determines an object of $\mathbf{I}$, and we have%
\begin{equation*}
h=\left( h_{\alpha ,i\left( \alpha \right) }\right) _{\alpha \in
A}:X_{i}=\dcoprod\limits_{\alpha \in A}X_{\alpha ,i\left( \alpha \right)
}\longrightarrow X.
\end{equation*}%
Clearly,%
\begin{equation*}
\varphi \left( h\right) =\left( h\circ f_{\alpha }\right) _{\alpha \in
A}=\left( g\left( \alpha \right) \right) _{\alpha \in A},
\end{equation*}%
and $\varphi $ is an epimorphism.
\end{enumerate}

\item Let now $\mathbf{Y}=\left( Y_{s}\right) _{s\in \mathbf{S}}$.%
\begin{eqnarray*}
&&%
\Hom%
_{\mathbf{Pro}\left( \mathbf{K}\right) }\left( \mathbf{X},\mathbf{Y}\right) 
\simeq%
\underleftarrow{\lim }_{s\in \mathbf{S}}~%
\Hom%
_{\mathbf{Pro}\left( \mathbf{K}\right) }\left( \mathbf{X},Y_{s}\right) 
\simeq
\\
&&%
\simeq%
\underleftarrow{\lim }_{s\in \mathbf{S}}~\dprod\limits_{\alpha \in A}%
\Hom%
_{\mathbf{Pro}\left( \mathbf{K}\right) }\left( \mathbf{X}_{\alpha
},Y_{s}\right) 
\simeq%
\dprod\limits_{\alpha \in A}\underleftarrow{\lim }_{s\in \mathbf{S}}~%
\Hom%
_{\mathbf{Pro}\left( \mathbf{K}\right) }\left( \mathbf{X}_{\alpha
},Y_{s}\right) 
\simeq
\\
&&%
\simeq%
\dprod\limits_{\alpha \in A}%
\Hom%
_{\mathbf{Pro}\left( \mathbf{K}\right) }\left( \mathbf{X}_{\alpha },\mathbf{Y%
}\right) ,
\end{eqnarray*}%
because limits commute with limits, e.g., products.
\end{enumerate}

\item \label{Ex-Colimits-in-Pro(K)-small-colimits}Small colimits. Any small
colimit is a combination of two coproducts and a cokernel:%
\begin{equation*}
\underrightarrow{\lim }~\left( F:\mathbf{I}\rightarrow \mathbf{Pro}\left( 
\mathbf{K}\right) \right) =%
\coker%
\left( \dcoprod\limits_{\left( i\rightarrow j\right) \in 
\Hom%
_{\mathbf{I}}\left( i,j\right) }F\left( i\right) \rightrightarrows
\dcoprod\limits_{i\in \mathbf{I}}F\left( i\right) \right) .
\end{equation*}
\end{enumerate}
\end{example}

Below is a series of examples of rudimentary and non-rudimentary objects. We
will need the following simple lemma:

\begin{lemma}
\label{Lemma-Split-mono-epi}Let $\mathbf{K}$ be a category, and 
\begin{equation*}
\varphi \in 
\Hom%
_{\mathbf{K}}\left( X,Y\right) .
\end{equation*}

\begin{enumerate}
\item If $\varphi $ is a split monomorphism (i.e., has a left inverse) and
an epimorphism, then $\varphi $ is an isomorphism.

\item If $\varphi $ is a split epimorphism (i.e., has a right inverse) and a
monomorphism, then $\varphi $ is an isomorphism.
\end{enumerate}
\end{lemma}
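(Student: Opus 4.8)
The statement is a completely elementary category-theoretic fact, so the plan is to argue directly from the definitions of split mono, split epi, mono, and epi. I would prove the two parts separately, noting that part (2) follows from part (1) by duality (passing to $\mathbf{K}^{op}$ turns a split epimorphism into a split monomorphism and a monomorphism into an epimorphism), so the real content is a single short diagram chase. Throughout I would denote by $\psi\in\Hom_{\mathbf{K}}(Y,X)$ the given left inverse, so that $\psi\circ\varphi=\mathbf{1}_X$, and the goal is to show that $\varphi\circ\psi=\mathbf{1}_Y$ as well, which together with $\psi\circ\varphi=\mathbf{1}_X$ exhibits $\varphi$ as an isomorphism with inverse $\psi$.

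For part (1), the key computation is
\begin{equation*}
(\varphi\circ\psi)\circ\varphi=\varphi\circ(\psi\circ\varphi)=\varphi\circ\mathbf{1}_X=\varphi=\mathbf{1}_Y\circ\varphi.
\end{equation*}
Thus the two morphisms $\varphi\circ\psi$ and $\mathbf{1}_Y$ from $Y$ to $Y$ become equal after precomposition with $\varphi$. Since $\varphi$ is an epimorphism, it is right-cancellable, so $\varphi\circ\psi=\mathbf{1}_Y$. Combined with $\psi\circ\varphi=\mathbf{1}_X$, this shows $\varphi$ is an isomorphism with $\varphi^{-1}=\psi$. For part (2), I would apply part (1) to the morphism $\varphi^{op}\in\Hom_{\mathbf{K}^{op}}(Y,X)$: a right inverse of $\varphi$ in $\mathbf{K}$ is a left inverse of $\varphi^{op}$ in $\mathbf{K}^{op}$, and a monomorphism in $\mathbf{K}$ is an epimorphism in $\mathbf{K}^{op}$, so $\varphi^{op}$ is a split monomorphism and an epimorphism in $\mathbf{K}^{op}$, hence an isomorphism there, hence $\varphi$ is an isomorphism in $\mathbf{K}$. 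Alternatively one can repeat the symmetric one-line chase with the right inverse.

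There is essentially no obstacle here; the only point requiring any care is making sure the cancellation is applied on the correct side (epimorphisms cancel on the right, monomorphisms on the left), and stating the duality cleanly rather than re-deriving it. I would keep the writeup to a few lines.
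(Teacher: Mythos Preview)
Your proof is correct and essentially identical to the paper's: the paper also computes $\varphi\circ\xi\circ\varphi=\varphi=\mathbf{1}_Y\circ\varphi$ (with $\xi$ the left inverse) and cancels the epimorphism $\varphi$ on the right, then obtains part~(2) by duality. The only addition in the paper is a brief alternative proof of part~(1) via the Yoneda embedding $h^\bullet$, observing that $h^\varphi$ is both mono and epi in $\mathbf{Set}^{\mathbf{K}}$, hence an isomorphism.
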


\begin{proof}
~

\begin{enumerate}
\item \label{Lemma-Epi-split-mono}~

\begin{enumerate}
\item It follows that $\xi \circ \varphi =\mathbf{1}_{X}$ for some $\xi
:Y\rightarrow X$. Therefore,%
\begin{equation*}
\varphi \circ \xi \circ \varphi =\varphi =\mathbf{1}_{Y}\circ \varphi .
\end{equation*}%
Since $\varphi $ is an epimorphism, $\varphi \circ \xi =\mathbf{1}_{Y}$, and
we are done.

\item Alternative proof. $\varphi $ is a split monomorphism iff $h^{\varphi
}:h^{Y}\rightarrow h^{X}$ is an \textbf{epi}morphism in $\mathbf{Set}^{%
\mathbf{K}}$, and is an epimorphism, iff $h^{\varphi }:h^{Y}\rightarrow
h^{X} $ is a \textbf{mono}morphism in $\mathbf{Set}^{\mathbf{K}}$.
Therefore, $h^{\varphi }$ is an isomorphism, and $\varphi $ is an
isomorphism as well, due to Yoneda's lemma.
\end{enumerate}

\item \label{Lemma-Mono-split-epi}Dual to (\ref{Lemma-Epi-split-mono}).
\end{enumerate}
\end{proof}

\label{Cor-Non-trivial-pro-object}\label{Cor-Non-rudimentary-pro-object}

\begin{example}
\label{Ex-Non-rudimentary-tower}Let $\mathbf{X}\in \mathbf{Pro}\left( 
\mathbf{K}\right) $ be given by the tower%
\begin{equation*}
X_{0}\overset{\rho _{0}}{\longleftarrow }X_{1}\overset{\rho _{1}}{%
\longleftarrow }X_{2}\overset{\rho _{2}}{\longleftarrow }\dots \overset{\rho
_{i-1}}{\longleftarrow }X_{i}\overset{\rho _{i}}{\longleftarrow }\dots
\end{equation*}%
Assume one of the following:

\begin{enumerate}
\item \label{Ex-Non-rudimentary-tower-epi}All $\rho _{i}$ are epimorphisms,
but not isomorphisms.

\item \label{Ex-Non-rudimentary-tower-mono}$\mathbf{K}$ is abelian, and all $%
\rho _{i}$ are monomorphisms, but not isomorphisms.
\end{enumerate}

Then $\mathbf{X}$ is \textbf{not} rudimentary.
\end{example}

\begin{proof}
For $i\leq j$, denote%
\begin{equation*}
\rho _{i,j}~%
{:=}%
~\rho _{i}\circ \rho _{i+1}\circ \dots \circ \rho _{j-1}.
\end{equation*}%
Assume $\mathbf{X}%
\simeq%
Z\in \mathbf{K}$. There are two morphisms%
\begin{equation*}
\alpha :\mathbf{X}\longrightarrow Z,\beta :Z\longrightarrow \mathbf{X}
\end{equation*}

in $\mathbf{Pro}\left( \mathbf{K}\right) $, such that%
\begin{equation*}
\alpha \circ \beta =1_{Z},\beta \circ \alpha =1_{\mathbf{X}}.
\end{equation*}

$\alpha \in 
\Hom%
_{\mathbf{Pro}\left( \mathbf{K}\right) }\left( \mathbf{X},Z\right) =%
\underrightarrow{\lim }_{i}~%
\Hom%
_{\mathbf{K}}\left( X_{i},Z\right) $ is represented by $\gamma
:X_{k}\rightarrow Z$ for some $k\geq 0$.

$\beta \in 
\Hom%
_{\mathbf{Pro}\left( \mathbf{K}\right) }\left( Z,\mathbf{X}\right) =%
\underleftarrow{\lim }_{i}~%
\Hom%
_{\mathbf{K}}\left( Z,X_{i}\right) $ is represented by the family $\left(
\delta _{i}:Z\rightarrow X_{i}\right) _{i\geq 0}$ where $\rho _{i}\circ
\delta _{i+1}=\delta _{i}$ for all $i\geq 0$. Clearly,%
\begin{equation*}
\rho _{i,j}\circ \delta _{j}=\delta _{i},0\leq i\leq j.
\end{equation*}%
One may ignore the items $X_{i}$, $i<k$, because the index subcategory%
\begin{equation*}
\mathbf{J}=\left\{ k\longleftarrow k+1\longleftarrow k+2\longleftarrow \dots
\right\}
\end{equation*}%
is co-cofinal in the main index category%
\begin{equation*}
\mathbf{I}=\left\{ 0\longleftarrow 1\longleftarrow 2\longleftarrow \dots
\right\} .
\end{equation*}%
See the diagrams below:%
\begin{equation*}
\begin{diagram}[size=3.0em,textflow]
X_{k} & \lTo^{\rho _{k}} & X_{k+1} & \lTo^{\rho _{k+1}} & X_{k+2} & \lTo^{\rho _{k+2}} & \dots
& \lTo^{\rho _{i-1}} & X_{i} & \lTo^{\rho _{i}} & \dots \\ 
\dTo<{\gamma} & \ruTo<{\delta _{k+1}} & & \ruTo(4,2) & & & & \ruTo(8,2)>{\delta _{i}} & & & & & & \\
Z &  &  &  &  & & & \\
\end{diagram}
%
\end{equation*}%
and%
\begin{equation*}
\begin{diagram}[size=3.0em,textflow]
X_{k} & \lTo^{\rho _{k}} & X_{k+1} & \lTo^{\rho _{k+1}} & X_{k+2} & \lTo^{\rho _{k+2}} & \dots
& \lTo^{\rho _{j-1}} & X_{j} & \lTo^{\rho _{j}} & \dots \\ 
\dTo<{\delta _{k}\circ \gamma} & \rdTo>{\delta _{k+1}\circ \gamma} \rdTo(8,2)>{\delta _{j} \circ \gamma} &&&&&&&&&&&&&&&&&& \\
X_{k} & \lTo_{\rho _{k}} & X_{k+1} & \lTo_{\rho _{k+1}} & X_{k+2} & \lTo_{\rho _{k+2}} & \dots
& \lTo_{\rho _{j-1}} & X_{j} & \lTo_{\rho _{j}} & \dots \\ 
\end{diagram}
%
\end{equation*}

$\alpha \circ \beta =\mathbf{1}_{Z}$ means $\gamma \circ \delta _{k}=\mathbf{%
1}_{Z}$. It follows that $Z$ is a \textbf{retract} of $X_{k}$, while $\gamma 
$ and $\delta _{k}$ are the \textbf{retraction} and \textbf{section},
respectively. Therefore, $\gamma $ is a \textbf{split} \textbf{epi}morphism,
while $\delta _{k}$ is a \textbf{split} \textbf{mono}morphism. It follows
also that for any $s\geq k$%
\begin{equation}
\mathbf{1}_{Z}=\gamma \circ \delta _{k}=\gamma \circ \rho _{k,s}\circ \delta
_{s}.  \label{Line-Tower-gamma-delta}
\end{equation}

$\beta \circ \alpha $ is represented (compare to \cite[Remark 2.1.9]%
{Prasolov-Cosheaves-2021-MR4347662}) by $\left( \varepsilon _{j}\right)
_{j\geq k}$, where each $\varepsilon _{j}$ is, in turn, represented by the
equivalence class%
\begin{equation*}
\varepsilon _{j}=\left[ \delta _{j}\circ \gamma \right] \in \Phi _{j}~%
{:=}%
~\underset{i\geq k}{\underrightarrow{\lim }}~%
\Hom%
_{\mathbf{K}}\left( X_{i},X_{j}\right) .
\end{equation*}%
The equality%
\begin{equation*}
\beta \circ \alpha =\mathbf{1}_{\mathbf{X}}\in 
\Hom%
_{\mathbf{Pro}\left( \mathbf{K}\right) }\left( \mathbf{X},\mathbf{X}\right) =%
\underset{j\geq k}{\underleftarrow{\lim }}~\underset{i\geq k}{%
\underrightarrow{\lim }}~%
\Hom%
_{\mathbf{K}}\left( X_{i},X_{j}\right) =\underset{j\geq k}{\underleftarrow{%
\lim }}~\Phi _{j}
\end{equation*}%
means that for each $j\geq k$ the classes $\left[ \delta _{j}\circ \gamma %
\right] $ and $\left[ \mathbf{1}_{X_{j}}\right] $ are \textbf{equal} in $%
\Phi _{j}$, i.e., there exists%
\begin{equation*}
s\left( j\right) \geq \max \left\{ k,j\right\}
\end{equation*}%
such that%
\begin{equation}
\delta _{j}\circ \gamma \circ \rho _{k,s\left( j\right) }=\mathbf{1}%
_{X_{j}}\circ \rho _{j,s\left( j\right) }=\rho _{j,s\left( j\right) }.
\label{Line-Tower-delta-gamma}
\end{equation}

\begin{enumerate}
\item Assume now that all $\rho _{i}$ are \textbf{epi}morphisms, but \textbf{%
not} isomorphisms. All $\rho _{i,j}$, $k\leq i\leq j$, are epimorphisms,
too. Put $j=k+1$ into (\ref{Line-Tower-delta-gamma}):%
\begin{eqnarray*}
\left( \delta _{k+1}\circ \gamma \circ \rho _{k}\right) \circ \rho
_{k+1,s\left( k+1\right) } &=& \\
\delta _{k+1}\circ \gamma \circ \rho _{k,s\left( k+1\right) } &=&\rho
_{k+1,s\left( k+1\right) }=\mathbf{1}_{X_{k+1}}\circ \rho _{k+1,s\left(
k+1\right) }.
\end{eqnarray*}%
Since $\rho _{k+1,s\left( k+1\right) }$ is an epimorphism,%
\begin{equation*}
\delta _{k+1}\circ \gamma \circ \rho _{k}=\mathbf{1}_{X_{k+1}}.
\end{equation*}%
It follows that $\rho _{k}$ is a \textbf{split} \textbf{mono}morphism. Lemma %
\ref{Lemma-Split-mono-epi} implies that $\rho _{k}$ is an isomorphism.
Contradiction.

\item Assume that $\mathbf{K}$ is abelian, and that all $\rho _{i}$ are 
\textbf{mono}morphisms, but \textbf{not} isomorphisms.

Clearly, since $\delta _{k}=\rho _{k,s\left( k\right) }\circ \delta
_{s\left( k\right) }$,%
\begin{equation*}
\Imm%
\delta _{k}\subseteq 
\Imm%
\rho _{k,s\left( k\right) }.
\end{equation*}%
Put $j=k$ into (\ref{Line-Tower-delta-gamma}):%
\begin{equation*}
\delta _{k}\circ \gamma \circ \rho _{k,s\left( k\right) }=\rho _{k,s\left(
k\right) }.
\end{equation*}%
It follows that $%
\Imm%
\rho _{k,s\left( k\right) }\subseteq 
\Imm%
\delta _{k}$, therefore%
\begin{equation*}
\Imm%
\delta _{k}=%
\Imm%
\rho _{k,s\left( k\right) }.
\end{equation*}%
On the other hand, since $\delta _{k}=\rho _{k,s\left( k\right) +1}\circ
\delta _{s\left( k\right) +1}$,%
\begin{equation*}
\Imm%
\rho _{k,s\left( k\right) }=%
\Imm%
\delta _{k}\subseteq 
\Imm%
\rho _{k,s\left( k\right) +1}=%
\Imm%
\left( \rho _{k,s\left( k\right) }\circ \rho _{s\left( k\right) }\right) .
\end{equation*}%
Contradiction, because $\rho _{k,s\left( k\right) }$ and $\rho _{s\left(
k\right) }$ are \textbf{mono}morphisms, but \textbf{not} isomorphisms.
\end{enumerate}
\end{proof}

\begin{remark}
\label{Rem-Not-retract-of-rudimentary}The above proof does \textbf{not} use
equation \ref{Line-Tower-gamma-delta}. This means that in fact we have
proved a \textbf{stronger} assertion: $\mathbf{X}$ is not a \textbf{retract}
of a rudimentary pro-object. See, e.g., Example \ref%
{Prop-Retracts-non-rudimentary}.
\end{remark}

$~$

\begin{definition}
~

\begin{enumerate}
\item A morphism $\varepsilon \in 
\Hom%
_{\mathbf{K}}\left( X,X\right) $ \textbf{splits} in $\mathbf{K}$ iff there
are morphisms%
\begin{equation*}
\varphi :Y\longrightarrow X,\xi :X\longrightarrow Y
\end{equation*}%
satisfying%
\begin{equation*}
\xi \circ \varphi =\mathbf{1}_{Y},\varphi \circ \xi =\varepsilon .
\end{equation*}

\item A morphism $\varepsilon \in 
\Hom%
_{\mathbf{K}}\left( X,X\right) $ is called an \textbf{idempotent} iff $%
\varepsilon \circ \varepsilon =\varepsilon $.

\item Clearly, a morphism $\varepsilon $ is an idempotent if it splits:%
\begin{equation*}
\varepsilon \circ \varepsilon =\varphi \circ \xi \circ \varphi \circ \xi
=\varphi \circ \mathbf{1}_{Y}\circ \xi =\varphi \circ \xi =\varepsilon .
\end{equation*}%
An idempotent is called \textbf{splitting} or \textbf{non-splitting}
depending on whether it splits.
\end{enumerate}
\end{definition}

\begin{example}
\label{Ex-Splitting-and-Non-splitting}~

\begin{enumerate}
\item Categories that do \textbf{not} have non-splitting idempotents.

\begin{enumerate}
\item $\mathbf{Set}$, $\mathbf{Top}$ and $\mathbf{Top}_{\ast }$: sets,
(pointed) topological spaces:

\begin{enumerate}
\item $Y=\left\{ x\in X~|~\varepsilon \left( x\right) =x\right\} $ (with the
subspace topology);

\item $\varphi :Y\longrightarrow X$: $\varphi \left( y\right) 
{:=}%
y$;

\item $\xi :X\longrightarrow Y$: $\xi \left( x\right) 
{:=}%
\varepsilon \left( x\right) $.
\end{enumerate}

\item The category $H\left( \mathbf{Pol}_{\ast }\right) $ of \textbf{pointed}
polyhedra modulo homotopy, see \cite%
{Hastings-Heller-1981-Splitting-homotopy-idempotents-MR643520}.

\item Any pre-abelian category:

\begin{enumerate}
\item Let $Y=\ker \left( \mathbf{1}_{\mathbf{X}}-\varepsilon \right) $.

\item $f:Y\longrightarrow X$: the canonical embedding. $\left( \mathbf{1}%
_{X}-\varepsilon \right) \circ f=0$ implies $\varepsilon \circ f=f$.

\item Since $\left( \mathbf{1}_{\mathbf{X}}-\varepsilon \right) \circ
\varepsilon =0$, there exists (unique!) morphism $g:X\rightarrow Y$ such
that $f\circ g=\varepsilon $:%
\begin{equation*}
X\overset{g}{\longrightarrow }Y=\ker \left( \mathbf{1}_{\mathbf{X}%
}-\varepsilon \right) \overset{f}{\longrightarrow }X\overset{\mathbf{1}_{%
\mathbf{X}}-\varepsilon }{\longrightarrow }X.
\end{equation*}%
Moreover,%
\begin{equation*}
f\circ g\circ f=\varepsilon \circ f=f=f\circ \mathbf{1}_{Y}.
\end{equation*}%
Since $f$ is a monomorphism, $g\circ f=\mathbf{1}_{Y}$.
\end{enumerate}
\end{enumerate}

\item Categories that \textbf{do} have non-splitting idempotents.

\begin{enumerate}
\item The category $H\left( \mathbf{Pol}\right) $ of polyhedra modulo
homotopy, see \cite%
{Hastings-Heller-1981-Splitting-homotopy-idempotents-MR643520}.

\item Assume that there are projective $k$-modules that are not free, e.g.,
if $k=R\times S$. The $k$-module $R$ is projective but not free. The
category $\mathbf{F}$ of \textbf{free} $k$-modules \textbf{does} have
non-splitting idempotents. Let $P\in \mathbf{Mod}\left( k\right) $ be
projective, but not free. There is $Q\in \mathbf{Mod}\left( k\right) $ such
that%
\begin{equation*}
P\oplus Q%
\simeq%
F
\end{equation*}%
where $F$ is free. The composition%
\begin{equation*}
\varepsilon :F\twoheadrightarrow P\rightarrowtail F
\end{equation*}%
is a non-splitting idempotent.
\end{enumerate}
\end{enumerate}
\end{example}

We are interested in non-splitting idempotents because of the following

\begin{example}
\label{Prop-Retracts-non-rudimentary}\label{Ex-Retract-non-rudimentary}Let $%
\mathbf{K}$ be any category that \textbf{does} have non-splitting
idempotents, say, $\varepsilon :Y\rightarrow Y$. Consider $\mathbf{X}\in 
\mathbf{Pro}\left( \mathbf{K}\right) $ given by the tower%
\begin{equation*}
Y\overset{\varepsilon }{\longleftarrow }Y\overset{\varepsilon }{%
\longleftarrow }Y\overset{\varepsilon }{\longleftarrow }\dots \overset{%
\varepsilon }{\longleftarrow }Y\overset{\varepsilon }{\longleftarrow }\dots
\end{equation*}%
Then $\mathbf{X}$:

\begin{enumerate}
\item is a \textbf{retract} of a rudimentary pro-object;

\item is \textbf{not} rudimentary.
\end{enumerate}
\end{example}

\begin{proof}
~

\begin{enumerate}
\item There are two morphisms%
\begin{equation*}
\varphi :\mathbf{X}\longrightarrow Y,\xi :Y\longrightarrow \mathbf{X}
\end{equation*}%
given by the following diagrams:%
\begin{equation*}
\varphi =\left( 
\begin{diagram}[size=3.0em,textflow]
Y & \lTo^{\varepsilon } & Y & \lTo^{\varepsilon } & Y & \lTo^{\varepsilon } & \dots & 
\lTo^{\varepsilon } & Y & \lTo^{\varepsilon } & \dots \\ 
\dTo<{\mathbf{1}_{Y}} &  &  &  &  &  &  &  &  &  &  \\ 
Y &  &  &  &  &  &  &  &  &  & 
\end{diagram}%
\right)
\end{equation*}%
and%
\begin{equation*}
\xi =\left( 
\begin{diagram}[size=3.0em,textflow]
Y &  &  &  &  &  &  & &&&&& \\
\dTo<{\varepsilon} & \rdTo<{\varepsilon} \rdTo(4,2) \rdTo(8,2)>{\varepsilon } &  &  &  &  &  &  &  &  & \dots \\ 
Y & \lTo_{\varepsilon } & Y & \lTo_{\varepsilon } & Y & \lTo_{\varepsilon } & \dots & 
\lTo_{\varepsilon } & Y & \lTo_{\varepsilon } & \dots \\
\end{diagram}%
\right)
\end{equation*}%
Clearly $\varphi \circ \xi =\varepsilon $. Compute $\xi \circ \varphi $:%
\begin{equation*}
\xi \circ \varphi =\left( 
\begin{diagram}[size=3.0em,textflow]
Y & \lTo & Y & \lTo^{\varepsilon } & Y & \lTo^{\varepsilon } & \dots & 
\lTo^{\varepsilon } & Y & \lTo^{\varepsilon } & \dots \\ 
\dTo<{\varepsilon} & \rdTo<{\varepsilon} \rdTo(4,2) \rdTo(8,2)>{\varepsilon } &  &  &  &  &  &  &  &  & \dots \\ 
Y & \lTo_{\varepsilon } & Y & \lTo_{\varepsilon } & Y & \lTo_{\varepsilon } & \dots & 
\lTo_{\varepsilon } & Y & \lTo_{\varepsilon } & \dots \\
\end{diagram}%
\right)
\end{equation*}%
Strangely enough, $\xi \circ \varphi =\mathbf{1}_{\mathbf{X}}$ in $%
\Hom%
_{\mathbf{Pro}\left( \mathbf{K}\right) }\left( \mathbf{X},\mathbf{X}\right) $%
! Indeed,%
\begin{equation*}
\xi \circ \varphi =\left( f_{j}\in \underrightarrow{\lim }_{i}~%
\Hom%
_{\mathbf{K}}\left( X_{i}=Y,X_{j}=Y\right) \right) _{j}
\end{equation*}%
where each $f_{j}$ is represented by $\varepsilon :X_{0}=Y\rightarrow
X_{j}=Y $. Since%
\begin{eqnarray*}
\left( X_{j+1}\overset{\varepsilon }{\longrightarrow }X_{j}\overset{%
\varepsilon }{\longrightarrow }X_{0}\overset{\varepsilon }{\longrightarrow }%
X_{j}\right) &=&\left( X_{j+1}\overset{\varepsilon }{\longrightarrow }X_{j}%
\overset{\mathbf{1}_{Y}}{\longrightarrow }X_{j}\right) , \\
\left( f_{j}\right) _{j} &=&\left( \mathbf{1}_{X_{j}}\right) _{j}=\mathbf{1}%
_{\mathbf{X}}.
\end{eqnarray*}

\item Assume $\mathbf{X}%
\simeq%
Z\in \mathbf{K}$. Since $\mathbf{K}$ is a full subcategory of $\mathbf{Pro}%
\left( \mathbf{K}\right) $, one has a splitting%
\begin{equation*}
\xi \circ \varphi =\mathbf{1}_{Z},\varphi \circ \xi =\varepsilon
\end{equation*}%
of the idempotent $\varepsilon $ in the category $\mathbf{K}$. Contradiction.
\end{enumerate}
\end{proof}

\begin{example}
\label{Ex-Rudimentary-tower}Let now $\varepsilon :Z\rightarrow Z$ be any
splitting idempotent in $\mathbf{K}$, i.e., there are%
\begin{eqnarray*}
\xi &:&Y\longrightarrow Z, \\
\varphi &:&Z\longrightarrow Y, \\
\xi \circ \varphi &=&\varepsilon ,\varphi \circ \xi =\mathbf{1}_{Y}.
\end{eqnarray*}%
Then%
\begin{equation*}
\mathbf{X}=\left( Z\overset{\varepsilon }{\longleftarrow }Z\overset{%
\varepsilon }{\longleftarrow }Z\overset{\varepsilon }{\longleftarrow }\dots 
\overset{\varepsilon }{\longleftarrow }Z\overset{\varepsilon }{%
\longleftarrow }\dots \right) \in \mathbf{Pro}\left( \mathbf{K}\right)
\end{equation*}%
is rudimentary. Indeed, $\varepsilon \circ \xi =\xi \circ \varphi \circ \xi
=\xi $, therefore%
\begin{equation*}
\begin{diagram}[size=3.0em,textflow]
Y &  &  &  &  &  &  & &&&&& \\
\dTo<{\xi} & \rdTo<{\xi} \rdTo(4,2) \rdTo(8,2)>{\xi} &  &  &  &  &  &  &  &  & \dots \\ 
Z & \lTo_{\varepsilon } & Z & \lTo_{\varepsilon } & Z & \lTo_{\varepsilon } & \dots & 
\lTo_{\varepsilon } & Z & \lTo_{\varepsilon } & \dots \\
\end{diagram}
%
\end{equation*}%
defines a well-defined morphism $\Xi :Y\rightarrow \mathbf{X}$. Moreover,
there is a morphism $\Phi :\mathbf{X}\rightarrow Y$ given by%
\begin{equation*}
\begin{diagram}[size=3.0em,textflow]
Z & \lTo^{\varepsilon } & Z & \lTo^{\varepsilon } & Z & \lTo^{\varepsilon } & \dots & 
\lTo^{\varepsilon } & Z & \lTo^{\varepsilon } & \dots \\ 
\dTo<{\varphi} &  &  &  &  &  &  &  &  &  &  \\ 
Y &  &  &  &  &  &  &  &  &  & 
\end{diagram}
%
\end{equation*}%
Clearly, $\Phi \circ \Xi =\mathbf{1}_{Y}$. Compute $\Xi \circ \Phi $:%
\begin{equation*}
\begin{diagram}[size=3.0em,textflow]
Z & \lTo & Z & \lTo^{\varepsilon } & Z & \lTo^{\varepsilon } & \dots & 
\lTo^{\varepsilon } & Z & \lTo^{\varepsilon } & \dots \\ 
\dTo<{\varepsilon} & \rdTo<{\varepsilon} \rdTo(4,2) \rdTo(8,2)>{\varepsilon } &  &  &  &  &  &  &  &  & \dots \\ 
Z & \lTo_{\varepsilon } & Z & \lTo_{\varepsilon } & Z & \lTo_{\varepsilon } & \dots & 
\lTo_{\varepsilon } & Z & \lTo_{\varepsilon } & \dots \\
\end{diagram}
%
\end{equation*}%
Like in Example \ref{Ex-Retract-non-rudimentary}, $\Xi \circ \Phi =\mathbf{1}%
_{\mathbf{X}}$. Finally, $\mathbf{X}%
\simeq%
Y$.
\end{example}

\subsection{(Pre)cosheaves on Hausdorff paracompact spaces}

\begin{example}
\label{Ex-Singular-homology-cosheaf}Let $G\in \mathbf{Ab}$. The precosheaf%
\begin{equation*}
H_{0}^{sing}\left( \bullet ,G\right) 
{:=}%
U\longmapsto H_{0}^{sing}\left( U,G\right)
\end{equation*}%
where $H_{n}^{sing}$ is the $n$-th singular homology, is in fact a cosheaf 
\cite[Example 3.5]{Prasolov-smooth-cosheaves-2012-MR2879363}.
\end{example}

\begin{example}
\label{Ex-Constant-cosheaf}Let $G\in \mathbf{Ab}$. It follows from \cite[%
Theorem 1.7(3,4)]{Prasolov-smooth-cosheaves-2012-MR2879363} that the
constant cosheaf $G_{\#}$ is isomorphic to the cosheaf%
\begin{equation*}
pro\text{-}H_{0}\left( \bullet ,G\right) 
{:=}%
\left( U\mapsto pro\text{-}H_{0}\left( U,G\right) \right)
\end{equation*}%
where $pro$-$H_{0}$ is the pro-homology groups from Definition \ref%
{Def-Pro-homology-groups}.
\end{example}

\begin{example}
\label{Ex-Converging-sequence}Let $X$ be the following sequence converging
to zero (together with the limit):%
\begin{equation*}
X=\left\{ 0\right\} \cup \left\{ 1,\frac{1}{2},\frac{1}{3},\frac{1}{4}%
,...\right\} \subseteq \mathbb{R}.
\end{equation*}%
Let also $G\in \mathbf{Ab}$.

\begin{enumerate}
\item Let%
\begin{equation*}
\mathcal{A}=H_{0}^{sing}\left( \bullet ,G\right) \in \mathbf{CS}\left( X,%
\mathbf{Ab}\right) \subseteq \mathbf{CS}\left( X,\mathbf{Pro}\left( \mathbf{%
Ab}\right) \right)
\end{equation*}%
from Example \ref{Ex-Singular-homology-cosheaf} and%
\begin{equation*}
\mathcal{B}=G_{\#}=pro\text{-}H_{0}\left( \bullet ,G\right) \in \mathbf{CS}%
\left( X,\mathbf{Pro}\left( \mathbf{Ab}\right) \right)
\end{equation*}%
from Example \ref{Ex-Constant-cosheaf}.

\item Consider costalks.

\begin{enumerate}
\item Clearly, $\mathcal{A}^{x}%
\simeq%
G\in \mathbf{Ab}\subseteq \mathbf{Pro}\left( \mathbf{Ab}\right) $ if $x\in
\left\{ 1,\frac{1}{2},\frac{1}{3},\frac{1}{4},...\right\} $, i.e., $\mathcal{%
A}^{x}$ is a rudimentary (Remark \ref{Rem-Rudimentary}) object of $\mathbf{%
Pro}\left( \mathbf{Ab}\right) $.

\item If $x=0$, $\mathcal{A}^{x}$ depends on whether we consider $\mathcal{A}%
\subseteq \mathbf{CS}\left( X,\mathbf{Ab}\right) $ or $\mathcal{A}\subseteq 
\mathbf{CS}\left( X,\mathbf{Pro}\left( \mathbf{Ab}\right) \right) $. Choose
different notations \cite[Remark 2.18]%
{Prasolov-Cosheafification-2016-MR3660525}:

\begin{enumerate}
\item Let%
\begin{equation*}
U_{n}=\left\{ 0\right\} \cup \left\{ \frac{1}{n},\frac{1}{n+1},\frac{1}{n+2}%
,...\right\} \subseteq X.
\end{equation*}%
Then%
\begin{equation*}
\mathcal{A}_{\mathbf{Ab}}^{x}=\underset{n}{\underleftarrow{\lim }}~\left(
H_{0}^{sing}\left( U_{n},G\right) \right) =\underset{n}{\underleftarrow{\lim 
}}~\left( G\oplus \left( \dbigoplus\limits_{k=n}^{\infty }G\right) \right)
=\dbigcap\limits_{k=n}^{\infty }X_{n}=G
\end{equation*}%
where%
\begin{equation*}
X_{n}=G\oplus \left( \dbigoplus\limits_{k=n}^{\infty }G\right) \subseteq
G\oplus \left( \dbigoplus\limits_{k=0}^{\infty }G\right) .
\end{equation*}

\item However, $\mathcal{A}_{\mathbf{Pro}\left( \mathbf{Ab}\right) }^{x}$ is
an object of $\mathbf{X\in Pro}\left( \mathbf{Ab}\right) $ given by the
diagram%
\begin{equation*}
X_{0}\leftarrowtail X_{1}\leftarrowtail X_{2}\leftarrowtail \dots
\leftarrowtail X_{n}\leftarrowtail \dots
\end{equation*}%
This object is \textbf{not} rudimentary, see Example \ref%
{Ex-Non-rudimentary-tower}(\ref{Ex-Non-rudimentary-tower-mono}). Therefore, $%
\mathcal{A}$ is \textbf{not} a constant cosheaf (with values in $\mathbf{Pro}%
\left( \mathbf{Ab}\right) $).
\end{enumerate}
\end{enumerate}

\item On the contrary, $\mathcal{B}=G_{\#}$ is indeed a constant cosheaf.
Consider costalks. Clearly, $\mathcal{B}^{x}=G$ if $x\in \left\{ 1,\frac{1}{2%
},\frac{1}{3},\frac{1}{4},...\right\} $. Let $x=0$. It follows from
Proposition \ref{Prop-(Co)stalks} that%
\begin{equation*}
\mathcal{B}^{x}=\left( G_{\#}\right) ^{x}%
\simeq%
G^{x}=G.
\end{equation*}%
Let us however consider this example in detail. For each%
\begin{equation*}
U_{n}=\left\{ 0\right\} \cup \left\{ \frac{1}{n},\frac{1}{n+1},\frac{1}{n+2}%
,...\right\} \subseteq X
\end{equation*}%
there is an $H\left( \mathbf{Pol}\right) $-expansion (a tower)%
\begin{equation*}
P_{n}=\left( V_{n,0}\longleftarrow V_{n,1}\longleftarrow
V_{n,2}\longleftarrow \dots \longleftarrow V_{n,k}\longleftarrow \dots
\right)
\end{equation*}%
where%
\begin{equation*}
V_{n,k}=\left\{ 0\right\} \cup \left\{ \frac{1}{n},\frac{1}{n+1},\frac{1}{n+2%
},...,\frac{1}{k}\right\}
\end{equation*}%
and $V_{n,k}\longleftarrow V_{n,k+1}$ sends $\frac{1}{k+1}$ to $\frac{1}{k}$
(remaining the rest unchanged). Therefore, $\mathcal{B}\left( U_{n}\right)
=pro$-$H_{0}\left( U_{n},G\right) $%
\begin{equation*}
=\left( 0\twoheadleftarrow 0\twoheadleftarrow \dots \twoheadleftarrow
0\twoheadleftarrow G\twoheadleftarrow G^{2}\twoheadleftarrow
G^{3}\twoheadleftarrow \dots \twoheadleftarrow G^{\max \left( 1,k-n+1\right)
}\twoheadleftarrow \dots \right)
\end{equation*}%
where the first $n-1$ items in the tower are zeroes. Due to Example \ref%
{Ex-Non-rudimentary-tower}(\ref{Ex-Non-rudimentary-tower-epi}), $\mathcal{B}%
\left( U_{n}\right) $ is \textbf{not} rudimentary. However,%
\begin{equation*}
\mathcal{B}^{x}=\underset{n}{\underleftarrow{\lim }}~\mathcal{B}\left(
U_{n}\right) 
\simeq%
G
\end{equation*}%
\textbf{is} rudimentary. For any $T\in \mathbf{Ab}$,%
\begin{eqnarray*}
&&%
\Hom%
_{\mathbf{Pro}\left( \mathbf{Ab}\right) }\left( \mathcal{B}^{x},T\right) 
\simeq%
\underset{n}{\underrightarrow{\lim }}~%
\Hom%
_{\mathbf{Pro}\left( \mathbf{Ab}\right) }\left( \mathcal{B}\left(
U_{n}\right) ,T\right) 
\simeq
\\
&&%
\simeq%
\underset{n}{\underrightarrow{\lim }}~\underset{k}{\underrightarrow{\lim }}~%
\Hom%
_{\mathbf{Ab}}\left( G^{\max \left( 1,k-n+1\right) },T\right) 
\simeq%
\underset{n}{\underrightarrow{\lim }}~\underset{k}{\underrightarrow{\lim }}%
~\left( 
\Hom%
_{\mathbf{Ab}}\left( G,T\right) \right) ^{\max \left( 1,k-n+1\right) }%
\simeq
\\
&&%
\simeq%
\underset{k}{\underrightarrow{\lim }}~\underset{n}{\underrightarrow{\lim }}%
~\left( 
\Hom%
_{\mathbf{Ab}}\left( G,T\right) \right) ^{\max \left( 1,k-n+1\right) }%
\simeq%
\underset{k}{\underrightarrow{\lim }}~\left( 
\Hom%
_{\mathbf{Ab}}\left( G,T\right) \right) 
\simeq%
\left( 
\Hom%
_{\mathbf{Ab}}\left( G,T\right) \right) .
\end{eqnarray*}%
Therefore $\mathcal{B}^{x}%
\simeq%
G$ indeed.

\item Let%
\begin{equation*}
\mathbf{H}=\left( G\twoheadleftarrow G^{2}\twoheadleftarrow
G^{3}\twoheadleftarrow \dots \twoheadleftarrow G^{n}\twoheadleftarrow \dots
\right)
\end{equation*}%
(a non-rudimentary pro-object). Then%
\begin{equation*}
H_{n}\left( X,\mathcal{B}\right) 
\simeq%
\left\{ 
\begin{array}{ccc}
\mathbf{H} & \text{if} & n=0 \\ 
0 & \text{if} & n>0%
\end{array}%
\right.
\end{equation*}%
See Example \ref{Ex-Hawaiian-ear-ring}.
\end{enumerate}
\end{example}

\begin{example}
\label{Ex-Hawaiian-ear-ring}Let $X_{k}$ be the $k$-dimensional Hawaiian
ear-ring, i.e.%
\begin{equation*}
X_{k}=\dbigcup\limits_{n=1}^{\infty }Y_{n,k}\subseteq \mathbb{R}^{k+1}
\end{equation*}%
where each $Y_{n,k}$ is the $k$-dimensional sphere with the radius $\frac{1}{%
2n}$ and the center in $\left( \frac{1}{2n},0,0,\dots ,0\right) $:%
\begin{equation*}
Y_{n,k}=\left\{ \left( x_{0},x_{1},\dots ,x_{k}\right) \in \mathbb{R}%
^{k+1}~|~\left( x_{0}-\frac{1}{2n}\right) ^{2}+\left( x_{1}\right)
^{2}+\dots +\left( x_{k}\right) ^{2}=\left( \frac{1}{2n}\right) ^{2}\right\}
.
\end{equation*}%
If $k=1$, the space resembles indeed the Hawaiian ear-ring. Notice that $%
X_{0}$ is the converging sequence from Example \ref{Ex-Converging-sequence}.
Let%
\begin{equation*}
\mathbf{H}=\left( G\twoheadleftarrow G^{2}\twoheadleftarrow
G^{3}\twoheadleftarrow \dots \twoheadleftarrow G^{n}\twoheadleftarrow \dots
\right) \in \mathbf{Pro}\left( \mathbf{Ab}\right) .
\end{equation*}%
We have%
\begin{equation*}
H_{n}\left( X_{k},G_{\#}\right) =\left\{ 
\begin{array}{ccccc}
0 & \text{if} & n\neq k & \text{and} & n\neq 0 \\ 
G & \text{if} & k\neq 0 & \text{and} & n=0 \\ 
\mathbf{H} & \text{if} & n=k=0 &  &  \\ 
\mathbf{H} & \text{if} & n=k & \text{and} & k\neq 0%
\end{array}%
\right.
\end{equation*}%
Indeed, consider the following object $\mathbf{Y}^{\left( k\right) }$ of $%
\mathbf{Pro}\left( H\left( \mathbf{Pol}\right) \right) $:%
\begin{equation*}
\mathbf{Y}^{\left( k\right) }=\left( Y_{0}^{\left( k\right) }\longleftarrow
Y_{1}^{\left( k\right) }\longleftarrow Y_{2}^{\left( k\right)
}\longleftarrow \dots \longleftarrow Y_{n}^{\left( k\right) }\longleftarrow
\dots \right)
\end{equation*}%
where%
\begin{equation*}
Y_{n}^{\left( k\right) }=\dbigcup\limits_{s=1}^{n}Y_{s,k}.
\end{equation*}%
The mapping $Y_{n}^{\left( k\right) }\rightarrow Y_{n-1}^{\left( k\right) }$
sends the $n$-th copy of the $k$-sphere into the point $\left( 0,0,\dots
,0\right) $. Let 
\begin{equation*}
\left( f_{n}:X_{k}\rightarrow Y_{n}^{\left( k\right) }\right) _{n\in \mathbb{%
N}}:X_{k}\longrightarrow \mathbf{Y}^{\left( k\right) }
\end{equation*}%
be the morphism in $\mathbf{Pro}\left( H\left( \mathbf{Top}\right) \right) $
such that $f_{n}$ sends all components $Y_{s,k}$, $s>n$, into the point $%
\left( 0,0,\dots ,0\right) $. It can be checked that $X\rightarrow \mathbf{Y}%
^{\left( k\right) }$ is an $H\left( \mathbf{Top}\right) $-expansion. Since $%
X_{k}$ is a metric space, therefore Hausdorff paracompact, Theorem \ref%
{Th-Paracompact-vs-Shape} reads that%
\begin{equation*}
H_{\bullet }\left( X_{k},G_{\#}\right) 
\simeq%
pro\text{-}H_{\bullet }\left( X_{k},G\right) 
\simeq%
H_{\bullet }\left( \mathbf{Y}^{\left( k\right) },G\right) .
\end{equation*}%
Consider now all the cases. Notice that each $Y_{n}^{\left( k\right) }$ is a
polyheder homeomorphic to a bouquet of $n$ copies of the $k$-sphere.

\begin{enumerate}
\item $n\neq k$ and $n\neq 0$. Since $H_{n}\left( Y_{s}^{\left( k\right)
},G\right) =0$ for all $s$, $H_{n}\left( \mathbf{Y}^{\left( k\right)
},G\right) $ is a rudimentary pro-object isomorphic to $0$.

\item $k\neq 0$ and $n=0$. Since $H_{0}\left( Y_{s}^{\left( k\right)
},G\right) =G$ for all $s$, $H_{n}\left( \mathbf{Y}^{\left( k\right)
},G\right) $ is a rudimentary pro-object isomorphic to $G$.

\item $n=k=0$ or ($n=k$ and $k\neq 0$). Since $H_{k}\left( Y_{s}^{\left(
k\right) },G\right) =G^{s}$, $H_{k}\left( \mathbf{Y},G\right) $ is a \textbf{%
non}-rudimentary pro-group%
\begin{equation*}
H_{k}\left( \mathbf{Y},G\right) =\mathbf{H}=\left( G\twoheadleftarrow
G^{2}\twoheadleftarrow G^{3}\twoheadleftarrow \dots \twoheadleftarrow
G^{n}\twoheadleftarrow \dots \right) \in \mathbf{Pro}\left( \mathbf{Ab}%
\right) .
\end{equation*}
\end{enumerate}
\end{example}

\subsection{(Pre)cosheaves on Alexandroff spaces}

\begin{example}
\label{Ex-4-point-circle}Let $\left( X,\tau \right) $ be the following
topological space (the $4$-point circle):%
\begin{eqnarray*}
X &=&\left\{ 0,1,2,3\right\} , \\
\tau &=&\left\{ \varnothing ,X,\left\{ 0\right\} ,\left\{ 2\right\} ,\left\{
0,1,2\right\} ,\left\{ 0,3,2\right\} \right\} .
\end{eqnarray*}%
The corresponding poset is this:%
\begin{equation*}
\begin{diagram}[size=3.0em,textflow]
1 &  &  3 \\ 
\dLine & \rdLine \ldLine & \dLine &  &  \\ 
0 &  & 2 &  & \\
\end{diagram}
%
\end{equation*}%
Apply Theorem \ref{Th-Cosheaf-homology-for-A-spaces} and \ref%
{Th-Poset-vs-cosheaves}(\ref{Th-Poset-vs-cosheaves-Bar-construction}):%
\begin{equation*}
H_{n}\left( X,G_{\#}\right) 
\simeq%
H_{n}^{sing}\left( \left\vert \mathcal{K}\left( X\right) \right\vert
,G\right) 
\simeq%
H_{n}^{sing}\left( X,G\right) 
\simeq%
\underrightarrow{\lim }_{n}~\left( X,G\right) .
\end{equation*}%
One obtains the following description of $H_{\bullet }\left( X,G_{\#}\right) 
$ (identical to that of the ordinary circle):%
\begin{equation*}
H_{n}\left( X,G_{\#}\right) =\left\{ 
\begin{array}{ccc}
G & \text{if} & n=0,1 \\ 
0 & \text{otherwise} & 
\end{array}%
\right.
\end{equation*}%
Notice that all $H_{n}\left( X,G_{\#}\right) $ are \textbf{rudimentary}
pro-objects. Indeed, calculate $\underrightarrow{\lim }_{\bullet }~\left(
X,G\right) $ using the bar-construction. Standard machinery allows us to use
only \textbf{non-degenerate} simplices:%
\begin{equation*}
H_{n}\left( X,G_{\#}\right) 
\simeq%
\underrightarrow{\lim }_{n}~\left( X,G\right) 
\simeq%
H_{n}\left( C_{\bullet }\left( X,G\right) \right)
\end{equation*}%
where $C_{\bullet }\left( X,G\right) =$ 
\begin{equation*}
G\left[ 0\right] \oplus G\left[ 1\right] \oplus G\left[ 2\right] \oplus G%
\left[ 3\right] \longleftarrow G\left[ 0<1\right] \oplus G\left[ 0<3\right]
\oplus G\left[ 2<1\right] \oplus G\left[ 2<3\right] \longleftarrow
0\longleftarrow \dots
\end{equation*}%
The differential $d_{0}:C_{1}\rightarrow C_{0}$ is given by%
\begin{eqnarray*}
d &:&\left[ 0<1\right] \longmapsto \left[ 1\right] -\left[ 0\right] , \\
d &:&\left[ 0<3\right] \longmapsto \left[ 3\right] -\left[ 0\right] , \\
d &:&\left[ 2<1\right] \longmapsto \left[ 1\right] -\left[ 2\right] , \\
d &:&\left[ 2<3\right] \longmapsto \left[ 3\right] -\left[ 2\right] .
\end{eqnarray*}%
The boundary group $B_{0}=d\left( C_{1}\right) $ will be:%
\begin{equation*}
B_{0}=\left\{ \left( g_{i}\right) \in
G^{4}~|~g_{0}+g_{1}+g_{2}+g_{3}=0\right\}
\end{equation*}%
and%
\begin{equation*}
H_{0}\left( X,G_{\#}\right) 
\simeq%
\frac{G^{4}}{B_{0}}%
\simeq%
G.
\end{equation*}%
The cycle group $Z_{1}=\ker d$ will be:%
\begin{equation*}
Z_{1}=\left\{ \left( g_{i}\right) \in G^{4}~|~g_{0}=g_{1}=g_{2}=g_{3}\right\}
\end{equation*}%
and%
\begin{equation*}
H_{1}\left( X,G_{\#}\right) 
\simeq%
Z_{1}%
\simeq%
G.
\end{equation*}
\end{example}

\begin{example}
\label{Ex-6-point-2-sphere}Let now $X$ be the the \textbf{non-Hausdorff
suspension} \cite[before Lemma 10]{McCord-MR0196744} of the $4$-point
circle. The corresponding poset is shown below:%
\begin{equation*}
\begin{diagram}[size=3.0em,textflow]
4 & & 5 \\
\dLine & \rdLine \ldLine & \dLine &  &  \\
1 &  &  3 \\ 
\dLine & \rdLine \ldLine & \dLine &  &  \\ 
0 &  & 2 &  & \\
\end{diagram}%
\end{equation*}%
It is a union of two contractible subsets (the \textbf{non-Hausdorff cones})%
\begin{eqnarray*}
U_{0} &=&\left\{ 0,1,2,3,4\right\} , \\
U_{1} &=&\left\{ 0,1,2,3,5\right\} , \\
U_{0}\cap U_{1} &=&\left\{ 0,1,2,3\right\} .
\end{eqnarray*}%
One may use the Mayer-Vietoris exact sequence:%
\begin{eqnarray*}
0 &\leftarrow &H_{0}\left( X,G\right) \leftarrow H_{0}\left( U_{0},G\right)
\oplus H_{0}\left( U_{1},G\right) \leftarrow H_{0}\left( U_{0}\cap
U_{1},G\right) \leftarrow \\
&\leftarrow &H_{1}\left( X,G\right) \leftarrow H_{1}\left( U_{0},G\right)
\oplus H_{1}\left( U_{1},G\right) \leftarrow H_{1}\left( U_{0}\cap
U_{1},G\right) \leftarrow \\
&\leftarrow &H_{2}\left( X,G\right) \leftarrow H_{2}\left( U_{0},G\right)
\oplus H_{2}\left( U_{1},G\right) \leftarrow H_{2}\left( U_{0}\cap
U_{1},G\right)
\end{eqnarray*}%
Use the contractibility of $U_{i}$ and Example \ref{Ex-4-point-circle}:%
\begin{equation*}
0\leftarrow H_{0}\left( X,G\right) \leftarrow G\oplus G\leftarrow
G\leftarrow H_{1}\left( X,G\right) \leftarrow 0\leftarrow G\leftarrow
H_{2}\left( X,G\right) \leftarrow 0,
\end{equation*}%
and obtain the result (identical to that of the ordinary $2$-sphere):%
\begin{equation*}
H_{n}\left( X,G_{\#}\right) =\left\{ 
\begin{array}{ccc}
G & \text{if} & n=0,2 \\ 
0 & \text{otherwise} & 
\end{array}%
\right.
\end{equation*}
\end{example}

\appendix

\section{\label{Sec-quasi-noetherian}Quasi-noetherian rings}

\begin{definition}
\label{Def-quasi-noetherian}$k$ is called \textbf{quasi-noetherian} iff for
any quasi-projective pro-module $\mathbf{P}\in \mathbf{Pro}\left( k\right) $
and any injective module $T\in \mathbf{Mod}\left( k\right) $, the module $%
\Hom%
_{\mathbf{Pro}\left( k\right) }\left( \mathbf{P},T\right) \in \mathbf{Mod}%
\left( k\right) $ is injective.
\end{definition}

\begin{theorem}
\label{Th-(Quasi-)noetherian}The following statements are equivalent:

\begin{enumerate}
\item $k$ is quasi-noetherian.

\item $k$ is noetherian.

\item Any filtered colimit of injective $k$-modules is injective.

\item The direct sum of any family of injective $k$-modules is injective.
\end{enumerate}
\end{theorem}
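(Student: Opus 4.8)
**The plan is to prove the cycle of implications $(2)\Rightarrow(4)\Rightarrow(3)\Rightarrow(1)\Rightarrow(2)$, of which only the last implication requires genuinely new work;** the other three are classical and I would only sketch them. The equivalence $(2)\Leftrightarrow(4)$ is the Bass–Papp theorem: a ring is noetherian if and only if arbitrary direct sums of injectives are injective; this can be quoted. The implication $(4)\Rightarrow(3)$ is easy, since a filtered colimit of injectives is a quotient of their direct sum in a way that splits off (more precisely, any filtered colimit $\varinjlim T_i$ is a direct summand of $\bigoplus T_i$ via the canonical section-less argument — actually one uses that $\varinjlim$ is a cokernel of a map between two copies of $\bigoplus T_i$, and over a noetherian ring both are injective, but to avoid circularity I would instead recall the standard fact that a pure quotient of an injective is injective, or simply cite it). The implication $(3)\Rightarrow(1)$ is the heart of the connection to pro-objects: if $\mathbf{P}\in\mathbf{Pro}(k)$ is quasi-projective, then by Proposition \ref{Prop-quasi-projective} it is represented by $(P_i)_{i\in I}$ with all $P_i$ projective, so
\begin{equation*}
\Hom_{\mathbf{Pro}(k)}(\mathbf{P},T)\simeq\underset{i\in I}{\underrightarrow{\lim}}\,\Hom_k(P_i,T),
\end{equation*}
and each $\Hom_k(P_i,T)$ is injective because $P_i$ is projective and $T$ is injective (the functor $\Hom_k(P_i,-)$ is exact, and $\Hom_k(-,\Hom_k(P_i,T))\simeq\Hom_k(P_i\otimes-,T)$ is exact); then $(3)$ applied to the filtered colimit of these injectives gives that $\Hom_{\mathbf{Pro}(k)}(\mathbf{P},T)$ is injective, which is exactly the definition of quasi-noetherian.

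**The main obstacle is the implication $(1)\Rightarrow(2)$: deducing that $k$ is noetherian from the a priori weak-looking hypothesis that $\Hom_{\mathbf{Pro}(k)}(\mathbf{P},T)$ is injective for every quasi-projective $\mathbf{P}$ and injective $T$.** The strategy I would pursue is contrapositive: assume $k$ is not noetherian, and produce a quasi-projective pro-module $\mathbf{P}$ and an injective module $T$ with $\Hom_{\mathbf{Pro}(k)}(\mathbf{P},T)$ not injective. Non-noetherianness gives a strictly increasing chain of ideals $I_1\subsetneq I_2\subsetneq\cdots$ with union $I$. From this I would build the pro-module $\mathbf{P}$ as a tower $(k^{\,?}\leftarrow\cdots)$ or, more naturally, take $\mathbf{P}$ to be the tower whose $n$-th term is $k^{(n)}$ (finite free) with transition maps chosen so that $\underrightarrow{\lim}\Hom_k(k^{(n)},T)$ computes something like $\varinjlim T^{?}$ detecting the failure of injectivity; the cleanest route is probably to let $\mathbf{P}$ be dual to a suitable $\mathbf{Ind}$-object so that $\Hom_{\mathbf{Pro}(k)}(\mathbf{P},-)$ on $\mathbf{Mod}(k)$ realizes the filtered colimit $\varinjlim\Hom_k(P_n,-)$, and then choose the $P_n$ projective (even free) so that this colimit is $\varinjlim(\text{injectives})$ which is a known counterexample to injectivity when $k$ is not noetherian. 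In effect $(1)\Rightarrow(2)$ is the reverse reading of the chain above once one knows $(3)\Leftrightarrow(2)$, so the real content is: \emph{every} filtered colimit of injectives arises as $\Hom_{\mathbf{Pro}(k)}(\mathbf{P},T)$ for some quasi-projective $\mathbf{P}$ and injective $T$. I would establish this by taking a filtered system $(T_i)$ of injectives, writing each $T_i$ as a direct summand of a product (or using that $T_i=\Hom_k(k,T_i)$), and massaging indices; alternatively, I would reduce to the specific Bass counterexample $\bigoplus_n E(k/I_n)$ and exhibit it directly as such a $\Hom_{\mathbf{Pro}(k)}$.

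**An alternative, and possibly shorter, organization** would exploit Proposition \ref{Prop-Duality} and the already-cited fact \cite[Proposition 2.28]{Prasolov-universal-coefficients-formula-2013-MR3095217} that noetherian $\Rightarrow$ quasi-noetherian, so that $(2)\Rightarrow(1)$ is free; then I only need $(1)\Rightarrow(2)$, and I would prove its contrapositive using the countable tower $\mathbf{P}=(G^{\max(1,n)})$ with surjective transitions built from a non-stabilizing ideal chain — this is precisely the kind of non-rudimentary pro-object analyzed in Example \ref{Ex-Non-rudimentary-tower}(\ref{Ex-Non-rudimentary-tower-epi}) — and check, via Proposition \ref{Prop-quasi-projective}, that it is quasi-projective while $\Hom_{\mathbf{Pro}(k)}(\mathbf{P},T)$ fails to be injective for an appropriate $T$. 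I expect the bookkeeping identifying the filtered colimit $\underrightarrow{\lim}_n\Hom_k(P_n,T)$ with the Bass-type non-injective module to be the only delicate point; everything else is formal manipulation of the pro-category $\Hom$-formula from Remark \ref{Rem-Pro-objects-morphisms} together with the standard homological facts about projectives, injectives, and filtered colimits over (non-)noetherian rings.
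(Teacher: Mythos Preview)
Your overall architecture is sound and your treatment of $(3)\Rightarrow(1)$ matches the paper exactly. The paper also cites the classical equivalence $(2)\Leftrightarrow(3)\Leftrightarrow(4)$ (it attributes the cycle $2\Rightarrow 3\Rightarrow 4\Rightarrow 2$ to Chase), so it avoids your awkward step $(4)\Rightarrow(3)$ entirely by organizing the cycle differently: the paper proves $(3)\Rightarrow(1)$ and $(1)\Rightarrow(4)$, then closes the loop via $(4)\Rightarrow(2)\Rightarrow(3)$.

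The substantive difference is in the hard implication. You propose a contrapositive $(1)\Rightarrow(2)$: start from a non-stabilizing ideal chain, manufacture a specific Bass-type counterexample $\bigoplus_n E(k/I_n)$, and realize it as some $\Hom_{\mathbf{Pro}(k)}(\mathbf{P},T)$. This can be made to work, but your sketch lacks the key construction, and your claim that ``every filtered colimit of injectives arises as $\Hom_{\mathbf{Pro}(k)}(\mathbf{P},T)$'' is not literally correct---one only gets it as a \emph{direct summand} of such a $\Hom$. The paper instead proves $(1)\Rightarrow(4)$ directly and constructively: fix the single injective cogenerator $J=\Hom_{\mathbb{Z}}(k,\mathbb{Q}/\mathbb{Z})$, write $(-)^{*}=\Hom_k(-,J)$, choose for each $T_s$ a projective $P_s$ with $P_s\twoheadrightarrow T_s^{*}$, so that $T_s$ embeds (split, since $T_s$ is injective) in $P_s^{*}$; then the quasi-projective pro-module $\mathbf{Q}$ given by the cofiltered system of \emph{finite} sums $\bigoplus_{s\in i}P_s$ satisfies $\Hom_{\mathbf{Pro}(k)}(\mathbf{Q},J)=\varinjlim_i\bigoplus_{s\in i}P_s^{*}=\bigoplus_{s\in S}P_s^{*}$, which is injective by (1), and $\bigoplus_s T_s$ sits inside as a summand. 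This cogenerator-plus-dualization trick is precisely the missing ingredient in your ``writing each $T_i$ as a direct summand of a product (or using that $T_i=\Hom_k(k,T_i)$), and massaging indices''---you were circling the idea but had not landed on using one fixed $J$ and dualizing projectives to produce the injectives you need.
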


\begin{proof}
~

\begin{enumerate}
\item $\left( 2\implies 3\implies 4\implies 2\right) $. See \cite[%
Proposition 4.1.]{Chase-1960-Direct-products-modules-MR0120260}.

\item $\left( 3\implies 1\right) $. Let $\mathbf{P=}\left( P_{i}\right)
_{i\in I}\in \mathbf{Pro}\left( k\right) $ be a quasi-projective pro-module.
One can assume (Proposition \ref{Prop-quasi-projective}) that all $P_{i}$
are projective. Let $T\in \mathbf{Mod}\left( k\right) $ be injective. Then
all modules $%
\Hom%
\left( P_{i},T\right) $ are injective, and%
\begin{equation*}
\Hom%
_{\mathbf{Pro}\left( k\right) }\left( \mathbf{P},T\right) =\underset{i\in I}{%
\underrightarrow{\lim }}~%
\Hom%
\left( P_{i},T\right)
\end{equation*}%
is injective as a filtered colimit of injective modules.

\item $\left( 1\implies 4\right) $. Given a family $\left( T_{s}\right)
_{s\in S}$ of injective $k$-modules.

\begin{enumerate}
\item Choose an injective cogenerator $J$ for the category $\mathbf{Mod}%
\left( k\right) $, say,%
\begin{equation*}
J=%
\Hom%
_{\mathbb{Z}}\left( k,\mathbb{Q}/\mathbb{Z}\right) .
\end{equation*}%
For each $M\in \mathbf{Mod}\left( k\right) $, denote%
\begin{equation*}
M^{\ast }%
{:=}%
\Hom%
_{k}\left( M,J\right) .
\end{equation*}

\begin{enumerate}
\item There is a functor morphism%
\begin{equation*}
\iota :1_{\mathbf{Mod}\left( k\right) }\longrightarrow \left( \bullet
\right) ^{\ast \ast }
\end{equation*}%
given by the family%
\begin{eqnarray*}
&&\left( \iota _{M}:M\longrightarrow M^{\ast \ast }\right) _{M\in \mathbf{Mod%
}\left( k\right) }, \\
&&\iota _{M}\left( x\right) \left( f:M\rightarrow J\right) 
{:=}%
f\left( x\right) .
\end{eqnarray*}

\item $\iota _{M}$ is a monomorphism. Indeed, let $x\in \ker \iota _{M}$. It
follows that $f\left( x\right) =0$ for all%
\begin{equation*}
f\in M^{\ast }=%
\Hom%
_{k}\left( M,J\right) 
\simeq%
\Hom%
_{\mathbb{Z}}\left( M,\mathbb{Q}/\mathbb{Z}\right) .
\end{equation*}%
Since $\mathbb{Q}/\mathbb{Z}$ is a cogenerator for $\mathbf{Mod}\left( 
\mathbb{Z}\right) =\mathbf{Ab}$, it follows that $x=0$.

\item If $f:M\rightarrow N$ is an epimorphism, then%
\begin{equation*}
f^{\ast }:N^{\ast }\longrightarrow M^{\ast }
\end{equation*}%
is clearly a monomorphism.
\end{enumerate}

\item For each $s\in S$, let%
\begin{equation*}
\pi _{s}:P_{s}\twoheadrightarrow \left( T_{s}\right) ^{\ast }
\end{equation*}%
be an epimorphism, where $P_{s}$ is projective. The compositions%
\begin{equation*}
\varphi _{s}:T_{s}\longrightarrow \left( T_{s}\right) ^{\ast \ast
}\longrightarrow \left( P_{s}\right) ^{\ast }.
\end{equation*}%
are monomorphisms (see above). Since $T_{s}$ are injective, there exist
splittings%
\begin{eqnarray*}
\psi _{s} &:&\left( P_{s}\right) ^{\ast }\longrightarrow T_{s}, \\
\psi _{s}\circ \varphi _{s} &=&1_{T_{s}}.
\end{eqnarray*}

\item Let%
\begin{eqnarray*}
&&P%
{:=}%
\dbigoplus\limits_{s\in S}\left( P_{s}\right) ^{\ast } \\
&&T%
{:=}%
\dbigoplus\limits_{s\in S}T_{s}, \\
\varphi 
{:=}%
\dbigoplus\limits_{s\in S}\varphi _{s} &:&T\longrightarrow P \\
\psi 
{:=}%
\dbigoplus\limits_{s\in S}\psi _{s} &:&P\longrightarrow T,
\end{eqnarray*}%
Clearly $\psi \circ \varphi =1_{T}$, and $T$ is a direct summand of $P$.

\item It is enough to prove that $P$ is injective. Consider the following 
\textbf{co}filtered system:

\begin{enumerate}
\item $I$ is the poset of \textbf{finite} subsets of $A$ ordered by the 
\textbf{inverse} inclusion.

\item The \textbf{co}filtered system $\left( Q_{i}\right) _{i\in I}$ of
projective modules is given by%
\begin{equation*}
Q_{i}=\dbigoplus\limits_{s\in i}P_{s}.
\end{equation*}%
The resulting pro-module $\mathbf{Q}$ is clearly quasi-projective, and
satisfies%
\begin{equation*}
\Hom%
_{\mathbf{Pro}\left( k\right) }\left( \mathbf{Q},J\right) =\underset{i\in I}{%
\underrightarrow{\lim }}~\left( Q_{i}\right) ^{\ast
}=\dbigoplus\limits_{s\in S}\left( P_{s}\right) ^{\ast }=P.
\end{equation*}%
Since $k$ is quasi-noetherian, $P$ is injective, therefore $T$ is injective
as well.
\end{enumerate}
\end{enumerate}
\end{enumerate}
\end{proof}

\section{\label{Sec-Derived-categories}Derived categories}

\subsection{Left satellites}

We use here the \textquotedblleft classical\textquotedblright\ definition of
an $F$-projective category. The subcategories, which are called
\textquotedblleft $F$-projective\textquotedblright\ in \cite[Definition
13.3.4]{Kashiwara-Categories-MR2182076}, are called \textbf{weak }$F$\textbf{%
-projective} in this paper.

\begin{definition}
\label{Def-F-projective}Let 
\begin{equation*}
F:\mathbf{C}\longrightarrow \mathbf{E}
\end{equation*}%
be an additive functor of abelian categories, and let $\mathbf{P}$ be a full
additive subcategory of $\mathbf{C}$.

\begin{enumerate}
\item \label{Def-F-projective-generating}We say that $\mathbf{P}$ is a 
\textbf{generating} subcategory iff for any $X\in \mathbf{C}$ there exists
an epimorphism $P\twoheadrightarrow X$ with $P\in \mathbf{P}$.

\item \label{Def-F-projective-closed-by-extensions}We say that $\mathbf{P}$
is a \textbf{quasi-projective} subcategory iff for any short exact sequence%
\begin{equation*}
0\longrightarrow X^{\prime }\longrightarrow X\longrightarrow X^{\prime
\prime }\longrightarrow 0
\end{equation*}%
$X,X^{\prime \prime }\in \mathbf{P}$ implies $X^{\prime }\in \mathbf{P}$.

\item \label{Def-F-projective-acyclic}We say that $\mathbf{P}$ is an $F$%
\textbf{-projective} subcategory iff:

\begin{enumerate}
\item $\mathbf{P}$ is generating and quasi-projective.

\item For any short exact sequence%
\begin{equation*}
0\longrightarrow X^{\prime }\longrightarrow X\longrightarrow X^{\prime
\prime }\longrightarrow 0
\end{equation*}%
in $\mathbf{C}$ with $X$, $X^{\prime \prime }\in \mathbf{P}$, the sequence%
\begin{equation*}
0\longrightarrow F\left( X^{\prime }\right) \longrightarrow F\left( X\right)
\longrightarrow F\left( X^{\prime \prime }\right) \longrightarrow 0
\end{equation*}%
is exact.
\end{enumerate}

\item \label{Def-Weak-F-projective}We say that $\mathbf{P}$ is \textbf{weak }%
$F$\textbf{-projective} if $\mathbf{P}$ satisfies the definition of an $F$%
-projective subcategory in \cite[Definition 13.3.4]%
{Kashiwara-Categories-MR2182076}.
\end{enumerate}
\end{definition}

\begin{notation}
\label{Not-Derived-category}\cite[Notation A.3.2]%
{Prasolov-Cosheaves-2021-MR4347662} For an abelian category $\mathbf{E}$,
let:

\begin{enumerate}
\item $~$

\begin{enumerate}
\item $Ch^{+}\left( \mathbf{E}\right) $ denote the category of \textbf{%
bounded below} chain complexes in $\mathbf{E}$:%
\begin{equation*}
X_{\bullet }\in Ch^{+}\left( \mathbf{E}\right) \iff X_{n}=0\text{ if }n\ll 0.
\end{equation*}

\item $Ch^{\geq 0}\left( \mathbf{E}\right) $ denote the category of \textbf{%
positive} chain complexes in $\mathbf{E}$:%
\begin{equation*}
X_{\bullet }\in Ch^{\geq 0}\left( \mathbf{E}\right) \iff X_{n}=0\text{ if }%
n<0.
\end{equation*}
\end{enumerate}

\item A \textbf{qis} denote a \textbf{quasi-isomorphism} in $Ch^{\ast
}\left( \mathbf{E}\right) $ ($\ast =\text{\textquotedblleft }+$%
\textquotedblright\ or \textquotedblleft $\geq 0$\textquotedblright ), i.e.,
a homomorphism%
\begin{equation*}
X_{\bullet }\longrightarrow Y_{\bullet }
\end{equation*}%
inducing an isomorphism of the homologies.

\item A complex $X_{\bullet }$ be \textbf{qis} to $Y_{\bullet }$ iff there
is a qis $X_{\bullet }\rightarrow Y_{\bullet }$.

\item $K^{\ast }\left( \mathbf{E}\right) $ denote the homotopy category of $%
Ch^{\ast }\left( \mathbf{E}\right) $, i.e., morphisms%
\begin{equation*}
X_{\bullet }\longrightarrow Y_{\bullet }
\end{equation*}%
in $K^{\ast }\left( \mathbf{E}\right) $ are \textbf{classes} of chain
homotopic maps $X_{\bullet }\rightarrow Y_{\bullet }$.

\item $D^{\ast }\left( \mathbf{E}\right) $ denote the corresponding derived
category of $K^{\ast }\left( \mathbf{E}\right) $, i.e.,%
\begin{equation*}
D^{\ast }\left( \mathbf{E}\right) =K^{\ast }\left( \mathbf{E}\right)
/N^{\ast }\left( \mathbf{E}\right)
\end{equation*}%
where $N^{\ast }\left( \mathbf{E}\right) $ is the full subcategory of $%
K^{\ast }\left( \mathbf{E}\right) $ consisting of complexes qis to $\mathbf{0%
}$.
\end{enumerate}
\end{notation}

\begin{proposition}
\label{Prop-Left-derived}Let $F:\mathbf{C}\rightarrow \mathbf{E}$ be an
additive functor of abelian categories, and let $\mathbf{P}$ be a full
additive subcategory of $\mathbf{C}$. Assume $\mathbf{P}$ is $F$-projective.
Then:

\begin{enumerate}
\item $\mathbf{P}$ is weak $F$-projective.

\item The left satellite%
\begin{equation*}
\mathbb{L}F:D^{\ast }(\mathbf{C})\longrightarrow D^{\ast }(\mathbf{E})
\end{equation*}%
exists, and%
\begin{equation*}
\mathbb{L}F(X_{\bullet })%
\simeq%
F(Y_{\bullet })
\end{equation*}%
for any qis%
\begin{equation*}
Y_{\bullet }\longrightarrow X_{\bullet }
\end{equation*}%
with $Y_{\bullet }\in K^{\ast }\left( \mathbf{P}\right) $.
\end{enumerate}
\end{proposition}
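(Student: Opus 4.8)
The statement is essentially a translation, via duality (Proposition \ref{Prop-Duality}), of the classical theory of right-derived functors relative to an injective-type resolving subcategory — or, more directly, it is the dual of \cite[Proposition 13.3.5 and Theorem 13.3.7]{Kashiwara-Categories-MR2182076} applied to the opposite functor $F^{op}:\mathbf{C}^{op}\rightarrow\mathbf{E}^{op}$. So the overall approach is: first reconcile our ``classical'' notion of $F$-projective subcategory (Definition \ref{Def-F-projective}(\ref{Def-F-projective-acyclic})) with the weaker notion of \cite[Definition 13.3.4]{Kashiwara-Categories-MR2182076}; then quote the existence theorem for left-derived functors from that reference (dualized). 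I will present it in two steps matching the two items of the statement.

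\textbf{Step 1 (item (1)): every $F$-projective subcategory is weak $F$-projective.} Here I would simply unwind both definitions. The weak notion in \cite[Definition 13.3.4]{Kashiwara-Categories-MR2182076} (dualized to the left/projective side) requires three things: $\mathbf{P}$ is generating; for every short exact sequence $0\to X'\to X\to X''\to 0$ with $X\in\mathbf{P}$ there is a short exact sequence $0\to P'\to P\to X''\to 0$ with $P,P'\in\mathbf{P}$ (a weak form of ``enough projectives relative to $\mathbf{P}$'', obtained by pulling back a chosen epimorphism $P\twoheadrightarrow X''$ along $X\to X''$); and, an exactness condition on $F$ applied to such sequences from $\mathbf{P}$. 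Our Definition \ref{Def-F-projective} is strictly stronger: it demands that $\mathbf{P}$ be closed under kernels of epimorphisms between its objects (the quasi-projectivity of \ref{Def-F-projective-closed-by-extensions}), from which the required ``$0\to P'\to P\to X''\to 0$'' follows at once by taking $P=X$ when $X\in\mathbf{P}$, or more generally by forming the pullback of an epimorphism $P\twoheadrightarrow X''$ ($P\in\mathbf P$) with $X\to X''$ — the pullback maps onto $P$ with kernel $X'$, and lies in $\mathbf P$ by closure; and it imposes precisely the exactness of $F$ on short exact sequences with the two right-hand terms in $\mathbf{P}$, which is the Kashiwara condition. This step is bookkeeping and I expect no obstacle.

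\textbf{Step 2 (item (2)): existence of $\mathbb{L}F$ and the formula.} Given Step 1, I would invoke the dual of \cite[Theorem 13.3.7]{Kashiwara-Categories-MR2182076}: for a functor admitting a weak $F$-projective subcategory, the left-derived functor $\mathbb{L}F:D^{\ast}(\mathbf{C})\to D^{\ast}(\mathbf{E})$ exists (for $\ast=+$ or $\ast=\geq 0$), and is computed by $\mathbb{L}F(X_{\bullet})\simeq F(Y_{\bullet})$ for any quasi-isomorphism $Y_{\bullet}\to X_{\bullet}$ with $Y_{\bullet}\in K^{\ast}(\mathbf{P})$. The two ingredients behind that theorem — that $K^{\ast}(\mathbf{P})\to D^{\ast}(\mathbf{C})$ is an equivalence onto its essential image and that every object of $D^{\ast}(\mathbf C)$ is so represented (using that $\mathbf P$ is generating, to build a $\mathbf P$-resolution termwise and pass to a total complex via a standard induction / Cartan--Eilenberg-type argument, cf. Proposition \ref{Prop-Cartan-Eilenberg}), and that $F$ sends qis between complexes in $K^{\ast}(\mathbf P)$ to qis (this is where the exactness condition in Definition \ref{Def-F-projective}(\ref{Def-F-projective-acyclic})(b) is used, through the usual ``acyclic complex in $\mathbf P$ goes to acyclic complex'' lemma, proved by splicing the short exact sequences of cycles) — are all exactly the dual statements already in \cite[\S13.3]{Kashiwara-Categories-MR2182076}. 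The mild subtlety, and the one place I would be slightly careful, is bounded-below versus positive complexes: for $\ast=\geq 0$ one truncates a bounded-below $\mathbf P$-resolution and must check the truncation stays $F$-acyclic, which again follows from closure of $\mathbf P$ under kernels of epimorphisms. Beyond that, the proof is a citation of the dualized Kashiwara machinery, and the only ``hard part'' is making sure the self-dual formalism is applied on the correct side (projective resolutions, left satellites) rather than restating the injective/right-derived version verbatim.
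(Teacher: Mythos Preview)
Your proposal is correct and follows essentially the same route as the paper: the paper's proof is simply the one-line citation ``Follows from \cite[dual to Proposition 13.3.5 and Corollary 13.3.8]{Kashiwara-Categories-MR2182076},'' and you have spelled out exactly what that citation entails (with the minor difference that you invoke Theorem 13.3.7 rather than its Corollary 13.3.8). Your elaboration of Step~1 and the remark on truncation for $\ast=\geq 0$ go beyond what the paper records, but the underlying strategy---verify the Kashiwara--Schapira hypotheses by unwinding definitions, then dualize their existence theorem---is identical.
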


\begin{proof}
Follows from \cite[dual to Proposition 13.3.5 and Corollary 13.3.8]%
{Kashiwara-Categories-MR2182076}.
\end{proof}

Using $F$-projective subcategories, one can define left \textbf{satellites}
of the functor $F$.

\begin{definition}
\label{Def-Left-derived-functors}\label{Def-Left-satellites}In the
conditions of Proposition \ref{Prop-Left-derived} let $X\in \mathbf{C}$.
Considering $X$ as a complex concentrated in degree $0$, take a qis $%
P_{\bullet }\longrightarrow X$, i.e., a \textbf{resolution}%
\begin{equation*}
0\longleftarrow X\longleftarrow P_{0}\longleftarrow P_{1}\longleftarrow
P_{2}\longleftarrow ...\longleftarrow P_{n}\longleftarrow ...
\end{equation*}%
with $P_{\bullet }\in K^{\geq 0}\left( \mathbf{P}\right) $. Define%
\begin{equation*}
L_{n}F\left( X\right) 
{:=}%
H_{n}\left( P_{\bullet }\right) .
\end{equation*}%
It is easy to check that $L_{n}F$, $n\geq 0$, are additive functors%
\begin{equation*}
L_{n}F:\mathbf{C\longrightarrow E,}
\end{equation*}%
that $L_{n}F=\mathbf{0}$ if $n<0$, and that $L_{0}F%
\simeq%
F$ if $F$ is right exact.

The functors $L_{n}F$ are called the \textbf{left satellites} of $F$.
\end{definition}

\subsection{Cartan-Eilenberg resolutions}

We follow here the scheme presented in \cite[Chapter 5.8]%
{Weibel-1994-An-introduction-to-homological-algebra-MR1269324}. However, the
key ingredients there, the Horseshoe Lemma 2.2.8, and Lemma 5.7.2 are valid
only for categories with enough projective objects. Our situation is more
complicated: we do \textbf{not} have enough projectives, and we deal instead
with generating and quasi-projective subcategories. Therefore, the proofs in
this section are a bit trickier.

Let $\mathbf{P}$ be a full additive subcategory of an abelian category $%
\mathbf{C}$.

\begin{definition}
\label{Def-P-projective-complex}A complex $\left( P_{\bullet },d_{\bullet
}\right) \in Ch^{+}\left( \mathbf{C}\right) $ is called $\mathbf{P}$-\textbf{%
projective} iff:

\begin{enumerate}
\item $P_{s}\in \mathbf{P}$ for each $s\in \mathbb{Z}$.

\item For each $s$, $%
\coker%
\left( d_{s}:P_{s}\rightarrow P_{s-1}\right) \in \mathbf{P}$.
\end{enumerate}
\end{definition}

\begin{remark}
\label{Rem-Homology-quasi-projective}Assume $P_{\bullet }$ is $\mathbf{P}$%
-projective, and $\mathbf{P}$ is quasi-projective, see Definition \ref%
{Def-F-projective}(\ref{Def-F-projective-closed-by-extensions}). Then%
\begin{equation*}
B_{s}\left( P_{\bullet }\right) ,Z_{s}\left( P_{\bullet }\right)
,H_{s}\left( P_{\bullet }\right) \in \mathbf{P}
\end{equation*}%
for all $s$. Indeed, since $%
\coker%
d_{s}\in \mathbf{P}$, the short exact sequence%
\begin{equation*}
0\longrightarrow B_{s}\left( P_{\bullet }\right) \longrightarrow
P_{s}\longrightarrow \left( 
\coker%
d_{s}%
\simeq%
\frac{X_{s}}{B_{s}\left( P_{\bullet }\right) }\right) \longrightarrow 0
\end{equation*}%
implies $B_{s}\left( P_{\bullet }\right) \in \mathbf{P}$. The short exact
sequence%
\begin{equation*}
0\longrightarrow Z_{s}\left( P_{\bullet }\right) \longrightarrow
P_{s}\longrightarrow B_{s-1}\left( P_{\bullet }\right) \longrightarrow 0
\end{equation*}%
implies $Z_{s}\left( P_{\bullet }\right) \in \mathbf{P}$. Finally, the short
exact sequence%
\begin{equation*}
0\longrightarrow \left( H_{s}\left( P_{\bullet }\right) 
\simeq%
\frac{Z_{s}\left( P_{\bullet }\right) }{B_{s}\left( P_{\bullet }\right) }%
\right) \longrightarrow \frac{P_{s}}{B_{s}\left( P_{\bullet }\right) }%
\longrightarrow B_{s-1}\left( P_{\bullet }\right) \longrightarrow 0
\end{equation*}%
implies $H_{s}\left( P_{\bullet }\right) \in \mathbf{P}$.
\end{remark}

\begin{definition}
\label{Def-Cartan-Eilenberg}Given $\left( X_{\bullet },d_{\bullet }\right)
\in Ch^{\geq 0}\left( \mathbf{C}\right) $, its \textbf{Cartan-Eilenberg }$P$%
\textbf{-resolution} is a bicomplex $P_{\bullet \bullet }$ in $\mathbf{P}$
of the first quadrant, i.e., $\left( P_{\bullet \bullet },d_{\bullet
},\delta _{\bullet }\right) \in Ch^{\geq 0,\geq 0}\left( \mathbf{P}\right) $%
, together with a morphism $P_{\bullet \bullet }\rightarrow X_{\bullet }$
such that:

\begin{enumerate}
\item For each $t\geq 0$, $P_{\bullet t}$ is a $\mathbf{P}$-projective
complex.

\item For each $s\geq 0$, the sequence%
\begin{equation*}
0\longleftarrow X_{s}\overset{\delta _{s,0}}{\longleftarrow }P_{s,0}\overset{%
\delta _{s,1}}{\longleftarrow }P_{s,1}\overset{\delta _{s,2}}{\longleftarrow 
}P_{s,2}\overset{\delta _{s,3}}{\longleftarrow }\dots
\end{equation*}%
is exact.

\item For each $s\geq 0$, the sequence%
\begin{equation*}
0\longleftarrow H_{s}X_{\bullet }\overset{\left( \delta _{s,0}\right) _{\ast
}}{\longleftarrow }H_{s}P_{\bullet ,0}\overset{\left( \delta _{s,1}\right)
_{\ast }}{\longleftarrow }H_{s}P_{\bullet ,1}\overset{\left( \delta
_{s,2}\right) _{\ast }}{\longleftarrow }H_{s}P_{\bullet ,2}\overset{\left(
\delta _{s,3}\right) _{\ast }}{\longleftarrow }\dots
\end{equation*}%
is exact.
\end{enumerate}
\end{definition}

\begin{remark}
The sequences above look \textbf{horizontal}, but we will consider those
sequences as \textbf{columns}, i.e., as \textbf{vertical} lines in a
bicomplex, while the complex $X_{\bullet }$ will be considered as a \textbf{%
horizontal} line, see the diagram below:%
\begin{equation*}
\begin{diagram}[size=3.0em,textflow]
\dots &  & \dots &  &  &  & \dots &  \\ 
\dTo &  & \dTo &  &  &  & \dTo &  \\
P_{0,t} & \lTo & P_{1,t} & \lTo & \dots & \lTo
& P_{s,t} & \lTo &\dots \\ 
\dTo &  & \dTo &  &  &  & \dTo &  \\ 
\dots &  & \dots &  &  &  & \dots &  \\ 
\dTo &  & \dTo &  &  &  & \dTo &  \\ 
P_{0,0} & \lTo & P_{1,0} & \lTo & \dots & \lTo
& P_{s,0} & \lTo & \dots \\ 
\dTo &  & \dTo &  &  &  & \dTo &  \\ 
X_{0} & \lTo & X_{1} & \lTo & \dots & \lTo & 
X_{s} & \lTo & \dots
\end{diagram}%
\end{equation*}
\end{remark}

\begin{lemma}
\label{Lemma-Cartan-Eilenberg}Assume $\mathbf{P}$ is generating in $\mathbf{C%
}$. Then for any $\left( X_{\bullet },d_{\bullet }\right) \in Ch^{\geq
0}\left( \mathbf{C}\right) $ there exists an epimorphism $\varphi _{\bullet
}:\left( P_{\bullet },\delta _{\bullet }\right) \twoheadrightarrow
X_{\bullet }$ with:

\begin{enumerate}
\item $P_{\bullet }\in Ch^{\geq 0}\left( \mathbf{P}\right) $.

\item $P_{\bullet }$ is $\mathbf{P}$-projective.

\item For each $s\geq 0$, $H_{s}\left( P_{\bullet }\right) \longrightarrow
H_{s}\left( X_{\bullet }\right) $ is an epimorphism.
\end{enumerate}
\end{lemma}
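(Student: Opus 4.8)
\textbf{Proof plan for Lemma \ref{Lemma-Cartan-Eilenberg}.}
The plan is to build the complex $P_\bullet$ degree by degree, using at each stage the freedom to add a "free summand" from $\mathbf{P}$ that maps onto the part of $H_s(X_\bullet)$ not yet hit, together with an extra summand that kills cycles that have appeared but are not yet boundaries. This is the classical horseshoe-style construction adapted to a merely generating subcategory, so I will not be able to lift along projectivity; instead I will only use that $\mathbf{P}$ is generating, i.e., that every object of $\mathbf{C}$ receives an epimorphism from an object of $\mathbf{P}$.

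\textbf{Step 1: the model piece.} For a single object $A\in\mathbf{C}$, choose an epimorphism $Q\twoheadrightarrow A$ with $Q\in\mathbf{P}$, and form the two-term complex $(Q \xrightarrow{1} Q)$ placed in degrees $n$ and $n-1$ (call it $D_n(Q)$), which is $\mathbf{P}$-projective, acyclic, and maps onto the "disc" built on $A$. Also record the one-term complex $S_n(A')$ concentrated in degree $n$, for $A'\in\mathbf{P}$; it is $\mathbf{P}$-projective with homology $A'$ in degree $n$. These are the building blocks: $\mathbf{P}$-projectivity of a direct sum of such pieces is immediate from Definition \ref{Def-P-projective-complex}, since cokernels of the differential in each piece lie in $\mathbf{P}$ and direct sums of objects of $\mathbf{P}$ lie in $\mathbf{P}$.

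\textbf{Step 2: assemble $P_\bullet$.} For each $s\ge 0$, choose $R_s\in\mathbf{P}$ with an epimorphism $R_s\twoheadrightarrow Z_s(X_\bullet)$ (using that $\mathbf{P}$ is generating and that $Z_s(X_\bullet)$ is the cycle object of $X_\bullet$ in degree $s$); composing with $Z_s(X_\bullet)\hookrightarrow X_s$ gives $R_s\to X_s$ hitting all cycles, hence inducing an epimorphism $R_s\twoheadrightarrow H_s(X_\bullet)$. Take $P_\bullet := \bigoplus_s S_s(R_s) \ \oplus\ \bigoplus_s D_{s+1}(R_s)$, where the disc summand $D_{s+1}(R_s)$ has its degree-$s$ component mapped identically into the degree-$s$ copy of $R_s$ and its degree-$(s+1)$ component mapped via $R_s\xrightarrow{\text{incl.}} Z_s\hookrightarrow X_s$ composed with $d$... — more carefully, arrange the differential of $P_\bullet$ so that the map $P_\bullet\to X_\bullet$ is a chain map: on $S_s(R_s)$ send $R_s$ to $X_s$ through $Z_s(X_\bullet)$ (so its differential lands in $Z_{s-1}$ correctly, which forces $d^{P}|_{S_s(R_s)}=0$ and commutes because the image is a cycle), and on $D_{s+1}(R_s)$ the bottom $R_s$ goes to $X_s$ the same way while the differential $R_s\xrightarrow{1}R_s$ upstairs is compatible. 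One then checks: (i) each $P_{\bullet t}$-slice is $\mathbf{P}$-projective — but here $P_\bullet$ is a single complex, so "$\mathbf{P}$-projective complex" is exactly Definition \ref{Def-P-projective-complex}, verified because $P_\bullet$ is a sum of $S$'s and $D$'s; (ii) $P_\bullet\to X_\bullet$ is degreewise an epimorphism — this is the point where I must be careful, since the $R_s\twoheadrightarrow Z_s$ only surject onto cycles, not onto all of $X_s$; I fix this by also adjoining, for each $s$, a disc $D_{s}(Q_s)$ with $Q_s\twoheadrightarrow X_{s}$ chosen so that the degree-$s$ component surjects onto $X_s$ (the lower component landing in $X_{s-1}$ via $d$, automatically a cycle), thereby making $P_s\to X_s$ surjective in every degree while keeping acyclicity of the added piece so that homology is unaffected; (iii) $H_s(P_\bullet)\twoheadrightarrow H_s(X_\bullet)$: the $S_s(R_s)$ summand contributes $R_s$ to $Z_s(P_\bullet)$, mapping onto $H_s(X_\bullet)$ by construction, and the disc summands contribute nothing to homology, so the induced map on $H_s$ is an epimorphism.

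\textbf{Main obstacle.} The delicate part is reconciling two competing demands: the summand responsible for surjectivity onto $H_s(X_\bullet)$ must be a \emph{cycle} subcomplex (so it survives to homology), while the summand responsible for degreewise surjectivity onto $X_s$ generally is \emph{not} a cycle and must be absorbed into an acyclic disc so as not to create spurious homology. Getting the differentials and the chain map $\varphi_\bullet$ to commute simultaneously — in particular checking that $\varphi_\bullet$ is a chain map once the disc pieces carry their differentials into lower degrees — is the calculation I would carry out in full; everything else (the $\mathbf{P}$-projectivity of $P_\bullet$, the fact that direct sums stay in $\mathbf{P}$, and the acyclicity bookkeeping) follows formally from Definition \ref{Def-F-projective}(\ref{Def-F-projective-generating}) and Remark \ref{Rem-Homology-quasi-projective}. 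I expect no use of quasi-projectivity of $\mathbf{P}$ here — only the generating property — which matches the hypothesis of the lemma.
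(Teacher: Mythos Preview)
Your approach is essentially the same as the paper's, but your Step~2 is tangled and one piece is actually wrong as written. The paper builds $P_\bullet$ as a direct sum of spheres $S_s(P_s^H)$ with $P_s^H\twoheadrightarrow Z_s(X_\bullet)$ and discs $D_{s+1}(P_s^B)$ with $P_s^B\twoheadrightarrow X_{s+1}$, mapping the top of each disc by the chosen surjection $\gamma_s$ and the bottom by its composite $\beta_s=d_{s+1}\circ\gamma_s$; after reindexing ($Q_{s+1}=P_s^B$) this is exactly your $\bigoplus_s S_s(R_s)\oplus\bigoplus_s D_s(Q_s)$. Your additional summands $D_{s+1}(R_s)$, however, cannot be mapped to $X_\bullet$ the way you describe: you want the bottom copy of $R_s$ to go to $X_s$ via $R_s\to Z_s\hookrightarrow X_s$, but then the chain-map condition forces the top map $R_s\to X_{s+1}$ to satisfy $d_{s+1}\circ(\text{top})=(R_s\to Z_s)$, which is impossible since $\operatorname{im} d_{s+1}=B_s$ and $R_s\twoheadrightarrow Z_s$ does not factor through $B_s$ in general. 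These summands are simply extraneous; drop them (or map them to zero) and your construction becomes literally the paper's. Your identification of the ``main obstacle'' as the chain-map verification and your observation that only the generating hypothesis is used are both correct.
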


\begin{proof}
Let $Z_{\bullet }$, $B_{\bullet }$ and $H_{\bullet }=Z_{\bullet }/B_{\bullet
}$ be the cycles, boundaries and homologies of $X_{\bullet }$, respectively.

\begin{enumerate}
\item For each $s$, choose $P_{s}^{B},P_{s}^{H}\in \mathbf{P}$, and
epimorphisms $\gamma _{s}:P_{s}^{B}\twoheadrightarrow X_{s+1}$, $\eta
_{s}:P_{s}^{H}\twoheadrightarrow Z_{s}$. It follows that each composition of
two epimorphisms%
\begin{equation*}
\beta _{s}=\left( P_{s}^{B}\twoheadrightarrow X_{s+1}\twoheadrightarrow
B_{s}\right)
\end{equation*}%
is an epimorphism as well. Consider the following diagram of complexes:%
\begin{equation*}
\begin{diagram}[size=3.0em,textflow]
P_{0} & \lTo^{\delta _{1}} & P_{1} & \lTo^{\delta _{2}} & P_{2} & \lTo^{\delta _{3}} & \dots
& \lTo^{\delta _{s}} & P_{s} & \lTo^{\delta _{s+1}} & \dots \\ 
\dTo<{\varphi_{0}} &  & \dTo<{\varphi_{1}} &  & \dTo<{\varphi_{2}} &  &  &  & \dTo<{\varphi_{s}} &  \\ 
X_{0} & \lTo & X_{1} & \lTo & X_{2} & \lTo & 
\dots & \lTo & X_{s} & \lTo & \dots \\
\end{diagram}
%
\end{equation*}%
where:

\begin{enumerate}
\item $P_{0}=P_{0}^{B}\oplus P_{0}^{H}$.

\item $P_{s}=P_{s}^{B}\oplus P_{s}^{H}\oplus P_{s-1}^{B}$, $s\geq 1$.

\item 
\begin{equation*}
\delta _{1}=\left[ 
\begin{array}{ccc}
0 & 0 & 1 \\ 
0 & 0 & 0%
\end{array}%
\right] ,
\end{equation*}

\item 
\begin{equation*}
\delta _{s}=\left[ 
\begin{array}{ccc}
0 & 0 & 1 \\ 
0 & 0 & 0 \\ 
0 & 0 & 0%
\end{array}%
\right] ,s\geq 2.
\end{equation*}

\item $\varphi _{0}=\left[ 
\begin{array}{cc}
\beta _{0} & \eta _{0}%
\end{array}%
\right] $.

\item $\varphi _{s}=\left[ 
\begin{array}{ccc}
\beta _{s} & \eta _{s} & \gamma _{s}%
\end{array}%
\right] $, $s\geq 1$.
\end{enumerate}

\item The complex $P_{\bullet }$ is clearly $\mathbf{P}$-projective. Indeed,%
\begin{eqnarray*}
\coker%
\delta _{1} &=&P_{0}^{H}\in \mathbf{P,} \\
\coker%
\delta _{s} &=&P_{s-1}^{H}\oplus P_{s-2}^{B}\in \mathbf{P},s\geq 2.
\end{eqnarray*}

\item The vertical morphisms $\varphi _{\bullet }$ form a morphism of
complexes:%
\begin{eqnarray*}
\varphi _{0}\circ \delta _{1} &=&\left[ 
\begin{array}{cc}
\beta _{0} & \eta _{0}%
\end{array}%
\right] \left[ 
\begin{array}{ccc}
0 & 0 & 1 \\ 
0 & 0 & 0%
\end{array}%
\right] =\beta _{0}= \\
0+0+\beta _{0} &=&d_{1}\circ \beta _{1}+d_{1}\circ \eta _{1}+d_{1}\circ
\gamma _{1}=d_{1}\circ \varphi _{1}, \\
\varphi _{s-1}\circ \delta _{s} &=&\left[ 
\begin{array}{ccc}
\beta _{s-1} & \eta _{s-1} & \gamma _{s-1}%
\end{array}%
\right] \left[ 
\begin{array}{ccc}
0 & 0 & 1 \\ 
0 & 0 & 0 \\ 
0 & 0 & 0%
\end{array}%
\right] =\beta _{s-1}= \\
0+0+\beta _{s-1} &=&d_{s}\circ \beta _{s}+d_{s}\circ \eta _{s}+d_{s}\circ
\gamma _{s}=d_{s}\circ \varphi _{s},s\geq 2.
\end{eqnarray*}

\item Since $\varphi _{s}|_{P_{s}^{B}}=\beta _{s}$ is an epimorphism, $%
\varphi _{s}$ is an epimorphism, too.

\item Clearly, $H_{s}\left( P_{\bullet }\right) =P_{s}^{H}$, and $%
H_{s}\left( \varphi _{\bullet }\right) $, being the composition of an
epimorphism $\eta _{s}:P_{s}^{H}\twoheadrightarrow Z_{s}$, and the
projection $Z_{s}\twoheadrightarrow H_{s}$, is an epimorphism.
\end{enumerate}
\end{proof}

\begin{proposition}
\label{Prop-Cartan-Eilenberg}~

\begin{enumerate}
\item If $\mathbf{P}$ is generating in $\mathbf{C}$, then for any $%
X_{\bullet }\in Ch^{\geq 0}\left( \mathbf{C}\right) $ there exists a
Cartan-Eilenberg resolution $P_{\bullet \bullet }\rightarrow X_{\bullet }$.

\item Let $\left( Tot\left( P_{\bullet \bullet }\right) ,D_{\bullet }\right) 
$ be the total complex:%
\begin{eqnarray*}
&&\left( Tot\left( P_{\bullet \bullet }\right) \right) _{n}%
{:=}%
\dbigoplus\limits_{s=0}^{n}P_{s,n-s}, \\
D_{n} &=&\dsum\limits_{s=0}^{n}\left( d_{s,n-s}+\left( -1\right) ^{s}\delta
_{s,n-s}\right) :\left( Tot\left( P_{\bullet \bullet }\right) \right)
_{n}\longrightarrow \left( Tot\left( P_{\bullet \bullet }\right) \right)
_{n-1}.
\end{eqnarray*}%
Then $\delta _{\bullet 0}:Tot\left( P_{\bullet \bullet }\right) \rightarrow
X_{\bullet }$ is a qis.
\end{enumerate}
\end{proposition}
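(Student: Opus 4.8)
\textbf{Proof plan for Proposition \ref{Prop-Cartan-Eilenberg}.}

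The plan is to prove (1) by iterating Lemma \ref{Lemma-Cartan-Eilenberg} one column at a time, and (2) by a spectral-sequence (or mapping-cone) comparison argument using exactness of the columns. For (1), I would construct the bicomplex $P_{\bullet\bullet}$ inductively in the vertical direction. First apply Lemma \ref{Lemma-Cartan-Eilenberg} to $X_{\bullet}$ itself to obtain a $\mathbf{P}$-projective complex $P_{\bullet,0}$ together with an epimorphism $\varphi^{(0)}_{\bullet}: P_{\bullet,0}\twoheadrightarrow X_{\bullet}$ which is moreover surjective on homology. Let $X_{\bullet}^{(1)} := \ker(\varphi^{(0)}_{\bullet})\in Ch^{\geq 0}(\mathbf{C})$; since $\mathbf{P}$ is generating I can apply Lemma \ref{Lemma-Cartan-Eilenberg} again to $X^{(1)}_{\bullet}$, getting $P_{\bullet,1}\twoheadrightarrow X^{(1)}_{\bullet}$, and so on, setting $X^{(t+1)}_{\bullet} := \ker(P_{\bullet,t}\twoheadrightarrow X^{(t)}_{\bullet})$. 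The vertical differentials $\delta_{s,\bullet}$ are the composites $P_{s,t+1}\twoheadrightarrow X^{(t+1)}_{s}\hookrightarrow P_{s,t}$. By construction each column $0\leftarrow X_s\leftarrow P_{s,0}\leftarrow P_{s,1}\leftarrow\cdots$ is exact (this is the usual ``resolution by kernels'' argument), giving Definition \ref{Def-Cartan-Eilenberg}(2). For Definition \ref{Def-Cartan-Eilenberg}(3) I need the homology columns $0\leftarrow H_s X_{\bullet}\leftarrow H_s P_{\bullet,0}\leftarrow\cdots$ to be exact; this is exactly where the extra ``surjective on homology'' clause of Lemma \ref{Lemma-Cartan-Eilenberg} is used, together with the long exact homology sequences of the short exact sequences of complexes $0\to X^{(t+1)}_{\bullet}\to P_{\bullet,t}\to X^{(t)}_{\bullet}\to 0$: surjectivity of $H_s(P_{\bullet,t})\to H_s(X^{(t)}_{\bullet})$ forces the connecting maps to vanish and splits the long exact sequence into short exact pieces $0\to H_s(X^{(t+1)}_{\bullet})\to H_s(P_{\bullet,t})\to H_s(X^{(t)}_{\bullet})\to 0$, which splice into the required exact homology column. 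One should also record, via Remark \ref{Rem-Homology-quasi-projective}, that $H_s(P_{\bullet,t})\in\mathbf{P}$ whenever $\mathbf{P}$ is quasi-projective, so the homology columns are themselves $\mathbf{P}$-resolutions when needed.

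For (2), I would compare $Tot(P_{\bullet\bullet})$ with $X_{\bullet}$ through the augmentation $\delta_{\bullet 0}$ by filtering the double complex by columns and looking at the first-quadrant homology spectral sequence $^{I}\!E^1_{s,t}=H_t^{ver}(P_{s,\bullet})$. Exactness of the columns $P_{s,\bullet}\to X_s$ (Definition \ref{Def-Cartan-Eilenberg}(2)) gives $^{I}\!E^1_{s,t}=0$ for $t>0$ and $^{I}\!E^1_{s,0}=X_s$, with the horizontal $d^1$ differential being $d_{\bullet}$; hence $^{I}\!E^2_{s,t}=H_s(X_{\bullet})$ if $t=0$ and $0$ otherwise, so the spectral sequence collapses and $H_n(Tot(P_{\bullet\bullet}))\simeq H_n(X_{\bullet})$, the isomorphism being induced by $\delta_{\bullet 0}$. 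This shows $\delta_{\bullet 0}$ is a quasi-isomorphism. (Alternatively, one can argue directly with the acyclic-assembly lemma, i.e.\ that the mapping cone of $\delta_{\bullet 0}$ is a first-quadrant double complex with exact columns, hence has acyclic total complex; I would use whichever of the two bicomplex tools the paper has available from \cite{Weibel-1994-An-introduction-to-homological-algebra-MR1269324}.)

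The main obstacle is \textbf{not} the spectral-sequence bookkeeping in (2)---that is standard once the columns are known to be exact---but rather making sure that the inductive construction in (1) really yields exact \emph{homology} columns, since in our setting we lack enough projectives and must work with the generating quasi-projective subcategory $\mathbf{P}$. The subtle point is that the naive ``resolve each object'' construction would not control homology; it is precisely the strengthened conclusion of Lemma \ref{Lemma-Cartan-Eilenberg} (that the chosen epimorphism $P_{\bullet}\to X_{\bullet}$ can be taken to be surjective on each $H_s$) that makes the long exact sequences degenerate. I would therefore spend the bulk of the write-up carefully verifying that clause propagates through the iteration, and that the resulting $\delta_{s,\bullet}$ assemble into an honest bicomplex (i.e.\ $\delta\circ\delta=0$ and $d\circ\delta=\delta\circ d$), which follows because each $\delta_{s,t+1}$ factors through the kernel of $\delta_{s,t}$ by construction.
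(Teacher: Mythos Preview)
Your proposal is correct and follows essentially the same approach as the paper: iterate Lemma \ref{Lemma-Cartan-Eilenberg} on successive kernels to build the columns, use the surjectivity-on-homology clause to force the connecting maps in the long exact homology sequences to vanish (yielding the exact homology columns), and then run the column-filtration spectral sequence for part (2). The paper's write-up is organized exactly as you outline, with your $X_\bullet^{(t)}$ playing the role of its $Y_\bullet^{(t)}$.
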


\begin{proof}
~

\begin{enumerate}
\item We will define the rows $P_{\bullet ,t}$ using induction on $t$:

\begin{enumerate}
\item $t=0$. Due to Lemma \ref{Lemma-Cartan-Eilenberg}, there exists an
epimorphism%
\begin{equation*}
\delta _{\bullet ,0}:P_{\bullet ,0}\twoheadrightarrow X_{\bullet }
\end{equation*}%
with $P_{\bullet ,0}\in Ch^{\geq 0}\left( \mathbf{P}\right) $, such that $%
H_{s}\left( \delta _{\bullet ,0}\right) $ is an epimorphism for all $s$.

\item Denote $Y_{\bullet }^{\left( -1\right) }%
{:=}%
X_{\bullet }$. For each $t=0,1,2,\dots $, let%
\begin{equation*}
Y_{\bullet }^{\left( t\right) }=\ker \left( P_{\bullet ,t}\longrightarrow
Y_{\bullet }^{\left( t-1\right) }\right) ,
\end{equation*}%
and let, due to Lemma \ref{Lemma-Cartan-Eilenberg},%
\begin{equation*}
\varphi _{\bullet ,t+1}:P_{\bullet ,t+1}\longrightarrow Y_{\bullet }^{\left(
t\right) }
\end{equation*}%
be an epimorphism of complexes, such that $H_{\bullet }\left( \delta
_{\bullet ,t+1}\right) $ is an epimorphism as well. Combining the following
short exact sequences of complexes:%
\begin{eqnarray*}
0 &\longrightarrow &Y_{\bullet }^{\left( 0\right) }\longrightarrow
P_{\bullet ,0}\longrightarrow \left( Y_{\bullet }^{\left( -1\right)
}=X_{\bullet }\right) \longrightarrow 0, \\
0 &\longrightarrow &Y_{\bullet }^{\left( 1\right) }\longrightarrow
P_{\bullet ,1}\longrightarrow Y_{\bullet }^{\left( 0\right) }\longrightarrow
0, \\
0 &\longrightarrow &Y_{\bullet }^{\left( 2\right) }\longrightarrow
P_{\bullet ,2}\longrightarrow Y_{\bullet }^{\left( 1\right) }\longrightarrow
0, \\
&&\dots \\
0 &\longrightarrow &Y_{\bullet }^{\left( t\right) }\longrightarrow
P_{\bullet ,t}\longrightarrow Y_{\bullet }^{\left( t-1\right)
}\longrightarrow 0, \\
&&\dots
\end{eqnarray*}%
one obtains a long exact sequence%
\begin{equation*}
0\longleftarrow X_{\bullet }\overset{\delta _{\bullet ,0}}{\longleftarrow }%
P_{\bullet ,0}\overset{\delta _{\bullet ,1}}{\longleftarrow }P_{\bullet ,1}%
\overset{\delta _{\bullet ,2}}{\longleftarrow }\dots \overset{\delta
_{\bullet ,t}}{\longleftarrow }P_{\bullet ,t}\overset{\delta _{\bullet ,t+1}}%
{\longleftarrow }\dots
\end{equation*}%
where%
\begin{equation*}
\delta _{\bullet ,t}=\left( Y_{\bullet }^{\left( t-1\right) }\longrightarrow
P_{\bullet ,t-1}\right) \circ \varphi _{\bullet ,t}.
\end{equation*}

\item Consider the corresponding long exact sequences of homologies. Since
each $H_{\bullet }\left( \varphi _{\bullet ,t}\right) $ is an epimorphism,
one has a series of short exact sequences%
\begin{eqnarray*}
0 &\longrightarrow &H_{s}\left( Y_{\bullet }^{\left( 0\right) }\right)
\longrightarrow H_{s}\left( P_{\bullet ,0}\right) \longrightarrow
H_{s}\left( Y_{\bullet }^{\left( 1\right) }\right) \longrightarrow 0, \\
0 &\longrightarrow &H_{s}\left( Y_{\bullet }^{\left( 1\right) }\right)
\longrightarrow H_{s}\left( P_{\bullet ,1}\right) \longrightarrow
H_{s}\left( Y_{\bullet }^{\left( 0\right) }\right) \longrightarrow 0, \\
0 &\longrightarrow &H_{s}\left( Y_{\bullet }^{\left( 2\right) }\right)
\longrightarrow H_{s}\left( P_{\bullet ,2}\right) \longrightarrow
H_{s}\left( Y_{\bullet }^{\left( 1\right) }\right) \longrightarrow 0, \\
&&\dots \\
0 &\longrightarrow &H_{s}\left( Y_{\bullet }^{\left( t\right) }\right)
\longrightarrow H_{s}\left( P_{\bullet ,t}\right) \longrightarrow
H_{s}\left( Y_{\bullet }^{\left( t-1\right) }\right) \longrightarrow 0, \\
&&\dots
\end{eqnarray*}%
Combining them, one obtains the long exact sequence%
\begin{equation*}
0\longleftarrow H_{s}\left( X_{\bullet }\right) \longleftarrow H_{s}\left(
P_{\bullet ,0}\right) \longleftarrow H_{s}\left( P_{\bullet ,1}\right)
\longleftarrow \dots \longleftarrow H_{s}\left( P_{\bullet ,t}\right)
\longleftarrow \dots
\end{equation*}
\end{enumerate}

\item Consider the first spectral sequence from \cite[Theorem A.4.3(4a)]%
{Prasolov-Cosheaves-2021-MR4347662} for the bicomplex $P_{\bullet \bullet }$:%
\begin{equation*}
^{ver}E_{s,t}^{2}%
\simeq%
~^{hor}H_{s}\left( ^{ver}H_{t}\left( P_{\bullet ,\bullet }\right) \right)
\implies H_{s+t}\left( Tot\left( P_{\bullet \bullet }\right) \right) .
\end{equation*}%
The sequence degenerates:%
\begin{equation*}
^{ver}E_{s,t}^{2}=~^{hor}H_{s}\left( ^{ver}H_{t}\left( P_{\bullet ,\bullet
}\right) \right) =\left\{ 
\begin{array}{ccc}
H_{s}\left( X_{\bullet }\right) & \text{if} & t=0 \\ 
0 & \text{otherwise} & 
\end{array}%
\right.
\end{equation*}%
therefore%
\begin{equation*}
H_{s}\left( Tot\left( P_{\bullet \bullet }\right) \right) 
\simeq%
H_{s}\left( X_{\bullet }\right) ,
\end{equation*}%
and $Tot\left( P_{\bullet \bullet }\right) \rightarrow X_{\bullet }$ is a
qis.
\end{enumerate}
\end{proof}

\begin{remark}
We have in fact proved the following statement, dual to \cite[Lemma 13.2.1]%
{Kashiwara-Categories-MR2182076}. If $\mathbf{P}\subseteq \mathbf{C}$ is
generating, then for any $X_{\bullet }\in Ch^{\geq 0}\left( \mathbf{C}%
\right) $ there exists a $P_{\bullet }\in Ch^{\geq 0}\left( \mathbf{P}%
\right) $, and a qis $P_{\bullet }\rightarrow X_{\bullet }$.
\end{remark}

\subsection{The Grothendieck spectral sequence}

\begin{definition}
\label{Def-F-acyclic}Let $F:\mathbf{C}\rightarrow \mathbf{E}$ be an additive
functor between abelian categories, and let $\mathbf{P}\subseteq \mathbf{C}$
be an $F$-projective subcategory. We say that an object $X$ is $F$\textbf{%
-acyclic} iff $L_{s}F\left( X\right) =0$ for all $s>0$.
\end{definition}

\begin{theorem}
\label{Th-Grothendieck-spectral-sequence}Let $\mathbf{C}$, $\mathbf{D}$ and $%
\mathbf{E}$ be abelian categories, and let $G:\mathbf{C}\longrightarrow 
\mathbf{D}$ and $F:\mathbf{D}\longrightarrow \mathbf{E}$ be right exact
additive functors. Assume that the additive subcategories $\mathbf{P}%
\subseteq \mathbf{C}$ and $\mathbf{Q}\subseteq \mathbf{D}$ are $G$%
-projective and $F$-projective respectively, and that for any $P\in \mathbf{P%
}$, the object $G\left( P\right) $ is $F$-acyclic. Then there exists a
spectral sequence, natural in $X$,%
\begin{equation*}
E_{s,t}^{2}%
{:=}%
L_{t}F\left( L_{s}G\left( X\right) \right) \implies L_{s+t}\left( F\circ
G\right) \left( X\right) .
\end{equation*}
\end{theorem}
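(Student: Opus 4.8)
The plan is to follow the classical Grothendieck-spectral-sequence argument, adapted to the setting of $F$-projective (rather than injective/projective-enough) subcategories, using the Cartan--Eilenberg machinery already developed in Proposition~\ref{Prop-Cartan-Eilenberg}. The overall strategy: given $X\in\mathbf{C}$, take a $\mathbf{P}$-projective resolution $P_{\bullet}\to X$ with $P_{\bullet}\in K^{\geq 0}(\mathbf{P})$ (which exists since $\mathbf{P}$ is generating, by the remark following Proposition~\ref{Prop-Cartan-Eilenberg}), apply $G$ to get a complex $G(P_{\bullet})\in Ch^{\geq 0}(\mathbf{D})$, choose a Cartan--Eilenberg $\mathbf{Q}$-resolution $Q_{\bullet\bullet}\to G(P_{\bullet})$ (Proposition~\ref{Prop-Cartan-Eilenberg}(1), legitimate because $\mathbf{Q}$ is generating in $\mathbf{D}$), and then run the two spectral sequences of the first-quadrant bicomplex $F(Q_{\bullet\bullet})$ converging to $H_{\bullet}(Tot(F(Q_{\bullet\bullet})))$.

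First I would analyze the ``vertical'' spectral sequence (filtration by the resolution-degree of the Cartan--Eilenberg resolution). Here the key input is the hypothesis that $G(P)$ is $F$-acyclic for $P\in\mathbf{P}$: since each $P_s\in\mathbf{P}$, each column $G(P_s)$ is $F$-acyclic, and a Cartan--Eilenberg resolution resolves each $G(P_s)$ by objects of $\mathbf{Q}$ in a way that computes $L_{\bullet}F$. Using the exactness properties built into Definition~\ref{Def-Cartan-Eilenberg} (the ``vertical'' exact columns and the exactness on homology) together with $F$-acyclicity, this spectral sequence should degenerate, giving $H_n(Tot(F(Q_{\bullet\bullet})))\simeq H_n(F(G(P_{\bullet})))$; but $H_n(F(G(P_{\bullet})))$ is exactly $L_n(F\circ G)(X)$, because $P_{\bullet}$ is a $\mathbf{P}$-projective resolution of $X$ and $\mathbf{P}$ is $(F\circ G)$-projective --- this last point requires checking that $\mathbf{P}$ is indeed $(F\circ G)$-projective, which follows from $\mathbf{P}$ being $G$-projective plus $G(\mathbf{P})\subseteq\{F\text{-acyclic objects}\}\cap\mathbf{Q}$-resolvable, so that $F\circ G$ sends short exact sequences with outer terms in $\mathbf{P}$ to exact sequences.

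Next I would analyze the ``horizontal'' spectral sequence (filtration by the degree coming from $P_{\bullet}$). Taking horizontal homology of $F(Q_{\bullet\bullet})$ first: since for fixed $t$ the complex $Q_{\bullet t}$ is a $\mathbf{P}$-(here $\mathbf{Q}$-)projective resolution-row, the rows $F(Q_{\bullet,t})$ compute something expressible via $L_{\bullet}F$ applied to the homology $H_s(G(P_{\bullet}))=L_sG(X)$. Concretely, the Cartan--Eilenberg condition on homology guarantees $H_s(Q_{\bullet,\bullet})\to H_s(G(P_{\bullet}))=L_sG(X)$ is a $\mathbf{Q}$-projective resolution, so taking vertical homology afterwards yields $E^2_{s,t}=L_tF(L_sG(X))$. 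Assembling the two computations gives the stated spectral sequence $E^2_{s,t}=L_tF(L_sG(X))\implies L_{s+t}(F\circ G)(X)$. Naturality in $X$ follows because all constructions (resolutions, Cartan--Eilenberg resolutions, total complexes) can be carried out functorially up to homotopy, so the induced maps on $E^2$ and on the abutment are natural; alternatively one invokes the standard comparison argument using that two $\mathbf{P}$-projective resolutions of $X$ are chain-homotopy equivalent.

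The main obstacle I anticipate is \emph{not} in the formal bicomplex bookkeeping but in verifying carefully that all the intermediate objects stay inside the relevant subcategories so that $L_{\bullet}$ of each functor is actually computed by the chosen resolutions. In particular: (i) that $H_s(G(P_{\bullet}))$ has a $\mathbf{Q}$-projective resolution with the right behaviour, i.e. that the Cartan--Eilenberg resolution restricts correctly on homology (this uses Remark~\ref{Rem-Homology-quasi-projective} together with quasi-projectivity of $\mathbf{Q}$); (ii) that $\mathbf{P}$ is $(F\circ G)$-projective, which needs the $F$-acyclicity hypothesis in an essential way (an $F$-acyclic object need not lie in $\mathbf{Q}$, so one argues via a $\mathbf{Q}$-resolution and the degeneration of a small spectral sequence); and (iii) controlling convergence of the two spectral sequences, which is automatic here since $Q_{\bullet\bullet}$ lives in the first quadrant so both filtrations are bounded in each total degree. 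I would also note, as in the cited $\mathbf{P}$-projective framework of Kashiwara--Schapira, that everything can be phrased more cleanly at the level of derived categories using $\mathbb{L}(F\circ G)\simeq\mathbb{L}F\circ\mathbb{L}G$, but the bicomplex proof above is the most self-contained given what the excerpt has set up.
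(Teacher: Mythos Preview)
Your approach is essentially the same as the paper's: both take a $\mathbf{P}$-resolution $P_{\bullet}\to X$, apply $G$, build a Cartan--Eilenberg $\mathbf{Q}$-resolution $Q_{\bullet\bullet}\to G(P_{\bullet})$, and compare the two spectral sequences of $F(Q_{\bullet\bullet})$; the paper's identification of the $E^{2}$-pages and of the abutment matches your steps (1) and (2) almost verbatim. You are in fact slightly more careful than the paper on one point: you flag that $\mathbf{P}$ being $(F\circ G)$-projective needs justification before writing $H_{n}(F(G(P_{\bullet})))=L_{n}(F\circ G)(X)$, whereas the paper simply asserts this equality.

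The one place where your proposal is too optimistic is naturality. Your fallback claim that ``two $\mathbf{P}$-projective resolutions of $X$ are chain-homotopy equivalent'' is not available here: without enough projectives, $\mathbf{P}$-resolutions are only connected by zig-zags of quasi-isomorphisms, not by genuine chain-homotopy equivalences, so the usual comparison argument does not go through directly. The paper acknowledges this explicitly, deferring the naturality argument to Remark~\ref{Rem-Naturality-in-X} and describing it as ``complicated, and beyond the scope of this paper,'' outlining instead a roof-construction in the derived category of bicomplexes. So your construction of the spectral sequence is correct and matches the paper, but your justification of naturality would need to be replaced by that more delicate argument.
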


\begin{proof}
For $X\in \mathbf{C}$, choose a resolution%
\begin{equation*}
0\longleftarrow X\longleftarrow P_{0}\longleftarrow P_{1}\longleftarrow
P_{2}\longleftarrow \dots
\end{equation*}%
with $P_{i}\in \mathbf{P}$. Apply $G$ and get a complex%
\begin{equation*}
Y_{\bullet }%
{:=}%
G\left( P_{0}\right) \longleftarrow G\left( P_{1}\right) \longleftarrow
G\left( P_{2}\right) \longleftarrow \dots
\end{equation*}%
Due to Proposition \ref{Prop-Cartan-Eilenberg}, there exists a
Cartan-Eilenberg resolution%
\begin{equation*}
0\longleftarrow Y_{\bullet }\longleftarrow Q_{\bullet ,0}\longleftarrow
Q_{\bullet ,1}\longleftarrow Q_{\bullet ,2}\longleftarrow \dots
\end{equation*}%
where $Q_{st}\in \mathbf{Q}$. See the diagram (remember that the above line
becomes in fact a series of \textbf{columns}):%
\begin{equation*}
\begin{diagram}[size=3.0em,textflow]
\dots &  & \dots &  &  &  & \dots &  \\ 
\dTo &  & \dTo &  &  &  & \dTo &  \\
Q_{0,t} & \lTo & Q_{1,t} & \lTo & \dots & \lTo
& Q_{s,t} & \lTo & \dots \\ 
\dTo &  & \dTo &  &  &  & \dTo &  \\ 
\dots &  & \dots &  &  &  & \dots &  \\ 
\dTo &  & \dTo &  &  &  & \dTo &  \\ 
Q_{0,0} & \lTo & Q_{1,0} & \lTo & \dots & \lTo
& Q_{s,0} & \lTo & \dots \\ 
\dTo &  & \dTo &  &  &  & \dTo &  \\ 
G\left( P_{0}\right) & \lTo & G\left( P_{1}\right) & \lTo
& \dots & \lTo & G\left( P_{s}\right) & \lTo & \dots
\end{diagram}
%
\end{equation*}%
Apply now the two spectral sequences \cite[Theorem A.4.3(4ab)]%
{Prasolov-Cosheaves-2021-MR4347662} to the bicomplex $F\left( Q_{\bullet
\bullet }\right) $:%
\begin{eqnarray*}
^{ver}E_{s,t}^{2}%
\simeq%
~^{hor}H_{s}\left( ^{ver}H_{t}F\left( Q_{\bullet ,\bullet }\right) \right)
&\implies &H_{s+t}\left( Tot\left( F\left( Q_{\bullet ,\bullet }\right)
\right) \right) , \\
^{hor}E_{s,t}^{2}%
\simeq%
~^{ver}H_{t}\left( ^{hor}H_{s}F\left( Q_{\bullet ,\bullet }\right) \right)
&\implies &H_{s+t}\left( Tot\left( F\left( Q_{\bullet ,\bullet }\right)
\right) \right) .
\end{eqnarray*}

\begin{enumerate}
\item ~

\begin{enumerate}
\item $^{ver}H_{t}F\left( Q_{\bullet ,\bullet }\right) =L_{t}F\left(
Y_{\bullet }\right) =L_{t}F\left( G\left( P_{\bullet }\right) \right) $.
Since all objects $G\left( P_{t}\right) $ are $F$-acyclic, one gets%
\begin{equation*}
\left( ^{ver}H_{t}F\left( Q_{\bullet ,\bullet }\right) \right) _{st}=\left\{ 
\begin{array}{ccc}
F\left( G\left( P_{s}\right) \right) & \text{if} & t=0 \\ 
0 & \text{if} & t\neq 0%
\end{array}%
\right.
\end{equation*}

\item 
\begin{eqnarray*}
^{ver}E_{s,t}^{2} &=&~^{hor}H_{s}\left( ^{ver}H_{t}F\left( Q_{\bullet
,\bullet }\right) \right) =~^{hor}H_{s}\left( L_{t}F\left( G\left(
P_{\bullet }\right) \right) \right) = \\
&=&\left\{ 
\begin{array}{ccc}
H_{s}\left( F\left( G\left( P_{\bullet }\right) \right) \right) & \text{if}
& t=0 \\ 
0 & \text{if} & t\neq 0%
\end{array}%
\right. =\left\{ 
\begin{array}{ccc}
L_{s}\left( F\circ G\right) \left( X\right) & \text{if} & t=0 \\ 
0 & \text{if} & t\neq 0%
\end{array}%
\right.
\end{eqnarray*}

\item The sequence degenerates, implying%
\begin{equation*}
H_{n}\left( Tot\left( F\left( Q_{\bullet \bullet }\right) \right) \right) 
\simeq%
L_{n}\left( F\circ G\right) \left( X\right) .
\end{equation*}
\end{enumerate}

\item ~

\begin{enumerate}
\item For each $t\geq 0$, the homologies $^{hor}H_{s}F\left( Q_{\bullet
,t}\right) \in \mathbf{Q}$ due to Remark \ref{Rem-Homology-quasi-projective}.

\item 
\begin{equation*}
0\longleftarrow \left( L_{s}G\left( X\right) =H_{s}\left( G\left( P_{\bullet
}\right) \right) =H_{s}\left( Y_{\bullet }\right) \right) \longleftarrow
H_{s}\left( Q_{\bullet ,0}\right) \longleftarrow H_{s}\left( Q_{\bullet
,1}\right) \longleftarrow \dots \longleftarrow H_{s}\left( Q_{\bullet
,t}\right) \longleftarrow \dots
\end{equation*}%
is a $\mathbf{Q}$-resolution, therefore%
\begin{equation*}
^{hor}E_{s,t}^{2}%
\simeq%
~^{ver}H_{t}\left( F\left( ^{hor}H_{s}\left( Q_{\bullet ,\bullet }\right)
\right) \right) 
\simeq%
L_{t}F\left( L_{s}G\left( X\right) \right) \implies L_{s+t}\left( F\circ
G\right) \left( X\right) .
\end{equation*}
\end{enumerate}

\item See Remark \ref{Rem-Naturality-in-X} below.
\end{enumerate}
\end{proof}

\begin{remark}
\label{Rem-Naturality-in-X}Proof of the naturality in $X$ is complicated,
and is beyond the scope of this paper. One should follow the scheme
described as alternative a) in \cite[p. 146]%
{Grothendieck-Tohoku-1957-MR0102537}. The problem is again that in our
situation we do not have enough projectives. We should do the following:

\begin{enumerate}
\item Consider the abelian category $\mathbf{B}=Ch^{+}\left( \mathbf{C}%
\right) $.

\item Build the derived category $D\left( Ch^{+}\left( \mathbf{B}\right)
\right) $ based on bicomplexes $Ch^{+,+}\left( \mathbf{C}\right) $.

\item Define the full additive subcategory $\mathbf{P}^{\prime }\subseteq 
\mathbf{C}$ consisting of $\mathbf{P}$-projective complexes (Definition \ref%
{Def-P-projective-complex}).

\item Prove the generalization of Proposition \ref{Prop-Cartan-Eilenberg}:
for any complex $X_{\bullet }\in Ch^{+}\left( \mathbf{B}\right) $ there
exists a complex $P_{\bullet }\in Ch^{+}\left( \mathbf{P}^{\prime }\right) $
and a qis $P_{\bullet }\rightarrow X_{\bullet }$.

\item Then the naturality in $X$ would follow:

\begin{enumerate}
\item The \textbf{last} stage of the proof of Theorem \ref%
{Th-Grothendieck-spectral-sequence} would \textbf{not} depend on the
resolution $Q_{\bullet \bullet }$. Given two such resolutions $Q_{\bullet
\bullet }\twoheadrightarrow Y_{\bullet }$ and $Q_{\bullet \bullet }^{\prime
}\twoheadrightarrow Y_{\bullet }$, build, using \cite[Theorem 10.2.3(i) and
Proposition 10.2.7]{Kashiwara-Categories-MR2182076}, the third such
resolution, two qis $Q_{\bullet \bullet }^{\prime \prime }\twoheadrightarrow
Q_{\bullet \bullet }$ and $Q_{\bullet \bullet }^{\prime \prime
}\twoheadrightarrow Q_{\bullet \bullet }^{\prime }$, and a commutative
diagram:%
\begin{equation*}
\begin{diagram}[size=3.0em,textflow]
Q_{\bullet \bullet } & \rOnto & Y_{\bullet } & \lOnto
& Q_{\bullet \bullet }^{\prime } \\ 
& \luOnto & \uOnto & \ruOnto &  \\ 
&  & Q_{\bullet \bullet }^{\prime \prime } &  & 
\end{diagram}
%
\end{equation*}%
Then build the Grothendieck spectral sequence using $Q_{\bullet \bullet
}^{\prime \prime }\twoheadrightarrow Y_{\bullet }$. The result would be
isomorphic to the sequences for $Q_{\bullet \bullet }\twoheadrightarrow
Y_{\bullet }$ and $Q_{\bullet \bullet }^{\prime }\twoheadrightarrow
Y_{\bullet }$.

\item The \textbf{first} stage of Theorem \ref%
{Th-Grothendieck-spectral-sequence} would \textbf{not} depend on the
resolution $P_{\bullet }\twoheadrightarrow X$. Given two such resolutions $%
P_{\bullet }\twoheadrightarrow X$ and $P_{\bullet }^{\prime
}\twoheadrightarrow X$, build the third such resolution, two qis $P_{\bullet
}^{\prime \prime }\twoheadrightarrow P_{\bullet }$ and $P_{\bullet }^{\prime
\prime }\twoheadrightarrow P_{\bullet }^{\prime }$, and a commutative
diagram:%
\begin{equation*}
\begin{diagram}[size=3.0em,textflow]
P_{\bullet } & \rOnto & X & \lOnto & P_{\bullet
}^{\prime } \\ 
& \luOnto & \uOnto & \ruOnto &  \\ 
&  & P_{\bullet }^{\prime \prime } &  & \\ 
\end{diagram}
%
\end{equation*}%
Then build the resolution $Q_{\bullet \bullet }^{\prime \prime
}\twoheadrightarrow G\left( P_{\bullet }^{\prime \prime }\right) $, two qis $%
Q_{\bullet \bullet }^{\prime \prime }\twoheadrightarrow Q_{\bullet \bullet }$
and $Q_{\bullet \bullet }^{\prime \prime }\twoheadrightarrow Q_{\bullet
\bullet }^{\prime }$, and a commutative diagram:%
\begin{equation*}
\begin{diagram}[size=3.0em,textflow]
G\left( P_{\bullet }\right) & \rOnto & G\left( X\right) & 
\lOnto & G\left( P_{\bullet }^{\prime }\right) \\ 
\uTo &  & \uOnto &  & \uTo \\ 
Q_{\bullet \bullet } & \lOnto & Q_{\bullet \bullet }^{\prime \prime }
& \rOnto & Q_{\bullet \bullet }^{\prime }
\end{diagram}
%
\end{equation*}%
Build the Grothendieck spectral sequence using $Q_{\bullet \bullet }^{\prime
\prime }$. The result would be isomorphic to the sequences built for $%
Q_{\bullet \bullet }$ and $Q_{\bullet \bullet }^{\prime }$.

\item Given a morphism $X\rightarrow X^{\prime }$, build two resolutions $%
P_{\bullet }\twoheadrightarrow X$ and $P_{\bullet }^{\prime
}\twoheadrightarrow X^{\prime }$, the third complex $P_{\bullet }^{\prime
\prime }$, a morphism $P_{\bullet }^{\prime \prime }\rightarrow P_{\bullet
}^{\prime }$ and a qis $P_{\bullet }^{\prime \prime }\rightarrow P_{\bullet
} $ such that the diagram%
\begin{equation*}
\begin{diagram}[size=3.0em,textflow]
P_{\bullet }^{\prime \prime } & \rTo & P_{\bullet } & 
\rOnto & X \\ 
& \rdTo &  &  & \dTo \\ 
&  & P_{\bullet }^{\prime } & \rOnto & X^{\prime } \\
\end{diagram}
%
\end{equation*}%
commutes. Then build two resolutions $Q_{\bullet \bullet }\twoheadrightarrow
G\left( P_{\bullet }^{\prime \prime }\right) $ and $Q_{\bullet \bullet
}^{\prime }\twoheadrightarrow G\left( P_{\bullet }^{\prime }\right) $, the
third bicomplex $Q_{\bullet \bullet }^{\prime \prime }$, a morphism $%
Q_{\bullet \bullet }^{\prime \prime }\rightarrow Q_{\bullet \bullet
}^{\prime }$ and a qis $Q_{\bullet \bullet }^{\prime \prime }\rightarrow
Q_{\bullet \bullet }$ such that the diagram%
\begin{equation*}
\begin{diagram}[size=3.0em,textflow]
Q_{\bullet \bullet } & \rOnto & G\left( P_{\bullet }^{\prime
\prime }\right) & \rTo & G\left( P_{\bullet }\right) & 
\rOnto & G\left( X\right) \\ 
\uTo &  &  & \rdTo &  &  & \dTo \\ 
Q_{\bullet \bullet }^{\prime \prime } & \rTo & Q_{\bullet \bullet
}^{\prime } & \rOnto & G\left( P_{\bullet }^{\prime }\right) & 
\rOnto & G\left( X^{\prime }\right) \\
\end{diagram}
%
\end{equation*}%
commutes. Build the Grothendieck spectral sequences using $Q_{\bullet
\bullet }$, $Q_{\bullet \bullet }^{\prime }$, and $Q_{\bullet \bullet
}^{\prime \prime }$. Then one obtains an \textbf{iso}morphism from the the
third sequence to the first one, and a \textbf{morphism} from the third one
to the second one, giving the desired morphism:%
\begin{equation*}
E_{s,t}^{2}\left( f\right) :E_{s,t}^{2}\left( X\right) =L_{t}F\left(
L_{s}G\left( X\right) \right) \longrightarrow L_{t}F\left( L_{s}G\left(
X^{\prime }\right) \right) =E_{s,t}^{2}\left( X^{\prime }\right) .
\end{equation*}

\item Finally:

\begin{enumerate}
\item Prove that the above morphism does not depend on $Q_{\bullet \bullet
}^{\prime \prime }$.

\item Prove that, given $f:X\rightarrow X^{\prime }$ and $g:X^{\prime
}\rightarrow X^{\prime \prime }$,%
\begin{equation*}
E_{s,t}^{2}\left( g\circ f\right) =E_{s,t}^{2}\left( g\right) \circ
E_{s,t}^{2}\left( f\right) .
\end{equation*}
\end{enumerate}
\end{enumerate}
\end{enumerate}
\end{remark}

\bibliographystyle{apalike}
\bibliography{Cosheaves}

\end{document}